\tikzset{cross/.style={cross out, draw=black, minimum size=2*(#1-\pgflinewidth), inner sep=0pt, outer sep=0pt},cross/.default={2pt}}
\theoremstyle{thmstyleone}%
\newtheorem{theorem}{Theorem}%  meant for continuous numbers
\newtheorem{proposition}{Proposition}% 
\newtheorem{lemma}{Lemma}% 
\newtheorem{corollary}{Corollary}
\theoremstyle{thmstyletwo}%
\newtheorem{example}{Example}%
\newtheorem{remark}{Remark}%
\theoremstyle{thmstylethree}%
\newtheorem{definition}{Definition}%
\begin{document}

\title[Article Title]{Lengthscale-informed sparse grids for kernel methods in high dimensions}

%%=============================================================%%
%% GivenName	-> \fnm{Joergen W.}
%% Particle	-> \spfx{van der} -> surname prefix
%% FamilyName	-> \sur{Ploeg}
%% Suffix	-> \sfx{IV}
%% \author*[1,2]{\fnm{Joergen W.} \spfx{van der} \sur{Ploeg} 
%%  \sfx{IV}}\email{iauthor@gmail.com}
%%=============================================================%%

\author[1]{\fnm{Elliot J.} \sur{Addy}}\email{e.j.addy@sms.ed.ac.uk\footnote{ORCID-IDs: 0009-0003-2667-8493, 0000-0002-4600-0247, 0000-0001-5089-0476.}}

\author[2]{\fnm{Jonas} \sur{Latz}}%\email{jonas.latz@manchester.ac.uk}
% \equalcont{These authors contributed equally to this work.}

\author[1]{\fnm{Aretha L.} \sur{Teckentrup}}%\email{a.teckentrup@ed.ac.uk}
% \equalcont{These authors contributed equally to this work.}

\affil[1]{\orgdiv{School of Mathematics and Maxwell Institute for Mathematical Sciences}, \orgname{University of Edinburgh}, \orgaddress{\street{King's Buildings}, \city{Edinburgh}, \postcode{EH9 3FD}, \country{UK}}}

\affil[2]{\orgdiv{Department of Mathematics}, \orgname{University of Manchester}, \orgaddress{\street{Alan Turing Building, Oxford Road}, \city{Manchester}, \postcode{M13 9PL}, \country{UK}}}

%%==================================%%
%% Sample for unstructured abstract %%
%%==================================%%

\abstract{Kernel interpolation, especially in the context of Gaussian process emulation, is a widely used technique in surrogate modelling, where the goal is to cheaply approximate an input-output map using a limited number of function evaluations. However, in high-dimensional settings, such methods typically suffer from the \emph{curse of dimensionality}; the number of required evaluations to achieve a fixed approximation error grows exponentially with the input dimension. To overcome this, a common technique used in high-dimensional approximation methods, such as quasi-Monte Carlo and sparse grids, is to exploit functional anisotropy: the idea that some input dimensions are more `sensitive' than others. In doing so, such methods can significantly reduce the dimension dependence in the error. In this work, we propose a generalisation of sparse grid methods that incorporates a form of anisotropy encoded by the lengthscale parameter in Mat\'ern kernels. We derive error bounds and perform numerical experiments that show that our approach enables effective emulation over arbitrarily high dimensions for functions exhibiting sufficient anisotropy.
}

\keywords{Kernel methods, High-dimensional approximation, Sparse grids, Lengthscales, Gaussian processes.\\\textbf{Mathematics Subject Classification:} 41A25, 41A63, 62G08, 65B99, 65C20, 65D12, 65D15, 65D32, 65D40.
}

%%\pacs[JEL Classification]{D8, H51}

%%\pacs[MSC Classification]{35A01, 65L10, 65L12, 65L20, 65L70}

\maketitle

\section{Introduction}\label{sec: introduciton}

Kernel methods are widely used in surrogate modelling, where the goal is to cheaply approximate an input-output map using a limited number of function evaluations. Due to their mesh-free nature, kernel methods are especially well-suited to the low-data regime, where data acquisition poses the primary computational bottleneck, as well as scattered data approximation, where the location of function evaluations may be very irregular. Kernel methods are well known in the numerical analysis community, for example through approximation with radial basis functions \cite{wendland_2004,buhmann2000radial,nww05,nww06,adt12}, as well as in the statistics community, where the Gaussian process framework for regression is well established \cite{Rasmussen2005,gramacy2020surrogates,stein,kennedy2001bayesian,hkccr04}. 

In this work, we are interested in the accuracy of kernel methods as a function of the number of function evaluations (or \emph{training points}) used to construct the approximation. More precisely, we consider the problem of interpolation, where we wish to approximate a function $f$ from given evaluations \(\{(\mathbf{x},f(\mathbf{x}))\}_{\mathbf{x}\in\mathcal{X}}\)  at \(N\) distinct points. From a numerical analysis point of view, we then wish to bound the error $\|f - s_{\mathcal{X},\varphi}(f)\|$ between the true function $f$ and its interpolant $s_{\mathcal{X},\varphi}(f)$ built using the evaluations at $\mathcal{X}$ and kernel $\varphi$, in a suitably chosen norm. From a statistical point of view, the interpolant $s_{\mathcal{X},\varphi}(f)$ takes the role of the posterior mean in Gaussian process regression (see e.g. \cite{sss13}), and the former error analysis hence enables us to conclude on convergence of the methodology also in this framework.

The form of the error bounds, and the tools used to derive them, depend heavily on the kernel $\varphi$ employed in the approximation. Frequently used kernels in practice are the family of Mat\'ern kernels \cite{matern2013spatial,porcu2024matern}, since their parametric nature allows for flexibility to model various behaviours in terms of amplitude, length-scale and regularity. A further desirable feature of Mat\'ern kernels is that the corresponding interpolants $s_{\mathcal{X},\varphi}(f)$ lie in Sobolev spaces \cite{adams2003sobolev},  enabling the derivation of error bounds in the widely used Sobolev framework \cite{wendland_2004,nww05,van2011information}. In general, however, these error bounds unfortunately scale poorly with the number of inputs and suffer from the \emph{curse of dimensionality}. The number of function evaluations required to maintain a fixed error $\varepsilon$ grows exponentially with the input dimension, rendering the application of kernel methods to high-dimensional models notoriously challenging \cite{Santner2003,gramacy2020surrogates,GoldsteinVernon2010}.

To combat the challenge posed by a high input dimension, a central idea in high-dimensional approximation is to take advantage of anisotropy in the target function \(f\). If \(f\) is more difficult to approximate along certain axial directions than others, we can allocate computational effort preferentially to the more sensitive directions---placing fewer interpolation points in directions where \(f\) is relatively smooth and/or flat. Two well-known techniques in this direction are anisotropic sparse grids (for approximation and integration) and weighted quasi-Monte Carlo methods (for integration). To alleviate the dependence on the input dimension in the error bounds, anisotropic sparse grids require that the function $f$ is more regular in certain inputs, or equivalently that approximations in these input dimensions decay at faster rates \cite{Nobile2008, Nobile2018, Rieger2019}. Weighted quasi-Monte Carlo methods, on the other hand, assume that the partial first (and possibly higher) derivative of $f$ is much smaller in certain inputs, or equivalently that the variation of $f$ is small in these input dimensions \cite{sloan1998quasi,dick2013high}. Alternative approaches to kernel methods in high-dimensional input spaces are typically based on low-dimensional approximations, including for example ANOVA-type decompositions \cite{rieger2024approximability,rieger2024constructive}, sparsity assumptions \cite{yang2015minimax} or projection-based dimension reduction \cite{liu2017dimension}. 

In practice, the assumption that $f$ is more regular in some of its inputs, as required by anisotropic sparse grids, is often not satisfied or difficult to verify. This motivates the development of alternative formulations of anisotropy for kernel methods for approximation with sparse grids. One natural candidate is the lengthscale parameter \(\lambda\) in Mat\'ern kernels; larger values of \(\lambda\) correspond to flatter, more slowly varying interpolants, effectively assuming a reduced sensitivity to changes in the inputs. Lengthscale-anisotropic kernels, such as the anisotropic squared exponential kernel $\varphi(\mathbf{x},\mathbf{x'}) = \sigma^2 \exp^{-\sum_{j=1}^{d} (x_j-x_j')^2/2\lambda_j^2}$, are commonly used in the spatial statistics and machine learning literature, see e.g. \cite{rasmussen2004gaussian,vanderVaart2009,Bhattacharya2014} and the references therein. The lengthscales $\{\lambda_j\}_{j=1}^d$ are typically estimated using the observed values \(\{(\mathbf{x},f(\mathbf{x}))\}_{\mathbf{x}\in\mathcal{X}}\), and, in the same spirit as in this work, the estimated lengthscales are then frequently used in an automatic relevance determination (ARD) framework \cite{mackay1992bayesian,neal2012bayesian,wipf2007new} to assign varying levels of importance to the different inputs.  

From a theoretical point of view, the effect of lengthscale adaptivity and/or anisotropy in Gaussian process regression has notably been studied in the works \cite{vanderVaart2009,fasshauer2012dimension, Bhattacharya2014,Teckentrup2020} (see also the references therein). In \cite{Teckentrup2020}, the author studies the effect of adapted (or estimated) length-scales $\{\widehat \lambda_j\}_{j=1}^d$, dependent on the number of observed evaluations $N$, in Mat\'ern kernels on error bounds for the posterior mean and covariance. The authors of \cite{vanderVaart2009} show that for a suitable hyper-prior defined on $\lambda_j \equiv \lambda$, Gaussian process regression with the squared exponential kernel can adapt to the smoothness of the target $f$ and provide optimal convergence rates in a general setting. The effect of anisotropic lengthscales, again defined through a suitable hyper-prior, was studied in \cite{Bhattacharya2014}. We note, however, that the faster convergence rates proved in \cite{Bhattacharya2014} additionally rely on anisotropic regularity as was the case for anisotropic sparse grids discussed earlier. Finally, while the authors of \cite{fasshauer2012dimension} show that approximation with anisotropic squared exponential kernels can theoretically lead to dimension independent error bounds, their proof is not constructive and a choice of kernel function and observation points achieving these rates is to the best of our knowledge unknown.

In this work, our goal is to incorporate lengthscale-based anisotropy into the well-studied framework of kernel methods and sparse grid approximation.
In particular, we investigate how directional sensitivity, encoded via the lengthscales of separable Mat\'ern kernels, can be exploited to improve upon established error estimates and lead to convergence rates that exhibit dimension robust behaviour. To the best of our knowledge, this work is the first to develop a method with proven improved error bounds in the high-dimensional setting assuming only anisotropy in the lengthscales, not requiring anisotropic (or infinite) regularity. Furthermore, we would like to emphasise that our approach does not involve any form of dimension reduction or truncation; our method works in the full high-dimensional setting.
Finally, we note that the use of structured designs such as the introduced lengthscale-informed sparse grids enables fast linear algebra routines (see e.g. \cite{jagadeeswaran2019fast, Plumlee2014}), resulting in a method that scales well with the number of observed function evaluations as well as the input dimension.

\subsection*{Our contributions}
This paper makes the following key contributions:
\begin{enumerate}
    \item We introduce a novel sparse grid construction based on uniformly-spaced points, termed lengthscale-informed sparse grids (LISGs), designed to exploit lengthscale-anisotropy in target functions and resulting in sparse grids with controlled growth in the individual input dimensions. The construction is detailed in Definition \ref{def: penalised sparse grid operator}.
    \item We derive an error bound for lengthscale-anisotropic Mat\'ern kernel interpolation using LISGs
    in terms of the sparse grid level $L$ (see Theorem \ref{thm: error in L}), demonstrating its superior performance in high-dimensional settings. 
    We further include an explicit relationship between the sparse grid level $L$ and the number of points in the corresponding LISG, see Theorem \ref{thm: counting abscissae}.
    \item Using the relationship between kernel interpolation and Gaussian process emulation, we derive an analogous error bound to point 2. above for the posterior marginal variance in Proposition \ref{prop: vairance convergence}. (The convergence of the posterior mean follows directly from Theorem \ref{thm: error in L}.)
    \item We adapt the fast inference algorithm for isotropic sparse grids from \cite{Plumlee2014} to LISGs in Algorithm \ref{alg: 1}, resulting in an algorithm with significantly lower computational cost in terms of the input dimension $d$. In terms of the sparse grid level $L$, the cost of inverting the largest matrix in Algorithm \ref{alg: 1} is $\mathcal O(L 2^L)$.
    \item We demonstrate the dimension-robust performance of our method on test problems in up to 100 dimensions.
\end{enumerate}

\subsection*{Paper structure}

In Section~\ref{sec: background}, we provide the necessary background material and review relevant prior results. Sections~\ref{subsec: gaussian process emulation} and~\ref{subsec: approximating with kernels} cover Gaussian process emulation and its connections to kernel interpolation. Sections~\ref{subsec: sparse grids and dominating mixed smoothness} and~\ref{subsec: lengthscale anisotropy} introduce the regularity and anisotropy assumptions required of the target function. In Section~\ref{sec: results} we introduce lengthscale-informed sparse grids; defining their construction in Section~\ref{subsec: construction} and in Sections~\ref{subsec: results} and \ref{subsec: predictive variance} we present general error bounds for the approximation error and associated marginal variance, respectively. Full derivations of the main theorems are provided in Section~\ref{sec: error analysis}. In Section~\ref{sec:fast implementation} we derive an adapted algorithm for fast computations of interpolants, and in Section~\ref{sec: numerics} we present numerical results exploring the performance of our method on a test problem in high-dimensional settings under different anisotropy conditions.

\subsection*{Notation}
\renewcommand{\arraystretch}{1.2}
\begin{center}
\begin{longtable}{|c|p{0.8\textwidth}|}
    \hline
    Notation & Description\\
    \hline
    \(f\) & Target function, later assumed to be in the native space of interpolating kernels; \(\mathcal{N}_\varphi(\Omega)\).\\
    \(d\) & Dimension of the input space of \(f\).\\
    \(\mathcal{X}\), \(\Omega\) & Arbitrary finite point set and compact set where \(\mathcal{X}\subset\Omega\subset\mathbb{R}^d\).\\
    \(h_{\mathcal{X},\Omega}\) & The fill-distance of \(\mathcal{X}\) in \(\Omega\); \(\,\sup_{\mathbf{x}\in\Omega}\inf_{\mathbf{x}'\in\mathcal{X}}\|\mathbf{x}-\mathbf{x}'\|_{2}\).\\
    \(R_\Omega\) & Restriction operator to \(\Omega\); \(R_\Omega(g)=g|_{\Omega}\).\\
    \(\varphi\) & Arbitrary positive definite kernel function \(\varphi:\mathbb{R}\times\mathbb{R}\rightarrow\mathbb{R}\).\\
    \(m_{\mathcal{X}}^f\), \(\varphi_{\mathcal{X}}\) & Posterior mean and covariance functions with data \(D_{\mathcal{X}}=\{\mathbf{x},f(\mathbf{x})\}_{x\in\mathcal{X}}\). (See Section \ref{subsec: gaussian process emulation})\\
    \(s_{\mathcal{X},\varphi}\) & Kernel interpolation operator with interpolating kernel \(\varphi\) and data \(D_{\mathcal{X}}=\{\mathbf{x},f(\mathbf{x})\}_{x\in\mathcal{X}}\). Note \(s_{\mathcal{X},\varphi}(f)=m_{\mathcal{X}}^f\). (See Definition \ref{def:restricted kernel interpolant}) \\
    \(\mathcal{N}_{\varphi}(\Omega)\) & native space, or reproducing kernel Hilbert space, of the kernel \(\varphi\), restricted to \(\Omega\). (See Definition \ref{def: native space})\\
    \(\phi_{\nu,\lambda}\), \(\Phi_{\boldsymbol{\nu},\boldsymbol{\lambda}}\) & Mat\'ern and separable Mat\'ern kernel functions. (See Definitions \ref{def: 1D Matern} and \ref{def: separable matern})\\
    \(\nu\), \(\alpha\), \(\beta\) & Regularity parameters in Mat\'ern kernels and Sobolev spaces.\\
    \(\lambda\), \(\boldsymbol{\lambda}\) & Lengthscale parameter in \(\mathbb{R}_{\geq0}\) and lengthscale vector in \(\mathbb{R}_{\geq0}^d\) for Mat\'ern and separable Mat\'ern kernels, respectively.\\
    \(\sigma\) & Standard deviation parameter in Mat\'ern kernels.\\
    \(\Gamma\), \(\Gamma_k\) & The unit interval \(\left(-1/2,1/2\right)\) and the intervals \(\left(-2^{-k-1},2^{-k-1}\right)\), respectively. Note \(\Gamma_0=\Gamma\).\\
    \(L\) & Level parameter for sparse grids in \(\mathbb{N}_0\).\\
    \(p\), \(\mathbf{p}\) & Penalty parameter in \(\mathbb{N}_0\), and penalty vector in \(\mathbb{N}_0^d\).\\
    \(\mathcal{I}_L^d\) & Multi-index set \(\{\boldsymbol{l}\in\mathbb{N}_0^d\,:\,|\boldsymbol{l}|_1\leq L\}\).\\
    \(\mathcal{X}_l\), \(\mathcal{X}_l^p\) & One dimensional uniformly spaced point sets and \(p\)-penalised point sets in \(\Gamma\). Note \(\mathcal{X}_l^0 = \mathcal{X}_l\). (See Definitions \ref{def:chi_l} and \ref{def:chi_l^p})\\
    \(\mathcal{X}^{\otimes}_{d,L}\), \(\mathcal{X}^\otimes_{d,L,\mathbf{p}}\) & Standard and lengthscale-informed sparse grid designs in \(\Gamma^d\). (See Definition \ref{def: iso sparse grid})\\
    \(S_{L,\boldsymbol{\nu}}\), \(P_{L,\boldsymbol{\nu},\mathbf{p}}\) & Standard and lengthscale-informed sparse grid kernel interpolation operators. Note \(P_{L,\boldsymbol{\nu},\mathbf{0}}=S_{L,\boldsymbol{\nu}}\). (See Definition \ref{def: penalised sparse grid operator})\\
    \(N_{d,\mathbf{p}}(L)\) & Number of points in lengthscale-informed sparse grid; 
    \(|\mathcal{X}^\otimes_{d,L,\mathbf{p}}|\).\\
    \(\mathcal{S}_\lambda\), \(\mathcal{T}_\lambda\) & Function- and point- stretching operators, respectively. (See Definition \ref{def: stretching operators})\\
    \(\mathfrak{u}\), \(\mathfrak{v}\) & Subsets of \(\{1,\dots,d\}\) of size \(0\leq k\leq d\), denoting a \(k\)-dimensional subspace. For a given \(\mathbf{c}\in\mathbb{R}^d\), we define \(\mathbf{c}_{\mathfrak{u}}\coloneqq(c_{\mathfrak{u}_1},\dots,c_{\mathfrak{u}_k})\subset\mathbb{R}^k\), where \(\mathfrak{u}_j<\mathfrak{u}_{j+1}\).\\
    \(\mathcal{P}^d_k\) & Set of all subsets of \(\{1,\dots,d\}\) of size \(k\); \(\{\mathfrak{u}\subset\{1,\dots,d\}:|\mathfrak{u}|=k\}\).\\
    \hline
    
\end{longtable}
\end{center}

\section{Background}\label{sec: background}
In this section, we provide the relevant background material to define lengthscale-informed sparse grids, including: The wider context of Gaussian process emulation in Section \ref{subsec: gaussian process emulation}, the connection to kernel interpolation in Section \ref{subsec: approximating with kernels}, sparse grids for kernel methods in Section \ref{subsec: sparse grids and dominating mixed smoothness} and finally our formulation of lengthscale-anisotropy in Section \ref{subsec: lengthscale anisotropy}. 

\subsection{Gaussian process emulation}\label{subsec: gaussian process emulation}
The broad aim of emulation is to approximate a function \(f:\Omega\rightarrow \mathbb{R}\), where \(\Omega\subset\mathbb{R}^d\) is compact with Lipschitz boundary and \(d\in\mathbb{N}\), using only point-wise evaluations of \(f\). Gaussian process emulation approaches this problem in a Bayesian framework: We begin by placing a prior distribution on the unknown function, \(f\), modelling it as a Gaussian process,
\begin{align}
    f_0\sim\textrm{GP}(0,\varphi),\nonumber
\end{align}
where \(\varphi:\Omega\times\Omega\rightarrow\mathbb{R}\) is the prior covariance kernel in which prior knowledge about \(f\) is encoded, and we have chosen a zero prior mean. The choice of zero prior mean is for ease of presentation only, and our error bound in Theorem \ref{thm: error in L} can be extended to suitable non-zero prior means (satisfying the same assumptions as $f$), as shown in Theorem 3.5, \cite{Teckentrup2020}. Given a set of function evaluations \(D_{\mathcal{X}}=\{(\mathbf{x},f(\mathbf{x}))\}_{\mathbf{x}\in\mathcal{X}}\)  at \(N\) distinct points, \(\mathcal{X}=\{\mathbf{x}_i\}_{1\leq i\leq N}\subset\Omega\), we update the prior distribution with Bayes' rule. 
%By assuming a Gaussian distribution for the likelihood, 
The posterior distribution is also a Gaussian process (see e.g. \cite{Rasmussen2005}),
\begin{align}
    f_0|D_{\mathcal{X}}\sim\textrm{GP}(m^f_{\mathcal{X}},\varphi_{\mathcal{X}}),\nonumber
\end{align}
with closed form expressions for the posterior mean function, \(m^f_{\mathcal{X}}:\Omega\rightarrow\mathbb{R}\), and posterior covariance kernel, \(\varphi_{\mathcal{X}}:\Omega \times \Omega \rightarrow\mathbb{R}\), given by
\begin{align}
    m_{\mathcal{X}}^f(\mathbf{x})&=\varphi(\mathbf{x},\mathcal{X})^T\varphi(\mathcal{X},\mathcal{X})^{-1}f(\mathcal{X}),\label{eq: mean func}\\
    \varphi_{\mathcal{X}}(\mathbf{x},\mathbf{x}')&=\varphi(\mathbf{x},\mathbf{x}')-\varphi(\mathbf{x},\mathcal{X})^T\varphi(\mathcal{X},\mathcal{X})^{-1}\varphi(\mathbf{x}',\mathcal{X}).
\end{align}
Here we define the vectors \(\varphi(\mathbf{x},\mathcal{X})\coloneqq[\varphi(\mathbf{x},\mathbf{x}_1),\dots,\varphi(\mathbf{x},\mathbf{x}_N)]\in\mathbb{R}^N\) and \(f(\mathcal{X})\coloneqq[f(\mathbf{x}_1),\dots,f(\mathbf{x}_N)]\in\mathbb{R}^N\), whereas \(\varphi(\mathcal{X},\mathcal{X})\in\mathbb{R}^{N\times N}\) is the covariance matrix such that \(\varphi(\mathcal{X},\mathcal{X})_{ik}=\varphi(\mathbf{x}_i,\mathbf{x}_k)\).
In general, this procedure is \(\mathcal{O}(N^3)\) as a consequence of having to invert (or solve linear systems with) \(\varphi(\mathcal{X},\mathcal{X})\). The posterior mean function, \(m_{\mathcal{X}}^f\), is often used in inference algorithms as an approximation for \(f\), and the posterior covariance, \(\varphi_{\mathcal{X}}\), can be considered a local measure of uncertainty in this approximation \cite{conrad2017statistical,bai2024gaussian}. Our focus is to provide error estimates for the posterior mean and covariance kernel that are insensitive to the input dimension, \(d\), by a judicious choice of the design, \(\mathcal{X}\), and covariance kernel, \(\varphi\).

\subsection{Approximating with kernels}\label{subsec: approximating with kernels}
To measure the approximation error of the mean function to \(f\), we draw from results in the scattered data approximation literature \cite{wendland_2004}. In particular, the posterior mean function of a Gaussian process, \(m_\mathcal{X}^f\), is a \textit{kernel interpolant} of \(f\), uniquely defined as the function of minimum norm in the \textit{native space} of the covariance kernel that interpolates \(f\) at \(\mathcal{X}\subseteq \Omega\).

\begin{definition}\label{def: native space}
    A Hilbert space \(H\) of functions \(f:\Omega\rightarrow\mathbb{R}\), with \(\Omega\subset\mathbb{R}^d\), is a \textit{native} or \emph{reproducing kernel Hilbert space} if point evaluation is a continuous linear functional. Each native space, \(H\), is associated to a unique function, \(\varphi:\Omega\times\Omega\rightarrow\mathbb{R}\), called the \textit{reproducing kernel} of \(H\), which satisfies the reproducing kernel property: For all \(f\in H\), \(x\in\Omega\), we have \(\langle f,\varphi(\cdot,x)\rangle_{H}=f(x)\). For a given kernel, \(\varphi\), we denote the corresponding native space by \(H=\mathcal{N}_{\varphi}(\Omega)\), with induced norm \(\|f\|_{\mathcal{N}_{\varphi}(\Omega)}=\langle f,f\rangle_H^{1/2}\).
\end{definition}
\begin{definition}\label{def:restricted kernel interpolant}
   Let \(f:\Omega\rightarrow\mathbb{R}\) be bounded and let \(\mathcal{X}=\{\mathbf{x}_1,\dots,\mathbf{x}_N\}\subset \Omega\) be a finite discrete set such that \(|\mathcal{X}|=N\). Define a linear sampling operator by \(T_{\mathcal{X}}(f)=f(\mathcal{X})\coloneqq[f(\mathbf{x}_1),\dots,f(\mathbf{x}_N)]\). The \(\varphi\)-\emph{kernel interpolant operator}, applied to \(f\), with respect to \(\mathcal{X}\), denoted \(s_{\mathcal{X},\varphi}(f)\in\mathcal{N}_{\varphi}(\Omega)\), is given by
   \begin{align}
       s_{\mathcal{X},\varphi}(f)\coloneqq\underset{\substack{g\in\mathcal{N}_{\varphi}(\Omega)\\T_{\mathcal{X}}(g)=T_{\mathcal{X}}(f)}}{\arg\min}\|g\|_{\mathcal{N}_{\varphi}(\Omega)}.\nonumber
   \end{align}
\end{definition}

\begin{proposition}[Section 6.2, \cite{Rasmussen2005}, Theorem 1, \cite{Scholkopf2001}]\label{prop: mean functions are kernel interpolants}
For all \(\mathbf{x}\in\Omega\), \(s_{\mathcal{X},\varphi}(f)(\mathbf{x}) = m_{\mathcal{X}}^f(\mathbf{x})\). 
\end{proposition}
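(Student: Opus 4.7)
The plan is to verify that the right-hand side $m_{\mathcal{X}}^f$ from \eqref{eq: mean func} is itself a minimiser of the variational problem in Definition \ref{def:restricted kernel interpolant}, and then invoke uniqueness to conclude. I would proceed in three steps.

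First, I would check that $m_{\mathcal{X}}^f$ is an admissible competitor, i.e.\ it lies in $\mathcal{N}_\varphi(\Omega)$ and interpolates $f$ on $\mathcal{X}$. Writing $\boldsymbol{\alpha} = \varphi(\mathcal{X},\mathcal{X})^{-1} f(\mathcal{X}) \in \mathbb{R}^N$ (invertibility holds because $\varphi$ is positive definite and the points are distinct), we have $m_{\mathcal{X}}^f(\mathbf{x}) = \sum_{i=1}^N \alpha_i \varphi(\mathbf{x},\mathbf{x}_i)$. Since each $\varphi(\cdot,\mathbf{x}_i)\in\mathcal{N}_\varphi(\Omega)$ by the reproducing property, so is $m_{\mathcal{X}}^f$. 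The interpolation property $T_{\mathcal{X}}(m_{\mathcal{X}}^f) = T_{\mathcal{X}}(f)$ is immediate from \eqref{eq: mean func}, since $\varphi(\mathbf{x}_j,\mathcal{X})^T \varphi(\mathcal{X},\mathcal{X})^{-1} = e_j^T$.

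Second, I would use the reproducing kernel property to establish an orthogonality relation. For any $g\in\mathcal{N}_\varphi(\Omega)$ with $T_{\mathcal{X}}(g) = T_{\mathcal{X}}(f)$, set $r = g - m_{\mathcal{X}}^f$. Then $r(\mathbf{x}_i)=0$ for every $i$, and
\begin{align*}
\langle m_{\mathcal{X}}^f, r\rangle_{\mathcal{N}_\varphi(\Omega)} = \sum_{i=1}^N \alpha_i \langle \varphi(\cdot,\mathbf{x}_i), r\rangle_{\mathcal{N}_\varphi(\Omega)} = \sum_{i=1}^N \alpha_i\, r(\mathbf{x}_i) = 0.
\end{align*}
Consequently $\|g\|_{\mathcal{N}_\varphi(\Omega)}^2 = \|m_{\mathcal{X}}^f\|_{\mathcal{N}_\varphi(\Omega)}^2 + \|r\|_{\mathcal{N}_\varphi(\Omega)}^2 \geq \|m_{\mathcal{X}}^f\|_{\mathcal{N}_\varphi(\Omega)}^2$, with equality only when $r\equiv 0$. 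Thus $m_{\mathcal{X}}^f$ is the unique minimiser of the problem defining $s_{\mathcal{X},\varphi}(f)$.

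Third, I would conclude by appealing to the well-definedness of the argmin in Definition \ref{def:restricted kernel interpolant} (which is a standard closest-point projection on the closed affine subspace $\{g\in\mathcal{N}_\varphi(\Omega) : T_{\mathcal{X}}(g) = T_{\mathcal{X}}(f)\}$, non-empty by step one). By uniqueness, $s_{\mathcal{X},\varphi}(f) = m_{\mathcal{X}}^f$ as elements of $\mathcal{N}_\varphi(\Omega)$, and hence pointwise on $\Omega$.

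There is no genuine obstacle here; the proof is a textbook application of the reproducing property and the Hilbert space projection theorem. The only point that needs a brief justification is the invertibility of $\varphi(\mathcal{X},\mathcal{X})$, which follows from the (strict) positive definiteness assumed of $\varphi$ together with the assumption that the points in $\mathcal{X}$ are distinct.
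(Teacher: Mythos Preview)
Your proof is correct and is precisely the standard representer-theorem argument that the paper defers to via its citations (the paper itself gives no proof, only references). There is nothing to add: the three steps you outline---feasibility, orthogonality via the reproducing property, and uniqueness from strict convexity---are exactly how the cited sources establish the result.
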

\begin{proposition}[Chapter 10, \cite{wendland_2004}]\label{prop: native space norm}
The native space norm of a kernel interpolant is given by
\begin{align}
\|s_{\mathcal{X},\varphi}(f)\|_{\mathcal{N}_{\varphi}(\Omega)}&=\mathbf{w}^T\varphi(\mathcal{X},\mathcal{X})\mathbf{w},\label{eq: norm of interpolant}
\end{align}
where \(\mathbf{w}=\varphi(\mathcal{X},\mathcal{X})^{-1}f(\mathcal{X})\).
\end{proposition}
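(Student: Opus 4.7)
My plan is to prove the formula in three short steps: establish that the interpolant must be a linear combination of the kernel sections $\{\varphi(\cdot,\mathbf{x}_i)\}_{i=1}^N$; read off the coefficients from the interpolation constraint; and compute the norm via the reproducing property. Implicit in the stated formula is that the right-hand side gives the squared native-space norm, which is what I would verify.

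First, I would argue by orthogonal decomposition that $s_{\mathcal{X},\varphi}(f)\in V_{\mathcal{X}}\coloneqq\mathrm{span}\{\varphi(\cdot,\mathbf{x}_i)\}_{i=1}^N$. Take any competitor $g\in\mathcal{N}_{\varphi}(\Omega)$ with $T_{\mathcal{X}}(g)=T_{\mathcal{X}}(f)$ and split $g = g_{\parallel} + g_{\perp}$ with $g_{\parallel}\in V_{\mathcal{X}}$ and $g_{\perp}\in V_{\mathcal{X}}^{\perp}$. The reproducing property of Definition~\ref{def: native space} gives $g_{\perp}(\mathbf{x}_i)=\langle g_{\perp},\varphi(\cdot,\mathbf{x}_i)\rangle_{\mathcal{N}_{\varphi}(\Omega)}=0$, so $g_{\parallel}$ still interpolates $f$ on $\mathcal{X}$ while Pythagoras yields $\|g\|_{\mathcal{N}_{\varphi}(\Omega)}^{2}=\|g_{\parallel}\|_{\mathcal{N}_{\varphi}(\Omega)}^{2}+\|g_{\perp}\|_{\mathcal{N}_{\varphi}(\Omega)}^{2}$. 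Hence the minimum-norm interpolant has no orthogonal component, i.e.\ $s_{\mathcal{X},\varphi}(f)=\sum_{i=1}^{N}w_{i}\varphi(\cdot,\mathbf{x}_i)$ for some $\mathbf{w}\in\mathbb{R}^{N}$.

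Second, the interpolation condition $s_{\mathcal{X},\varphi}(f)(\mathbf{x}_j)=f(\mathbf{x}_j)$ for every $j$ is the linear system $\varphi(\mathcal{X},\mathcal{X})\mathbf{w}=f(\mathcal{X})$. Since $\varphi$ is positive definite, the Gram matrix $\varphi(\mathcal{X},\mathcal{X})$ is invertible on distinct points, and $\mathbf{w}=\varphi(\mathcal{X},\mathcal{X})^{-1}f(\mathcal{X})$. Third, the norm computation uses the reproducing property twice:
\begin{align}
\|s_{\mathcal{X},\varphi}(f)\|_{\mathcal{N}_{\varphi}(\Omega)}^{2}
=\sum_{i,j=1}^{N}w_{i}w_{j}\langle\varphi(\cdot,\mathbf{x}_i),\varphi(\cdot,\mathbf{x}_j)\rangle_{\mathcal{N}_{\varphi}(\Omega)}
=\sum_{i,j=1}^{N}w_{i}w_{j}\varphi(\mathbf{x}_i,\mathbf{x}_j)
=\mathbf{w}^{T}\varphi(\mathcal{X},\mathcal{X})\mathbf{w},
\end{align}
which gives the claimed expression.

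The only non-routine step is the first one: identifying the representer form of $s_{\mathcal{X},\varphi}(f)$ via the orthogonal projection onto $V_{\mathcal{X}}$. Once that is done, the rest reduces to inverting a Gram matrix and a one-line reproducing-property calculation, both of which are standard. I would not attempt to reprove the existence of the reproducing kernel or the uniqueness of the minimiser, appealing instead to the standard Hilbert-space minimum-norm argument that underpins Definition~\ref{def:restricted kernel interpolant}.
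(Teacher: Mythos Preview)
Your proof is correct and is precisely the standard argument; the paper does not supply its own proof for this proposition but simply cites Chapter~10 of Wendland, where essentially this same orthogonal-projection-plus-reproducing-property computation appears. You are also right to flag that the displayed formula in the statement gives the \emph{squared} native-space norm.
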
 
We restrict ourselves to the case of Mat\'ern kernels; a popular class of positive definite kernels whose native spaces are known to be \textit{Sobolev spaces}, see e.g.\cite{wendland_2004,Novak2018}. 
\begin{definition}[\cite{Lord_Powell_Shardlow_2014}]\label{def: 1D Matern}
    Let \(d\in\mathbb{N}\). A \emph{Mat\'ern covariance kernel} is a symmetric, positive-definite kernel \(\phi_{\nu,\lambda}:\mathbb{R}^d\times\mathbb{R}^d\rightarrow\mathbb{R}_{>0}\), defined by
    \begin{align}\label{eq:1d_matern}
        \phi_{\nu,\lambda}(\mathbf{x},\mathbf{x}')\coloneqq\sigma^2\frac{2^{1-\nu}}{\Gamma(\nu)}\left(\sqrt{2\nu}\frac{\|\mathbf{x}-\mathbf{x}'\|_2}{\lambda}\right)^{\nu}K_{\nu}\left(\sqrt{2\nu}\frac{\|\mathbf{x}-\mathbf{x}'\|_2}{\lambda}\right),
    \end{align}
    for all \(\mathbf{x},\mathbf{x}'\in\mathbb{R}^d\), where \(\nu,\lambda,\sigma>0\) and \(K_\nu\) is the modified Bessel function of the second kind. 
\end{definition}
The three hyperparameters \(\nu\), \(\lambda\), and \(\sigma\) represent the smoothness, lengthscale and standard deviation of the kernels, respectively. The posterior mean function of a Gaussian process with covariance kernel \(\phi_{\nu,\lambda}\) does not depend on the standard deviation, \(\sigma\), and hence we suppress it in our notation, considering it a free parameter of concern only in the wider statistical framework. Definition \ref{def: 1D Matern} is much simplified for half-integer \(\nu\), with notable examples including the exponential kernel, \(\phi_{1/2,\lambda}(\mathbf{x}, \mathbf{x}')=\sigma^2\exp(-\|\mathbf{x}-\mathbf{x}'\|_2/\lambda)\) and, by taking the limit \(\nu\rightarrow\infty\), the Gaussian (or squared-exponential) kernel \(\phi_{\nu\rightarrow\infty,\lambda}=\sigma^2\exp(-\|\mathbf{x}-\mathbf{x}'\|_2^2/2\lambda^2)\).
\begin{proposition}[Corollary 10.13, \cite{wendland_2004}]\label{prop:mat_sobolev}
    The function spaces \(\mathcal{N}_{\phi_{\nu,\lambda}}(\mathbb{R}^d)\) and \(H^{\nu+d/2}(\mathbb{R}^d)\) are isomorphic, with equivalent norms. This further holds for restrictions to Lipschitz \(\Omega\subset\mathbb{R}^d\), specifically \(\mathcal{N}_{\phi_{\nu,\lambda}|_{\Omega\times\Omega}}(\Omega)\cong H^{\nu+d/2}(\Omega)\).
\end{proposition}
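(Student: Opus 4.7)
The plan is to reduce the statement to a Fourier-analytic comparison on $\mathbb{R}^d$, and then transfer the result to $\Omega$ using standard extension/restriction theorems for Lipschitz domains.

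First I would apply Bochner's theorem to the translation-invariant, continuous, integrable kernel $\phi_{\nu,\lambda}$ to realise it as the Fourier transform of a finite non-negative measure on $\mathbb{R}^d$. A classical computation, using the integral representation
\begin{equation*}
K_\nu(z) = \tfrac{1}{2}\bigl(\tfrac{z}{2}\bigr)^\nu\int_0^\infty t^{-\nu-1}\exp\!\bigl(-t - z^2/(4t)\bigr)\,dt,
\end{equation*}
reduces the Fourier integral defining $\widehat{\phi_{\nu,\lambda}}$ to a Gaussian integral, yielding the explicit spectral density
\begin{equation*}
\widehat{\phi_{\nu,\lambda}}(\omega) = c(\nu,\lambda,\sigma,d)\,\bigl(1 + \tfrac{\lambda^2}{2\nu}\|\omega\|_2^2\bigr)^{-\nu - d/2}
\end{equation*}
for some strictly positive constant $c(\nu,\lambda,\sigma,d)$. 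In particular, for fixed $\nu,\lambda,\sigma$, there exist $0<c_1\le c_2<\infty$ with
\begin{equation*}
c_1(1+\|\omega\|_2^2)^{-\nu-d/2}\le \widehat{\phi_{\nu,\lambda}}(\omega)\le c_2(1+\|\omega\|_2^2)^{-\nu-d/2}\qquad\forall\,\omega\in\mathbb{R}^d.
\end{equation*}

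Second, I would invoke the Fourier characterisation of native spaces for translation-invariant kernels (Theorem~10.12 in Wendland): a function $f\in L^2(\mathbb{R}^d)$ lies in $\mathcal{N}_{\phi_{\nu,\lambda}}(\mathbb{R}^d)$ if and only if $\hat f/\sqrt{\widehat{\phi_{\nu,\lambda}}}\in L^2(\mathbb{R}^d)$, and the native norm is a constant multiple of $\bigl(\int_{\mathbb{R}^d}|\hat f(\omega)|^2/\widehat{\phi_{\nu,\lambda}}(\omega)\,d\omega\bigr)^{1/2}$. Substituting the two-sided bound on the spectral density and comparing with the Bessel-potential definition
\begin{equation*}
\|f\|_{H^{\nu+d/2}(\mathbb{R}^d)}^2 = \int_{\mathbb{R}^d}(1+\|\omega\|_2^2)^{\nu+d/2}|\hat f(\omega)|^2\,d\omega
\end{equation*}
yields the norm equivalence $\mathcal{N}_{\phi_{\nu,\lambda}}(\mathbb{R}^d)\cong H^{\nu+d/2}(\mathbb{R}^d)$ immediately.

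For the statement on $\Omega$, I would chain two standard results. On the native-space side, the restriction principle (Theorem~10.46 in Wendland) identifies $\mathcal{N}_{\phi_{\nu,\lambda}|_{\Omega\times\Omega}}(\Omega)$ with the space of restrictions of $\mathcal{N}_{\phi_{\nu,\lambda}}(\mathbb{R}^d)$-functions to $\Omega$, equipped with the quotient (minimum-extension) norm. On the Sobolev side, Stein's extension theorem for Lipschitz domains produces a bounded linear extension operator $E: H^{\nu+d/2}(\Omega)\to H^{\nu+d/2}(\mathbb{R}^d)$, so that $H^{\nu+d/2}(\Omega)$ is likewise norm-equivalent to the restriction space. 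Combining these two identifications with the $\mathbb{R}^d$-level equivalence established above gives the claimed isomorphism with equivalent norms.

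The only technically delicate step is the explicit Fourier-transform computation of $\phi_{\nu,\lambda}$; the remaining arguments are bookkeeping with standard references. In a paper at this level, I would state the spectral density as a known fact (citing Stein's \emph{Interpolation of Spatial Data} or Wendland, Theorem~6.13) and devote the bulk of the proof to making the Fourier and restriction comparisons precise. The reliance on Lipschitz regularity of $\partial\Omega$ enters only through Stein's extension theorem; without it, the restriction characterisation of $H^{\nu+d/2}(\Omega)$ could fail.
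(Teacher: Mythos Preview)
The paper does not prove this proposition; it is stated as a direct citation of Corollary~10.13 in Wendland's monograph, with no accompanying argument. Your sketch is the standard Fourier-analytic route that underlies that corollary (spectral density of the Mat\'ern kernel combined with Theorem~10.12 in Wendland, then restriction/extension for Lipschitz domains), so your proposal is correct and aligned with the cited source.
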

\begin{remark}
For ease of notation, we will denote \(\mathcal{N}_{\varphi}(\Omega) =\mathcal{N}_{\varphi|_{\Omega\times\Omega}}(\Omega)\) for kernels \(\varphi:\mathbb{R}^d\times\mathbb{R}^d\rightarrow\mathbb{R}\) and domains \(\Omega\subset\mathbb{R}^d\).
\end{remark}
\begin{proposition}[Theorem 10.47, \cite{wendland_2004}]
    Let \(g\in\mathcal{N}_{\phi_{\nu,\lambda}}(\mathbb{R}^d
    )\). Then, \(R_{\Omega}(g)\in\mathcal{N}_{\phi_{\nu,\lambda}}(\Omega)\), where \(R_{\Omega}(g)\) represents the usual restriction of functions \(g|_\Omega\).
\end{proposition}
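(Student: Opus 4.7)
The plan is to leverage the Sobolev-space characterisation of Mat\'ern native spaces recorded in Proposition \ref{prop:mat_sobolev}. Since $g\in\mathcal{N}_{\phi_{\nu,\lambda}}(\mathbb{R}^d)$ and $\mathcal{N}_{\phi_{\nu,\lambda}}(\mathbb{R}^d)\cong H^{\nu+d/2}(\mathbb{R}^d)$ with equivalent norms, we immediately have $g\in H^{\nu+d/2}(\mathbb{R}^d)$. Classical Sobolev theory on Lipschitz domains (e.g.\ Adams \& Fournier) then guarantees that the restriction operator $R_\Omega$ is a bounded linear map from $H^s(\mathbb{R}^d)$ into $H^s(\Omega)$ for every $s\ge 0$, so $R_\Omega(g)\in H^{\nu+d/2}(\Omega)$. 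Applying Proposition \ref{prop:mat_sobolev} once more on the restricted domain yields $R_\Omega(g)\in\mathcal{N}_{\phi_{\nu,\lambda}}(\Omega)$, which is the claim. This reduces the proof to assembling three ingredients that are all already in the excerpt or standard in the literature.

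A more self-contained alternative, closer in spirit to Wendland's Theorem 10.47, would work directly with the native-space construction rather than passing through Sobolev spaces. The idea is to identify $\mathcal{N}_{\phi_{\nu,\lambda}}(\Omega)$ isometrically with the quotient $\mathcal{N}_{\phi_{\nu,\lambda}}(\mathbb{R}^d)/\mathcal{K}$, where $\mathcal{K}=\{h\in\mathcal{N}_{\phi_{\nu,\lambda}}(\mathbb{R}^d):h|_\Omega\equiv 0\}$ is a closed subspace. One first approximates $g$ in the $\mathcal{N}_{\phi_{\nu,\lambda}}(\mathbb{R}^d)$-norm by finite linear combinations $g_n=\sum_i c_i^{(n)}\phi_{\nu,\lambda}(\cdot,x_i^{(n)})$, then verifies that the restrictions $g_n|_\Omega$ are Cauchy in $\mathcal{N}_{\phi_{\nu,\lambda}}(\Omega)$ via the reproducing property and the continuity of $R_\Omega$, and finally identifies the limit with $R_\Omega(g)$ using the continuity of point evaluation on both native spaces.

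The main obstacle in the direct route is handling the fact that the centres $x_i^{(n)}$ in the approximating combinations are generally points of $\mathbb{R}^d$, not of $\Omega$, so the restricted basis functions $\phi_{\nu,\lambda}(\cdot,x_i^{(n)})|_\Omega$ are not a priori elements of the pre-Hilbert space spanning $\mathcal{N}_{\phi_{\nu,\lambda}}(\Omega)$; the quotient-space viewpoint is what resolves this. Since Proposition \ref{prop:mat_sobolev} has already been invoked in the paper and the classical Sobolev restriction result is elementary, I would opt for the short Sobolev argument sketched in the first paragraph and simply cite Wendland's Theorem 10.47 for the quotient-space version, which the authors have already done in the statement.
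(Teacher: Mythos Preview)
The paper does not supply its own proof of this proposition; it is stated purely as a citation of Theorem~10.47 in Wendland's monograph, as you yourself note in your closing sentence. Your Sobolev-space route is correct and provides a clean alternative justification.

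One minor caution: the restricted-domain half of Proposition~\ref{prop:mat_sobolev}, which you invoke to conclude $H^{\nu+d/2}(\Omega)\cong\mathcal{N}_{\phi_{\nu,\lambda}}(\Omega)$, is in Wendland's development itself built on the extension/restriction machinery surrounding Theorem~10.47. So if one wanted a fully non-circular argument, one should check that the isomorphism on $\Omega$ can be established without already knowing that restrictions land in the restricted native space. In practice this is unproblematic---both facts follow independently from the quotient-space description of $\mathcal{N}_{\phi_{\nu,\lambda}}(\Omega)$ that you outline in your second paragraph---but it is worth being aware of the logical dependency when citing the two results side by side.
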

We note that the Sobolev spaces \(\mathcal{N}_{\phi_{\nu,\lambda}}=H^{\nu+1/2}\) depend only on the smoothness parameter, \(\nu\). In particular, changing the lengthscale does not change the space. Upper bounds for the interpolation error in the Sobolev norm are then given in terms of the \textit{fill-distance} of the point-set in the domain;
\begin{align}
    h_{\mathcal{X},\Omega}\coloneqq\sup_{\mathbf{x}\in\Omega}\inf_{\mathbf{x}'\in\mathcal{X}}\|\mathbf{x}-\mathbf{x}'\|_{2}.\nonumber
\end{align}
    
\begin{proposition}(Theorem 3.2, \cite{adt12})\label{prop: initial wendland}
    Let \(d\in\mathbb{N}\), \( \beta>d/2\) and \(0 \leq \alpha \leq \beta\). Let \(\Omega\subset\mathbb{R}^d\) be bounded with Lipschitz boundary such that is satisfies an interior cone condition. For \(f\in H^{\beta}(\Omega)\), and \(h_{\mathcal{X},\Omega}\) sufficiently small, there exists a constant $C$, independent of $f$ and $N$, such that
    \begin{align}
        \left\|I-R_{\Omega}(s_{\mathcal{X},\phi_{\nu,\lambda}})\right\|_{H^{\beta}(\Omega)\rightarrow H^{\alpha}(\Omega)}\leq Ch^{\beta-\alpha}_{\mathcal{X},\Omega}.\nonumber
    \end{align}
\end{proposition}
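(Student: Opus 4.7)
My plan is to combine a Sobolev sampling inequality (a zeros lemma of Narcowich--Ward--Wendland type) with a stability estimate for the kernel interpolant in the appropriate Sobolev norm. The sampling inequality asserts that whenever $g\in H^\beta(\Omega)$ with $\beta>d/2$ vanishes on a point set $\mathcal{X}$ whose fill-distance $h_{\mathcal{X},\Omega}$ is sufficiently small, and $\Omega$ satisfies an interior cone condition, one has
\[\|g\|_{H^\alpha(\Omega)} \leq C\, h_{\mathcal{X},\Omega}^{\beta-\alpha}\, \|g\|_{H^\beta(\Omega)}, \qquad 0\leq \alpha\leq\beta.\]
This is proved by covering $\Omega$ with balls of radius comparable to $h_{\mathcal{X},\Omega}$, constructing local polynomial reproductions from the points of $\mathcal{X}$ inside each ball, and controlling the remainder via a Bramble--Hilbert argument; the interior cone condition supplies the geometric regularity needed to carry out this construction uniformly in position.

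With this tool in hand, I would apply the inequality to $g=f-R_\Omega(s_{\mathcal{X},\phi_{\nu,\lambda}}(f))$, which vanishes on $\mathcal{X}$ by Definition~\ref{def:restricted kernel interpolant}. The claim then reduces to the stability estimate
\[\|f-R_\Omega(s_{\mathcal{X},\phi_{\nu,\lambda}}(f))\|_{H^\beta(\Omega)} \leq C\|f\|_{H^\beta(\Omega)}.\]
When $\beta=\nu+d/2$, this is essentially immediate: Proposition~\ref{prop:mat_sobolev} identifies $\mathcal{N}_{\phi_{\nu,\lambda}}(\Omega)$ with $H^{\nu+d/2}(\Omega)$ up to equivalent norms, Definition~\ref{def:restricted kernel interpolant} gives the minimum-norm property $\|s_{\mathcal{X},\phi_{\nu,\lambda}}(f)\|_{\mathcal{N}_{\phi_{\nu,\lambda}}(\Omega)}\leq\|f\|_{\mathcal{N}_{\phi_{\nu,\lambda}}(\Omega)}$, and the triangle inequality finishes the bound.

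The main obstacle is the \emph{escape-from-native-space} case $d/2<\beta<\nu+d/2$, where $f$ is rougher than any element of the native space and the minimum-norm argument cannot be invoked directly. I would handle this via band-limited approximation: Stein-extend $f$ to $\tilde f\in H^\beta(\mathbb{R}^d)$ with comparable norm, then define $\tilde f_\sigma$ by restricting its Fourier transform to the ball of radius $\sigma \sim h_{\mathcal{X},\Omega}^{-1}$. Plancherel yields $\|\tilde f-\tilde f_\sigma\|_{H^\beta(\mathbb{R}^d)} \lesssim \|\tilde f\|_{H^\beta(\mathbb{R}^d)}$ while $\|\tilde f_\sigma\|_{H^{\nu+d/2}(\mathbb{R}^d)} \lesssim \sigma^{\,\nu+d/2-\beta}\|\tilde f\|_{H^\beta(\mathbb{R}^d)}$. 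Decomposing
\[f - s_{\mathcal{X},\phi_{\nu,\lambda}}(f) = \bigl(f-R_\Omega\tilde f_\sigma\bigr) + \bigl(R_\Omega\tilde f_\sigma - s_{\mathcal{X},\phi_{\nu,\lambda}}(R_\Omega\tilde f_\sigma)\bigr) + s_{\mathcal{X},\phi_{\nu,\lambda}}(R_\Omega\tilde f_\sigma - f),\]
applying the native-space case to the middle summand and balancing the three contributions against the choice of $\sigma$, yields the required $H^\beta$ stability and completes the proof.
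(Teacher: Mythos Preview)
The paper does not supply its own proof of this proposition: it is quoted verbatim as Theorem~3.2 of \cite{adt12} and used as a black box throughout. There is therefore nothing in the paper to compare your argument against.

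That said, your sketch is precisely the standard route by which results of this type are established in the scattered-data literature (Narcowich--Ward--Wendland sampling inequalities combined with native-space stability, and band-limited regularisation for the escape regime $d/2<\beta<\nu+d/2$), and is in line with how the cited reference proceeds. One point worth tightening: in your decomposition the third summand $s_{\mathcal{X},\phi_{\nu,\lambda}}(R_\Omega\tilde f_\sigma - f)$ must be controlled in $H^\beta(\Omega)$, but $f$ itself is not in the native space, so the minimum-norm bound is not directly available for it. The usual fix is to observe that this term depends only on the data values $(R_\Omega\tilde f_\sigma - f)(\mathcal{X})$, bound these pointwise via the Sobolev embedding $H^\beta\hookrightarrow L^\infty$ together with the sampling inequality applied to $R_\Omega\tilde f_\sigma - f$ (which need not vanish on $\mathcal{X}$, so one uses the full version of the inequality that retains the discrete $\ell^\infty$ term), and then invoke a Lebesgue-constant or power-function estimate for the interpolation operator. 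Alternatively one avoids the third term altogether by writing $f-s_{\mathcal{X},\phi}(f) = (f-\tilde f_\sigma) - s_{\mathcal{X},\phi}(f-\tilde f_\sigma) + (\tilde f_\sigma - s_{\mathcal{X},\phi}(\tilde f_\sigma))$ and applying the sampling inequality directly to the whole of $f-s_{\mathcal{X},\phi}(f)$ rather than to each piece.
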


We see that the error is governed by both the smoothness of $f$ and the fill-distance, which can be thought of as the largest distance any point in $\Omega$ can be from the points in \(\mathcal{X}\), including the boundary of \(\Omega\). In the best case, the fill-distance decays at a rate of \(N^{-1/d}\) in terms of the number of training points, see e.g.~\cite{Teckentrup2020} and the references therein. Here we are immediately confronted with the so called `curse of dimensionality;' if we increase the dimension, \(d\), we require exponentially many more points in \(\mathcal{X}\) in order to retain the same fill-distance, and hence the same approximation error. Approximation schemes that aim to reduce this dimension dependence assume additional structure on \(f\), with the general goals of (a), avoiding the \(d\)-dimensional 2-norm in the error bound, and (b), requiring fewer points in less `sensitive' dimensions. To address these, we introduce spaces of dominating mixed smoothness and function anisotropy, respectively.

\subsection{Sparse grids and Sobolev spaces of dominating mixed smoothness}\label{subsec: sparse grids and dominating mixed smoothness}
It is common in the high-dimensional approximation literature to work with functions with bounded mixed derivatives, see e.g.  \cite{Hackbush2012,dick2013high,Nobile2018}. Specifically, we will consider \(f\) to be in a \textit{Sobolev space of dominating mixed smoothness},
\begin{align}
    H^{\boldsymbol{\alpha}}_{\textrm{mix}}(\Omega) \coloneqq H^{\alpha_1}(\Gamma^{(1)})\otimes\cdots\otimes H^{\alpha_d}(\Gamma^{(d)}),\nonumber
\end{align}
where \(\boldsymbol{\alpha}\in\mathbb{N}_0^d\) and \(\Omega=\prod_{j=1}^d\Gamma^{(j)}\), with each \(\Gamma^{(j)}\) compact in \(\mathbb{R}\). The theory can extend to higher dimensional component domains, however we consider only one-dimensional \(\Gamma^{(j)}\) for ease of notation. Sparse grid methods are an application of the Smolyak algorithm to function approximation problems in such spaces, \cite{bungartz2004sparse,WASILKOWSKI19951, Nobile2008,rieger2017sampling}. In the Gaussian process emulation setting, this is simply equivalent to employing product covariance kernels on sparse grid designs. For Mat\'ern kernels, the corresponding product kernels are known as \textit{separable Mat\'ern kernels}.
\begin{definition}[See e.g.\ \cite{Teckentrup2020}]\label{def: separable matern}
    Define the hyperparameter vectors \(\boldsymbol{\nu}\coloneqq(\nu_1,\dots,\nu_d)\) and \(\boldsymbol{\lambda}\coloneqq(\lambda_1,\dots,\lambda_d)\), and let \(\{\phi_{\nu_j,\lambda_j}\}_{1\leq j\leq d}\) be a collection of \(d\)-many one dimensional Mat\'ern kernels, \(\phi_{\nu_j,\lambda_j}:\mathbb{R}\times\mathbb{R}\rightarrow\mathbb{R}_{>0}\). We define the \emph{separable Mat\'ern kernel}, \(\Phi_{\boldsymbol{\nu},\boldsymbol{\lambda}}:\mathbb{R}^d\times\mathbb{R}^d\rightarrow\mathbb{R}_{>0}\), by the tensor product, \(\Phi_{\boldsymbol{\nu},\boldsymbol{\lambda}}=\phi_{\nu_1,\lambda_1}\otimes\cdots\otimes\phi_{\nu_d,\lambda_d}\).
\end{definition}

\begin{proposition}\label{prop: separable native space isomorph}
\begin{enumerate}[label=(\roman*)]
    \item The native spaces of separable Mat\'ern kernels are the completion of the tensor product of the native spaces of the constituent one-dimensional Mat\'ern kernels, \(\mathcal{N}_{\Phi_{\boldsymbol{\nu},\boldsymbol{\lambda}}}(\Omega)=\mathcal{N}_{\phi_{\nu_1,\lambda_1}}(\Gamma^{(1)})\otimes\cdots\otimes\mathcal{N}_{\phi_{\nu_d,\lambda_d}}(\Gamma^{(d)})\) , under the induced norm,
\[
\left\langle f_1\otimes\cdots\otimes f_d,f_1\otimes\cdots\otimes f_d\right\rangle_{\mathcal{N}_{\Phi_{\boldsymbol{\nu},\boldsymbol{\lambda}}}(\Omega)}^{1/2} = \prod_{j=1}^d\langle f_j,f_j\rangle^{1/2}_{\mathcal{N}_{\phi_{\nu_j,\lambda_j}}\left(\Omega\right)},
\]
for \(f_j\in\mathcal{N}_{\phi_{\nu_j,\lambda_j}}\left(\Omega\right), 1\leq j\leq d\).
\item These spaces are isomorphic to Sobolev spaces with dominating mixed smoothness. In particular,
\[
\mathcal{N}_{\Phi_{\boldsymbol{\nu},\boldsymbol{\lambda}}}(\Omega)\cong H^{\boldsymbol{\nu}+1/2}_{\textrm{mix}}(\Omega^d)
,\]
with equivalent norms.
\end{enumerate}
\end{proposition}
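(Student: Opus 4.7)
The plan is to prove part (i) by the standard tensor product construction for reproducing kernel Hilbert spaces, and then deduce part (ii) immediately by applying the one-dimensional isomorphism of Proposition \ref{prop:mat_sobolev} componentwise.

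For part (i), I first construct a pre-Hilbert space on the algebraic tensor product $\bigotimes_{j=1}^d \mathcal{N}_{\phi_{\nu_j,\lambda_j}}(\Gamma^{(j)})$ by declaring the inner product on simple tensors to be the product of the factor inner products, as in the displayed formula, and extending sesquilinearly. Standard arguments (decomposing an element as a finite sum of simple tensors and using the Gram matrix of the factors) show this bilinear form is well-defined and positive definite, so its completion $H$ is a Hilbert space. Next I verify that $\Phi_{\boldsymbol{\nu},\boldsymbol{\lambda}}$ is the reproducing kernel of $H$: for any $\mathbf{x}=(x_1,\dots,x_d)\in\Omega$, the function $\Phi_{\boldsymbol{\nu},\boldsymbol{\lambda}}(\cdot,\mathbf{x})$ is the simple tensor $\bigotimes_{j=1}^d \phi_{\nu_j,\lambda_j}(\cdot,x_j)$ and hence lies in $H$; on a simple tensor $f=\bigotimes_{j=1}^d f_j$ the product inner product gives
\begin{align}
\bigl\langle f,\Phi_{\boldsymbol{\nu},\boldsymbol{\lambda}}(\cdot,\mathbf{x})\bigr\rangle_H = \prod_{j=1}^d \bigl\langle f_j,\phi_{\nu_j,\lambda_j}(\cdot,x_j)\bigr\rangle_{\mathcal{N}_{\phi_{\nu_j,\lambda_j}}(\Gamma^{(j)})} = \prod_{j=1}^d f_j(x_j) = f(\mathbf{x}),
\end{align}
by the reproducing property in each factor. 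Extending by linearity to finite sums of simple tensors and by continuity (point evaluation is continuous because the factor evaluations are) to the completion yields the reproducing property on all of $H$. Since the RKHS associated with a positive definite kernel is unique, this gives $\mathcal{N}_{\Phi_{\boldsymbol{\nu},\boldsymbol{\lambda}}}(\Omega)=H$ with the stated tensor product norm.

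For part (ii), Proposition \ref{prop:mat_sobolev} applied in dimension one yields $\mathcal{N}_{\phi_{\nu_j,\lambda_j}}(\Gamma^{(j)})\cong H^{\nu_j+1/2}(\Gamma^{(j)})$ with equivalent norms, say with constants $c_j\le C_j$. Because tensor products are functorial with respect to bounded linear isomorphisms of Hilbert spaces, the induced map on the Hilbert tensor products is a bounded bijection with constants $\prod_j c_j$ and $\prod_j C_j$. By part (i) the domain of this map is $\mathcal{N}_{\Phi_{\boldsymbol{\nu},\boldsymbol{\lambda}}}(\Omega)$, and by the definition of the Sobolev space of dominating mixed smoothness the codomain is $H^{\boldsymbol{\nu}+1/2}_{\textrm{mix}}(\Omega)$, which proves the claim.

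The main obstacle is the rigorous handling of the completion step in part (i): one must check that the product inner product extends consistently to a genuine inner product on the algebraic tensor product (positive definiteness is the subtle point), and that the reproducing identity, established initially on simple tensors, survives the passage to the completion. Both issues are standard but require the continuity of point evaluation on each factor, which is exactly the RKHS hypothesis. Once this is in place, part (ii) is essentially bookkeeping, since no new analysis on the Sobolev side is needed beyond the one-dimensional result already quoted.
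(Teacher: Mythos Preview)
Your proposal is correct and follows essentially the same strategy as the paper: the paper's proof simply cites Aronszajn's classical result on tensor products of reproducing kernel Hilbert spaces for (i) and Wendland's Fourier-transform characterisation of native spaces for (ii), and your argument spells out precisely the content of those references. The only cosmetic difference is that for (ii) you tensor up the one-dimensional isomorphism via functoriality, whereas the paper implicitly applies the Sobolev characterisation directly to the product kernel; both routes are standard and equivalent.
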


\begin{proof}
As shown in Proposition 2, \cite{Nobile2018}, (i) follows from Theorem 1, Section 8, \cite{Aronszajn1950}, and (ii) follows from Theorem 10.12 \cite{wendland_2004}.
\end{proof}
We define a sparse grid design in terms of Cartesian products of one-dimensional point sets. Since the error is driven by the fill-distance, a natural choice are uniformly-spaced designs. Without loss of generality, we consider all component dimensions to be the unit interval \(\Gamma^{(j)}=\Gamma\coloneqq(-1/2, 1/2)\) for all \(1\leq j\leq d\), and consequently \(\Omega=\Gamma^d\).
\begin{definition}\label{def:chi_l}
Let \(l\in\mathbb{N}_0\) be given. We define \(\mathcal{X}_l\) to be the set of \(2^l-1\) uniformly spaced points in \(\Gamma\); \(\mathcal{X}_l \coloneqq\left\{n/2^{l+1}\in\Gamma\,:\,n\in\mathbb{Z}\right\}.\)
\end{definition}
\begin{remark}
    In order for the sparse grid approximation to also be an interpolant, and accordingly coincide with the posterior mean function of a Gaussian process, we require this sequence to be nested, i.e., \(\mathcal{X}_l\subset\mathcal{X}_{l+1}\) for all \(l\in\mathbb{N}_0\). 
\end{remark}
\begin{definition}[See e.g \cite{bungartz2004sparse}]\label{def: iso sparse grid}
    The \emph{sparse grid design}, \(\mathcal{X}^{\otimes}_{d,L}\subset\Gamma^d\), of \emph{construction level} \(L\in\mathbb{N}_0\) is given by
    \begin{align}
        \mathcal{X}_{d,L}^{\otimes}\coloneqq\bigcup_{\boldsymbol{l}\in\mathcal{I}_L^d}\mathcal{X}_{l_1}\times\cdots\times\mathcal{X}_{l_d},\nonumber
    \end{align}
    where \(\mathcal{I}_L^d\coloneqq\{\boldsymbol{l}\in\mathbb{N}_0^d\,:\,|\boldsymbol{l}|_1\leq L\}\).
\end{definition}
\begin{theorem}[Theorem 1, \cite{Nobile2018}, Theorem 3.11, \cite{Teckentrup2020}]\label{prop: nobile result}
    Let \(0\leq\alpha_j<\nu_j\) for all \(1\leq j\leq d\) and define the standard sparse grid kernel interpolant operator by
    \(S_{L,\boldsymbol{\nu},\boldsymbol{\lambda}}=s_{\mathcal{X}^{\otimes}_{d,L},\Phi_{\boldsymbol{\nu,\lambda}}}\). For large enough \(L\in\mathbb{N}_0\), there exists a constant \(C_{\ref{prop: nobile result}}\), independent of \(L\), such that
    \begin{align}
        \|I-S_{L,\boldsymbol{\nu},\boldsymbol{\lambda}}\|_{H^{\boldsymbol{\nu}+1/2}_{\textrm{mix}}(\Gamma^d)\rightarrow H^{\boldsymbol{\alpha}+1/2}_{\textrm{mix}}(\Gamma^d)}\leq C_{\ref{prop: nobile result}}N_{L,d}^{-b_{\min}}(\log N_{L,d})^{(1+b_{\min})(d-1)},
    \end{align}
    where \(N_{L, d}\coloneqq|\mathcal{X}^{\otimes}_{d,L}|\) and \(b_{\min}=\min_{1\leq j\leq d}\nu_j-\alpha_j\).
\end{theorem}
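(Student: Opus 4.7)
The plan is to decompose the sparse-grid interpolant into a telescoping Smolyak sum of tensor-product detail operators, bound each term via the one-dimensional estimate of Proposition \ref{prop: initial wendland}, and then convert the resulting $L$-indexed sum into the claimed $N_{L,d}$-dependent rate. First I would set up the hierarchical one-dimensional operators $U_l^{(j)} \coloneqq s_{\mathcal{X}_l,\phi_{\nu_j,\lambda_j}}$ with $U_{-1}^{(j)} \coloneqq 0$ and detail operators $\delta_l^{(j)} \coloneqq U_l^{(j)} - U_{l-1}^{(j)}$. Since $\mathcal{X}_l$ consists of $2^l-1$ uniformly spaced points in $\Gamma$, the fill-distance equals $h_{\mathcal{X}_l,\Gamma} = 2^{-(l+1)}$, and Proposition \ref{prop: initial wendland} in one dimension yields
\begin{equation*}
\|I - U_l^{(j)}\|_{H^{\nu_j+1/2}(\Gamma) \to H^{\alpha_j+1/2}(\Gamma)} \leq C\, 2^{-l(\nu_j - \alpha_j)},
\end{equation*}
from which, by the triangle inequality, the same decay rate (with a different constant) carries over to $\delta_l^{(j)}$. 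Nestedness $\mathcal{X}_l \subset \mathcal{X}_{l+1}$ permits the standard Smolyak identity
\begin{equation*}
S_{L,\boldsymbol{\nu},\boldsymbol{\lambda}} = \sum_{\boldsymbol{l} \in \mathcal{I}_L^d} \bigotimes_{j=1}^{d} \delta_{l_j}^{(j)}, \qquad I - S_{L,\boldsymbol{\nu},\boldsymbol{\lambda}} = \sum_{\boldsymbol{l} \notin \mathcal{I}_L^d} \bigotimes_{j=1}^{d} \delta_{l_j}^{(j)}.
\end{equation*}

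Next I would exploit the tensor-product structure of the mixed Sobolev norm. By Proposition \ref{prop: separable native space isomorph}, the norm on $H^{\boldsymbol{\nu}+1/2}_{\mathrm{mix}}(\Gamma^d)$ factorises across coordinates, so the norm of each tensor-product detail operator splits as
\begin{equation*}
\Big\| \bigotimes_{j=1}^{d} \delta_{l_j}^{(j)} \Big\|_{H^{\boldsymbol{\nu}+1/2}_{\mathrm{mix}} \to H^{\boldsymbol{\alpha}+1/2}_{\mathrm{mix}}} \leq \prod_{j=1}^{d} \|\delta_{l_j}^{(j)}\|_{H^{\nu_j+1/2}(\Gamma) \to H^{\alpha_j+1/2}(\Gamma)} \leq C^d \prod_{j=1}^{d} 2^{-l_j b_j},
\end{equation*}
where $b_j \coloneqq \nu_j - \alpha_j$. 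Summing the tail over $\{|\boldsymbol{l}|_1 > L\}$ then becomes a purely combinatorial estimate on a multivariate geometric series: the slowest decay is set by $b_{\min}$, and a standard counting of lattice points on simplices $|\boldsymbol{l}|_1 = k$ gives a tail bounded by $C_d\, 2^{-L b_{\min}} L^{d-1}$, with the $L^{d-1}$ factor arising from the number of ways to distribute the hyperbolic cross indices.

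Finally I would convert from $L$ to $N_{L,d}$. The well-known asymptotic count for sparse grids built on dyadic univariate grids is $N_{L,d} \asymp 2^L L^{d-1}$, so $L \asymp \log N_{L,d}$ and $2^L \asymp N_{L,d}/(\log N_{L,d})^{d-1}$; substituting into $2^{-L b_{\min}} L^{d-1}$ produces exactly $N_{L,d}^{-b_{\min}} (\log N_{L,d})^{(1+b_{\min})(d-1)}$. The main technical obstacle I anticipate is the bookkeeping of constants in the tensorisation step: the Native-space/Sobolev-mixed-smoothness norm equivalence from Proposition \ref{prop: separable native space isomorph}, the one-dimensional interpolation constant from Proposition \ref{prop: initial wendland}, and the smallness-of-fill-distance hypothesis each multiply across $d$ coordinates, which is why the proof is typically delegated to the careful treatments in \cite{Nobile2018, Teckentrup2020}. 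The combinatorial tail bound producing the $L^{d-1}$ factor is the only genuinely delicate step; everything else is a direct consequence of tensorisation and the one-dimensional theory.
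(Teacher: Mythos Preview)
Your proposal is correct and follows the standard Smolyak tail-sum argument that the paper itself invokes. Note that the paper does not give a standalone proof of this theorem---it is cited as a known result from \cite{Nobile2018, Teckentrup2020}---but the same decomposition (error operator as $\sum_{\boldsymbol{l}\notin\mathcal{I}_L^d}\bigotimes_j\delta_{l_j}^{(j)}$, one-dimensional bound via Lemma~\ref{thm: Wendland norm}, tensorisation via Proposition~\ref{prop: tensor product norm}) is reproduced within the proof of Theorem~\ref{thm: error in L} to derive the $L$-indexed bound $\epsilon^{(k)}_{\boldsymbol{\nu}_{\mathfrak{u}},\boldsymbol{\alpha}_{\mathfrak{u}}}(L)$, so your outline matches the paper's methodology exactly.
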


In contrast to the general Sobolev setting shown in Proposition \ref{prop: initial wendland}, the error in the number of training points exhibits dimension dependence only in the logarithmic factor, resulting in significantly improved scalability in high dimensions. Beyond \(d\approx10\), however, the logarithmic term becomes prohibitively large in many practical applications. As such, in order to apply these methods to arbitrarily high dimensional problems, we must assume additional structure on \(f\).

\subsection{Lengthscale anisotropy}\label{subsec: lengthscale anisotropy}
In order to enforce lengthscale assumptions on \(f\), we work directly in the native space norm of separable Mat\'ern kernels. We say that a function \(f\in H^{\boldsymbol{\nu}+1/2}_{\textrm{mix}}(\Gamma^d)\) exhibits lengthscale anisotropy with respect to \(\boldsymbol{\lambda}\in\mathbb{R}_{\geq0}^d\) if it is bounded in the corresponding native space norm,
\begin{align}
    \|f\|_{\mathcal{N}_{\Phi_{\boldsymbol{\nu},\boldsymbol{\lambda}}}(\Gamma^d)}<C.\label{eq: anisotropic condition}
\end{align}
This can be viewed as a weighted mixed Sobolev norm, where variation in \(f\) along the axes, \(j\), associated with larger lengthscales, \(\lambda_j\), has a stronger influence on the norm, so that boundedness enforces fewer effective degrees of freedom in those directions. In particular, in the high-dimensional setting, we are interested in functions whose norms are bounded independently of the dimension, \(d\). This is similar in spirit to weighted function spaces with product weights in the Quasi-Monte Carlo literature (see, e.g., \cite{sloan1998quasi,dick2013high}), in which the weights, \(\gamma_j\), directly multiply the gradient in dimension \(j\), and hence penalise large variation in these directions. Without loss of generality, for ease of presentation we will assume that the axes are ordered from most to least `sensitive,' i.e. \(\lambda_j\leq\lambda_{j+1}\) for all \(1\leq j\leq d-1\). In Figure~\ref{fig: anisotropic function}, we provide an illustration of such anisotropy with axial cross-sections of such a \(d\)-dimensional function. We note that identifying this ordering of dimensions can be a challenging task in high dimensions in the absence of prior knowledge \cite{Iooss2015}.

\begin{figure}[h]
    \centering
    \begin{tikzpicture}[scale = 1]
        \draw[<->] (-0.2,0) -- (4.2,0);
        \draw[<->] (2,-1) -- (2,1);
        \node at (2,1.3) {$f(\mathbf{x})$};
        \node at (4.5,0) {$x_1$};

        \draw[ultra thick, red] (0,0) sin (1/2,1);    %% the real business in this line
        \draw[ultra thick, red] (1/2,1) cos (2/2,0);    %% the real business in this line
        \draw[ultra thick, red] (2/2,0) sin (3/2,-1);    %% the real business in this line
        \draw[ultra thick, red] (3/2,-1) cos (4/2,0);    %% the real business in this line
        \draw[ultra thick, red] (4/2,0)  sin (5/2,1);    %% the real business in this line
        \draw[ultra thick, red] (5/2,1) cos (6/2,0);    %% the real business in this line
        \draw[ultra thick, red] (6/2,0) sin (7/2,-1);    %% the real business in this line
        \draw[ultra thick, red] (7/2,-1) cos (8/2,0);

        \draw[<->] (-0.2+6.2,0) -- (4.2+6.2,0);
        \draw[<->] (2+6.2,-1) -- (2+6.2,1);
        \node at (2+6.2,1.3) {$f(\mathbf{x})$};
        \node at (4.5+6.2,0) {$x_d$};

        \draw[ultra thick, red] (6.2,-0.4) cos (8.2,0);    %% the real business in this line
        \draw[ultra thick, red] (8.2,0)  sin (10.2,0.4);
    \end{tikzpicture}
    \vspace{3mm}
    \caption{Illustrated cross-sections of an anisotropic function satisfying \eqref{eq: anisotropic condition} in which dimensions are ordered such that the lengthscales grow, \(\lambda_j\leq\lambda_{j+1}\). We observe that the function exhibits much more variation in its first parameter, \(x_1\), than its last, \(x_d\).}
    \label{fig: anisotropic function}
\end{figure}

To fit within the sparse grid framework, we restrict ourselves to lengthscales of the form \(\lambda_j=2^{p_j}\), where we call \(p_j\in\mathbb{N}_0\) the \textit{penalty} associated to dimension \(j\). A \textit{penalty vector}, \(\mathbf{p}\in\mathbb{N}_0^d\), then completely specifies the lengthscale anisotropy of a given problem. While our theoretical results presented in Section~\ref{sec: results} apply to any degree of anisotropy, in order to obtain useful error bounds in high dimensions, we additionally require the penalties to be at least `eventually' increasing; i.e. if we consider the penalty as a sequence \(\{p_j\}_{j\in\mathbb{N}}\), then for all \(j\in\mathbb{N}\), there must exist some \(m\in\mathbb{N}\) such that \(p_{j+m}>p_j\). The complexity of a given problem is then directly linked to the growth of these penalties. In Section~\ref{sec: numerics}, we consider test problems with both linearly and logarithmically growing penalties, corresponding to exponentially and linearly growing lengthscales, respectively.

As discussed in Section~\ref{sec: introduciton}, although there is substantial empirical evidence supporting the benefits of employing kernels with anisotropic lengthscales, there remains a comparative lack of accompanying numerical analysis theory, especially in the absence of additional regularity assumptions. The framework proposed here offers a setting in which such theoretical advantages can be rigorously established. The results presented in Section~\ref{sec: results}, however, assume exact knowledge of the lengthscales \emph{a priori}. We note that this condition is unlikely to be satisfied in many practical applications, for example in our particular interest of Gaussian process emulation, where lengthscales are routinely estimated from data. Reassuringly, numerical results in  Section~\ref{sec: numerics} suggest that the method is robust to minor misspecifications of the penalties, however, its practical success in this setting remains dependent on the accuracy and reliability of the chosen estimation procedure. While we view an analysis into such techniques as being beyond the scope of this work, we point the reader to the substantial literature on the topic, for example \cite{Rasmussen2005, mardia1984, Sundararajan2001, hyperparamEstimationInverse}.
Estimation of lengthscales for separable Mat\'ern kernels in particular is considered in \cite{Briol2019}, and such kernels are widely used in surrogate modelling applications where they are often referred to as a form of automatic relevance determination (ARD) kernel \cite{geophysExampleARD,ZHANG2021100188}. Additionally, we believe that the formulation of the condition in \eqref{eq: anisotropic condition} opens the door to the development of novel estimation procedures in which, for a given target function, a natural aim is to identify the penalty vector corresponding to its minimal native space norm. Such methods may take advantage of the error bound presented in Theorem~\ref{thm: error in L} and the relative ease of computing native space norms of interpolants compared with more conventional Sobolev norms (see Proposition~\ref{prop: native space norm}).

\section{Lengthscale-informed sparse grids for kernel interpolation}\label{sec: results}
This section develops the theoretical foundations of our lengthscale-informed sparse grid interpolation framework. In Section~\ref{subsec: construction}, we formally introduce the construction of the interpolation operator. Section \ref{subsec: results} presents approximation error bounds in the native space and \(L^\infty\)-norms, including a discussion of the theoretical and practical benefits of incorporating lengthscale information in comparison to standard isotropic sparse grids. Finally, in Section~\ref{subsec: predictive variance}, we extend these results to bound the corresponding posterior marginal variance in the Gaussian process framework.

\subsection{Construction}\label{subsec: construction}
The construction of lengthscale-informed sparse grids diverges from their isotropic counterparts only in that the onset of the growth of points in each axial direction, \(1\leq j\leq d\), is delayed by the corresponding penalty, \(p_j\in\mathbb{N}_0\). In this way, we link the size of the sparse grid in each dimension to the lengthscale parameter, \(\lambda_j\).

\begin{figure}[h]
    \centering
    \begin{tikzpicture}
        % No penalty.
        % Labels.
        \node[black] at (-2-3,-0.3) {\small-0.5};
        \node[black] at (2-3,-0.3) {\small0.5};
        \node[black] at (-2.8-3,0.75) {\(l\)};
        % \node[gray] at (-2.3-3,-0.5) {\footnotesize-1};
        \node[black] at (-2.3-3,-0) {\footnotesize0};
        \node[black] at (-2.3-3,0.5) {\footnotesize1};
        \node[black] at (-2.3-3,1) {\footnotesize2};
        \node[black] at (-2.3-3,1.5) {\footnotesize3};
        \node[black] at (-2.3-3,2) {\footnotesize4};
        \node[black] at (0-3,0-0.6) {(a)};
        \node[black] at (0-3,0+2.6) {\large\(\mathcal{X}_l=\mathcal{X}_l^0\)};
        % Lines.
        % \draw[gray!40] (-2-3,-0.5) -- (2-3,-0.5);
        \draw[gray!40] (-2-3,0) -- (2-3,0);
        \draw[gray!40] (-2-3,0.5) -- (2-3,0.5);
        \draw[gray!40] (-2-3,1) -- (2-3,1);
        \draw[gray!40] (-2-3,1.5) -- (2-3,1.5);
        \draw[gray!40] (-2-3,2) -- (2-3,2);
        
        % Level 0.
        \filldraw[black] (0-3,0) circle (1pt);
        % Level 1.
        \filldraw[black] (0-3,0+0.5) circle (1pt);
        % New.
        \filldraw[black] (0-1-3,0+0.5)  circle (1pt);
        \filldraw[black] (0+1-3,0+0.5) circle (1pt);
        % Level 2.
        \filldraw[black] (0-3,0+1) circle (1pt);
        \filldraw[black] (0-1-3,0+1) circle (1pt);
        \filldraw[black] (0+1-3,0+1) circle (1pt);
        % New.
        \filldraw[black] (0+0.5-3,0+1) circle (1pt);
        \filldraw[black] (0-1+0.5-3,0+1) circle (1pt);
        \filldraw[black] (0+1+0.5-3,0+1) circle (1pt);
        \filldraw[black] (0-0.5-3,0+1) circle (1pt);
        \filldraw[black] (0-1-0.5-3,0+1) circle (1pt);
        \filldraw[black] (0+1-0.5-3,0+1) circle (1pt);
        % Level 3.
        \filldraw[black] (0-3,0+1.5) circle (1pt);
        \filldraw[black] (0-1-3,0+1.5) circle (1pt);
        \filldraw[black] (0+1-3,0+1.5) circle (1pt);
        \filldraw[black] (0+0.5-3,0+1.5) circle (1pt);
        \filldraw[black] (0-1+0.5-3,0+1.5) circle (1pt);
        \filldraw[black] (0+1+0.5-3,0+1.5) circle (1pt);
        \filldraw[black] (0-0.5-3,0+1.5) circle (1pt);
        \filldraw[black] (0-1-0.5-3,0+1.5) circle (1pt);
        \filldraw[black] (0+1-0.5-3,0+1.5) circle (1pt);
        % New.
        \filldraw[black] (0-0.25-3,0+1.5) circle (1pt);
        \filldraw[black] (0-1-0.25-3,0+1.5) circle (1pt);
        \filldraw[black] (0+1-0.25-3,0+1.5) circle (1pt);
        \filldraw[black] (0+0.5-0.25-3,0+1.5) circle (1pt);
        \filldraw[black] (0-1+0.5-0.25-3,0+1.5) circle (1pt);
        \filldraw[black] (0+1+0.5-0.25-3,0+1.5) circle (1pt);
        \filldraw[black] (0-0.5-0.25-3,0+1.5) circle (1pt);
        \filldraw[black] (0-1-0.5-0.25-3,0+1.5) circle (1pt);
        \filldraw[black] (0+1-0.5-0.25-3,0+1.5) circle (1pt);
        \filldraw[black] (0+0.25-3,0+1.5) circle (1pt);
        \filldraw[black] (0-1+0.25-3,0+1.5) circle (1pt);
        \filldraw[black] (0+1+0.25-3,0+1.5) circle (1pt);
        \filldraw[black] (0+0.5+0.25-3,0+1.5) circle (1pt);
        \filldraw[black] (0-1+0.5+0.25-3,0+1.5) circle (1pt);
        \filldraw[black] (0+1+0.5+0.25-3,0+1.5) circle (1pt);
        \filldraw[black] (0-0.5+0.25-3,0+1.5) circle (1pt);
        \filldraw[black] (0-1-0.5+0.25-3,0+1.5) circle (1pt);
        \filldraw[black] (0+1-0.5+0.25-3,0+1.5) circle (1pt);
        % Level 4.
        \filldraw[black] (0-3,0+2) circle (1pt);
        \filldraw[black] (0-1-3,0+2) circle (1pt);
        \filldraw[black] (0+1-3,0+2) circle (1pt);
        \filldraw[black] (0+0.5-3,0+2) circle (1pt);
        \filldraw[black] (0-1+0.5-3,0+2) circle (1pt);
        \filldraw[black] (0+1+0.5-3,0+2) circle (1pt);
        \filldraw[black] (0-0.5-3,0+2) circle (1pt);
        \filldraw[black] (0-1-0.5-3,0+2) circle (1pt);
        \filldraw[black] (0+1-0.5-3,0+2) circle (1pt);
        \filldraw[black] (0-0.25-3,0+2) circle (1pt);
        \filldraw[black] (0-1-0.25-3,0+2) circle (1pt);
        \filldraw[black] (0+1-0.25-3,0+2) circle (1pt);
        \filldraw[black] (0+0.5-0.25-3,0+2) circle (1pt);
        \filldraw[black] (0-1+0.5-0.25-3,0+2) circle (1pt);
        \filldraw[black] (0+1+0.5-0.25-3,0+2) circle (1pt);
        \filldraw[black] (0-0.5-0.25-3,0+2) circle (1pt);
        \filldraw[black] (0-1-0.5-0.25-3,0+2) circle (1pt);
        \filldraw[black] (0+1-0.5-0.25-3,0+2) circle (1pt);
        \filldraw[black] (0+0.25-3,2) circle (1pt);
        \filldraw[black] (0-1+0.25-3,2) circle (1pt);
        \filldraw[black] (0+1+0.25-3,2) circle (1pt);
        \filldraw[black] (0+0.5+0.25-3,2) circle (1pt);
        \filldraw[black] (0-1+0.5+0.25-3,0+2) circle (1pt);
        \filldraw[black] (0+1+0.5+0.25-3,0+2) circle (1pt);
        \filldraw[black] (0-0.5+0.25-3,0+2) circle (1pt);
        \filldraw[black] (0-1-0.5+0.25-3,0+2) circle (1pt);
        \filldraw[black] (0+1-0.5+0.25-3,0+2) circle (1pt);
        % New.
        \filldraw[black] (0-0.25-0.125-3,0+2) circle (1pt);
        \filldraw[black] (0-1-0.25-0.125-3,0+2) circle (1pt);
        \filldraw[black] (0+1-0.25-0.125-3,0+2) circle (1pt);
        \filldraw[black] (0+0.5-0.25-0.125-3,0+2) circle (1pt);
        \filldraw[black] (0-1+0.5-0.25-0.125-3,0+2) circle (1pt);
        \filldraw[black] (0+1+0.5-0.25-0.125-3,0+2) circle (1pt);
        \filldraw[black] (0-0.5-0.25-0.125-3,0+2) circle (1pt);
        \filldraw[black] (0-1-0.5-0.25-0.125-3,0+2) circle (1pt);
        \filldraw[black] (0+1-0.5-0.25-0.125-3,0+2) circle (1pt);
        \filldraw[black] (0+0.25-0.125-3,0+2) circle (1pt);
        \filldraw[black] (0-1+0.25-0.125-3,0+2) circle (1pt);
        \filldraw[black] (0+1+0.25-0.125-3,0+2) circle (1pt);
        \filldraw[black] (0+0.5+0.25-0.125-3,0+2) circle (1pt);
        \filldraw[black] (0-1+0.5+0.25-0.125-3,0+2) circle (1pt);
        \filldraw[black] (0+1+0.5+0.25-0.125-3,0+2) circle (1pt);
        \filldraw[black] (0-0.5+0.25-0.125-3,0+2) circle (1pt);
        \filldraw[black] (0-1-0.5+0.25-0.125-3,0+2) circle (1pt);
        \filldraw[black] (0+1-0.5+0.25-0.125-3,0+2) circle (1pt);
        \filldraw[black] (0-0.25+0.125-3,0+2) circle (1pt);
        \filldraw[black] (0-1-0.25+0.125-3,0+2) circle (1pt);
        \filldraw[black] (0+1-0.25+0.125-3,0+2) circle (1pt);
        \filldraw[black] (0+0.5-0.25+0.125-3,0+2) circle (1pt);
        \filldraw[black] (0-1+0.5-0.25+0.125-3,0+2) circle (1pt);
        \filldraw[black] (0+1+0.5-0.25+0.125-3,0+2) circle (1pt);
        \filldraw[black] (0-0.5-0.25+0.125-3,0+2) circle (1pt);
        \filldraw[black] (0-1-0.5-0.25+0.125-3,0+2) circle (1pt);
        \filldraw[black] (0+1-0.5-0.25+0.125-3,0+2) circle (1pt);
        \filldraw[black] (0+0.25+0.125-3,0+2) circle (1pt);
        \filldraw[black] (0-1+0.25+0.125-3,0+2) circle (1pt);
        \filldraw[black] (0+1+0.25+0.125-3,0+2) circle (1pt);
        \filldraw[black] (0+0.5+0.25+0.125-3,0+2) circle (1pt);
        \filldraw[black] (0-1+0.5+0.25+0.125-3,0+2) circle (1pt);
        \filldraw[black] (0+1+0.5+0.25+0.125-3,0+2) circle (1pt);
        \filldraw[black] (0-0.5+0.25+0.125-3,0+2) circle (1pt);
        \filldraw[black] (0-1-0.5+0.25+0.125-3,0+2) circle (1pt);
        \filldraw[black] (0+1-0.5+0.25+0.125-3,0+2) circle (1pt);
        
        % Penalised.
        % Labels.
        \node[black] at (-2+3,-0.3) {\small-0.5};
        \node[black] at (2+3,-0.3) {\small0.5};
        \node[black] at (-2.8+3,0.75) {\(l\)};
        % \node[gray] at (-2.3+3,-0.5) {\footnotesize-1};
        \node[black] at (-2.3+3,-0) {\footnotesize0};
        \node[black] at (-2.3+3,0.5) {\footnotesize1};
        \node[black] at (-2.3+3,1) {\footnotesize2};
        \node[black] at (-2.3+3,1.5) {\footnotesize3};
        \node[black] at (-2.3+3,2) {\footnotesize4};
        \node[black] at (0+3,0-0.6) {(b)};
        \node[black] at (0+3,0+2.6) {\large\(\mathcal{X}_l^2\)};
        % Lines.
        % \draw[gray!40] (-2+3,-0.5) -- (2+3,-0.5);
        \draw[gray!40] (-2+3,0) -- (2+3,0);
        \draw[gray!40] (-2+3,0.5) -- (2+3,0.5);
        \draw[gray!40] (-2+3,1) -- (2+3,1);
        \draw[gray!40] (-2+3,1.5) -- (2+3,1.5);
        \draw[gray!40] (-2+3,2) -- (2+3,2);
        
        % Level 0.
        \filldraw[black] (0+3,0) circle (1pt);
        % Level 1.
        \filldraw[black] (0+3,0.5) circle (1pt);
        % Level 2.
        \filldraw[black] (0+3,1) circle (1pt);
        % Level 3.
        \filldraw[black] (0+3,0+1.5) circle (1pt);
        % New.

        \filldraw[black] (0-1+3,0+1.5) circle (1pt);
        \filldraw[black] (0+1+3,0+1.5) circle (1pt);
        % Level 4.
        \filldraw[black] (0+3,0+2) circle (1pt);
        \filldraw[black] (0-1+3,0+2) circle (1pt);
        \filldraw[black] (0+1+3,0+2) circle (1pt);
        % New.
        \filldraw[black] (0+0.5+3,0+2) circle (1pt);
        \filldraw[black] (0-1+0.5+3,0+2) circle (1pt);
        \filldraw[black] (0+1+0.5+3,0+2) circle (1pt);
        \filldraw[black] (0-0.5+3,0+2) circle (1pt);
        \filldraw[black] (0-1-0.5+3,0+2) circle (1pt);
        \filldraw[black] (0+1-0.5+3,0+2) circle (1pt);

    \end{tikzpicture}
    \caption{The nested point-sets \(\mathcal{X}_l\), (a), and \(\mathcal{X}_l^p\) with penalty \(p=2\), (b), for different levels, \(l\in\mathbb{N}_0\). 
    % Crosses represent points added to the previous level, \(\mathcal{X}_l\setminus\mathcal{X}_{l-1}\).
    }
    \label{fig: point sets}
\end{figure}
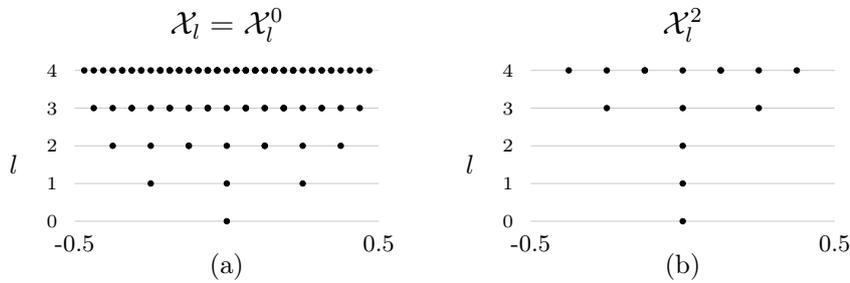

In contrast to the isotropic construction, the component one-dimensional point sets are now defined in terms of the penalty, depicted in Figure \ref{fig: point sets}.
\begin{definition}\label{def:chi_l^p}
    Let \(l,p\in\mathbb{N}_0\) be given. We define the \(p\)-\emph{penalised point-set}, \(\mathcal{X}_{l}^p\subset\Gamma\), by 
    \begin{align}
        \mathcal{X}_{l}^p\coloneqq\begin{cases}
            \mathcal{X}_{l-p}\quad&\textrm{ for }l\geq p+1, \textrm{and}\\
            \mathcal{X}_{0}=\{0\}&\textrm{ for }0\leq l\leq p.\end{cases}\nonumber
    \end{align}
\end{definition}
We note that this sequence retains a nested structure \(\mathcal{X}_l^p\subset\mathcal{X}_{l+1}^p\), and that, for the case \(p=0\), we recover our original point set \(\mathcal{X}_l^0= \mathcal{X}_l\). The lengthscale-informed sparse grid operator is defined in terms of tensor products of one dimensional interpolants defined over these designs, where the penalties are now described by a vector. 

\begin{figure}
\centering
\begin{subfigure}{.5\textwidth}
  \centering
  \begin{tikzpicture}[scale=0.75]
      \draw[->, gray, very thick] (0,0) -- (5.5,0);
      \draw[->, gray, very thick] (0,0) -- (0,5.5);

      % Title
      \node at (3,4.8) {\Large\(\mathcal{X}_{l_1}^{1}\times\mathcal{X}_{l_2}^{2}\)};

      % Axis labels
      \node at (2.5,-0.9) {\Large\(l_1\)};
      \node at (-0.9,2.5) {\Large\(l_2\)};

      % Axis ticks
      % Vertical
      \node at (0.5,-0.3) {0};
      \node at (1.5,-0.3) {1};
      \node at (2.5,-0.3) {2};
      \node at (3.5,-0.3) {3};
      \node at (4.5,-0.3) {4};
      % Horizontal
      \node at (-0.3,0.5) {0};
      \node at (-0.3,1.5) {1};
      \node at (-0.3,2.5) {2};
      \node at (-0.3,3.5) {3};
      \node at (-0.3,4.5) {4};

      % Grid lines.
      % Horizontal
      \draw[gray] (0,4) -- (1,4);
      \draw[gray] (0,3) -- (2,3);
      \draw[gray] (0,2) -- (3,2);
      \draw[gray] (0,1) -- (4,1);
      % Horizontal
      \draw[gray] (4,0) -- (4,1);
      \draw[gray] (3,0) -- (3,2);
      \draw[gray] (2,0) -- (2,3);
      \draw[gray] (1,0) -- (1,4);

      % L=4 label
      \draw[very thick] (0,5) -- (1,5) -- (1,4) -- (2,4) -- (2,3) -- (3,3) -- (3,2) -- (4,2) -- (4,1) -- (5,1) -- (5,0);

      % Points
      % Single point squares.
      \filldraw[black] (0+0.5,0+0.5) circle (0.5pt);
      \filldraw[black] (1+0.5,0+0.5) circle (0.5pt);
      \filldraw[black] (0+0.5,1+0.5) circle (0.5pt);
      \filldraw[black] (1+0.5,1+0.5) circle (0.5pt);
      \filldraw[black] (1+0.5,2+0.5) circle (0.5pt);
      \filldraw[black] (0+0.5,2+0.5) circle (0.5pt);

      %% One dimensional %%
      % Three point squares.
      % Horizontal
      %
      \filldraw[black] (2+0.5,0+0.5) circle (0.5pt);
      \filldraw[black] (2+0.5+0.25,0+0.5) circle (0.5pt);
      \filldraw[black] (2+0.5-0.25,0+0.5) circle (0.5pt);
      \filldraw[black] (2+0.5,1+0.5) circle (0.5pt);
      \filldraw[black] (2+0.5+0.25,1+0.5) circle (0.5pt);
      \filldraw[black] (2+0.5-0.25,1+0.5) circle (0.5pt);
      \filldraw[black] (2+0.5,2+0.5) circle (0.5pt);
      \filldraw[black] (2+0.5+0.25,2+0.5) circle (0.5pt);
      \filldraw[black] (2+0.5-0.25,2+0.5) circle (0.5pt);
      % Vertical
      %
      \filldraw[black] (0+0.5,3+0.5) circle (0.5pt);
      \filldraw[black] (0+0.5,3+0.5+0.25) circle (0.5pt);
      \filldraw[black] (0+0.5,3+0.5-0.25) circle (0.5pt);
      \filldraw[black] (1+0.5,3+0.5) circle (0.5pt);
      \filldraw[black] (1+0.5,3+0.5+0.25) circle (0.5pt);
      \filldraw[black] (1+0.5,3+0.5-0.25) circle (0.5pt);

      % Seven point squares
      % Horizontal
      %
      \filldraw[black] (3+0.5,0+0.5) circle (0.5pt);
      \filldraw[black] (3+0.5+0.125,0+0.5) circle (0.5pt);
      \filldraw[black] (3+0.5-0.125,0+0.5) circle (0.5pt);
      \filldraw[black] (3+0.5+0.25,0+0.5) circle (0.5pt);
      \filldraw[black] (3+0.5-0.25,0+0.5) circle (0.5pt);
      \filldraw[black] (3+0.5+0.25+0.125,0+0.5) circle (0.5pt);
      \filldraw[black] (3+0.5-0.25-0.125,0+0.5) circle (0.5pt);
      \filldraw[black] (3+0.5,1+0.5) circle (0.5pt);
      \filldraw[black] (3+0.5+0.125,1+0.5) circle (0.5pt);
      \filldraw[black] (3+0.5-0.125,1+0.5) circle (0.5pt);
      \filldraw[black] (3+0.5+0.25,1+0.5) circle (0.5pt);
      \filldraw[black] (3+0.5-0.25,1+0.5) circle (0.5pt);
      \filldraw[black] (3+0.5+0.25+0.125,1+0.5) circle (0.5pt);
      \filldraw[black] (3+0.5-0.25-0.125,1+0.5) circle (0.5pt);
      %
      % Vertical
      %
      \filldraw[black] (0+0.5,4+0.5) circle (0.5pt);
      \filldraw[black] (0+0.5,4+0.5+0.125) circle (0.5pt);
      \filldraw[black] (0+0.5,4+0.5-0.125) circle (0.5pt);
      \filldraw[black] (0+0.5,4+0.5+0.25) circle (0.5pt);
      \filldraw[black] (0+0.5,4+0.5-0.25) circle (0.5pt);
      \filldraw[black] (0+0.5,4+0.5+0.25+0.125) circle (0.5pt);
      \filldraw[black] (0+0.5,4+0.5-0.25-0.125) circle (0.5pt);
      % Fifteen point squares
      %
      \filldraw[black] (4+0.5,0+0.5) circle (0.5pt);
      \filldraw[black] (4+0.5+0.125,0+0.5) circle (0.5pt);
      \filldraw[black] (4+0.5+0.125+0.0625,0+0.5) circle (0.5pt);
      \filldraw[black] (4+0.5+0.125-0.0625,0+0.5) circle (0.5pt);
      \filldraw[black] (4+0.5-0.125,0+0.5) circle (0.5pt);
      \filldraw[black] (4+0.5-0.125+0.0625,0+0.5) circle (0.5pt);
      \filldraw[black] (4+0.5-0.125-0.0625,0+0.5) circle (0.5pt);
      \filldraw[black] (4+0.5+0.25,0+0.5) circle (0.5pt);
      \filldraw[black] (4+0.5-0.25,0+0.5) circle (0.5pt);
      \filldraw[black] (4+0.5+0.25+0.125,0+0.5) circle (0.5pt);
      \filldraw[black] (4+0.5+0.25+0.125+0.0625,0+0.5) circle (0.5pt);
      \filldraw[black] (4+0.5+0.25+0.125-0.0625,0+0.5) circle (0.5pt);
      \filldraw[black] (4+0.5-0.25-0.125,0+0.5) circle (0.5pt);
      \filldraw[black] (4+0.5-0.25-0.125+0.0625,0+0.5) circle (0.5pt);
      \filldraw[black] (4+0.5-0.25-0.125-0.0625,0+0.5) circle (0.5pt);
    \end{tikzpicture}
    \subcaption[]{\centering Lengthscale-informed sparse grid, \(L=4\), \(\mathbf{p}=(1,2)\).}
    \label{subfig: LISG component diagram}
\end{subfigure}%
\begin{subfigure}{.5\textwidth}
  \centering
  \begin{tikzpicture}[scale=0.75]
      \draw[->, gray, very thick] (0,0) -- (5.5,0);
      \draw[->, gray, very thick] (0,0) -- (0,5.5);

      \node at (3,4.8) {\Large\(\mathcal{X}_{l_1}^0\times\mathcal{X}_{l_2}^0\)};

      % Axis labels
      \node at (2.5,-0.9) {\Large\(l_1\)};
      \node at (-0.9,2.5) {\Large\(l_2\)};

      % Axis ticks
      % Vertical
      \node at (0.5,-0.3) {0};
      \node at (1.5,-0.3) {1};
      \node at (2.5,-0.3) {2};
      \node at (3.5,-0.3) {3};
      \node at (4.5,-0.3) {4};
      % Horizontal
      \node at (-0.3,0.5) {0};
      \node at (-0.3,1.5) {1};
      \node at (-0.3,2.5) {2};
      \node at (-0.3,3.5) {3};
      \node at (-0.3,4.5) {4};

        % L=4 label

      % Grid lines.
       % Horizontal
      \draw[gray] (0,4) -- (1,4);
      \draw[gray] (0,3) -- (2,3);
      \draw[gray] (0,2) -- (3,2);
      \draw[gray] (0,1) -- (4,1);
      % Horizontal
      \draw[gray] (4,0) -- (4,1);
      \draw[gray] (3,0) -- (3,2);
      \draw[gray] (2,0) -- (2,3);
      \draw[gray] (1,0) -- (1,4);

      % Isotopic L=4
      \draw[very thick, gray] (0,5) -- (1,5) -- (1,4) -- (2,4) -- (2,3) -- (3,3) -- (3,2) -- (4,2) -- (4,1) -- (5,1) -- (5,0);
      
      % FIll in difference
      \filldraw[gray, fill opacity=0.2]  (0,5) -- (1,5) -- (1,4) -- (2,4) -- (2,3) -- (3,3) -- (3,2) -- (4,2) -- (4,1) -- (5,1) -- (5,0) -- (4,0) -- (4,1) -- (1,1) -- (1,3) -- (0,3) -- (0,5);

      % Points
      % Single point squares.
      \filldraw[black] (0+0.5,0+0.5) circle (0.5pt);

      %% One dimensional %%
      % Three point squares.
      % Horizontal
      \filldraw[black] (1+0.5,0+0.5) circle (0.5pt);
      \filldraw[black] (1+0.5+0.25,0+0.5) circle (0.5pt);
      \filldraw[black] (1+0.5-0.25,0+0.5) circle (0.5pt);
      % Vertical
      \filldraw[black] (0+0.5,1+0.5) circle (0.5pt);
      \filldraw[black] (0+0.5,1+0.5+0.25) circle (0.5pt);
      \filldraw[black] (0+0.5,1+0.5-0.25) circle (0.5pt);

      % Seven point squares
      % Horizontal
      \filldraw[black] (2+0.5,0+0.5) circle (0.5pt);
      \filldraw[black] (2+0.5+0.125,0+0.5) circle (0.5pt);
      \filldraw[black] (2+0.5-0.125,0+0.5) circle (0.5pt);
      \filldraw[black] (2+0.5+0.25,0+0.5) circle (0.5pt);
      \filldraw[black] (2+0.5-0.25,0+0.5) circle (0.5pt);
      \filldraw[black] (2+0.5+0.25+0.125,0+0.5) circle (0.5pt);
      \filldraw[black] (2+0.5-0.25-0.125,0+0.5) circle (0.5pt);
      % Vertical
      \filldraw[black] (0+0.5,2+0.5) circle (0.5pt);
      \filldraw[black] (0+0.5,2+0.5+0.125) circle (0.5pt);
      \filldraw[black] (0+0.5,2+0.5-0.125) circle (0.5pt);
      \filldraw[black] (0+0.5,2+0.5+0.25) circle (0.5pt);
      \filldraw[black] (0+0.5,2+0.5-0.25) circle (0.5pt);
      \filldraw[black] (0+0.5,2+0.5+0.25+0.125) circle (0.5pt);
      \filldraw[black] (0+0.5,2+0.5-0.25-0.125) circle (0.5pt);

      % Fifteen point squares
      % Horizontal
      \filldraw[black] (3+0.5,0+0.5) circle (0.5pt);
      \filldraw[black] (3+0.5+0.125,0+0.5) circle (0.5pt);
      \filldraw[black] (3+0.5+0.125+0.0625,0+0.5) circle (0.5pt);
      \filldraw[black] (3+0.5+0.125-0.0625,0+0.5) circle (0.5pt);
      \filldraw[black] (3+0.5-0.125,0+0.5) circle (0.5pt);
      \filldraw[black] (3+0.5-0.125+0.0625,0+0.5) circle (0.5pt);
      \filldraw[black] (3+0.5-0.125-0.0625,0+0.5) circle (0.5pt);
      \filldraw[black] (3+0.5+0.25,0+0.5) circle (0.5pt);
      \filldraw[black] (3+0.5-0.25,0+0.5) circle (0.5pt);
      \filldraw[black] (3+0.5+0.25+0.125,0+0.5) circle (0.5pt);
      \filldraw[black] (3+0.5+0.25+0.125+0.0625,0+0.5) circle (0.5pt);
      \filldraw[black] (3+0.5+0.25+0.125-0.0625,0+0.5) circle (0.5pt);
      \filldraw[black] (3+0.5-0.25-0.125,0+0.5) circle (0.5pt);
      \filldraw[black] (3+0.5-0.25-0.125+0.0625,0+0.5) circle (0.5pt);
      \filldraw[black] (3+0.5-0.25-0.125-0.0625,0+0.5) circle (0.5pt);
      % Vertical
      \filldraw[gray] (0.5,3+0.5) circle (0.5pt);
      \filldraw[gray] (0.5,3+0.5+0.125) circle (0.5pt);
      \filldraw[gray] (0.5,3+0.5+0.125+0.0625) circle (0.5pt);
      \filldraw[gray] (0.5,3+0.5+0.125-0.0625) circle (0.5pt);
      \filldraw[gray] (0.5,3+0.5-0.125) circle (0.5pt);
      \filldraw[gray] (0.5,3+0.5-0.125+0.0625) circle (0.5pt);
      \filldraw[gray] (0.5,3+0.5-0.125-0.0625) circle (0.5pt);
      \filldraw[gray] (0.5,3+0.5+0.25) circle (0.5pt);
      \filldraw[gray] (0.5,3+0.5-0.25) circle (0.5pt);
      \filldraw[gray] (0.5,3+0.5+0.25+0.125) circle (0.5pt);
      \filldraw[gray] (0.5,3+0.5+0.25+0.125+0.0625) circle (0.5pt);
      \filldraw[gray] (0.5,3+0.5+0.25+0.125-0.0625) circle (0.5pt);
      \filldraw[gray] (0.5,3+0.5-0.25-0.125) circle (0.5pt);
      \filldraw[gray] (0.5,3+0.5-0.25-0.125+0.0625) circle (0.5pt);
      \filldraw[gray] (0.5,3+0.5-0.25-0.125-0.0625) circle (0.5pt);

      % Thirty one point squares
      % Horizontal
      \filldraw[gray] (4+0.5,0+0.5) circle (0.5pt);
      \filldraw[gray] (4+0.5+0.125,0+0.5) circle (0.5pt);
      \filldraw[gray] (4+0.5+0.125+0.0625,0+0.5) circle (0.5pt);
      \filldraw[gray] (4+0.5+0.125+0.0625+0.03125,0+0.5) circle (0.5pt);
      \filldraw[gray] (4+0.5+0.125+0.0625-0.03125,0+0.5) circle (0.5pt);
      \filldraw[gray] (4+0.5+0.125-0.0625,0+0.5) circle (0.5pt);
      \filldraw[gray] (4+0.5+0.125-0.0625+0.03125,0+0.5) circle (0.5pt);
      \filldraw[gray] (4+0.5+0.125-0.0625-0.03125,0+0.5) circle (0.5pt);
      \filldraw[gray] (4+0.5-0.125,0+0.5) circle (0.5pt);
      \filldraw[gray] (4+0.5-0.125+0.0625,0+0.5) circle (0.5pt);
      \filldraw[gray] (4+0.5-0.125+0.0625+0.03125,0+0.5) circle (0.5pt);
      \filldraw[gray] (4+0.5-0.125+0.0625-0.03125,0+0.5) circle (0.5pt);
      \filldraw[gray] (4+0.5-0.125-0.0625,0+0.5) circle (0.5pt);
      \filldraw[gray] (4+0.5-0.125-0.0625+0.03125,0+0.5) circle (0.5pt);
      \filldraw[gray] (4+0.5-0.125-0.0625-0.03125,0+0.5) circle (0.5pt);
      \filldraw[gray] (4+0.5+0.25,0+0.5) circle (0.5pt);
      \filldraw[gray] (4+0.5-0.25,0+0.5) circle (0.5pt);
      \filldraw[gray] (4+0.5+0.25+0.125,0+0.5) circle (0.5pt);
      \filldraw[gray] (4+0.5+0.25+0.125+0.0625,0+0.5) circle (0.5pt);
      \filldraw[gray] (4+0.5+0.25+0.125+0.0625+0.03125,0+0.5) circle (0.5pt);
      \filldraw[gray] (4+0.5+0.25+0.125+0.0625-0.03125,0+0.5) circle (0.5pt);
      \filldraw[gray] (4+0.5+0.25+0.125-0.0625,0+0.5) circle (0.5pt);
      \filldraw[gray] (4+0.5+0.25+0.125-0.0625+0.03125,0+0.5) circle (0.5pt);
      \filldraw[gray] (4+0.5+0.25+0.125-0.0625-0.03125,0+0.5) circle (0.5pt);
      \filldraw[gray] (4+0.5-0.25-0.125,0+0.5) circle (0.5pt);
      \filldraw[gray] (4+0.5-0.25-0.125+0.0625,0+0.5) circle (0.5pt);
      \filldraw[gray] (4+0.5-0.25-0.125+0.0625+0.03125,0+0.5) circle (0.5pt);
      \filldraw[gray] (4+0.5-0.25-0.125+0.0625-0.03125,0+0.5) circle (0.5pt);
      \filldraw[gray] (4+0.5-0.25-0.125-0.0625,0+0.5) circle (0.5pt);
      \filldraw[gray] (4+0.5-0.25-0.125-0.0625+0.03125,0+0.5) circle (0.5pt);
      \filldraw[gray] (4+0.5-0.25-0.125-0.0625-0.03125,0+0.5) circle (0.5pt);
      % Vertical
      \filldraw[gray] (0+0.5,4+0.5) circle (0.5pt);
      \filldraw[gray] (0+0.5,4+0.5+0.125) circle (0.5pt);
      \filldraw[gray] (0+0.5,4+0.5+0.125+0.0625) circle (0.5pt);
      \filldraw[gray] (0+0.5,4+0.5+0.125+0.0625+0.03125) circle (0.5pt);
      \filldraw[gray] (0+0.5,4+0.5+0.125+0.0625-0.03125) circle (0.5pt);
      \filldraw[gray] (0+0.5,4+0.5+0.125-0.0625) circle (0.5pt);
      \filldraw[gray] (0+0.5,4+0.5+0.125-0.0625+0.03125) circle (0.5pt);
      \filldraw[gray] (0+0.5,4+0.5+0.125-0.0625-0.03125) circle (0.5pt);
      \filldraw[gray] (0+0.5,4+0.5-0.125) circle (0.5pt);
      \filldraw[gray] (0+0.5,4+0.5-0.125+0.0625) circle (0.5pt);
      \filldraw[gray] (0+0.5,4+0.5-0.125+0.0625+0.03125) circle (0.5pt);
      \filldraw[gray] (0+0.5,4+0.5-0.125+0.0625-0.03125) circle (0.5pt);
      \filldraw[gray] (0+0.5,4+0.5-0.125-0.0625) circle (0.5pt);
      \filldraw[gray] (0+0.5,4+0.5-0.125-0.0625+0.03125) circle (0.5pt);
      \filldraw[gray] (0+0.5,4+0.5-0.125-0.0625-0.03125) circle (0.5pt);
      \filldraw[gray] (0+0.5,4+0.5+0.25) circle (0.5pt);
      \filldraw[gray] (0+0.5,4+0.5-0.25) circle (0.5pt);
      \filldraw[gray] (0+0.5,4+0.5+0.25+0.125) circle (0.5pt);
      \filldraw[gray] (0+0.5,4+0.5+0.25+0.125+0.0625) circle (0.5pt);
      \filldraw[gray] (0+0.5,4+0.5+0.25+0.125+0.0625+0.03125) circle (0.5pt);
      \filldraw[gray] (0+0.5,4+0.5+0.25+0.125+0.0625-0.03125) circle (0.5pt);
      \filldraw[gray] (0+0.5,4+0.5+0.25+0.125-0.0625) circle (0.5pt);
      \filldraw[gray] (0+0.5,4+0.5+0.25+0.125-0.0625+0.03125) circle (0.5pt);
      \filldraw[gray] (0+0.5,4+0.5+0.25+0.125-0.0625-0.03125) circle (0.5pt);
      \filldraw[gray] (0+0.5,4+0.5-0.25-0.125) circle (0.5pt);
      \filldraw[gray] (0+0.5,4+0.5-0.25-0.125+0.0625) circle (0.5pt);
      \filldraw[gray] (0+0.5,4+0.5-0.25-0.125+0.0625+0.03125) circle (0.5pt);
      \filldraw[gray] (0+0.5,4+0.5-0.25-0.125+0.0625-0.03125) circle (0.5pt);
      \filldraw[gray] (0+0.5,4+0.5-0.25-0.125-0.0625) circle (0.5pt);
      \filldraw[gray] (0+0.5,4+0.5-0.25-0.125-0.0625+0.03125) circle (0.5pt);
      \filldraw[gray] (0+0.5,4+0.5-0.25-0.125-0.0625-0.03125) circle (0.5pt);
      
      %     %% Two dimensional %%
      % Three times three
      \filldraw[gray] (1+0.5,1+0.5) circle (0.5pt);
      \filldraw[gray] (1+0.5+0.25,1+0.5) circle (0.5pt);
      \filldraw[gray] (1+0.5-0.25,1+0.5) circle (0.5pt);
      \filldraw[gray] (1+0.5,1+0.5+0.25) circle (0.5pt);
      \filldraw[gray] (1+0.5+0.25,1+0.5+0.25) circle (0.5pt);
      \filldraw[gray] (1+0.5-0.25,1+0.5+0.25) circle (0.5pt);
      \filldraw[gray] (1+0.5,1+0.5-0.25) circle (0.5pt);
      \filldraw[gray] (1+0.5+0.25,1+0.5-0.25) circle (0.5pt);
      \filldraw[gray] (1+0.5-0.25,1+0.5-0.25) circle (0.5pt);
      
      % Three times seven
      % Horizontal
      \filldraw[gray] (2+0.5,1+0.5) circle (0.5pt);
      \filldraw[gray] (2+0.5+0.25,1+0.5) circle (0.5pt);
      \filldraw[gray] (2+0.5+0.25+0.125,1+0.5) circle (0.5pt);
      \filldraw[gray] (2+0.5+0.25-0.125,1+0.5) circle (0.5pt);
      \filldraw[gray] (2+0.5-0.25,1+0.5) circle (0.5pt);
      \filldraw[gray] (2+0.5-0.25+0.125,1+0.5) circle (0.5pt);
      \filldraw[gray] (2+0.5-0.25-0.125,1+0.5) circle (0.5pt);
      \filldraw[gray] (2+0.5,1+0.5+0.25) circle (0.5pt);
      \filldraw[gray] (2+0.5+0.25,1+0.5+0.25) circle (0.5pt);
      \filldraw[gray] (2+0.5+0.25+0.125,1+0.5+0.25) circle (0.5pt);
      \filldraw[gray] (2+0.5+0.25-0.125,1+0.5+0.25) circle (0.5pt);
      \filldraw[gray] (2+0.5-0.25,1+0.5+0.25) circle (0.5pt);
      \filldraw[gray] (2+0.5-0.25+0.125,1+0.5+0.25) circle (0.5pt);
      \filldraw[gray] (2+0.5-0.25-0.125,1+0.5+0.25) circle (0.5pt);
      \filldraw[gray] (2+0.5,1+0.5-0.25) circle (0.5pt);
      \filldraw[gray] (2+0.5+0.25,1+0.5-0.25) circle (0.5pt);
      \filldraw[gray] (2+0.5+0.25+0.125,1+0.5-0.25) circle (0.5pt);
      \filldraw[gray] (2+0.5+0.25-0.125,1+0.5-0.25) circle (0.5pt);
      \filldraw[gray] (2+0.5-0.25,1+0.5-0.25) circle (0.5pt);
      \filldraw[gray] (2+0.5-0.25+0.125,1+0.5-0.25) circle (0.5pt);
      \filldraw[gray] (2+0.5-0.25-0.125,1+0.5-0.25) circle (0.5pt);
      % Vertical
      \filldraw[gray] (1+0.5,2+0.5) circle (0.5pt);
      \filldraw[gray] (1+0.5+0.25,2+0.5) circle (0.5pt);
      \filldraw[gray] (1+0.5-0.25,2+0.5) circle (0.5pt);
      \filldraw[gray] (1+0.5,2+0.5+0.25) circle (0.5pt);
      \filldraw[gray] (1+0.5+0.25,2+0.5+0.25) circle (0.5pt);
      \filldraw[gray] (1+0.5-0.25,2+0.5+0.25) circle (0.5pt);
      \filldraw[gray] (1+0.5,2+0.5+0.25+0.125) circle (0.5pt);
      \filldraw[gray] (1+0.5+0.25,2+0.5+0.25+0.125) circle (0.5pt);
      \filldraw[gray] (1+0.5-0.25,2+0.5+0.25+0.125) circle (0.5pt);
      \filldraw[gray] (1+0.5,2+0.5+0.25-0.125) circle (0.5pt);
      \filldraw[gray] (1+0.5+0.25,2+0.5+0.25-0.125) circle (0.5pt);
      \filldraw[gray] (1+0.5-0.25,2+0.5+0.25-0.125) circle (0.5pt);
      \filldraw[gray] (1+0.5,2+0.5-0.25) circle (0.5pt);
      \filldraw[gray] (1+0.5+0.25,2+0.5-0.25) circle (0.5pt);
      \filldraw[gray] (1+0.5-0.25,2+0.5-0.25) circle (0.5pt);
      \filldraw[gray] (1+0.5,2+0.5-0.25+0.125) circle (0.5pt);
      \filldraw[gray] (1+0.5+0.25,2+0.5-0.25+0.125) circle (0.5pt);
      \filldraw[gray] (1+0.5-0.25,2+0.5-0.25+0.125) circle (0.5pt);
      \filldraw[gray] (1+0.5,2+0.5-0.25-0.125) circle (0.5pt);
      \filldraw[gray] (1+0.5+0.25,2+0.5-0.25-0.125) circle (0.5pt);
      \filldraw[gray] (1+0.5-0.25,2+0.5-0.25-0.125) circle (0.5pt);
      
      % Seven times seven
      \filldraw[gray] (2+0.5,2+0.5) circle (0.5pt);
      \filldraw[gray] (2+0.5+0.25,2+0.5) circle (0.5pt);
      \filldraw[gray] (2+0.5+0.25+0.125,2+0.5) circle (0.5pt);
      \filldraw[gray] (2+0.5+0.25-0.125,2+0.5) circle (0.5pt);
      \filldraw[gray] (2+0.5-0.25,2+0.5) circle (0.5pt);
      \filldraw[gray] (2+0.5-0.25+0.125,2+0.5) circle (0.5pt);
      \filldraw[gray] (2+0.5-0.25-0.125,2+0.5) circle (0.5pt);
      \filldraw[gray] (2+0.5,2+0.5+0.25) circle (0.5pt);
      \filldraw[gray] (2+0.5+0.25,2+0.5+0.25) circle (0.5pt);
      \filldraw[gray] (2+0.5+0.25+0.125,2+0.5+0.25) circle (0.5pt);
      \filldraw[gray] (2+0.5+0.25-0.125,2+0.5+0.25) circle (0.5pt);
      \filldraw[gray] (2+0.5-0.25,2+0.5+0.25) circle (0.5pt);
      \filldraw[gray] (2+0.5-0.25+0.125,2+0.5+0.25) circle (0.5pt);
      \filldraw[gray] (2+0.5-0.25-0.125,2+0.5+0.25) circle (0.5pt);
      \filldraw[gray] (2+0.5,2+0.5+0.25+0.125) circle (0.5pt);
      \filldraw[gray] (2+0.5+0.25,2+0.5+0.25+0.125) circle (0.5pt);
      \filldraw[gray] (2+0.5+0.25+0.125,2+0.5+0.25+0.125) circle (0.5pt);
      \filldraw[gray] (2+0.5+0.25-0.125,2+0.5+0.25+0.125) circle (0.5pt);
      \filldraw[gray] (2+0.5-0.25,2+0.5+0.25+0.125) circle (0.5pt);
      \filldraw[gray] (2+0.5-0.25+0.125,2+0.5+0.25+0.125) circle (0.5pt);
      \filldraw[gray] (2+0.5-0.25-0.125,2+0.5+0.25+0.125) circle (0.5pt);
      \filldraw[gray] (2+0.5,2+0.5+0.25-0.125) circle (0.5pt);
      \filldraw[gray] (2+0.5+0.25,2+0.5+0.25-0.125) circle (0.5pt);
      \filldraw[gray] (2+0.5+0.25+0.125,2+0.5+0.25-0.125) circle (0.5pt);
      \filldraw[gray] (2+0.5+0.25-0.125,2+0.5+0.25-0.125) circle (0.5pt);
      \filldraw[gray] (2+0.5-0.25,2+0.5+0.25-0.125) circle (0.5pt);
      \filldraw[gray] (2+0.5-0.25+0.125,2+0.5+0.25-0.125) circle (0.5pt);
      \filldraw[gray] (2+0.5-0.25-0.125,2+0.5+0.25-0.125) circle (0.5pt);
      \filldraw[gray] (2+0.5,2+0.5-0.25) circle (0.5pt);
      \filldraw[gray] (2+0.5+0.25,2+0.5-0.25) circle (0.5pt);
      \filldraw[gray] (2+0.5+0.25+0.125,2+0.5-0.25) circle (0.5pt);
      \filldraw[gray] (2+0.5+0.25-0.125,2+0.5-0.25) circle (0.5pt);
      \filldraw[gray] (2+0.5-0.25,2+0.5-0.25) circle (0.5pt);
      \filldraw[gray] (2+0.5-0.25+0.125,2+0.5-0.25) circle (0.5pt);
      \filldraw[gray] (2+0.5-0.25-0.125,2+0.5-0.25) circle (0.5pt);
      \filldraw[gray] (2+0.5,2+0.5-0.25+0.125) circle (0.5pt);
      \filldraw[gray] (2+0.5+0.25,2+0.5-0.25+0.125) circle (0.5pt);
      \filldraw[gray] (2+0.5+0.25+0.125,2+0.5-0.25+0.125) circle (0.5pt);
      \filldraw[gray] (2+0.5+0.25-0.125,2+0.5-0.25+0.125) circle (0.5pt);
      \filldraw[gray] (2+0.5-0.25,2+0.5-0.25+0.125) circle (0.5pt);
      \filldraw[gray] (2+0.5-0.25+0.125,2+0.5-0.25+0.125) circle (0.5pt);
      \filldraw[gray] (2+0.5-0.25-0.125,2+0.5-0.25+0.125) circle (0.5pt);
      \filldraw[gray] (2+0.5,2+0.5-0.25-0.125) circle (0.5pt);
      \filldraw[gray] (2+0.5+0.25,2+0.5-0.25-0.125) circle (0.5pt);
      \filldraw[gray] (2+0.5+0.25+0.125,2+0.5-0.25-0.125) circle (0.5pt);
      \filldraw[gray] (2+0.5+0.25-0.125,2+0.5-0.25-0.125) circle (0.5pt);
      \filldraw[gray] (2+0.5-0.25,2+0.5-0.25-0.125) circle (0.5pt);
      \filldraw[gray] (2+0.5-0.25+0.125,2+0.5-0.25-0.125) circle (0.5pt);
      \filldraw[gray] (2+0.5-0.25-0.125,2+0.5-0.25-0.125) circle (0.5pt);
      
      % Fifteen times three
      \filldraw[gray] (3+0.5,1+0.5) circle (0.5pt);
      \filldraw[gray] (3+0.5+0.25,1+0.5) circle (0.5pt);
      \filldraw[gray] (3+0.5+0.25+0.125,1+0.5) circle (0.5pt);
      \filldraw[gray] (3+0.5+0.25+0.125+0.0625,1+0.5) circle (0.5pt);
      \filldraw[gray] (3+0.5+0.25+0.125-0.0625,1+0.5) circle (0.5pt);
      \filldraw[gray] (3+0.5+0.25-0.125,1+0.5) circle (0.5pt);
      \filldraw[gray] (3+0.5+0.25-0.125+0.0625,1+0.5) circle (0.5pt);
      \filldraw[gray] (3+0.5+0.25-0.125-0.0625,1+0.5) circle (0.5pt);
      \filldraw[gray] (3+0.5-0.25,1+0.5) circle (0.5pt);
      \filldraw[gray] (3+0.5-0.25+0.125,1+0.5) circle (0.5pt);
      \filldraw[gray] (3+0.5-0.25+0.125+0.0625,1+0.5) circle (0.5pt);
      \filldraw[gray] (3+0.5-0.25+0.125-0.0625,1+0.5) circle (0.5pt);
      \filldraw[gray] (3+0.5-0.25-0.125,1+0.5) circle (0.5pt);
      \filldraw[gray] (3+0.5-0.25-0.125+0.0625,1+0.5) circle (0.5pt);
      \filldraw[gray] (3+0.5-0.25-0.125-0.0625,1+0.5) circle (0.5pt);
      \filldraw[gray] (3+0.5,1+0.5+0.25) circle (0.5pt);
      \filldraw[gray] (3+0.5+0.25,1+0.5+0.25) circle (0.5pt);
      \filldraw[gray] (3+0.5+0.25+0.125,1+0.5+0.25) circle (0.5pt);
      \filldraw[gray] (3+0.5+0.25+0.125+0.0625,1+0.5+0.25) circle (0.5pt);
      \filldraw[gray] (3+0.5+0.25+0.125-0.0625,1+0.5+0.25) circle (0.5pt);
      \filldraw[gray] (3+0.5+0.25-0.125,1+0.5+0.25) circle (0.5pt);
      \filldraw[gray] (3+0.5+0.25-0.125+0.0625,1+0.5+0.25) circle (0.5pt);
      \filldraw[gray] (3+0.5+0.25-0.125-0.0625,1+0.5+0.25) circle (0.5pt);
      \filldraw[gray] (3+0.5-0.25,1+0.5+0.25) circle (0.5pt);
      \filldraw[gray] (3+0.5-0.25+0.125,1+0.5+0.25) circle (0.5pt);
      \filldraw[gray] (3+0.5-0.25+0.125+0.0625,1+0.5+0.25) circle (0.5pt);
      \filldraw[gray] (3+0.5-0.25+0.125-0.0625,1+0.5+0.25) circle (0.5pt);
      \filldraw[gray] (3+0.5-0.25-0.125,1+0.5+0.25) circle (0.5pt);
      \filldraw[gray] (3+0.5-0.25-0.125+0.0625,1+0.5+0.25) circle (0.5pt);
      \filldraw[gray] (3+0.5-0.25-0.125-0.0625,1+0.5+0.25) circle (0.5pt);
      \filldraw[gray] (3+0.5,1+0.5-0.25) circle (0.5pt);
      \filldraw[gray] (3+0.5+0.25,1+0.5-0.25) circle (0.5pt);
      \filldraw[gray] (3+0.5+0.25+0.125,1+0.5-0.25) circle (0.5pt);
      \filldraw[gray] (3+0.5+0.25+0.125+0.0625,1+0.5-0.25) circle (0.5pt);
      \filldraw[gray] (3+0.5+0.25+0.125-0.0625,1+0.5-0.25) circle (0.5pt);
      \filldraw[gray] (3+0.5+0.25-0.125,1+0.5-0.25) circle (0.5pt);
      \filldraw[gray] (3+0.5+0.25-0.125+0.0625,1+0.5-0.25) circle (0.5pt);
      \filldraw[gray] (3+0.5+0.25-0.125-0.0625,1+0.5-0.25) circle (0.5pt);
      \filldraw[gray] (3+0.5-0.25,1+0.5-0.25) circle (0.5pt);
      \filldraw[gray] (3+0.5-0.25+0.125,1+0.5-0.25) circle (0.5pt);
      \filldraw[gray] (3+0.5-0.25+0.125+0.0625,1+0.5-0.25) circle (0.5pt);
      \filldraw[gray] (3+0.5-0.25+0.125-0.0625,1+0.5-0.25) circle (0.5pt);
      \filldraw[gray] (3+0.5-0.25-0.125,1+0.5-0.25) circle (0.5pt);
      \filldraw[gray] (3+0.5-0.25-0.125+0.0625,1+0.5-0.25) circle (0.5pt);
      \filldraw[gray] (3+0.5-0.25-0.125-0.0625,1+0.5-0.25) circle (0.5pt);
      % Three times fifteen
      \filldraw[gray] (1+0.5,3+0.5) circle (0.5pt);
      \filldraw[gray] (1+0.5+0.25,3+0.5) circle (0.5pt);
      \filldraw[gray] (1+0.5-0.25,3+0.5) circle (0.5pt);
      \filldraw[gray] (1+0.5,3+0.5+0.25) circle (0.5pt);
      \filldraw[gray] (1+0.5+0.25,3+0.5+0.25) circle (0.5pt);
      \filldraw[gray] (1+0.5-0.25,3+0.5+0.25) circle (0.5pt);
      \filldraw[gray] (1+0.5,3+0.5+0.25+0.125) circle (0.5pt);
      \filldraw[gray] (1+0.5+0.25,3+0.5+0.25+0.125) circle (0.5pt);
      \filldraw[gray] (1+0.5-0.25,3+0.5+0.25+0.125) circle (0.5pt);
      \filldraw[gray] (1+0.5,3+0.5+0.25+0.125+0.0625) circle (0.5pt);
      \filldraw[gray] (1+0.5+0.25,3+0.5+0.25+0.125+0.0625) circle (0.5pt);
      \filldraw[gray] (1+0.5-0.25,3+0.5+0.25+0.125+0.0625) circle (0.5pt);
      \filldraw[gray] (1+0.5,3+0.5+0.25+0.125-0.0625) circle (0.5pt);
      \filldraw[gray] (1+0.5+0.25,3+0.5+0.25+0.125-0.0625) circle (0.5pt);
      \filldraw[gray] (1+0.5-0.25,3+0.5+0.25+0.125-0.0625) circle (0.5pt);
      \filldraw[gray] (1+0.5,3+0.5+0.25-0.125) circle (0.5pt);
      \filldraw[gray] (1+0.5+0.25,3+0.5+0.25-0.125) circle (0.5pt);
      \filldraw[gray] (1+0.5-0.25,3+0.5+0.25-0.125) circle (0.5pt);
      \filldraw[gray] (1+0.5,3+0.5+0.25-0.125+0.0625) circle (0.5pt);
      \filldraw[gray] (1+0.5+0.25,3+0.5+0.25-0.125+0.0625) circle (0.5pt);
      \filldraw[gray] (1+0.5-0.25,3+0.5+0.25-0.125+0.0625) circle (0.5pt);
      \filldraw[gray] (1+0.5,3+0.5+0.25-0.125-0.0625) circle (0.5pt);
      \filldraw[gray] (1+0.5+0.25,3+0.5+0.25-0.125-0.0625) circle (0.5pt);
      \filldraw[gray] (1+0.5-0.25,3+0.5+0.25-0.125-0.0625) circle (0.5pt);
      \filldraw[gray] (1+0.5,3+0.5-0.25) circle (0.5pt);
      \filldraw[gray] (1+0.5+0.25,3+0.5-0.25) circle (0.5pt);
      \filldraw[gray] (1+0.5-0.25,3+0.5-0.25) circle (0.5pt);
      \filldraw[gray] (1+0.5,3+0.5-0.25+0.125) circle (0.5pt);
      \filldraw[gray] (1+0.5+0.25,3+0.5-0.25+0.125) circle (0.5pt);
      \filldraw[gray] (1+0.5-0.25,3+0.5-0.25+0.125) circle (0.5pt);
      \filldraw[gray] (1+0.5,3+0.5-0.25+0.125+0.0625) circle (0.5pt);
      \filldraw[gray] (1+0.5+0.25,3+0.5-0.25+0.125+0.0625) circle (0.5pt);
      \filldraw[gray] (1+0.5-0.25,3+0.5-0.25+0.125+0.0625) circle (0.5pt);
      \filldraw[gray] (1+0.5,3+0.5-0.25+0.125-0.0625) circle (0.5pt);
      \filldraw[gray] (1+0.5+0.25,3+0.5-0.25+0.125-0.0625) circle (0.5pt);
      \filldraw[gray] (1+0.5-0.25,3+0.5-0.25+0.125-0.0625) circle (0.5pt);
      \filldraw[gray] (1+0.5,3+0.5-0.25-0.125) circle (0.5pt);
      \filldraw[gray] (1+0.5+0.25,3+0.5-0.25-0.125) circle (0.5pt);
      \filldraw[gray] (1+0.5-0.25,3+0.5-0.25-0.125) circle (0.5pt);
      \filldraw[gray] (1+0.5,3+0.5-0.25-0.125+0.0625) circle (0.5pt);
      \filldraw[gray] (1+0.5+0.25,3+0.5-0.25-0.125+0.0625) circle (0.5pt);
      \filldraw[gray] (1+0.5-0.25,3+0.5-0.25-0.125+0.0625) circle (0.5pt);
      \filldraw[gray] (1+0.5,3+0.5-0.25-0.125-0.0625) circle (0.5pt);
      \filldraw[gray] (1+0.5+0.25,3+0.5-0.25-0.125-0.0625) circle (0.5pt);
      \filldraw[gray] (1+0.5-0.25,3+0.5-0.25-0.125-0.0625) circle (0.5pt);

      \draw[very thick] (0,3) -- (1,3) -- (1,1) -- (4,1) -- (4,0);
  \end{tikzpicture}
  \subcaption[]{\centering Isotropic sparse grid (gray), \(L=4\), \(\mathbf{p}=(0,0)\).}
  \label{subfig: isotropic component diagram}
\end{subfigure}
\caption{Component diagrams for constructing two-dimensional sparse grids. A sparse grid is the union of all cartesian-product grids, \(\mathcal{X}_{l_1}^{p_1}\times\cdots\times\mathcal{X}_{l_d}^{p_d}\), each corresponding to a multi-index \(\boldsymbol{l}\in\mathcal{I}_L^{d}\). The highlighted components in (b) show the components of (a) mapped onto the isotropic component diagram.}
\label{fig: component diagrams}
\end{figure}
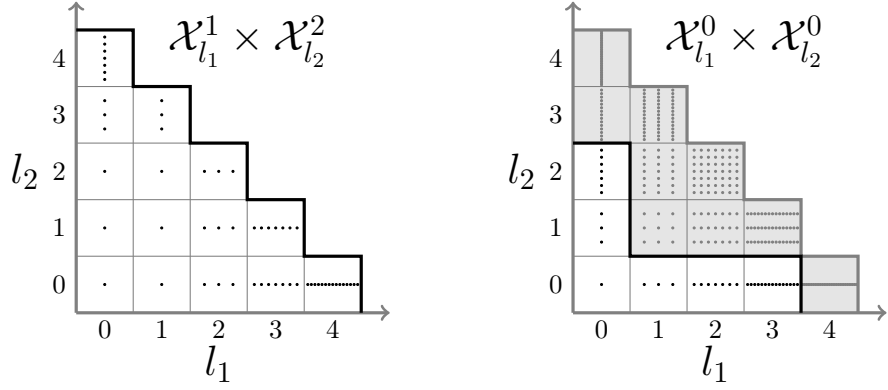

\begin{definition}\label{def: penalised sparse grid operator}
    Let \(L\in\mathbb{N}_0\) and
    \(\mathbf{p}\in\mathbb{N}_0^d\). We define the \emph{lengthscale-informed sparse grid interpolant operator}, \(P_{L,
    \boldsymbol{\nu},\mathbf{p}}:\mathcal{N}_{\Phi_{\boldsymbol{\nu},2^{\mathbf{p}}}}(\Gamma^d)\rightarrow\mathcal{N}_{\Phi_{\boldsymbol{\nu},2^{\mathbf{p}}}}(\Gamma^d)\), by 
\begin{align}
    P_{L,\boldsymbol{\nu}
    ,\mathbf{p}}\coloneqq \sum_{\boldsymbol{l}\in\mathcal{I}^d_L}\bigotimes_{j=1}^dR_{\Gamma}\circ\left(s_{\mathcal{X}_{l_j}^{p_j},\phi_{\nu_j,2^{p_j}}}-s_{\mathcal{X}_{l_j-1}^{p_j},\phi_{\nu_j,2^{p_j}}}\right).
\end{align}
\end{definition}
This operator maps a function \(f\in\mathcal{N}_{\Phi_{\boldsymbol{\nu},2^{\mathbf{p}}}}(\Gamma^d)\) to its \(\Phi_{\boldsymbol{\nu},2^{\mathbf{p}}}\)-kernel interpolant, with interpolation points arranged in a \textit{lengthscale-informed sparse grid}; for all \(\mathbf{x}\in\Gamma^d\),
\begin{align}
    P_{L,\boldsymbol{\nu},\mathbf{p}}(f)(\mathbf{x}) = s_{\mathcal{X}^{\otimes}_{L,\mathbf{p}},\Phi_{\nu,2^{\mathbf{p}}}}(f)(\mathbf{x}),\nonumber
\end{align}
where
\begin{align}
    \mathcal{X}^{\otimes}_{\mathbf{p},L}=\bigcup_{\boldsymbol{l}\in\mathcal{I}_{L}^d}\mathcal{X}^{p_1}_{l_1}\times\cdots\times\mathcal{X}^{p_d}_{l_d}.\nonumber
\end{align}
By Proposition \ref{prop: mean functions are kernel interpolants}, this also means that \(P_{L,\boldsymbol{\nu},\mathbf{p}}(f)\) is indeed the posterior mean function of a Gaussian process with zero prior mean, prior covariance kernel \(\Phi_{\boldsymbol{\nu},2^{\mathbf{p}}}\), and data in the form of evaluations of \(f\) at points in \(\mathcal{X}^{\otimes}_{\mathbf{p},L}\). In the component diagrams shown in Figure \ref{fig: component diagrams}, we see a two dimensional example of how the construction of lengthscale-informed sparse grids relate to their isotropic counterparts. Choosing a trivial penalty \(\mathbf{p}=\mathbf{0}\) recovers exactly the isotropic sparse grid construction in Definition \ref{def: iso sparse grid}. In this way, lengthscale-informed sparse grids can be viewed as a generalisation of sparse grids to settings with lengthscale anisotropy. The resultant isotropic and lengthscale-informed sparse grid designs are illustrated in Figure \(\ref{fig: stretched sparse grid}\). 

\begin{figure}[h]
    \centering
    \begin{tikzpicture}

    % Square 1.
    \draw[gray, thick] (-5,-2) -- (-5,2);
    \draw[gray, thick] (-1,-2) -- (-1,2);
    \draw[gray, thick] (-5,2) -- (-1,2);
    \draw[gray, thick] (-5,-2) -- (-1,-2);
    % Filling in first square.
    \filldraw[gray, fill opacity = 0.2] (-5,2) -- (-1,2) -- (-1,-2) -- (-5,-2) -- (-5,2);
    \filldraw[white] (-4,0.5) -- (-2,0.5) -- (-2,-0.5) -- (-4,-0.5) -- (-4,0.5);

    % Square 2.
    \draw[gray, thick] (5,-2) -- (5,2);
    \draw[gray, thick] (1,-2) -- (1,2);
    \draw[gray, thick] (5,2) -- (1,2);
    \draw[gray, thick] (5,-2) -- (1,-2);

    % Rectangle 1.
    \draw[gray, thick] (-4,-0.5) -- (-4,0.5);
    \draw[gray, thick] (-2,-0.5) -- (-2,0.5);
    \draw[gray, thick] (-4,0.5) -- (-2,0.5);
    \draw[gray, thick] (-4,-0.5) -- (-2,-0.5);

    % Lines Rectangle 1 to Square 2.
    \draw[gray, thick] (-4,-0.5) -- (1,-2);
    \draw[gray, thick] (-2,-0.5) -- (1,-0.5-0.64285714285);
    \draw[gray, thick, dotted] (1,-0.5-0.64285714285) -- (5,-2);
    \draw[gray, thick] (-4,0.5) -- (1,2);
    \draw[gray, thick] (-2,0.5) -- (1,0.5+0.64285714285);
    \draw[gray, thick, dotted] (1,0.5+0.64285714285) -- (5,2);

    %% POINTS SQUARE 1 %%
    
    % Centre point.
    \filldraw[black] (-3,0) circle (1pt);
    
    % Horizontal axis.
    % Level 1.
    \filldraw[gray] (-2,0) circle (1pt);
    \filldraw[gray] (-4,0) circle (1pt);
    % Level 2.
    \filldraw[black] (-2.5,0) circle (1pt);
    \filldraw[black] (-3.5,0) circle (1pt);
    \filldraw[gray] (-1.5,0) circle (1pt);
    \filldraw[gray] (-4.5,0) circle (1pt);
    % Level 3.
    \filldraw[black] (-2.25,0) circle (1pt);
    \filldraw[black] (-3.25,0) circle (1pt);
    \filldraw[gray] (-1.25,0) circle (1pt);
    \filldraw[gray] (-4.25,0) circle (1pt);
    \filldraw[black] (-2.75,0) circle (1pt);
    \filldraw[black] (-3.75,0) circle (1pt);
    \filldraw[gray] (-1.75,0) circle (1pt);
    \filldraw[gray] (-4.75,0) circle (1pt);
    % Level 4.
    \filldraw[black] (-2.25-0.125,0) circle (1pt);
    \filldraw[black] (-3.25-0.125,0) circle (1pt);
    \filldraw[gray] (-1.25-0.125,0) circle (1pt);
    \filldraw[gray] (-4.25-0.125,0) circle (1pt);
    \filldraw[black] (-2.75-0.125,0) circle (1pt);
    \filldraw[black] (-3.75-0.125,0) circle (1pt);
    \filldraw[gray] (-1.75-0.125,0) circle (1pt);
    \filldraw[gray] (-4.75-0.125,0) circle (1pt);
    
    \filldraw[black] (-2.25+0.125,0) circle (1pt);
    \filldraw[black] (-3.25+0.125,0) circle (1pt);
    \filldraw[gray] (-1.25+0.125,0) circle (1pt);
    \filldraw[gray] (-4.25+0.125,0) circle (1pt);
    \filldraw[black] (-2.75+0.125,0) circle (1pt);
    \filldraw[black] (-3.75+0.125,0) circle (1pt);
    \filldraw[gray] (-1.75+0.125,0) circle (1pt);
    \filldraw[gray] (-4.75+0.125,0) circle (1pt);

    % Vertical axis.
    % Level 1.
    \filldraw[gray] (-3,1) circle (1pt);
    \filldraw[gray] (-3,-1) circle (1pt);
    % Level 2.
    \filldraw[gray] (-3,-0.5) circle (1pt);
    \filldraw[gray] (-3,-1.5) circle (1pt);
    \filldraw[gray] (-3,0.5) circle (1pt);
    \filldraw[gray] (-3,1.5) circle (1pt);
    % Level 3.
    \filldraw[black] (-3,-0.25) circle (1pt);
    \filldraw[black] (-3,0.25) circle (1pt);
    \filldraw[gray] (-3,1.25) circle (1pt);
    \filldraw[gray] (-3,-1.25) circle (1pt);
    \filldraw[gray] (-3,0.75) circle (1pt);
    \filldraw[gray] (-3,-0.75) circle (1pt);
    \filldraw[gray] (-3,-1.75) circle (1pt);
    \filldraw[gray] (-3,1.75) circle (1pt);
    % Level 4.
    \filldraw[black] (-3,-0.25-0.125) circle (1pt);
    \filldraw[black] (-3,0.25-0.125) circle (1pt);
    \filldraw[gray] (-3,1.25-0.125) circle (1pt);
    \filldraw[gray] (-3,-1.25-0.125) circle (1pt);
    \filldraw[gray] (-3,0.75-0.125) circle (1pt);
    \filldraw[gray] (-3,-0.75-0.125) circle (1pt);
    \filldraw[gray] (-3,-1.75-0.125) circle (1pt);
    \filldraw[gray] (-3,1.75-0.125) circle (1pt);
    
    \filldraw[black] (-3,-0.25+0.125) circle (1pt);
    \filldraw[black] (-3,0.25+0.125) circle (1pt);
    \filldraw[gray] (-3,1.25+0.125) circle (1pt);
    \filldraw[gray] (-3,-1.25+0.125) circle (1pt);
    \filldraw[gray] (-3,0.75+0.125) circle (1pt);
    \filldraw[gray] (-3,-0.75+0.125) circle (1pt);
    \filldraw[gray] (-3,-1.75+0.125) circle (1pt);
    \filldraw[gray] (-3,1.75+0.125) circle (1pt);

    % Off-centre points.
    \filldraw[gray] (-4,1) circle (1pt);
    \filldraw[gray] (-2,1) circle (1pt);
    \filldraw[gray] (-2,-1) circle (1pt);
    \filldraw[gray] (-4,-1) circle (1pt);

    % Top-left points.
    % Horizontal axis.
    % Level 1.
    \filldraw[gray] (1/2 + -4,0 + 1) circle (1pt);
    \filldraw[gray] (-1/2 + -4,0 + 1) circle (1pt);
    % Level 2.
    \filldraw[gray] (-0.5/2 + -4,0 + 1) circle (1pt);
    \filldraw[gray] (-1.5/2 + -4,0 + 1) circle (1pt);
    \filldraw[gray] (1.5/2 + -4,0 + 1) circle (1pt);
    \filldraw[gray] (0.5/2 + -4,0 + 1) circle (1pt);
    % Vertical axis.
    % Level 1.
    \filldraw[gray] (0 + -4,1/2 + 1) circle (1pt);
    \filldraw[gray] (0 + -4,-1/2 + 1) circle (1pt);
    % Level 2.
    \filldraw[gray] (0 + -4,-0.5/2 + 1) circle (1pt);
    \filldraw[gray] (0 + -4,-1.5/2 + 1) circle (1pt);
    \filldraw[gray] (0 + -4,1.5/2 + 1) circle (1pt);
    \filldraw[gray] (0 + -4,0.5/2 + 1) circle (1pt);

    % Top-right points.
    % Horizontal axis.
    % Level 1.
    \filldraw[gray] (1/2 + -2,0 + 1) circle (1pt);
    \filldraw[gray] (-1/2 + -2,0 + 1) circle (1pt);
    % Level 2.
    \filldraw[gray] (-0.5/2 + -2,0 + 1) circle (1pt);
    \filldraw[gray] (-1.5/2 + -2,0 + 1) circle (1pt);
    \filldraw[gray] (1.5/2 + -2,0 + 1) circle (1pt);
    \filldraw[gray] (0.5/2 + -2,0 + 1) circle (1pt);
    % Vertical axis.
    % Level 1.
    \filldraw[gray] (0 + -2,1/2 + 1) circle (1pt);
    \filldraw[gray] (0 + -2,-1/2 + 1) circle (1pt);
    % Level 2.
    \filldraw[gray] (0 + -2,-0.5/2 + 1) circle (1pt);
    \filldraw[gray] (0 + -2,-1.5/2 + 1) circle (1pt);
    \filldraw[gray] (0 + -2,1.5/2 + 1) circle (1pt);
    \filldraw[gray] (0 + -2,0.5/2 + 1) circle (1pt);

    % Bottom-right points.
    % Horizontal axis.
    % Level 1.
    \filldraw[gray] (1/2 + -2,0 - 1) circle (1pt);
    \filldraw[gray] (-1/2 + -2,0 - 1) circle (1pt);
    % Level 2.
    \filldraw[gray] (-0.5/2 + -2,0 - 1) circle (1pt);
    \filldraw[gray] (-1.5/2 + -2,0 - 1) circle (1pt);
    \filldraw[gray] (1.5/2 + -2,0 - 1) circle (1pt);
    \filldraw[gray] (0.5/2 + -2,0 - 1) circle (1pt);
    % Vertical axis.
    % Level 1.
    \filldraw[gray] (0 + -2,1/2 - 1) circle (1pt);
    \filldraw[gray] (0 + -2,-1/2 - 1) circle (1pt);
    % Level 2.
    \filldraw[gray] (0 + -2,-0.5/2 - 1) circle (1pt);
    \filldraw[gray] (0 + -2,-1.5/2 - 1) circle (1pt);
    \filldraw[gray] (0 + -2,1.5/2 - 1) circle (1pt);
    \filldraw[gray] (0 + -2,0.5/2 - 1) circle (1pt);

    % Bottom-left points.
    % Horizontal axis.
    % Level 1.
    \filldraw[gray] (1/2 + -4,0 - 1) circle (1pt);
    \filldraw[gray] (-1/2 + -4,0 - 1) circle (1pt);
    % Level 2.
    \filldraw[gray] (-0.5/2 + -4,0 - 1) circle (1pt);
    \filldraw[gray] (-1.5/2 + -4,0 - 1) circle (1pt);
    \filldraw[gray] (1.5/2 + -4,0 - 1) circle (1pt);
    \filldraw[gray] (0.5/2 + -4,0 - 1) circle (1pt);
    % Vertical axis.
    % Level 1.
    \filldraw[gray] (0 + -4,1/2 - 1) circle (1pt);
    \filldraw[gray] (0 + -4,-1/2 - 1) circle (1pt);
    % Level 2.
    \filldraw[gray] (0 + -4,-0.5/2 - 1) circle (1pt);
    \filldraw[gray] (0 + -4,-1.5/2 - 1) circle (1pt);
    \filldraw[gray] (0 + -4,1.5/2 - 1) circle (1pt);
    \filldraw[gray] (0 + -4,0.5/2 - 1) circle (1pt);

    % Off-off centre points.
    \filldraw[gray] (-4 + 0.5,1 + 0.5) circle (1pt);
    \filldraw[gray] (-2 + 0.5,1 + 0.5) circle (1pt);
    \filldraw[gray] (-2 + 0.5,-1 + 0.5) circle (1pt);
    \filldraw[gray] (-4 + 0.5,-1 + 0.5) circle (1pt);

    \filldraw[gray] (-4 + 0.5,1 - 0.5) circle (1pt);
    \filldraw[gray] (-2 + 0.5,1 - 0.5) circle (1pt);
    \filldraw[gray] (-2 + 0.5,-1 - 0.5) circle (1pt);
    \filldraw[gray] (-4 + 0.5,-1 - 0.5) circle (1pt);

    \filldraw[gray] (-4 - 0.5,1 + 0.5) circle (1pt);
    \filldraw[gray] (-2 - 0.5,1 + 0.5) circle (1pt);
    \filldraw[gray] (-2 - 0.5,-1 + 0.5) circle (1pt);
    \filldraw[gray] (-4 - 0.5,-1 + 0.5) circle (1pt);
    
    \filldraw[gray] (-4 - 0.5,1 - 0.5) circle (1pt);
    \filldraw[gray] (-2 - 0.5,1 - 0.5) circle (1pt);
    \filldraw[gray] (-2 - 0.5,-1 - 0.5) circle (1pt);
    \filldraw[gray] (-4 - 0.5,-1 - 0.5) circle (1pt);

    %% POINTS SQUARE 2 %%
    % Centre point.
    \filldraw[black] (3,0) circle (1pt);
    
    % Horizontal axis.
    % Level 1.
    \filldraw[black] (2,0) circle (1pt);
    \filldraw[black] (4,0) circle (1pt);
    % Level 2.
    \filldraw[black] (2.5,0) circle (1pt);
    \filldraw[black] (3.5,0) circle (1pt);
    \filldraw[black] (1.5,0) circle (1pt);
    \filldraw[black] (4.5,0) circle (1pt);
    % Level 3.
    \filldraw[black] (2.25,0) circle (1pt);
    \filldraw[black] (3.25,0) circle (1pt);
    \filldraw[black] (1.25,0) circle (1pt);
    \filldraw[black] (4.25,0) circle (1pt);
    \filldraw[black] (2.75,0) circle (1pt);
    \filldraw[black] (3.75,0) circle (1pt);
    \filldraw[black] (1.75,0) circle (1pt);
    \filldraw[black] (4.75,0) circle (1pt);

    % Vertical axis.
    % Level 1.
    \filldraw[black] (3,1) circle (1pt);
    \filldraw[black] (3,-1) circle (1pt);
    % Level 2.
    \filldraw[black] (3,-0.5) circle (1pt);
    \filldraw[black] (3,-1.5) circle (1pt);
    \filldraw[black] (3,0.5) circle (1pt);
    \filldraw[black] (3,1.5) circle (1pt);

    % Labels.
    \node at (-3,-3.1) {\small(a) Isotropic sparse grid, \(L=4\).};
    \node at (3,-3.1) {\small(b) Lengthscale-informed sparse grid,};
    \node at (3.2,-3.5) {\small\(L=4\), with penalty \(\mathbf{p}=(1,2)\).};
    % \node at (-3,-2.6) {(a)};
    % \node at (3,-2.6) {(b)};

    % Axis;
    \node[black] at (-5,-2.3) {\small-0.5};
    \node[black] at (-1,-2.3) {\small0.5};
    \node[black] at (5,-2.3) {\small0.5};
    \node[black] at (1,-2.3) {\small-0.5};

    \node[black] at (-5.5,-2) {\small-0.5};
    \node[black] at (-5.5,2) {\small0.5};
    \node[black] at (5.5,-2) {\small-0.5};
    \node[black] at (5.5,2) {\small0.5};

    % Axis labels.
    \node[black] at (-3,-2.5) {\Large\(x_1\)};
    \node[black] at (-5.5,0) {\Large\(x_2\)};
    \node[black] at (3,-2.5) {\Large\(x_1\)};
    \node[black] at (5.5,0) {\Large\(x_2\)};

\end{tikzpicture}

    \caption{Here we can see how a lengthscale-informed sparse grid, (b), is simply an isotropic sparse grid, (a), of the same level, \(L\), stretched by the penalty vector, \(\mathbf{p}\), where points outside the domain are then excluded. Since \(p_2>p_1\), \(f\) is assumed to be more sensitive in \(x_1\), and so more points are placed in the horizontal direction.}
    \label{fig: stretched sparse grid}
\end{figure}
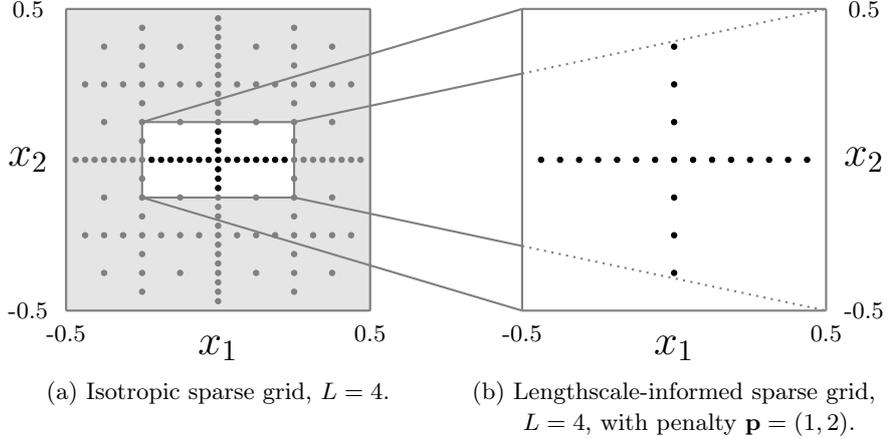
\begin{remark}
    The theoretical results presented in Section \ref{subsec: results} rely upon recursive properties of uniformly spaced points. Extending the analysis to other point sets, for example Clenshaw-Curtis abscissae, would require significant alterations to the theory. The numerical experiments presented in Section \ref{sec: numerics} nevertheless suggest that similar convergence behaviour may hold in practice.
\end{remark}
\subsection{Bounding the error}\label{subsec: results}

We now state our main theoretical results. Theorem \ref{thm: error in L} establishes an upper bound for the approximation error for Mat\'ern kernel interpolation on lengthscale-informed sparse grids in terms of \(L\), the level of construction of the sparse grid. We measure the error in the corresponding native space norm of the separable Mat\'ern kernel, wherein any anisotropy in the lengthscale is encoded via the penalty vector \(\mathbf{p}\in\mathbb{N}_0^d\). By Proposition \ref{prop: sup norm}, this also gives us an upper bound in \(L^\infty\). Theorem \ref{thm: counting abscissae} then links the number of abscissae in a lengthscale-informed sparse grid to \(L\) via an explicit expression. We compare both results directly with their isotropic counterparts and consequently exhibit how, for a fixed \(L\), increased penalties correspond, in a sense, to lower dimensional problems, with respect to their computational complexity. Derivations of the two theorems are presented in Section \ref{sec: error analysis}.
\begin{definition} We define \(\mathcal{P}_k^d\) to be the set of all \(k\)-length subsets of \(\{1,\dots,d\}\), ordered such that, for a given \(\mathfrak{u}\in\mathcal{P}^d_k\), we have \(\mathfrak{u}_i<\mathfrak{u}_{i+1}\), for all \(1\leq i\leq k-1\). For a given \(\mathfrak{u}\in\mathcal{P}^d_k\), and a given vector \(\mathbf{a}=[a_1,\dots,a_d]\), we define the subvector \(\mathbf{a}_{\mathfrak{u}}\coloneqq[a_{\mathfrak{u}_1},\dots,a_{\mathfrak{u}_k}]\).
    
\end{definition}
\begin{theorem}[Native space approximation error]\label{thm: error in L}
    Let \(\mathbf{p}\in\mathbb{N}_0^d\), \(L\in\mathbb{N}_0\), be given, and let \(\boldsymbol{\nu},\boldsymbol{\alpha}\in\mathbb{R}^d_{\geq1/2}\) be such that \(\nu_j-\alpha_j=c\in\mathbb{R}_{\geq0}\) for all \(1\leq j \leq d\). For constants \(C^{(\boldsymbol{\nu},\boldsymbol{\alpha})}_{\ref{lem: tensor hilbert}}\) and \(C_{\ref{lem: supersets}}^{(\boldsymbol{\nu}_{\mathfrak{u}},\boldsymbol{\alpha}_{\mathfrak{u}})}\) independent of \(\mathbf{p}\), the approximation error of the lengthscale-informed sparse grid interpolation operator is bounded by,
    \begin{align}
    \begin{split}
    &\qquad\quad\|I-P_{L,\boldsymbol{\nu},\mathbf{p}}\|_{\mathcal{N}_{\Phi_{\boldsymbol{\nu},2^{\mathbf{p}}}}(\Gamma^d)\rightarrow\mathcal{N}_{\Phi_{\boldsymbol{\alpha},2^{\mathbf{p}}}}(\Gamma^d)}\\
    &\leq C^{(\boldsymbol{\nu},\boldsymbol{\alpha})}_{\ref{lem: tensor hilbert}}\sum_{k=1}^d2^{-ck}\sum_{\mathfrak{u}\in\mathcal{P}^d_k}2^{-c|\mathbf{p}_{\mathfrak{u}}|_1}\epsilon^{(k)}_{\boldsymbol{\nu}_{\mathfrak{u}},\boldsymbol{\alpha}_{\mathfrak
    {u}}}(L-|\mathbf{p}_{\mathfrak{u}}|_1-k),
    \end{split}\nonumber
    \end{align}
    where \(\epsilon^{(k)}_{\boldsymbol{\nu}_{\mathfrak{u}},\boldsymbol{\alpha}_{\mathfrak{u}}}(L)\) is the corresponding approximation error bound for a \(k\)-dimensional isotropic sparse grid of level \(L\), in the mixed Sobolev norm. That is,
    \begin{align}
    \|I-S_{L,\boldsymbol{\nu}_{\mathfrak{u}}}\|_{H_{\textrm{mix}}^{\boldsymbol{\nu}_{\mathfrak{u}}+1/2}(\Gamma^k)\rightarrow H_{\textrm{mix}}^{\boldsymbol{\alpha}_{\mathfrak{u}}+1/2}(\Gamma^k)}&\leq C_{\ref{lem: supersets}}^{(\boldsymbol{\nu}_{\mathfrak{u}},\boldsymbol{\alpha}_{\mathfrak{u}})}\sum_{\boldsymbol{l}\in\mathbb{N}_0^k\setminus\mathcal{I}_{L}^k}2^{-c|\boldsymbol{l}|_1}\nonumber\\
    &\eqqcolon\epsilon^{(k)}_{\boldsymbol{\nu}_{\mathfrak{u}},\boldsymbol{\alpha}_{\mathfrak{u}}}(L)\nonumber.
    \end{align}
    For \(L<0\), we define \(\epsilon^{(k)}_{\boldsymbol{\nu}_{\mathfrak{u}},\boldsymbol{\alpha}_{\mathfrak{u}}}(L)\coloneqq\epsilon^{(k)}_{\boldsymbol{\nu}_{\mathfrak{u}},\boldsymbol{\alpha}_{\mathfrak{u}}}(0)\).
\end{theorem}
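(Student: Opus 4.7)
The plan is to decompose the error operator into a hierarchical sum, identify which multi-indices actually contribute thanks to the penalty structure, rescale each active dimension back to a unit lengthscale, and then apply the known isotropic sparse grid bound on the resulting lower-dimensional subspace.

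First I would write the telescoping identity
\[
I - P_{L,\boldsymbol{\nu},\mathbf{p}} = \sum_{\boldsymbol{l} \in \mathbb{N}_0^d \setminus \mathcal{I}^d_L} \bigotimes_{j=1}^d R_\Gamma \circ \Delta^{(j)}_{l_j},
\]
where \(\Delta^{(j)}_{l_j} \coloneqq s_{\mathcal{X}^{p_j}_{l_j}, \phi_{\nu_j, 2^{p_j}}} - s_{\mathcal{X}^{p_j}_{l_j-1}, \phi_{\nu_j, 2^{p_j}}}\), with the convention \(s_{\mathcal{X}^{p_j}_{-1},\cdot} \coloneqq 0\). By Proposition~\ref{prop: separable native space isomorph} the native space of \(\Phi_{\boldsymbol{\nu},2^{\mathbf{p}}}\) is a Hilbert tensor product, and a standard tensor-product argument (encoded by the constant \(C^{(\boldsymbol{\nu},\boldsymbol{\alpha})}_{\ref{lem: tensor hilbert}}\) in the statement) bounds the operator norm of this sum by the \(\ell^1\)-sum of products of the corresponding one-dimensional operator norms.

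Next I would exploit the penalty structure: Definition~\ref{def:chi_l^p} gives \(\mathcal{X}^{p_j}_{l_j}=\mathcal{X}^{p_j}_{l_j-1}=\{0\}\) for \(1\le l_j\le p_j\), so \(\Delta^{(j)}_{l_j}=0\) in this range. Consequently only multi-indices whose components satisfy either \(l_j=0\) or \(l_j\ge p_j+1\) contribute. Parametrising such \(\boldsymbol{l}\) by its active set \(\mathfrak{u}=\{j:l_j\ge p_j+1\}\in\mathcal{P}^d_k\) of cardinality \(k\), and substituting \(l''_j=l_j-p_j-1\ge 0\) for \(j\in\mathfrak{u}\), the constraint \(|\boldsymbol{l}|_1>L\) becomes \(\boldsymbol{l}''_\mathfrak{u}\in\mathbb{N}_0^k\setminus\mathcal{I}^k_{L-|\mathbf{p}_\mathfrak{u}|_1-k}\). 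Using the stretching operators \(\mathcal{S}_{2^{p_j}}\) and \(\mathcal{T}_{2^{p_j}}\), together with the identity \(\phi_{\nu_j,2^{p_j}}(x,x')=\phi_{\nu_j,1}(2^{-p_j}x,2^{-p_j}x')\) and the fact that \(\mathcal{X}^{p_j}_{l_j}=\mathcal{X}_{l_j-p_j}\) rescales to a uniform grid on the shrunken domain \(2^{-p_j}\Gamma\), each one-dimensional operator norm equals the corresponding quantity with lengthscale one on \(2^{-p_j}\Gamma\); a standard Matérn hierarchical error bound then yields a factor of order \(2^{-c l_j}=2^{-c(l''_j+p_j+1)}\) per active dimension. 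Multiplying across \(j\in\mathfrak{u}\) produces the three-fold product \(2^{-ck}\,2^{-c|\mathbf{p}_\mathfrak{u}|_1}\,2^{-c|\boldsymbol{l}''_\mathfrak{u}|_1}\); summing the last factor over \(\boldsymbol{l}''_\mathfrak{u}\in\mathbb{N}_0^k\setminus\mathcal{I}^k_{L-|\mathbf{p}_\mathfrak{u}|_1-k}\) yields exactly \(\epsilon^{(k)}_{\boldsymbol{\nu}_\mathfrak{u},\boldsymbol{\alpha}_\mathfrak{u}}(L-|\mathbf{p}_\mathfrak{u}|_1-k)\), with the isotropic constant \(C^{(\boldsymbol{\nu}_\mathfrak{u},\boldsymbol{\alpha}_\mathfrak{u})}_{\ref{lem: supersets}}\) supplied by the lemma referenced in the statement. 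Summing over \(k\) and \(\mathfrak{u}\) reproduces the displayed bound; the convention \(\epsilon^{(k)}(L)=\epsilon^{(k)}(0)\) for \(L<0\) naturally accommodates the case in which the admissible index set already equals all of \(\mathbb{N}_0^k\).

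The main obstacle will be the rescaling step: one must verify carefully that the stretching operator \(\mathcal{S}_{2^{p_j}}\) induces the required isometry between native spaces at different lengthscales over the different domains \(\Gamma\) and \(2^{-p_j}\Gamma\), and that the outer restriction \(R_\Gamma\) (which truncates to a subset of the stretched domain) does not inflate the one-dimensional operator norm. Once the uniform one-dimensional estimate \(\|R_\Gamma\circ\Delta^{(j)}_{l_j}\|\leq C\,2^{-c l_j}\), with \(C\) independent of \(p_j\), is in hand, the remaining argument—splitting multi-indices by their active set and collecting the geometric factors above—reduces to routine combinatorial bookkeeping.
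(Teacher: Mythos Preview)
Your proposal is correct and follows essentially the same route as the paper: telescoping expansion of $I-P_{L,\boldsymbol{\nu},\mathbf{p}}$ over $\mathbb{N}_0^d\setminus\mathcal{I}_L^d$, tensor-product factorisation of the operator norm, elimination of the indices with $1\le l_j\le p_j$ via the penalty structure, reparametrisation by the active set $\mathfrak{u}$ with the shift $l_j\mapsto l_j-p_j-1$, and a rescaling argument reducing each active one-dimensional factor to a unit-lengthscale estimate. The paper carries out the rescaling step you flag as the main obstacle by first showing that the restricted native-space norm of a kernel interpolant coincides with its unrestricted norm (Proposition~\ref{prop: matern extensions}), then establishing the exact identity $\|R_{\Gamma}(s_{\mathcal{X}_l,\phi_{\nu,2^p}}(f))\|_{\mathcal{N}_{\phi_{\nu,2^p}}(\Gamma)}=\|R_{\Gamma_p}(s_{\mathcal{X}_{l+p}\cap\Gamma_p,\phi_{\nu,1}}(\mathcal{S}_{2^{-p}}(f)))\|_{\mathcal{N}_{\phi_{\nu,1}}(\Gamma_p)}$ (Lemma~\ref{prop: RKHS stretched kernels}); the actual decay rate $2^{-cl_j}$ is then extracted not directly in the native-space norm but via the clean scalar equivalence between $\mathcal{N}_{\phi_{\nu,1}}$ and $H^{\nu+1/2}$ at $\lambda=1$ (Remark~\ref{rem: change norm}) together with the Sobolev fill-distance bound (Lemma~\ref{thm: Wendland norm}), after which one returns to the native space and bounds the residual interpolant norm by $1$ using the minimum-norm property of kernel interpolants.
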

\begin{proof}
    See Section \ref{subsec: error in L}.
\end{proof}

\begin{remark}\label{rem: embeddings}
    This result contrasts with those of Nobile, Tempone \& Wolfers (2018)~\cite{Nobile2018}, where error bounds are established in more standard \(H_{\textrm{mix}}^{\otimes}\) norms. While the equivalence constants between these and the corresponding native space norms generally grow with the kernel lengthscales (see Proposition \ref{prop: equivalence}), the following results show that the \(L^\infty\)-error is nonetheless controlled by the native space norm, independently of the lengthscale anisotropy.
\end{remark}
\begin{remark}
    The constants \(C^{(\boldsymbol{\nu},\boldsymbol{\alpha})}_{\ref{lem: tensor hilbert}}\) and \(C_{\ref{lem: supersets}}^{(\boldsymbol{\nu}_{\mathfrak{u}},\boldsymbol{\alpha}_{\mathfrak{u}})}\) are given explicitly in Lemmas~\ref{lem: tensor hilbert} and \ref{lem: supersets}, respectively. While \(C^{(\boldsymbol{\nu},\boldsymbol{\alpha})}_{\ref{lem: tensor hilbert}}\) depends on \(d\), we show in Proposition~\ref{prop: bounded constant} that it is bounded from above by \(1\). In contrast, \(C_{\ref{lem: supersets}}^{(\boldsymbol{\nu}_{\mathfrak{u}},\boldsymbol{\alpha}_{\mathfrak{u}})}\) is the product of the \(k\)-many constants appearing in Proposition~\ref{prop: initial wendland} and Lemma~\ref{thm: Wendland norm}, each of which is independent of both \(k\) and \(L\). To the authors' knowledge, an explicit analysis of these constants has not been carried out. This same product appears in the asymptotic formulation in \cite{Nobile2018} based on classical Sobolev norms, where vertical shifts were observed numerically in error convergence rates for increasing dimensions. Comparable small vertical shifts can also be seen for isotropic experiments (\(\mathbf{p}=\mathbf{0}\)) shown in Figures~\ref{fig: iso vs LISG} and \ref{fig: LISG time}, however appear to be absent in the setting of growing lengthscales, seen most clearly in Figure~\ref{fig: LISG high dimensions}. These observations suggests that any growth of these constants with the dimension \(k\) is quickly dominated by the exponential decay of the factors \(2^{-c|\mathbf{p}_{\mathfrak{u}}|_1}\) when sufficient anisotropy is present and properly accounted for.
\end{remark}
\begin{proposition}\label{prop: sup norm}
    For the native space \(\mathcal{N}_{\varphi}(\Omega)\),  with reproducing kernel \(\varphi\), we have, for any \(g\in\mathcal{N}_{\varphi}(\Omega)\)
    \begin{align}
       \|g\|_{L^{\infty}(\Omega)}\leq\|\sqrt{\varphi(\cdot,\cdot)}\|_{L^{\infty}(\Omega)}\|g\|_{\mathcal{N}_{\varphi}(\Omega)}.\nonumber
    \end{align}
\end{proposition}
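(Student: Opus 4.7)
The plan is a short, standard argument that uses only the reproducing property from Definition \ref{def: native space} together with the Cauchy--Schwarz inequality in the Hilbert space $\mathcal{N}_{\varphi}(\Omega)$. The diagonal quantity $\sqrt{\varphi(\cdot,\cdot)}$ on the right is precisely the $\mathcal{N}_{\varphi}$-norm of the canonical feature map $\varphi(\cdot,x)$, which is what makes the bound lengthscale-agnostic in the statement.

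First I would fix an arbitrary $x \in \Omega$ and $g \in \mathcal{N}_{\varphi}(\Omega)$, and use the reproducing property to write $g(x) = \langle g,\varphi(\cdot,x)\rangle_{\mathcal{N}_{\varphi}(\Omega)}$. Applying Cauchy--Schwarz gives
\begin{align*}
|g(x)| \;\leq\; \|g\|_{\mathcal{N}_{\varphi}(\Omega)}\,\|\varphi(\cdot,x)\|_{\mathcal{N}_{\varphi}(\Omega)}.
\end{align*}
Next I would compute the second factor by applying the reproducing property a second time, this time to the element $\varphi(\cdot,x)\in\mathcal{N}_{\varphi}(\Omega)$ itself:
\begin{align*}
\|\varphi(\cdot,x)\|_{\mathcal{N}_{\varphi}(\Omega)}^{2} \;=\; \langle \varphi(\cdot,x),\varphi(\cdot,x)\rangle_{\mathcal{N}_{\varphi}(\Omega)} \;=\; \varphi(x,x),
\end{align*}
so that $\|\varphi(\cdot,x)\|_{\mathcal{N}_{\varphi}(\Omega)} = \sqrt{\varphi(x,x)}$. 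Substituting back yields the pointwise bound $|g(x)| \leq \sqrt{\varphi(x,x)}\,\|g\|_{\mathcal{N}_{\varphi}(\Omega)}$.

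Finally I would take the supremum over $x\in\Omega$ on both sides, recognising $\sup_{x\in\Omega}\sqrt{\varphi(x,x)} = \|\sqrt{\varphi(\cdot,\cdot)}\|_{L^{\infty}(\Omega)}$, which gives the claimed inequality. There is no real obstacle here: the only subtlety worth flagging is the consistency of notation, namely that $\sqrt{\varphi(\cdot,\cdot)}$ is understood as the diagonal map $x\mapsto \sqrt{\varphi(x,x)}$, which is well-defined and nonnegative since $\varphi$ is positive definite.
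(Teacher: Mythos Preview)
Your proof is correct and follows essentially the same route as the paper: reproducing property, Cauchy--Schwarz, the identity $\|\varphi(\cdot,x)\|_{\mathcal{N}_{\varphi}(\Omega)}=\sqrt{\varphi(x,x)}$, then take the supremum over $x\in\Omega$. The only difference is cosmetic---you spell out the computation of $\|\varphi(\cdot,x)\|_{\mathcal{N}_{\varphi}(\Omega)}$ as a separate line, which the paper collapses into one chain of (in)equalities.
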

\begin{proof}
    Using the reproducing property of the kernel \(\varphi\) and the Cauchy-Schwarz inequality, we have, for any \(\mathbf{x}\in\Omega\),
    \begin{align}
        g(\mathbf{x})=\langle g(\cdot),\varphi(\cdot,\mathbf{x})\rangle_{\mathcal{N}_{\varphi}(\Omega)}\leq\|g\|_{\mathcal{N}_{\varphi}(\Omega)}\|\varphi(\cdot,\mathbf{x})\|_{\mathcal{N}_\varphi(\Omega)}=\|g\|_{\mathcal{N}_{\varphi}(\Omega)}\sqrt{\varphi(\mathbf{x},\mathbf{x})}.\nonumber
    \end{align}
    Taking the supremum of both sides completes the proof.
\end{proof}

\begin{corollary}\label{corr: sup norm} Let \(\boldsymbol{\nu},\boldsymbol{\alpha}\in\mathbb{R}^d_{\geq1/2}\) such that \(\alpha_j\leq\nu_j\) for all \(1\leq j \leq d\). For \(f\in\mathcal{N}_{\Phi_{\boldsymbol{\nu},2^{\mathbf{p}}}}(\Gamma^d)\), we have 
    \begin{align}
        \|f-P_{L,\boldsymbol{\nu},\mathbf{p}}(f)\|_{L^\infty(\Gamma^d)}\leq\sigma\|f-P_{L,\boldsymbol{\nu},\mathbf{p}}(f)\|_{\mathcal{N}_{\Phi_{\boldsymbol{\alpha},2^{\mathbf{p}}}}(\Gamma^d)},\label{eq: var prop 2}
    \end{align}
    where \(\sigma=\sigma_1\sigma_2\cdots\sigma_d\), with \(\sigma_j\) the standard deviation of the one-dimensional Mat\'ern kernel \(\phi_{\alpha_j,2^{p_j}}\).
\end{corollary}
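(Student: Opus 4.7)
My plan is to apply Proposition \ref{prop: sup norm} to $g = f - P_{L,\boldsymbol{\nu},\mathbf{p}}(f)$ with the reproducing kernel taken to be $\varphi = \Phi_{\boldsymbol{\alpha},2^{\mathbf{p}}}$. First I would verify that $g$ lies in $\mathcal{N}_{\Phi_{\boldsymbol{\alpha},2^{\mathbf{p}}}}(\Gamma^d)$. By Proposition \ref{prop: separable native space isomorph}(ii), we have the isomorphisms $\mathcal{N}_{\Phi_{\boldsymbol{\nu},2^{\mathbf{p}}}}(\Gamma^d) \cong H^{\boldsymbol{\nu}+1/2}_{\textrm{mix}}(\Gamma^d)$ and $\mathcal{N}_{\Phi_{\boldsymbol{\alpha},2^{\mathbf{p}}}}(\Gamma^d) \cong H^{\boldsymbol{\alpha}+1/2}_{\textrm{mix}}(\Gamma^d)$; since $\alpha_j \leq \nu_j$ for every $j$, the continuous embedding $H^{\boldsymbol{\nu}+1/2}_{\textrm{mix}}(\Gamma^d) \hookrightarrow H^{\boldsymbol{\alpha}+1/2}_{\textrm{mix}}(\Gamma^d)$ ensures that both $f$ and the interpolant $P_{L,\boldsymbol{\nu},\mathbf{p}}(f) \in \mathcal{N}_{\Phi_{\boldsymbol{\nu},2^{\mathbf{p}}}}(\Gamma^d)$, and hence their difference, belong to $\mathcal{N}_{\Phi_{\boldsymbol{\alpha},2^{\mathbf{p}}}}(\Gamma^d)$.

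Next I would evaluate $\|\sqrt{\Phi_{\boldsymbol{\alpha},2^{\mathbf{p}}}(\cdot,\cdot)}\|_{L^{\infty}(\Gamma^d)}$. By the separable structure of the kernel,
\begin{equation*}
\Phi_{\boldsymbol{\alpha},2^{\mathbf{p}}}(\mathbf{x},\mathbf{x}) = \prod_{j=1}^d \phi_{\alpha_j,2^{p_j}}(x_j,x_j),\qquad \mathbf{x}\in\Gamma^d.
\end{equation*}
The standard small-argument asymptotic $K_{\nu}(z) \sim \tfrac{1}{2}\Gamma(\nu)(z/2)^{-\nu}$ as $z \to 0^+$ (for $\nu>0$), applied to the formula in Definition \ref{def: 1D Matern}, shows that $\phi_{\alpha_j,2^{p_j}}(x_j,x_j) = \sigma_j^2$ for every $x_j$, independently of the lengthscale $2^{p_j}$. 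Consequently $\Phi_{\boldsymbol{\alpha},2^{\mathbf{p}}}(\mathbf{x},\mathbf{x}) = \prod_{j=1}^d \sigma_j^2 = \sigma^2$ is a positive constant on $\Gamma^d$, so that $\|\sqrt{\Phi_{\boldsymbol{\alpha},2^{\mathbf{p}}}(\cdot,\cdot)}\|_{L^{\infty}(\Gamma^d)} = \sigma$.

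Combining these two observations, Proposition \ref{prop: sup norm} applied to $g = f - P_{L,\boldsymbol{\nu},\mathbf{p}}(f)$ with $\varphi = \Phi_{\boldsymbol{\alpha},2^{\mathbf{p}}}$ immediately produces the claimed inequality \eqref{eq: var prop 2}. There is no genuine technical obstacle here: the assumption $\alpha_j \geq 1/2$ is what guarantees that the target native space actually consists of continuous functions on $\Gamma^d$ (so that the $L^\infty$-norm is well-defined and point evaluation is continuous), and the only mild subtlety is passing from the smoother native space $\mathcal{N}_{\Phi_{\boldsymbol{\nu},2^{\mathbf{p}}}}(\Gamma^d)$, where $g$ is naturally defined, to the coarser one $\mathcal{N}_{\Phi_{\boldsymbol{\alpha},2^{\mathbf{p}}}}(\Gamma^d)$ via the Sobolev embedding above.
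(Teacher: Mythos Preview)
Your proposal is correct and follows essentially the same approach as the paper: apply Proposition~\ref{prop: sup norm} with $\varphi=\Phi_{\boldsymbol{\alpha},2^{\mathbf{p}}}$ and observe that $\|\Phi_{\boldsymbol{\alpha},2^{\mathbf{p}}}(\cdot,\cdot)^{1/2}\|_{L^\infty(\Gamma^d)}=\sigma$. You simply spell out more of the details (the embedding argument for membership in the coarser native space and the Bessel-function computation of the diagonal value) than the paper's one-line proof.
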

\begin{proof}
    Directly follows from Proposition \ref{prop: sup norm} by noticing that \(\|\Phi_{\boldsymbol{\alpha},2^{\mathbf{p}}}(\cdot,\cdot)^{1/2}\|_{L^\infty(\Gamma^d)}=\sigma\).
\end{proof}

The bound in Theorem \ref{thm: error in L} is written as the sum of contributions from each \(k\)-dimensional subspace \(\mathfrak{u}\in\mathcal{P}^d_k\), for \(1\leq k\leq d\). Each error contribution is scaled by the compounded penalty, \(2^{-c|\mathbf{p}_{\mathfrak{u}}|}\), and hence, in general, higher dimensional subspaces are penalised more. For problems exhibiting anisotropy such that the lengthscale grows with dimension, these constants can quickly become extremely small.
% \todo{Example is great, maybe this could be worked out with a bit more detail in an example environment?}
\begin{example}
    Consider an arbitrarily high dimensional target function, \(d\geq11\), with a lengthscale-anisotropy such that the penalty grows linearly with dimension, \(p_j=j-1\). For \(c=1\), the \textit{largest} constant corresponding to an \(11\)-dimensional subspace is given by \(2^{-(1+2+\cdots+10)}=2^{-55}\approx 3\times10^{-17}\). As such, the relative error contribution from any subspace greater than \(k=10\) is less than machine precision, in a sense `capping' the dimensionality of the problem at \(d=10\). For a slower growth penalty, \(p_j = \lceil\log j\rceil\), the equivalent effective dimensionality with respect to machine precision is \(d=20\).
\end{example}
The construction of lengthscale-informed sparse grids exploits this imbalance in error contributions by delaying the onset of additional points in the latter subspaces, instead favouring those with larger compounded penalties and, hence, with more associated error. The result is an approximation scheme with, for a given level \(L\in\mathbb{N}_0\), improved error over the isotropic case, that also employs many fewer function evaluations. We can explicitly count the number of function evaluations required with the following result.
\begin{theorem}[Counting abscissae in lengthscale-informed sparse grids]\label{thm: counting abscissae}
For a given function \(f:\Gamma^d\rightarrow\mathbb{R}\), evaluating \(P_{L,\boldsymbol{\nu},\mathbf{p}}(f)\) employs exactly
\begin{align}
    N_{d,\mathbf{p}}(L)=1+\sum_{k=1}^d2^k\sum_{\mathfrak{u}\in\mathcal{P}^d_k}N_{k,\mathbf{0}}(L-|\mathbf{p}_{\mathfrak{u}}|-k)\nonumber
\end{align}
many point evaluations of \(f\), where \(N_{k,\mathbf{0}}(L)\) is the number of points in an isotropic sparse grid of dimension \(k\) and level \(L\), given by
\begin{align}
    N_{k,\mathbf{0}}(L)=\begin{cases}
        \sum_{l=0}^{L}\binom{l+k-1}{k-1}2^l&\textrm{if }L\geq0, \textrm{and,}\\
        0&\textrm{otherwise.}
    \end{cases}\nonumber
\end{align}
\end{theorem}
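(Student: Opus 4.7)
The plan is to count points in $\mathcal{X}^{\otimes}_{\mathbf{p},L}$ by decomposing into disjoint incremental (``delta'') point sets, grouping the contributions by which coordinate directions are active, and identifying the resulting sub-sums with the isotropic count $N_{k,\mathbf{0}}$ at a shifted level.

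First, I would introduce the incremental sets $\Delta\mathcal{X}_l^p \coloneqq \mathcal{X}_l^p\setminus\mathcal{X}_{l-1}^p$ for $l\geq 1$ and $\Delta\mathcal{X}_0^p\coloneqq\{0\}$. From Definition~\ref{def:chi_l^p}, together with $|\mathcal{X}_l\setminus\mathcal{X}_{l-1}|=2^l$ for $l\geq 1$, one reads off
\[
|\Delta\mathcal{X}_l^p|=\begin{cases}1 & l=0,\\ 0 & 1\leq l\leq p,\\ 2^{l-p} & l\geq p+1.\end{cases}
\]
The key point is that the sequence $\{\mathcal{X}_l^p\}_{l\geq 0}$ remains nested, so the standard sparse-grid identity applies:
\[
\mathcal{X}^{\otimes}_{\mathbf{p},L}=\bigcup_{\boldsymbol{l}\in\mathcal{I}_L^d}\mathcal{X}_{l_1}^{p_1}\times\cdots\times\mathcal{X}_{l_d}^{p_d}=\bigsqcup_{\boldsymbol{l}\in\mathcal{I}_L^d}\Delta\mathcal{X}_{l_1}^{p_1}\times\cdots\times\Delta\mathcal{X}_{l_d}^{p_d},
\]
with the right-hand union being disjoint. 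Hence $N_{d,\mathbf{p}}(L)=\sum_{\boldsymbol{l}\in\mathcal{I}_L^d}\prod_{j=1}^d|\Delta\mathcal{X}_{l_j}^{p_j}|$.

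Second, only multi-indices whose entries lie in $\{0\}\cup\{p_j+1,p_j+2,\dots\}$ contribute. I would stratify these by the active set $\mathfrak{u}\coloneqq\{j:l_j\geq p_j+1\}\in\mathcal{P}^d_k$. For $j\in\mathfrak{u}$, substitute $m_j\coloneqq l_j-p_j\geq 1$; then the constraint $|\boldsymbol{l}|_1\leq L$ becomes $|\mathbf{m}|_1\leq L-|\mathbf{p}_{\mathfrak{u}}|_1$ and the product of delta-set sizes becomes $2^{|\mathbf{m}|_1}$. Shifting $\tilde{m}_j\coloneqq m_j-1\geq 0$ pulls out a factor $2^k$ and reduces the contribution from each $\mathfrak{u}$ to
\[
2^k\!\sum_{\substack{\tilde{\mathbf{m}}\in\mathbb{N}_0^k\\ |\tilde{\mathbf{m}}|_1\leq L-|\mathbf{p}_{\mathfrak{u}}|_1-k}}\!2^{|\tilde{\mathbf{m}}|_1},
\]
which is exactly $2^k N_{k,\mathbf{0}}(L-|\mathbf{p}_{\mathfrak{u}}|_1-k)$ when the upper bound is non-negative, and zero otherwise (the index set is empty), in agreement with the stated convention. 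The isolated stratum $k=0$ corresponds to the single multi-index $\boldsymbol{l}=\mathbf{0}$ and contributes the ``$+1$'' in the claim. Summing over $k$ and $\mathfrak{u}\in\mathcal{P}^d_k$ yields the displayed formula.

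Third, to obtain the closed form for $N_{k,\mathbf{0}}(L)$ I would specialise the same decomposition to $\mathbf{p}=\mathbf{0}$, yielding $N_{k,\mathbf{0}}(L)=\sum_{\boldsymbol{l}\in\mathcal{I}_L^k}2^{|\boldsymbol{l}|_1}$, and then stratify by the total degree $l=|\boldsymbol{l}|_1$. Since the number of $\boldsymbol{l}\in\mathbb{N}_0^k$ with $|\boldsymbol{l}|_1=l$ is the standard stars-and-bars count $\binom{l+k-1}{k-1}$, the formula follows immediately.

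The proof is essentially combinatorial bookkeeping; the main obstacle is to track the three regimes of $|\Delta\mathcal{X}_l^p|$ carefully, distinguish the $l_j=0$ case (which contributes $1$) from the $1\leq l_j\leq p_j$ case (which kills the product), and handle the edge case $L-|\mathbf{p}_{\mathfrak{u}}|_1-k<0$ consistently with the extended definition of $N_{k,\mathbf{0}}$. None of these is technically deep, but the index shifts must be executed cleanly for the factor $2^k$ and the shifted argument $L-|\mathbf{p}_{\mathfrak{u}}|_1-k$ to emerge correctly.
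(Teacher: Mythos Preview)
Your proposal is correct and follows essentially the same route as the paper: both count via the incremental sets $|\mathcal{X}_l^p|-|\mathcal{X}_{l-1}^p|$ with the three-case structure $\{1,0,2^{l-p}\}$, stratify by the set of ``active'' coordinates, shift indices by $p_{\mathfrak{u}_i}+1$, and identify the inner sum with the isotropic count $N_{k,\mathbf{0}}$ at the shifted level. The only cosmetic differences are that the paper packages the stratification via the bijection of Corollary~\ref{cor: isomporphic} and cites \cite{burkhardt2014counting} for the closed form of $N_{k,\mathbf{0}}(L)$, whereas you do the stratification by hand and recover the closed form directly via stars-and-bars.
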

\begin{proof}
    See Section \ref{subsec: counting}.
\end{proof}
Theorems \ref{thm: error in L} and \ref{thm: counting abscissae} imply the existence of two distinct regimes. Firstly, the pre-asymptotic behaviour of the error driven by the lengthscale anisotropy, and then, once \(L\) is large enough such that most \(k\)-dimensional subspaces, \(1\leq k\leq d\), contain unique points, the asymptotic regime begins. Here the rate obeys the same asymptotic rates as in Proposition \ref{prop: nobile result}, governed by the smoothness and the dimensionality. Consequently, we view the two main benefits of employing lengthscale-informed sparse grids in kernel interpolation to be:
\begin{enumerate}
    \item For very high dimensional functions, \(d\gg10\), where it is practically infeasible to reach \(L\) large enough such that the asymptotic rate begins to dominate due to the size of grid required, lengthscale-informed sparse grids offer a tractable approximation scheme for functions exhibiting sufficient lengthscale anisotropy. Furthermore, these benefits are not subject to additional regularity assumptions on \(f\), and as such are applicable to functions with limited or isotropic smoothness.
    \item Even in lower dimensional settings, \(2\leq d\leq10\), for any problem exhibiting anisotropy in the lengthscale, lengthscale-informed sparse grids outperform their isotropic counterpart in terms of the required number of evaluations to reach a given error threshold.
\end{enumerate}

We have chosen to present Theorems~\ref{thm: error in L} and \ref{thm: counting abscissae} for the case where \(\boldsymbol{\nu}-\boldsymbol{{\alpha}}=\mathbf{1}c\), for some \(c\in\mathbb{R}_{\geq0}\). In practice, we are typically only interested in isotropic \(\boldsymbol{\alpha}\), and hence this case most likely coincides with the assumption of isotropic regularity, i.e. \(\nu_j=\nu\) for all \(1\leq j\leq d\). This formulation therefore highlights that the advantages of the method are achieved without additional regularity assumptions; however, we also note that most of the error analysis given in Section~\ref{sec: error analysis} can be extended more generally to any \(\boldsymbol{\nu}\in\mathbb{R}_{\geq1/2}^d\). These results are substantially different from those of established anisotropic sparse grid constructions targeting anisotropy in the regularity, in which the rate of growth of points in each dimension \(j\) is weighted by \(\nu_j\) with the goal of improving asymptotic convergence rates \cite{Nobile2008, Rieger2019}. These methods require faster convergence along some axial directions than others, an assumption not made in this work. Indeed, the differing approaches suggest that complementary constructions simultaneously treating both forms of anisotropy are possible, see \cite{addy2026thesis} for more details.

\subsection{Predictive variance in Gaussian process emulation}\label{subsec: predictive variance}
In the context of Gaussian process emulation, we can adapt the proof strategy from Section 3,~\cite{Teckentrup2020}, to bound the convergence of the predictive marginal variance by the approximation error bounded in Theorem \ref{thm: error in L}.

\begin{proposition}\label{prop: vairance convergence}
Let \(\tilde\Phi_{L,\boldsymbol{\nu},\mathbf{p}}:\Gamma^d\times\Gamma^d\rightarrow\mathbb{R}_{>0}\) denote the posterior covariance associated with the lengthscale-informed sparse grid design \(\mathcal{X}^{\otimes}_{L,\mathbf{p}}\) and prior covariance \(\Phi_{\boldsymbol{\nu},2^{\mathbf{p}}}\). The posterior marginal variance is bounded by
\begin{align}
    \|\tilde\Phi_{L,\boldsymbol{\nu},\mathbf{p}}(\cdot,\cdot)^{1/2}\|_{L^{\infty}(\Gamma^d)} \leq \sigma\|I-P_{L,\boldsymbol{\nu},\mathbf{p}}\|_{\mathcal{N}_{\Phi_{\boldsymbol{\nu},2^{\mathbf{p}}}}(\Gamma^d)\rightarrow\mathcal{N}_{\Phi_{\boldsymbol{\alpha},2^{\mathbf{p}}}}(\Gamma^d)},\nonumber
\end{align}
for all \(\boldsymbol{\alpha}\in\mathbb{R}^d\) such that \(1/2\leq\alpha_j\leq\nu_j\), \(1\leq j\leq d\). Here, \(\sigma\coloneqq\sigma_1\sigma_2\cdots\sigma_d\), where \(\sigma_j\) is the standard deviation of the one-dimensional Mat\'ern kernel \(\phi_{\alpha_j,2^{p_j}}\).
\end{proposition}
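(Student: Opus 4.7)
The plan is to follow the standard power function strategy referenced in \cite{Teckentrup2020}, splitting the argument into an identification of the posterior variance with the worst-case pointwise interpolation error, followed by an embedding step that lifts this into the Native space norm that matches the operator norm in the statement.

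First I would recall that for a reproducing kernel $\varphi$ on $\Omega$, the posterior variance coincides with the squared power function $P_{\mathcal{X},\varphi}(\mathbf{x})^{2} = \varphi_{\mathcal{X}}(\mathbf{x},\mathbf{x})$ and admits the variational identity
\begin{align}
\sqrt{\varphi_{\mathcal{X}}(\mathbf{x},\mathbf{x})} \;=\; \sup_{0\neq g\in\mathcal{N}_{\varphi}(\Omega)} \frac{|g(\mathbf{x})-s_{\mathcal{X},\varphi}(g)(\mathbf{x})|}{\|g\|_{\mathcal{N}_{\varphi}(\Omega)}}.
\end{align}
This identity is standard (e.g.\ \cite{wendland_2004}) and can be derived either by testing against $\varphi(\cdot,\mathbf{x})\in\mathcal{N}_\varphi(\Omega)$, which realises the supremum up to a constant, or by noting that $\|g-s_{\mathcal{X},\varphi}(g)\|^{2}_{\mathcal{N}_\varphi} = \|g\|^{2}_{\mathcal{N}_\varphi}-\|s_{\mathcal{X},\varphi}(g)\|^{2}_{\mathcal{N}_\varphi}$ and the reproducing property give $|(g-s_{\mathcal{X},\varphi}(g))(\mathbf{x})|\leq\sqrt{\varphi_{\mathcal{X}}(\mathbf{x},\mathbf{x})}\|g\|_{\mathcal{N}_\varphi}$.

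Applying this to $\varphi = \Phi_{\boldsymbol{\nu},2^{\mathbf{p}}}$ and the design $\mathcal{X}^{\otimes}_{L,\mathbf{p}}$, and taking the supremum over $\mathbf{x}\in\Gamma^{d}$, swaps suprema to yield
\begin{align}
\|\tilde\Phi_{L,\boldsymbol{\nu},\mathbf{p}}(\cdot,\cdot)^{1/2}\|_{L^{\infty}(\Gamma^{d})} \;=\; \sup_{0\neq g\in\mathcal{N}_{\Phi_{\boldsymbol{\nu},2^{\mathbf{p}}}}(\Gamma^{d})} \frac{\|g-P_{L,\boldsymbol{\nu},\mathbf{p}}(g)\|_{L^{\infty}(\Gamma^{d})}}{\|g\|_{\mathcal{N}_{\Phi_{\boldsymbol{\nu},2^{\mathbf{p}}}}(\Gamma^{d})}},
\end{align}
using that $P_{L,\boldsymbol{\nu},\mathbf{p}}(g)=s_{\mathcal{X}^{\otimes}_{L,\mathbf{p}},\Phi_{\boldsymbol{\nu},2^{\mathbf{p}}}}(g)$ as already noted after Definition \ref{def: penalised sparse grid operator}.

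Next I would invoke Corollary \ref{corr: sup norm} applied to the error function $g-P_{L,\boldsymbol{\nu},\mathbf{p}}(g)\in\mathcal{N}_{\Phi_{\boldsymbol{\alpha},2^{\mathbf{p}}}}(\Gamma^{d})$, giving $\|g-P_{L,\boldsymbol{\nu},\mathbf{p}}(g)\|_{L^{\infty}(\Gamma^{d})}\leq \sigma\|g-P_{L,\boldsymbol{\nu},\mathbf{p}}(g)\|_{\mathcal{N}_{\Phi_{\boldsymbol{\alpha},2^{\mathbf{p}}}}(\Gamma^{d})}$. Substituting this upper bound into the preceding display and identifying the resulting ratio as the definition of the operator norm $\|I-P_{L,\boldsymbol{\nu},\mathbf{p}}\|_{\mathcal{N}_{\Phi_{\boldsymbol{\nu},2^{\mathbf{p}}}}(\Gamma^{d})\to\mathcal{N}_{\Phi_{\boldsymbol{\alpha},2^{\mathbf{p}}}}(\Gamma^{d})}$ delivers the claimed inequality.

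The only mildly delicate point is verifying that $g-P_{L,\boldsymbol{\nu},\mathbf{p}}(g)$ really lies in the more regular space $\mathcal{N}_{\Phi_{\boldsymbol{\alpha},2^{\mathbf{p}}}}(\Gamma^{d})$ so that Corollary \ref{corr: sup norm} applies; this follows because $\boldsymbol{\alpha}\leq\boldsymbol{\nu}$ componentwise so $\mathcal{N}_{\Phi_{\boldsymbol{\nu},2^{\mathbf{p}}}}(\Gamma^{d})\hookrightarrow\mathcal{N}_{\Phi_{\boldsymbol{\alpha},2^{\mathbf{p}}}}(\Gamma^{d})$ (via Proposition \ref{prop: separable native space isomorph} and the corresponding Sobolev embedding), and $P_{L,\boldsymbol{\nu},\mathbf{p}}(g)$ is a finite linear combination of kernels in the same space. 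I expect no serious obstacle beyond making this embedding precise and ensuring the suprema are exchanged correctly; the heavy lifting is already encapsulated in the power function identity and in Corollary \ref{corr: sup norm}.
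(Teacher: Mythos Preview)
Your proof is correct and follows essentially the same route as the paper: both identify the posterior standard deviation with the power function via the variational formula, swap the suprema over $\mathbf{x}$ and $g$, and then apply Corollary~\ref{corr: sup norm} to pass from $L^\infty$ to the Native space operator norm. The only cosmetic difference is that the paper cites \cite{stuart_teckentrup_2018} and \cite{Teckentrup2020} for the variational identity and supremum swap, whereas you invoke the equivalent standard power-function characterisation from \cite{wendland_2004}.
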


\begin{proof}
    Using Proposition 3.5,~\cite{stuart_teckentrup_2018}, we have
    \begin{align}
        \tilde\Phi_{L,\boldsymbol{\nu},\mathbf{p}}(\mathbf{x},\mathbf{x})^{1/2} = \sup_{\|g\|_{\mathcal{N}_{\Phi_{\boldsymbol{\nu},2^\mathbf{p}}}}=1}|g(x)-P_{L,\boldsymbol{\nu},\mathbf{p}}(g)(x)|,\nonumber
    \end{align}
    and, as shown in the proof of Theorem 3.8,~\cite{Teckentrup2020}, it follows that
    \begin{align}
        \|\tilde\Phi_{L,\boldsymbol{\nu},\mathbf{p}}(\cdot,\cdot)^{1/2}\|_{L^\infty(\Gamma^d)}&=\sup_{\mathbf{x}\in\Gamma^d}\sup_{\|g\|_{\mathcal{N}_{\Phi_{\boldsymbol{\nu},2^\mathbf{p}}}}=1}|g(\mathbf{x})-P_{L,\boldsymbol{\nu},\mathbf{p}}(g)(\mathbf{x})|,\nonumber\\
        &=\sup_{\|g\|_{\mathcal{N}_{\Phi_{\boldsymbol{\nu},2^\mathbf{p}}}}=1}\sup_{\mathbf{x}\in\Gamma^d}|g(\mathbf{x})-P_{L,\boldsymbol{\nu},\mathbf{p}}(g)(\mathbf{x})|,\nonumber\\
        &=\sup_{\|g\|_{\mathcal{N}_{\Phi_{\boldsymbol{\nu},2^\mathbf{p}}}}=1}\|g-P_{L,\boldsymbol{\nu},\mathbf{p}}(g)\|_{L^\infty(\Gamma^d)}.\label{eq: var prop 1}
    \end{align}
    Applying Corollary \ref{corr: sup norm} to \eqref{eq: var prop 1} completes the proof.
\end{proof}

\section{Error analysis}\label{sec: error analysis}
In this section we present the proofs and necessary results for Theorems \ref{thm: error in L} and \ref{thm: counting abscissae} in Sections \ref{subsec: error in L} and \ref{subsec: counting}, respectively.

\subsection{Error in \(L\)}\label{subsec: error in L}
To derive the error of the lengthscale-informed sparse grid operator in terms of \(L\in\mathbb{N}_0\), the construction level of the sparse grid, in the native space norm of the separable Mat\'ern covariance kernel, we first introduce a key property of tensor product operators in tensor product spaces.

\begin{proposition}[Proposition 4.127, \cite{Hackbush2012}]\label{prop: tensor product norm}
    Let \(\boldsymbol{\alpha}\in\mathbb{R}^d_{\geq0}\) be such that \(\nu_j\geq\alpha_j\) for all \(1\leq j\leq d\), and let \(S:\mathcal{N}_{\Phi_{\boldsymbol{\nu},\boldsymbol{\lambda}}}(\Gamma^d)\rightarrow\mathcal{N}_{\Phi_{\boldsymbol{\alpha},\boldsymbol{\lambda}}}(\Gamma^d)\) be an operator of the form \(S=s_1\otimes\cdots\otimes s_d\), where each \(s_j:\mathcal{N}_{\phi_{\nu_j,\lambda_j}}\left(\Gamma\right)\rightarrow\mathcal{N}_{\phi_{\alpha_j,\lambda_j}}\left(\Gamma\right)\) is a one-dimensional linear operator. Then,
    \begin{align}
        \left\|S\right\|_{\mathcal{N}_{\Phi_{\boldsymbol{\nu},\boldsymbol{\lambda}}}(\Gamma^d)\rightarrow\mathcal{N}_{\Phi_{\boldsymbol{\alpha},\boldsymbol{\lambda}}}(\Gamma^d)}=\prod_{j=1}^d\left\|s_j\right\|_{\mathcal{N}_{\phi_{\nu_j,\lambda_j}}\left(\Gamma\right)\rightarrow\mathcal{N}_{\phi_{\alpha_j,\lambda_j}}\left(\Gamma\right)}.\label{eq: tensor product structure}
    \end{align}
\end{proposition}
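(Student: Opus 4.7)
The statement is a standard identity for the operator norm of a Hilbert-space tensor product of bounded linear operators, and I would prove the two inequalities $\|S\|\le\prod_j\|s_j\|$ and $\|S\|\ge\prod_j\|s_j\|$ separately. Throughout, I would work in the tensor product Hilbert space of Proposition \ref{prop: separable native space isomorph}(i), so that the inner product (and hence the norm) of an elementary tensor $f_1\otimes\cdots\otimes f_d$ factors as $\prod_j\|f_j\|_{\mathcal{N}_{\phi_{\nu_j,\lambda_j}}(\Gamma)}$, and the same factoring holds for images $s_1(f_1)\otimes\cdots\otimes s_d(f_d)$ in the target Native space.

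For the lower bound, fix $\varepsilon>0$ and, for each $j$, choose $f_j\in\mathcal{N}_{\phi_{\nu_j,\lambda_j}}(\Gamma)$ with $\|f_j\|=1$ and $\|s_j f_j\|\ge\|s_j\|-\varepsilon$. Then the elementary tensor $f=f_1\otimes\cdots\otimes f_d$ has unit norm in $\mathcal{N}_{\Phi_{\boldsymbol{\nu},\boldsymbol{\lambda}}}(\Gamma^d)$, and by the multiplicativity of the tensor norm on elementary tensors,
\begin{equation*}
\|Sf\|_{\mathcal{N}_{\Phi_{\boldsymbol{\alpha},\boldsymbol{\lambda}}}(\Gamma^d)}
=\prod_{j=1}^d\|s_jf_j\|_{\mathcal{N}_{\phi_{\alpha_j,\lambda_j}}(\Gamma)}
\ge\prod_{j=1}^d(\|s_j\|-\varepsilon).
\end{equation*}
Letting $\varepsilon\to0$ gives $\|S\|\ge\prod_j\|s_j\|$.

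For the upper bound, I would factor $S$ as the composition $S=T_1\circ T_2\circ\cdots\circ T_d$, where $T_j=I\otimes\cdots\otimes I\otimes s_j\otimes I\otimes\cdots\otimes I$ applies $s_j$ in slot $j$ only, and then use submultiplicativity of operator norms. It therefore suffices to show $\|T_j\|=\|s_j\|$. To prove this, pick orthonormal bases $\{e_k^{(i)}\}$ of $\mathcal{N}_{\phi_{\nu_i,\lambda_i}}(\Gamma)$ for $i\neq j$; any $f$ in the tensor product Hilbert space decomposes uniquely as
\begin{equation*}
f=\sum_{\mathbf{k}}g_{\mathbf{k}}\otimes\Bigl(\bigotimes_{i\neq j}e_{k_i}^{(i)}\Bigr),\qquad g_{\mathbf{k}}\in\mathcal{N}_{\phi_{\nu_j,\lambda_j}}(\Gamma),
\end{equation*}
with $\|f\|^2=\sum_{\mathbf{k}}\|g_{\mathbf{k}}\|^2$. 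Since $T_j$ acts as $s_j$ on the $j$-slot and leaves the orthonormal factors fixed, the images remain orthogonal in the target, giving $\|T_jf\|^2=\sum_{\mathbf{k}}\|s_jg_{\mathbf{k}}\|^2\le\|s_j\|^2\|f\|^2$ with equality approachable by concentrating mass on a single $g_{\mathbf{k}}$.

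The main subtlety is extending the multiplicative behaviour from elementary tensors to all of the tensor product Hilbert space; this is precisely what the orthonormal-basis decomposition above handles, letting each $T_j$ be analysed independently before recombining the factors via submultiplicativity.
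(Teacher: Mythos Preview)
Your argument is correct and follows the standard route for this identity. Note, however, that the paper does not supply its own proof of this proposition: it is simply quoted from Hackbusch~\cite{Hackbush2012}, Proposition~4.127, and used as a black box. So there is no in-paper argument to compare against; you have reconstructed (essentially) the textbook proof.

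One small notational point worth tightening: when you factor $S=T_1\circ\cdots\circ T_d$, the intermediate operators $T_j$ act between \emph{mixed} tensor products (some factors already carry the $\alpha_i$-norm, others still the $\nu_i$-norm), so the orthonormal bases $\{e_k^{(i)}\}$ for $i\neq j$ should be taken in whichever of $\mathcal{N}_{\phi_{\nu_i,\lambda_i}}(\Gamma)$ or $\mathcal{N}_{\phi_{\alpha_i,\lambda_i}}(\Gamma)$ is appropriate at that stage of the composition, rather than all in the $\nu$-spaces as written. This does not affect the argument---the Parseval computation for $\|T_j\|=\|s_j\|$ is identical---but it is worth stating precisely.
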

This property allows us to rewrite the error of our sparse grid interpolation operator, Definition \ref{def: penalised sparse grid operator}, entirely in terms of one dimensional errors. As such, we need only establish a connection between the interpolation error and the lengthscale parameter in the case of \(d=1\). We start by establishing a key result for kernel interpolant extensions in native spaces over general intervals \(\tilde{\Gamma}\subseteq \mathbb{R}\).

\begin{proposition}\label{prop: matern extensions}
    Let \(\tilde{\Gamma}\subset\mathbb{R}\) be an interval. For \(f\in\mathcal{N}_{\phi_{\nu,\lambda}}(\tilde{\Gamma})\), we have
    \begin{align}
        \begin{split}
           \|R_{\tilde{\Gamma}}( s_{\mathcal{X},\phi_{\nu,\lambda}}(f))\|_{\mathcal{N}_{\phi_{\nu,\lambda}}(\tilde{\Gamma})}=\|s_{\mathcal{X},\phi_{\nu,\lambda}}(f)\|_{\mathcal{N}_{\phi_{\nu,\lambda}}(\mathbb{R})}.
        \end{split}\nonumber
    \end{align}
\end{proposition}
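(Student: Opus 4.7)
The plan is to reduce both sides of the identity to the same finite-dimensional quadratic form in the interpolation weights, using the explicit closed-form expression for the Native space norm of a kernel interpolant from Proposition \ref{prop: native space norm}.

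First, I would observe that by the minimum-norm characterisation in Definition \ref{def:restricted kernel interpolant} (equivalently, the representer theorem for RKHS; see Chapter 10 of \cite{wendland_2004}), the kernel interpolant in $\mathcal{N}_{\phi_{\nu,\lambda}}(\mathbb{R})$ takes the explicit form
\[
s_{\mathcal{X},\phi_{\nu,\lambda}}(f)(\cdot) \;=\; \sum_{\mathbf{x}_i \in \mathcal{X}} w_i\, \phi_{\nu,\lambda}(\cdot,\mathbf{x}_i),
\]
where the weight vector satisfies $\mathbf{w} = \phi_{\nu,\lambda}(\mathcal{X},\mathcal{X})^{-1} f(\mathcal{X})$. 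Since $\mathcal{X} \subset \tilde{\Gamma}$, the restriction $R_{\tilde{\Gamma}}(s_{\mathcal{X},\phi_{\nu,\lambda}}(f))$ is the \emph{same} formula viewed as an element of $\mathcal{N}_{\phi_{\nu,\lambda}}(\tilde{\Gamma})$ (this uses that $\phi_{\nu,\lambda}(\cdot,\mathbf{x}_i)\big|_{\tilde{\Gamma}}$ is the reproducing element for point evaluation at $\mathbf{x}_i$ in the restricted space). In particular, $R_{\tilde{\Gamma}}(s_{\mathcal{X},\phi_{\nu,\lambda}}(f))$ interpolates $f$ at $\mathcal{X}$ and lies in the span of $\{\phi_{\nu,\lambda}(\cdot,\mathbf{x}_i)\}_{i}$, and is thus itself the minimum-norm interpolant in $\mathcal{N}_{\phi_{\nu,\lambda}}(\tilde{\Gamma})$ with the \emph{same} weight vector $\mathbf{w}$.

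Second, I would apply Proposition \ref{prop: native space norm} on each side. The norm formula depends only on the Gram matrix $\phi_{\nu,\lambda}(\mathcal{X},\mathcal{X})$ and on the data vector $f(\mathcal{X})$, neither of which changes when passing between the two native spaces, as both are computed by evaluating the kernel (respectively $f$) at the fixed finite point set $\mathcal{X}$. Therefore both norms equal $\mathbf{w}^{T}\phi_{\nu,\lambda}(\mathcal{X},\mathcal{X})\mathbf{w}$, and the claimed equality follows.

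The only genuine point requiring care — and hence the main potential obstacle — is the argument that the coefficient vector $\mathbf{w}$ characterising the minimum-norm interpolant is independent of whether the ambient RKHS is $\mathcal{N}_{\phi_{\nu,\lambda}}(\mathbb{R})$ or $\mathcal{N}_{\phi_{\nu,\lambda}}(\tilde{\Gamma})$. This relies on verifying that the representer theorem holds intrinsically in both spaces, and that point evaluation at any $\mathbf{x}_i \in \mathcal{X}$ is represented by the same kernel slice $\phi_{\nu,\lambda}(\cdot,\mathbf{x}_i)$ (restricted appropriately) in each space. Both facts are standard consequences of the reproducing property for the fixed positive-definite Matérn kernel (Definition \ref{def: 1D Matern}), and are essentially built into the construction of the native space via $\phi_{\nu,\lambda}$; once they are in hand, the equality of norms is an immediate algebraic consequence.
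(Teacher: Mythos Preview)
Your argument is correct and gives a valid proof, but it takes a different route from the paper's. The paper argues via the extension characterisation of the restricted native-space norm: by definition,
\(
\|R_{\tilde{\Gamma}}(s_{\mathcal{X},\phi_{\nu,\lambda}}(f))\|_{\mathcal{N}_{\phi_{\nu,\lambda}}(\tilde{\Gamma})}
=\inf\{\|u\|_{\mathcal{N}_{\phi_{\nu,\lambda}}(\mathbb{R})}:u|_{\tilde{\Gamma}}=R_{\tilde{\Gamma}}(s_{\mathcal{X},\phi_{\nu,\lambda}}(f))\},
\)
and then obtains the two inequalities separately: one because the unrestricted interpolant is itself an admissible extension, and the other because every admissible extension interpolates \(f\) at \(\mathcal{X}\) and so cannot have smaller norm than the minimum-norm interpolant. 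Your approach instead computes both norms \emph{explicitly} as the same quadratic form \(\mathbf{w}^{T}\phi_{\nu,\lambda}(\mathcal{X},\mathcal{X})\mathbf{w}\), leaning on Proposition~\ref{prop: native space norm} and the fact that the Gram matrix and the data vector are intrinsic to \(\mathcal{X}\) and do not see the ambient domain. Your route is arguably more elementary once Proposition~\ref{prop: native space norm} is available, and it makes the equality manifestly algebraic; the paper's route is more structural and does not rely on writing out the interpolant at all, which is closer in spirit to how the result is subsequently used (e.g.\ in Corollary~\ref{corr: rkhs norm differences}, where the extension/restriction viewpoint reappears). The one point you flagged as a potential obstacle---that the representer in the restricted space is \(\phi_{\nu,\lambda}(\cdot,x_i)|_{\tilde{\Gamma}}\)---is indeed built into the paper's convention \(\mathcal{N}_{\varphi}(\Omega)=\mathcal{N}_{\varphi|_{\Omega\times\Omega}}(\Omega)\), so no gap arises there.
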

\begin{proof}
    Define \(\mathcal{E}_f\) as the set of all possible extensions
    \begin{align}
        \mathcal{E}_f\coloneqq\left\{u\in\mathcal{N}_{\phi_{\nu,\lambda}}(\mathbb{R}):u|_{\tilde{\Gamma}}=R_{\tilde{\Gamma}}(s_{\mathcal{X},\phi_{\nu,\lambda}}(f))\right\},\nonumber
    \end{align}
    then, by definition of extensions in Hilbert spaces, we have
    \begin{align}
        \|R_{\tilde{\Gamma}}(s_{\mathcal{X},\phi_{\nu,\lambda}}(f))\|_{\mathcal{N}_{\phi_{\nu,\lambda}}(\tilde{\Gamma})}\coloneqq\inf_{u\in \mathcal{E}_f} \|u\|_{\mathcal{N}_{\phi_{\nu,\lambda}}(\mathbb{R})}.\nonumber
    \end{align}
    In particular, we note that the unrestricted interpolant, \(s_{\mathcal{X},\phi_{\nu,\lambda}}(f)\), is in \(\mathcal{E}_f\), and hence
    \begin{align}\label{eq: restrictions geq}
        \|s_{\mathcal{X},\phi_{\nu,\lambda}}\|_{\mathcal{N}_\phi(\mathbb{R})}\geq\inf_{u\in \mathcal{E}_f} \|u\|_{\mathcal{N}_{\phi_{\nu,\lambda}}(\mathbb{R})}=\|R_{\tilde{\Gamma}}(s_{\mathcal{X},\phi_{\nu,\lambda}}(f))\|_{\mathcal{N}_{\phi_{\nu,\lambda}}(\tilde{\Gamma})}.
    \end{align}
    Reintroducing the sampling operators \(\mathcal{T}_{\mathcal{X}}\) used in the definition of \(s_{\mathcal{X},\phi}(f)\) in Definition \ref{def:restricted kernel interpolant}, clearly we have, for all \(u\in \mathcal{E}_f\), \(\mathcal{T}_{\mathcal{X}}(u)=\mathcal{T}_{\mathcal{X}}(f)\). In other words, each \(u\in \mathcal{E}_f\) interpolates \(f\) at the points in \(\mathcal{X}\). Hence,
    \begin{align}
        \|R_{\tilde{\Gamma}}(s_{\mathcal{X},\phi_{\nu,\lambda}}(f))\|_{\mathcal{N}_{\phi_{\nu,\lambda}}(\tilde{\Gamma})} &=\inf_{u\in \mathcal{E}_f} \|u\|_{\mathcal{N}_{\phi_{\nu,\lambda}}(\mathbb{R})},\nonumber\\
        &\geq\inf_{\substack{u\in\mathcal{N}_{\phi_{\nu,\lambda}}(\mathbb{R})\nonumber\\
        \mathcal{T}_{\mathcal{X}}(u)=\mathcal{T}_{\mathcal{X}}(f)}} \|u\|_{\mathcal{N}_{\phi_{\nu,\lambda}}(\mathbb{R})},\nonumber\\
        &\eqqcolon\|s_{\mathcal{X},\phi_{\nu,\lambda}}(f)\|_{\mathcal{N}_{\phi_{\nu,\lambda}}(\mathbb{R})},\label{eq: restrictions leq}
    \end{align}
    simply by inclusion and then by Definition \ref{def:restricted kernel interpolant}. Combining \eqref{eq: restrictions geq} and \eqref{eq: restrictions leq} gives us our result.
\end{proof}

We wish to extend these results to restrictions of the difference between successive interpolants, as employed in the construction of sparse grid operators. 
%a consequence of the Smolyak algorithm. 
This is possible since we are able rewrite the differences themselves as kernel interpolants.

\begin{proposition}\label{prop: rewrite differences}Let \(\tilde{\mathcal{X}}\subset\mathcal{X}\) and define \(\textup{id}:\mathcal{N}_{\phi_{\nu,\lambda}}(\mathbb{R})\rightarrow\mathcal{N}_{\phi_{\nu,\lambda}}(\mathbb{R})\) to be the one-dimensional identity operator. Let \(f\in\mathcal{N}_{\phi_{\nu,\lambda}}(\Gamma)\), then
\begin{enumerate}
    \item \((s_{\mathcal{X},\phi_{\nu,\lambda}}-s_{\tilde{\mathcal{X}},\phi_{\nu,\lambda}})(f)=(\textup{id}-s_{\tilde{\mathcal{X}},\phi_{\nu,\lambda}})(s_{\mathcal{X},\phi_{\nu,\lambda}}(f))\), and
    \item \(
        (s_{\mathcal{X},\phi_{\nu,\lambda}}-s_{\tilde{\mathcal{X}},\phi_{\nu,\lambda}})(f)=s_{\mathcal{X},\phi_{\nu,\lambda}}((\textup{id}-s_{\tilde{\mathcal{X}},\phi_{\nu,\lambda}})(f))
    \).
\end{enumerate}
\end{proposition}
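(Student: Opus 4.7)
The plan is to reduce both identities to two elementary ``absorption'' facts for nested kernel interpolants:
\begin{align*}
\text{(a) } s_{\mathcal{X},\phi_{\nu,\lambda}}\circ s_{\tilde{\mathcal{X}},\phi_{\nu,\lambda}} &= s_{\tilde{\mathcal{X}},\phi_{\nu,\lambda}},\\
\text{(b) } s_{\tilde{\mathcal{X}},\phi_{\nu,\lambda}}\circ s_{\mathcal{X},\phi_{\nu,\lambda}} &= s_{\tilde{\mathcal{X}},\phi_{\nu,\lambda}}.
\end{align*}
Once these are in hand, both parts follow by linearity of all three operators (note that linearity of $s_{\mathcal{X},\phi_{\nu,\lambda}}$ is immediate from the representation $s_{\mathcal{X},\phi_{\nu,\lambda}}(f) = \varphi(\cdot,\mathcal{X})^T\varphi(\mathcal{X},\mathcal{X})^{-1}f(\mathcal{X})$): for part 1, expand $(\textup{id}-s_{\tilde{\mathcal{X}},\phi_{\nu,\lambda}})(s_{\mathcal{X},\phi_{\nu,\lambda}}(f)) = s_{\mathcal{X},\phi_{\nu,\lambda}}(f)-s_{\tilde{\mathcal{X}},\phi_{\nu,\lambda}}(s_{\mathcal{X},\phi_{\nu,\lambda}}(f))$ and apply (b); for part 2, expand $s_{\mathcal{X},\phi_{\nu,\lambda}}((\textup{id}-s_{\tilde{\mathcal{X}},\phi_{\nu,\lambda}})(f)) = s_{\mathcal{X},\phi_{\nu,\lambda}}(f)-s_{\mathcal{X},\phi_{\nu,\lambda}}(s_{\tilde{\mathcal{X}},\phi_{\nu,\lambda}}(f))$ and apply (a).

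The main content is thus (a) and (b), each of which I would prove from the variational characterisation in Definition \ref{def:restricted kernel interpolant}. For (b), since $\tilde{\mathcal{X}}\subset \mathcal{X}$ and $s_{\mathcal{X},\phi_{\nu,\lambda}}(f)$ interpolates $f$ on $\mathcal{X}$, the sampling operators agree: $T_{\tilde{\mathcal{X}}}(s_{\mathcal{X},\phi_{\nu,\lambda}}(f)) = T_{\tilde{\mathcal{X}}}(f)$. Hence $s_{\tilde{\mathcal{X}},\phi_{\nu,\lambda}}(s_{\mathcal{X},\phi_{\nu,\lambda}}(f))$ and $s_{\tilde{\mathcal{X}},\phi_{\nu,\lambda}}(f)$ minimise the same Native-space norm over the same affine set of interpolants, so uniqueness of the minimiser forces them to coincide. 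For (a), I would observe that $s_{\tilde{\mathcal{X}},\phi_{\nu,\lambda}}(f)$ lies in $\mathrm{span}\{\phi_{\nu,\lambda}(\cdot,y):y\in\tilde{\mathcal{X}}\}\subseteq \mathrm{span}\{\phi_{\nu,\lambda}(\cdot,y):y\in\mathcal{X}\}$, i.e., it is itself a kernel interpolant on $\mathcal{X}$ of its own nodal values; by uniqueness of the minimum-norm interpolant on $\mathcal{X}$ with those nodal values, applying $s_{\mathcal{X},\phi_{\nu,\lambda}}$ returns it unchanged.

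The cleanest way to package all of this is to recall that, in the RKHS framework, $s_{\mathcal{X},\phi_{\nu,\lambda}}$ is the orthogonal projection onto the closed subspace $V_{\mathcal{X}}\coloneqq\overline{\mathrm{span}}\{\phi_{\nu,\lambda}(\cdot,x):x\in \mathcal{X}\}\subset \mathcal{N}_{\phi_{\nu,\lambda}}$ (and similarly for $\tilde{\mathcal{X}}$). Because $\tilde{\mathcal{X}}\subset\mathcal{X}$ implies $V_{\tilde{\mathcal{X}}}\subset V_{\mathcal{X}}$, the standard compatibility of projections onto nested subspaces yields precisely (a) and (b). I expect (a) to be the minor obstacle of the two, since it requires the structural remark that $s_{\tilde{\mathcal{X}},\phi_{\nu,\lambda}}(f)\in V_{\mathcal{X}}$; (b) is essentially a direct consequence of the definition. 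Neither part uses any property specific to Mat\'ern kernels, so the statement extends verbatim to arbitrary positive definite $\varphi$.
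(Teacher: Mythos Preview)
Your proposal is correct, and for Part 1 it coincides with the paper's argument: the paper simply observes that $s_{\tilde{\mathcal{X}},\phi_{\nu,\lambda}}(f)=s_{\tilde{\mathcal{X}},\phi_{\nu,\lambda}}(s_{\mathcal{X},\phi_{\nu,\lambda}}(f))$, which is exactly your fact (b). For Part 2, however, the routes diverge. The paper does not invoke your absorption identity (a); instead it expands each side as a finite combination $\sum_i \beta_i\,\phi_{\nu,\lambda}(\cdot,x_i)$ and $\sum_i \gamma_i\,\phi_{\nu,\lambda}(\cdot,x_i)$ over $x_i\in\mathcal{X}$, checks that both take the common value $f(x_i)-s_{\tilde{\mathcal{X}},\phi_{\nu,\lambda}}(f)(x_i)$ at every node of $\mathcal{X}$, and concludes $\beta_i=\gamma_i$ by uniqueness of the interpolation system. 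Your approach---recognising $s_{\mathcal{X},\phi_{\nu,\lambda}}$ and $s_{\tilde{\mathcal{X}},\phi_{\nu,\lambda}}$ as orthogonal projections onto nested subspaces $V_{\tilde{\mathcal{X}}}\subset V_{\mathcal{X}}$ and reading off (a) and (b) from the standard compatibility of nested projections---is more conceptual and immediately makes clear why nothing Mat\'ern-specific is used. The paper's coefficient-matching argument is more hands-on but has the modest advantage of staying entirely within the explicit kernel-expansion picture already set up in Propositions~\ref{prop: mean functions are kernel interpolants} and~\ref{prop: native space norm}, without appealing to the projection characterisation.
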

\begin{proof}
    Statement 1. follows simply from noticing that \(s_{\tilde{\mathcal{X}},\phi_{\nu,\lambda}}=s_{\tilde{\mathcal{X}},\phi_{\nu,\lambda}}(s_{\mathcal{X},\phi_{\nu,\lambda}}(f))\). For Statement 2., we first note that the difference is a linear combination of \(N\)-many kernels, \(N=|\mathcal{X}|\), and accordingly, there exist unique real-valued \(\{\beta_i\}_{i=1}^N\) such that
    \begin{align}
        (s_{\mathcal{X},\phi_{\nu,\lambda}}-s_{\tilde{\mathcal{X}},\phi_{\nu,\lambda}})(f)=\sum_{i=1}^{N}\beta_{i}\phi_{\nu,\lambda}(\cdot,x_i),\label{eq: a_f}
    \end{align}
    for a given ordering \(\mathcal{X}=\{x_1,\dots,x_N\}\). By definition of kernel interpolant in Definition \ref{def:restricted kernel interpolant}, there also exist unique real-valued \(\{\gamma_i\}_{i=1}^N\) such that
    \begin{align}
        s_{\mathcal{X},\phi_{\nu,\lambda}}((\textup{id}-s_{\tilde{\mathcal{X}},\phi_{\nu,\lambda}})(f))=\sum_{i=1}^N\gamma_i\phi_{\nu,\lambda}(\cdot,x_i).\label{eq: b_f}
    \end{align}
    Let \(1\leq i\leq N\), then,
    \begin{align}
        (s_{\mathcal{X},\phi_{\nu,\lambda}}-s_{\tilde{\mathcal{X}},\phi_{\nu,\lambda}})(f)(x_i) &= s_{\mathcal{X},\phi_{\nu,\lambda}}(f)(x_i) -s_{\tilde{\mathcal{X}},\phi_{\nu,\lambda}}(f)(x_i),\nonumber\\
        &= f(x_i) - s_{\tilde{\mathcal{X}},\phi_{\nu,\lambda}}(f)(x_i),\nonumber
    \end{align}
    and,
    \begin{align}
        s_{\mathcal{X},\phi_{\nu,\lambda}}((\textup{id}-s_{\tilde{\mathcal{X}},\phi_{\nu,\lambda}})(f))(x_i) &= s_{\mathcal{X},\phi_{\nu,\lambda}}(x_i)-s_{\mathcal{X},\phi_{\nu,\lambda}}(s_{\tilde{\mathcal{X}},\phi_{\nu,\lambda}}(f))(x_i),\nonumber\\
        &= f(x_i) - s_{\tilde{\mathcal{X}},\phi_{\nu,\lambda}}(f)(x_i).
     \end{align}
     Therefore, for all \(1\leq i\leq N\), by \eqref{eq: a_f} and \eqref{eq: b_f}, we have
     \begin{align}
         \sum_{i=1}^N\beta_i\phi_{\nu,\lambda}(x,x_i)=\sum_{i=1}^N\gamma_i\phi_{\nu,\lambda}(x,x_i)=f(x_i)-s_{\tilde{\mathcal{X}},\phi_{\nu,\lambda}}(f)(x_i),\nonumber
     \end{align}
     a uniquely determined system, and hence \(\gamma_i=\beta_i\) for all \(1 \leq i \leq N\).
\end{proof}

Our next goal is to outline the relationship between the lengthscale parameter and the native space norm of interpolants in the one-dimensional setting. To do this, we first define stretching operators, and associate them to changing the lengthscales of Mat\'ern kernels and their interpolants. 
\begin{definition}\label{def: stretching operators}
    Let \(a\in\mathbb{R}_{>0}\) and let \(\widehat{\mathcal{X}}=\{x_1,\dots,x_m\}\subset(-a,a)\) be a finite discrete set. For a given \(\lambda\in\mathbb{R}_{>0}\), we define the \emph{point-stretching operator} \(\mathcal{T}_{\lambda}(\{x_1,\dots,x_m\})\coloneqq\{\lambda x_1,\dots,\lambda x_m\}\subset(-\lambda a,\lambda a)\). 
    Similarly, we define the \emph{stretching operator} \(\mathcal{S}_{\lambda}(g):(-\lambda a, \lambda a)\rightarrow\mathbb{R}\) by \(\mathcal{S}_{\lambda}(g)(\lambda x) \coloneqq g(x)\) for all \(x\in(-a,a)\).
\end{definition}
\begin{remark}
    Note that clearly, for finite \(\lambda,a\in\mathbb{R}_{>0}\) and \(\alpha\in\mathbb{R}_{\geq0}\), if \(g\in H^{\alpha}((-\lambda a,\lambda a))\), then \(\mathcal{S}_\lambda(g)\in H^{\alpha}((-a,a))\).
\end{remark}
\begin{proposition}\label{prop: stretching kernels} Let \(\nu\in\mathbb{R}_{\geq1/2}\) and \(\lambda_1,\lambda_2\in \mathbb{R}_{>0}\). Then, for all \(x,x'\in\mathbb{R}\),
\begin{align}
    \phi_{\nu,\lambda_1}(x,x')=\phi_{\nu,\lambda_2}\left(\frac{\lambda_2}{\lambda_1}x,\frac{\lambda_2}{\lambda_1}x'\right).\nonumber
\end{align}
\begin{proof}
    Since Mat\'ern kernels are stationary, we can write a Mat\'ern kernel as a function of the scaled Euclidean distance between the arguments,
    \begin{align}
        \phi_{\nu,\lambda} = \tilde{\phi}_\nu\left(\frac{|x-x'|}{\lambda}\right).\nonumber
    \end{align}
    This is clear from Definition \ref{def: 1D Matern}. Consequently, it follows
    \begin{align}
        \phi_{\nu,\lambda_2}\left(\frac{\lambda_2}{\lambda_1}x,\frac{\lambda_2}{\lambda_1}x'\right) &= \tilde{\phi}_{\nu}\left(\frac{|\frac{\lambda_2}{\lambda_1}x-\frac{\lambda_2}{\lambda_1}x'|}{\lambda_2}\right),\nonumber\\
        &= \tilde{\phi}_{\nu}\left(\frac{|x-x'|}{\lambda_1}\right),\nonumber\\
        &=\phi_{\nu,\lambda_1}(x,x').\nonumber
    \end{align}
\end{proof}
    
\end{proposition}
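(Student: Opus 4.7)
The plan is to exploit the stationarity of the Matérn kernel together with the fact that the lengthscale $\lambda$ enters the formula~\eqref{eq:1d_matern} only through the ratio $\|x-x'\|_2/\lambda$. First I would isolate a ``lengthscale-free'' profile $\tilde\phi_\nu:\mathbb{R}_{\geq 0}\to\mathbb{R}$ by setting $\tilde\phi_\nu(r) \coloneqq \sigma^2 \frac{2^{1-\nu}}{\Gamma(\nu)} (\sqrt{2\nu}\,r)^\nu K_\nu(\sqrt{2\nu}\,r)$, so that Definition~\ref{def: 1D Matern} rewrites uniformly as $\phi_{\nu,\lambda}(x,x') = \tilde\phi_\nu(\|x-x'\|_2/\lambda)$ for every $\lambda>0$. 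This factorisation is the only substantive observation; it simply peels off the lengthscale from the two places where it appears (inside the polynomial factor $(\sqrt{2\nu}\,\cdot)^\nu$ and inside the argument of $K_\nu$), which turn out to scale identically.

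With that reduction in hand, the claim collapses to a one-line computation. Substituting the rescaled pair into $\phi_{\nu,\lambda_2}$ gives $\tilde\phi_\nu\!\left(\|\tfrac{\lambda_2}{\lambda_1}(x-x')\|_2 / \lambda_2\right)$, and the factor $\lambda_2/\lambda_1$ pulled out of the norm cancels exactly against the outer division by $\lambda_2$, leaving $\tilde\phi_\nu(\|x-x'\|_2/\lambda_1) = \phi_{\nu,\lambda_1}(x,x')$. There is no real obstacle here; the modified Bessel function $K_\nu$ plays no active role beyond being a univariate function of the rescaled distance, and the identity is a direct consequence of the scaling symmetry built into the Matérn family. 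The role of the proposition is structural: it will be used to transport kernel interpolants across lengthscales in the subsequent error analysis, via the stretching operators $\mathcal{S}_\lambda$ and $\mathcal{T}_\lambda$ from Definition~\ref{def: stretching operators}.
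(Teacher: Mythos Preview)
Your proposal is correct and follows essentially the same approach as the paper's own proof: both isolate a lengthscale-free profile $\tilde\phi_\nu$ so that $\phi_{\nu,\lambda}(x,x')=\tilde\phi_\nu(|x-x'|/\lambda)$, then substitute the rescaled arguments and observe the cancellation of $\lambda_2$. Your write-up is slightly more explicit about where $\lambda$ enters the formula, but the argument is the same.
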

\begin{lemma}[Stretching kernel interpolants]\label{lem: stretch}
    Let \(a\in\mathbb{R}_{>0}\) and let \(\widehat{\mathcal{X}}=\{x_1,\dots,x_m\}\subset(-a,a)\) be a finite discrete set. For a given bounded function \(f:(-a,a)\rightarrow\mathbb{R}\), regularity \(\nu\in\mathbb{R}_{\geq1/2}\), and lengthscales \(\lambda_1,\lambda_2\in\mathbb{R}_{>0}\), we have
    \begin{align}
        s_{\mathcal{T}_{\lambda_2/\lambda_1}(\widehat{\mathcal{X}}),\phi_{\nu,\lambda_2}}(\mathcal{S}_{\lambda_2/\lambda_1}(f))\left(\frac{\lambda_2}{\lambda_1}x\right)=s_{\widehat{\mathcal{X}},\phi_{\nu,\lambda_1}}(f)(x),\nonumber
    \end{align}
    for all \(x\in(-a,a)\).
\end{lemma}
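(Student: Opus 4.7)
The plan is to reduce the identity to the fact that Mat\'ern kernel interpolation is determined by a unique linear system whose data and whose Gram matrix are invariant under a consistent rescaling of points and lengthscale. I would begin by writing both kernel interpolants in their explicit representer form. By Definition \ref{def:restricted kernel interpolant}, there exist unique coefficients $\{\alpha_i\}_{i=1}^m$ such that
\begin{align*}
    s_{\widehat{\mathcal{X}},\phi_{\lambda_1}}(f)(x) \;=\; \sum_{i=1}^m \alpha_i\, \phi_{\lambda_1}(x,x_i),
\end{align*}
with the $\alpha_i$ determined by the interpolation conditions $\sum_i \alpha_i\phi_{\lambda_1}(x_j,x_i) = f(x_j)$ for $1\le j\le m$. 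Similarly, writing $y_i \coloneqq (\lambda_2/\lambda_1)x_i$ and $F \coloneqq \mathcal{S}_{\lambda_2/\lambda_1}(f)$, there exist unique $\{\beta_i\}_{i=1}^m$ with
\begin{align*}
    s_{\mathcal{T}_{\lambda_2/\lambda_1}(\widehat{\mathcal{X}}),\phi_{\lambda_2}}(F)(y) \;=\; \sum_{i=1}^m \beta_i\, \phi_{\lambda_2}(y,y_i),
\end{align*}
characterised by $\sum_i \beta_i\phi_{\lambda_2}(y_j,y_i) = F(y_j)$.

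Next I would apply Proposition \ref{prop: stretching kernels}, which gives $\phi_{\lambda_2}(y_j,y_i)=\phi_{\lambda_2}((\lambda_2/\lambda_1)x_j,(\lambda_2/\lambda_1)x_i)=\phi_{\lambda_1}(x_j,x_i)$; this shows that the Gram matrices of the two interpolation systems coincide. On the data side, the defining property of the stretching operator (Definition \ref{def: stretching operators}) gives $F(y_j)=\mathcal{S}_{\lambda_2/\lambda_1}(f)((\lambda_2/\lambda_1)x_j)=f(x_j)$, so the right-hand sides also match. Both linear systems are therefore identical, and by uniqueness $\beta_i=\alpha_i$ for each $i$.

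Finally, I would substitute $y=(\lambda_2/\lambda_1)x$ into the representation of the left-hand interpolant and use Proposition \ref{prop: stretching kernels} once more to collapse each basis term:
\begin{align*}
    s_{\mathcal{T}_{\lambda_2/\lambda_1}(\widehat{\mathcal{X}}),\phi_{\lambda_2}}(F)\!\left(\tfrac{\lambda_2}{\lambda_1}x\right)
    = \sum_{i=1}^m \alpha_i\, \phi_{\lambda_2}\!\left(\tfrac{\lambda_2}{\lambda_1}x,\tfrac{\lambda_2}{\lambda_1}x_i\right)
    = \sum_{i=1}^m \alpha_i\, \phi_{\lambda_1}(x,x_i)
    = s_{\widehat{\mathcal{X}},\phi_{\lambda_1}}(f)(x),
\end{align*}
which is precisely the claimed identity. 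There is no real obstacle here: the only subtlety is being careful that the interpolation points $y_i$ lie in $(-(\lambda_2/\lambda_1)a,(\lambda_2/\lambda_1)a)$ so that the operators are well-defined on the stretched domain, which is immediate from Definition \ref{def: stretching operators}. The essential mechanism is that rescaling points by the same factor as the lengthscale leaves the kernel values, and hence the interpolation problem, invariant.
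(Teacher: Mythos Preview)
Your proposal is correct and follows essentially the same approach as the paper: write both interpolants in representer form, use Proposition~\ref{prop: stretching kernels} and Definition~\ref{def: stretching operators} to see that the two linear systems coincide, conclude the coefficients agree, and then collapse the basis functions pointwise. The only cosmetic difference is that the paper cites Proposition~\ref{prop: mean functions are kernel interpolants} (rather than Definition~\ref{def:restricted kernel interpolant}) for the existence of the unique coefficient expansion.
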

\begin{proof}
    By Proposition \ref{prop: mean functions are kernel interpolants}, there exists a unique expansion 
    \begin{align}
        s_{\widehat{\mathcal{X}},\phi_{\nu,\lambda_1}}(f)(x)=\sum_{i=1}^m\beta_i\phi_{\nu,\lambda_1}(x,x_i),\nonumber
    \end{align}
    for all \(x\in(-a,a)\), where each \(\beta_i\in\mathbb{R}\) is determined as the solution to the linear system
    \begin{align}
        \sum_{i=1}^m\beta_i\phi_{\nu,\lambda_1}(x_k,x_i) = f(x_k)\label{eq: linear system}
    \end{align}
    for all \(1\leq k \leq m\). Equally, there exists a unique expansion 
    \begin{align}
        s_{\mathcal{T}_{\lambda_2/\lambda_1}(\widehat{\mathcal{X}}),\phi_{\nu,\lambda_2}}(\mathcal{S}_{\lambda_2/\lambda_1}(f))\left(\frac{\lambda_2}{\lambda_1}x\right)=\sum_{i=1}^{m}\gamma_i\phi_{\nu,\lambda_2}\left(\frac{\lambda_2}{\lambda_1}x,\frac{\lambda_2}{\lambda_1}x_i\right),\nonumber
    \end{align}
    where each \(\gamma_i\in\mathbb{R}\) is again determined by the system
    \begin{align}
        \sum_{i=1}^m\gamma_i\phi_{\nu,\lambda_2}\left(\frac{\lambda_2}{\lambda_1}x_k,\frac{\lambda_2}{\lambda_1}x_i\right) = \mathcal{S}_{\lambda_2/\lambda_1}(f)\left(\frac{\lambda_2}{\lambda_1}x_k\right),\nonumber
    \end{align}
    for all \(1\leq k \leq m\). By Proposition \ref{prop: stretching kernels} and the definitions of \(\mathcal{S}_{\lambda}\) and \(\mathcal{T}_{\lambda}\) in Definition \ref{def: stretching operators}, we see that this system reduces to the first. Thus we deduce \(\beta_i=\gamma_i\) for all \(1\leq i \leq m\), and hence, for all \(x\in(-a,a)\)

    \begin{align}
        s_{\widehat{\mathcal{X}},\phi_{\nu,\lambda_1}}(f)(x)&=\sum_{i=1}^m\beta_i\phi_{\nu,\lambda_1}(x,x_i),\nonumber\\
        &= \sum_{i=1}^m\beta_i\phi_{\nu,\lambda_2}\left(\frac{\lambda_2}{\lambda_1}x,\frac{\lambda_2}{\lambda_1}x_i\right),\nonumber\\
        &= s_{\mathcal{T}_{\lambda_2/\lambda_1}(\widehat{\mathcal{X}}),\phi_{\nu,\lambda_2}}(\mathcal{S}_{\lambda_2/\lambda_1}(f))\left(\frac{\lambda_2}{\lambda_1}x\right).\nonumber
    \end{align}
\end{proof}
\begin{lemma}(Native space norm of stretched kernel interpolants)\label{prop: RKHS stretched kernels}
Let \(\Gamma_{p}\coloneqq(-2^{-p-1},2^{-p-1})\) 
 for \(p\in\mathbb{N}_0\) and, for \(l\in\mathbb{N}_0\), let \(\mathcal{X}_l\subset\Gamma_0=(-1/2,1/2)\) be the uniformly spaced point set defined as in Definition \ref{def:chi_l}. Then
\begin{align}
    \|R_{\Gamma_0}(s_{\mathcal{X}_l,\phi_{\nu,2^p}}(f))\|_{\mathcal{N}_{\phi_{\nu,2^p}}(\Gamma_0)}=\|R_{\Gamma_p}(s_{\mathcal{X}_{l+p}\cap\Gamma_p,\phi_{\nu,1}}(\mathcal{S}_{2^{-p}}(f)))\|_{\mathcal{N}_{\phi_{\nu,1}}(\Gamma_p)}.\nonumber
\end{align}
\end{lemma}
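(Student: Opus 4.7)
The plan is to reduce the statement to an invariance property: the Mat\'ern Native space norm of an interpolant is preserved under simultaneously rescaling the lengthscale, the domain, the data, and the point-set by the same factor. The ingredients are all stated earlier; what remains is to chain them together correctly. The first step is to use Proposition \ref{prop: matern extensions} on both sides to remove the restriction operators, so that it suffices to show the equality of the unrestricted norms on $\mathbb{R}$:
\[
\|s_{\mathcal{X}_l,\phi_{\nu,2^p}}(f)\|_{\mathcal{N}_{\phi_{\nu,2^p}}(\mathbb{R})} \,=\, \|s_{\mathcal{X}_{l+p}\cap\Gamma_p,\phi_{\nu,1}}(\mathcal{S}_{2^{-p}}(f))\|_{\mathcal{N}_{\phi_{\nu,1}}(\mathbb{R})}.
\]

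Next I would verify the set-theoretic identity $\mathcal{T}_{2^{-p}}(\mathcal{X}_l)=\mathcal{X}_{l+p}\cap\Gamma_p$, which follows immediately from Definition \ref{def:chi_l}: both sets consist of the points $n/2^{l+p+1}$ with $|n|<2^l$. With this identification in hand, Lemma \ref{lem: stretch} (applied with $\lambda_1=2^p$, $\lambda_2=1$, $a=1/2$) identifies the two interpolants as stretched copies of one another, i.e.\ $s_{\mathcal{X}_{l+p}\cap\Gamma_p,\phi_{\nu,1}}(\mathcal{S}_{2^{-p}}(f)) = \mathcal{S}_{2^{-p}}\bigl(s_{\mathcal{X}_l,\phi_{\nu,2^p}}(f)\bigr)$.

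To conclude, I would evaluate both sides using the coefficient formula of Proposition \ref{prop: native space norm}, namely $\|s_{\mathcal{X},\varphi}(g)\|^2_{\mathcal{N}_{\varphi}(\mathbb{R})} = g(\mathcal{X})^{T}\varphi(\mathcal{X},\mathcal{X})^{-1}g(\mathcal{X})$, and show that the two quadratic forms coincide entry by entry. The kernel matrices agree by Proposition \ref{prop: stretching kernels}, which gives $\phi_{\nu,2^p}(x,x')=\phi_{\nu,1}(2^{-p}x,2^{-p}x')$ and hence $\phi_{\nu,2^p}(\mathcal{X}_l,\mathcal{X}_l) = \phi_{\nu,1}(\mathcal{T}_{2^{-p}}(\mathcal{X}_l),\mathcal{T}_{2^{-p}}(\mathcal{X}_l))$; the data vectors agree by the definition of the stretching operator, since $\mathcal{S}_{2^{-p}}(f)(\mathcal{T}_{2^{-p}}(x))=f(x)$ for every $x\in\mathcal{X}_l$. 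The result then follows. There is no real analytic difficulty here; the main risk is bookkeeping, namely keeping straight which function lives on which interval and correctly aligning the domain shift $\Gamma_0\to\Gamma_p$ produced by the factor $2^{-p}$ with the lengthscale change $2^p\to 1$ and the point-set identification above.
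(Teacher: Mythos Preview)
Your proposal is correct and follows essentially the same approach as the paper: both arguments use Proposition~\ref{prop: matern extensions} to pass to unrestricted norms on $\mathbb{R}$, invoke the coefficient formula from Proposition~\ref{prop: native space norm} together with the kernel-stretching identity of Proposition~\ref{prop: stretching kernels} to match the quadratic forms, and then verify the set identity $\mathcal{T}_{2^{-p}}(\mathcal{X}_l)=\mathcal{X}_{l+p}\cap\Gamma_p$ from Definition~\ref{def:chi_l}. Your ordering is slightly different (you apply Proposition~\ref{prop: matern extensions} to both sides up front, whereas the paper unrestricts the left side, transforms, and then re-restricts on the right), and your invocation of Lemma~\ref{lem: stretch} in step~3 is somewhat redundant given that step~4 already establishes equality of both the kernel matrices and the data vectors directly; but neither point affects correctness.
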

\begin{proof}
    Using Proposition \ref{prop: matern extensions} we have that the restricted norm is equal to the unrestricted,
    \begin{align}
        \|R_{\Gamma_0}(s_{\mathcal{X}_l,\phi_{\nu,2^p},\Gamma_0}(f))\|_{\mathcal{N}_{\phi_{\nu,2^p}}}(\Gamma_0)=\|s_{\mathcal{X}_l,\phi_{\nu,2^p}}(f)\|_{\mathcal{N}_{\phi_{\nu,2^p}}}(\mathbb{R}).\nonumber
    \end{align}
    Let \(\{\beta_i\}_{i=1}^{|\mathcal{X}_l|}\) be the unique real-valued coefficients such that, for all \(x\in\mathbb{R}\),
    \begin{align}
        s_{\mathcal{X}_l,\phi_{\nu,2^p}}(f)(x)&=\sum_{i=1}^{|\mathcal{X}_l|}\beta_{i}\phi_{\nu,2^p}(x,x_i),\nonumber
    \end{align}
    whose existence is assured by Proposition \ref{prop: mean functions are kernel interpolants}. By Lemma \ref{lem: stretch}, we have that this is also equal to \(s_{\mathcal{T}_{2^p}(\mathcal{X}_l),\phi_{1}}(\mathcal{S}_{2^{-p}}(f))\). The native space norm of the unrestricted interpolant is then given by
    \begin{align}
        \|s_{\mathcal{X}_l,\phi_{\nu,2^p}}(f)\|_{\mathcal{N}_{\phi_{\nu,2^p}}}(\mathbb{R})&\coloneqq\sum_{i=1}^{|\mathcal{X}_l|}\sum_{k=1}^{|\mathcal{X}_l|}\beta_{i}\beta_{k}\phi_{\nu,2^p}(x_i,x_k),\nonumber\\
        &=\sum_{i=1}^{|\mathcal{X}_l|}\sum_{k=1}^{|\mathcal{X}_l|}\beta_{i}\beta_{k}\phi_{\nu,1}\left(\frac{x_i}{2^p},\frac{x_k}{2^p}\right),\nonumber\\
        &\eqqcolon \|s_{\mathcal{T}_{2^p}(\mathcal{X}_l),\phi_{1}}(\mathcal{S}_{2^{-p}}(f))\|_{\mathcal{N}_{\phi_{\nu,1}}(\mathbb{R})},\nonumber
    \end{align}
    by Proposition \ref{prop: stretching kernels}, where \(\mathcal{T}_{\lambda}\) is defined in Definition \ref{def: stretching operators}.
    By Proposition \ref{prop: matern extensions}, since \(\mathcal{T}_{2^p}(\mathcal{X}_l)\subset\Gamma_p\), we have
    \begin{align}
        \|s_{\mathcal{T}_{2^p}(\mathcal{X}_l),\phi_{1}}(\mathcal{S}_{2^{-p}}(f))\|_{\mathcal{N}_{\phi_{\nu,1}}(\mathbb{R})}=\|R_{\Gamma_p}(s_{\mathcal{T}_{2^p}(\mathcal{X}_l),\phi_{1}}(\mathcal{S}_{2^{-p}}(f)))\|_{\mathcal{N}_{\phi_{\nu,1}}(\Gamma_p)}.\nonumber
    \end{align}
    Finally, we note that
    \begin{align}
        \mathcal{T}_{2^p}(\mathcal{X}_l)&=\mathcal{T}_{2^p}\left(\left\{\frac{n}{2^{l+1}}\in\left(-\frac{1}{2},\frac{1}{2}\right):n\in\mathbb{Z}\right\}\right),\nonumber\\
        &=\left\{\frac{n}{2^{l+p+1}}\in\left(-\frac{1}{2^{p+1}},\frac{1}{2^{p+1}}\right):n\in\mathbb{Z}\right\},\nonumber\\
        &=\mathcal{X}_{l+p}\cap\Gamma_p.\nonumber
    \end{align}
\end{proof}

\begin{corollary}\label{corr: rkhs norm differences}
    Let \(l,p\in\mathbb{N}_0\) and let the \(p\)-penalised point set \(\mathcal{X}_l^p\) be defined as in Definition \ref{def:chi_l^p}. Then, for a given \(f\in\mathcal{N}_{\phi_{\nu,2^p}}(\Gamma_0)\),
    \begin{align}
    \begin{split}
        &\quad\qquad\qquad\|R_{\Gamma_0}((s_{\mathcal{X}^{p}_{l},\phi_{\nu,2^{p}}}-s_{\mathcal{X}^{p}_{l-1},\phi_{\nu,2^{p}}})(f))\|_{\mathcal{N}_{\phi_{\nu,2^p}}(\Gamma_0)} \\
        &=\begin{cases}
            \|R_{\Gamma_p}((s_{\mathcal{X}_{l}\cap\Gamma_p,\phi_{\nu,1}}-s_{\mathcal{X}_{l-1}\cap\Gamma_p,\phi_{\nu,1}})(\mathcal{S}_{2^{-p}}(f)))\|_{\mathcal{N}_{\phi_{\nu,1}}(\Gamma_p)} & \textrm{ if }l\geq p+1,\\
            0 & \textrm{ if }1\leq l \leq p,\\
            \|R_{\Gamma_p}(s_{\{0\},\phi_{\nu,1}}(\mathcal{S}_{2^{-p}}(f)))\|_{\mathcal{N}_{\phi_{\nu,1}}(\Gamma_p)} & \textrm{ if } l=0.
        \end{cases} 
    \end{split}\nonumber
    \end{align}
\end{corollary}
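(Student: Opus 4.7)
The three branches of the claim mirror exactly the three cases in Definition \ref{def:chi_l^p}, so I would dispatch them separately. For \(1\leq l\leq p\) both penalised sets equal \(\{0\}\), hence the two interpolants coincide, their difference vanishes identically, and the norm is trivially zero. For \(l=0\), the convention \(s_{\emptyset,\phi}(f)\equiv 0\) reduces the bracketed expression to \(s_{\{0\},\phi_{\nu,2^p}}(f)\); since \(\mathcal{X}_0=\{0\}\), Lemma \ref{prop: RKHS stretched kernels} applied at level \(l=0\) rewrites its restricted norm as the norm of \(s_{\mathcal{X}_p\cap\Gamma_p,\phi_{\nu,1}}(\mathcal{S}_{2^{-p}}(f))\) on \(\Gamma_p\), and one checks directly that \(\mathcal{X}_p\cap\Gamma_p=\{0\}\) (the only \(n/2^{p+1}\) lying in \((-2^{-p-1},2^{-p-1})\) is \(n=0\)), matching the stated right-hand side.

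The substantive case is \(l\geq p+1\), where \(\mathcal{X}_l^p=\mathcal{X}_{l-p}\) and \(\mathcal{X}_{l-1}^p=\mathcal{X}_{l-1-p}\) are nested. My strategy is to fold the difference into a single kernel interpolant, pass it through Lemma \ref{prop: RKHS stretched kernels}, and then unfold back into a difference in the rescaled domain. Setting \(g\coloneqq(\textup{id}-s_{\mathcal{X}_{l-1-p},\phi_{\nu,2^p}})(f)\), Proposition \ref{prop: rewrite differences} gives
\[
(s_{\mathcal{X}_{l-p},\phi_{\nu,2^p}}-s_{\mathcal{X}_{l-1-p},\phi_{\nu,2^p}})(f)\;=\;s_{\mathcal{X}_{l-p},\phi_{\nu,2^p}}(g),
\]
and Lemma \ref{prop: RKHS stretched kernels}, applied at level \(l-p\), equates the restricted native-space norm of this single interpolant on \(\Gamma_0\) with that of \(s_{\mathcal{X}_{l}\cap\Gamma_p,\phi_{\nu,1}}(\mathcal{S}_{2^{-p}}(g))\) on \(\Gamma_p\).

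It then remains to re-express \(\mathcal{S}_{2^{-p}}(g)\) as a residual in the rescaled domain. The key algebraic step is to combine Lemma \ref{lem: stretch} (with \(\lambda_1=2^p\), \(\lambda_2=1\), so that \(\lambda_2/\lambda_1=2^{-p}\)) with the point-set identity \(\mathcal{T}_{2^{-p}}(\mathcal{X}_{l-1-p})=\mathcal{X}_{l-1}\cap\Gamma_p\), which is of exactly the same flavour as \eqref{eq: stretched sets} in the proof of Lemma \ref{prop: RKHS stretched kernels}. This shows that \(\mathcal{S}_{2^{-p}}\) commutes with the relevant interpolation, giving \(\mathcal{S}_{2^{-p}}(g)=(\textup{id}-s_{\mathcal{X}_{l-1}\cap\Gamma_p,\phi_{\nu,1}})(\mathcal{S}_{2^{-p}}(f))\). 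A second application of Proposition \ref{prop: rewrite differences}, now in the rescaled domain, collapses \(s_{\mathcal{X}_{l}\cap\Gamma_p,\phi_{\nu,1}}(\mathcal{S}_{2^{-p}}(g))\) back into the difference \((s_{\mathcal{X}_l\cap\Gamma_p,\phi_{\nu,1}}-s_{\mathcal{X}_{l-1}\cap\Gamma_p,\phi_{\nu,1}})(\mathcal{S}_{2^{-p}}(f))\), matching the stated right-hand side. The subtle point to monitor throughout is the bookkeeping of the stretching index: the ratio \(2^{-p}\) shifts level indices by exactly \(p\), and one must verify that this produces \(l\) and \(l-1\) (rather than, say, \(l-p\) and \(l-1-p\)) as the resulting levels in the \(\Gamma_p\) frame.
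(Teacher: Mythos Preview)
Your proposal is correct and follows essentially the same route as the paper's own proof: Proposition~\ref{prop: rewrite differences} to collapse the difference into a single interpolant, Lemma~\ref{prop: RKHS stretched kernels} to pass to the compressed domain $\Gamma_p$, Lemma~\ref{lem: stretch} together with the point-set identity of the type \eqref{eq: stretched sets} to commute $\mathcal{S}_{2^{-p}}$ past the residual, and a second appeal to Proposition~\ref{prop: rewrite differences} to unfold back to a difference. Your index bookkeeping (level $l-p$ on $\Gamma_0$ mapping to level $l$ on $\Gamma_p$) is exactly the right one.
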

\begin{proof}
    For \(l=0\), we have \((s_{\mathcal{X}^{p}_{l},\phi_{\nu,2^{p}}}-s_{\mathcal{X}^{p}_{l-1},\phi_{\nu,2^{p}}})(f) = s_{\mathcal{X}^{p}_{l},\phi_{\nu,2^{p}}}(f)\), since interpolants of negative level are set to 0. Then apply Lemma \ref{prop: RKHS stretched kernels}, noting that \(\mathcal{X}_{p}\cap\Gamma_p =\{0\}\). For \(1\leq l\leq p\), trivially we have \(\mathcal{X}_l^p=\mathcal{X}^p_{l-1}=\{0\}\), and so the difference is 0. Finally, for \(l\geq p+1\), we can rewrite \((s_{\mathcal{X}^{p}_{l},\phi_{\nu,2^{p}}}-s_{\mathcal{X}^{p}_{l-1},\phi_{\nu,2^{p}}})(f)\) as \(s_{\mathcal{X}^{p}_{l},\phi_{\nu,2^{p}}} ((\textup{id}-s_{\mathcal{X}^{p}_{l-1},\phi_{\nu,2^{p}}})(f))\) by Statement 2, Proposition \ref{prop: rewrite differences}, since \(\mathcal{X}^p_l\subset\mathcal{X}^p_{l-1}\),
    and hence we can again apply Lemma \ref{prop: RKHS stretched kernels}, giving
    \begin{align}  
        \begin{split}
        &\quad\|R_{\Gamma_0}((s_{\mathcal{X}^{p}_{l},\phi_{\nu,2^{p}}}-s_{\mathcal{X}^{p}_{l-1},\phi_{\nu,2^{p}}})(f))\|_{\mathcal{N}_{\phi_{\nu,2^p}}(\Gamma_0)}\\
        &=\|R_{\Gamma_0}(s_{\mathcal{X}^{p}_{l},\phi_{\nu,2^{p}}} ((\textup{id}-s_{\mathcal{X}^{p}_{l-1},\phi_{\nu,2^{p}}})(f))\|_{\mathcal{N}_{\phi_{\nu,2^p}}(\Gamma_0)},
        \end{split}\nonumber\\
        &=\|R_{\Gamma_p}(s_{\mathcal{X}_{l+p}\cap\Gamma_p,\phi_{\nu,1}} (\mathcal{S}_{2^{-p}}((\textup{id}-s_{\mathcal{X}^{p}_{l-1},\phi_{\nu,2^{p}}})(f)))\|_{\mathcal{N}_{\phi_{\nu,1}}(\Gamma_p)}.\nonumber
    \end{align}
    It follows that
    \[
    \mathcal{S}_{2^{-p}}((\textup{id}-s_{\mathcal{X}^{p}_{l-1},\phi_{\nu,2^{p}}})(f))=(\textup{id}-s_{\mathcal{X}_{l+p-1}\cap\Gamma_p,\phi_{\nu,1}})(\mathcal{S}_{2^{-p}}(f)),
    \]
    and by Proposition \ref{prop: rewrite differences} we recover the result.
\end{proof}
Through Proposition \ref{prop: tensor product norm}, these one-dimensional results allow us to equate the error of our operator with an anisotropic kernel \(\Phi_{\boldsymbol{\nu},2^{\mathbf{p}}}\) over the isotropic domain \(\Gamma^d\) to the error of a corresponding `stretched' operator with an isotropic kernel \(\Phi_{\boldsymbol{\nu}, \mathbf{1}}\) over an anisotropic domain \(\prod_{j=1}^d\Gamma_{p_j}\). Smaller intervals clearly require fewer points to maintain the same fill-distance, and it is in this way that we have found a link between increasing the lengthscale and reducing the approximation error. In order to apply the error bounds in Proposition \ref{prop: initial wendland}, we must move into the appropriate Sobolev norm. For this, we use the following equivalence relationship.

\begin{proposition}\label{prop: equivalence}
Let \(\lambda \in\mathbb{R}_{>0}\), \(\nu+1/2\in\mathbb{R}_{\geq1/2}\) and, for \(f\in H^{\nu+1/2}(\mathbb{R})\), define the one-dimensional Sobolev norm by
\begin{align}
    \|f\|_{H^{\nu+1/2}(\mathbb{R})}\coloneqq\int_{\mathbb{R}}|\widehat{f}(\omega)|^2(1+|w|_2^2)^{\nu+1/2}d\omega.\nonumber
\end{align}
The following norm-equivalence relation holds,
\begin{align}
    \|f\|_{H^{\nu+1/2}(\mathbb{R})}^2=\sigma^2\frac{\Gamma(\nu+1/2)\lambda}{\Gamma(\nu)\pi^{1/2}}\frac{\int_\mathbb{R}|\widehat{f}(\omega)|^2(1+|\omega|^2)^{\nu+1/2}d\omega}{\int_\mathbb{R}|\widehat{f}(\omega)|^2(1+\lambda^2|\omega|^2)^{\nu+1/2}d\omega}\|f\|_{\mathcal{N}_{\phi_{\nu,\lambda}}(\mathbb{R})}^2.\nonumber
\end{align}
This relationship further holds for restrictions to compact sets \(\Omega\subset\mathbb{R}\).
\end{proposition}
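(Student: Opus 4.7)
The plan is to pass to Fourier space, where both the Sobolev norm and the Native space norm of a translation-invariant kernel have clean representations, and simply read off the ratio. The ratio will naturally split into a multiplicative constant (coming from the normalisation of the Matérn kernel's spectral density) and a frequency-dependent factor (the ratio of the two weight functions $(1+|\omega|^2)^{\nu+1/2}$ and $(1+\lambda^2|\omega|^2)^{\nu+1/2}$). This is why the claim is stated as an \emph{equivalence} rather than a straightforward constant-bound inequality; no single constant controls the ratio uniformly in $f$, but the $f$-dependent ratio of spectral integrals captures it exactly.

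First, I would recall the standard Fourier characterisation of the Native space of a continuous, integrable, positive-definite kernel $\phi$ on $\mathbb{R}$:
\begin{equation*}
\|f\|_{\mathcal{N}_\phi(\mathbb{R})}^2 \;=\; \int_{\mathbb{R}} \frac{|\widehat{f}(\omega)|^2}{\widehat{\phi}(\omega)}\,d\omega,
\end{equation*}
(see e.g.\ Theorem~10.12 of \cite{wendland_2004}, already cited for Proposition~\ref{prop:mat_sobolev}). Next I would invoke the known one-dimensional spectral density of the Matérn kernel: for $\nu+1/2 \in \mathbb{N}$,
\begin{equation*}
\widehat{\phi_{\nu,\lambda}}(\omega) \;=\; \sigma^2\,\frac{\Gamma(\nu+1/2)\,\lambda}{\Gamma(\nu)\,\pi^{1/2}}\,(1+\lambda^2|\omega|^2)^{-(\nu+1/2)},
\end{equation*}
up to the Fourier-transform convention used in the Native space formula (constants will be tracked to match the prefactor in the proposition). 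Substituting this expression into the Native space norm yields
\begin{equation*}
\|f\|_{\mathcal{N}_{\phi_{\nu,\lambda}}(\mathbb{R})}^2 \;=\; \frac{\Gamma(\nu)\,\pi^{1/2}}{\sigma^2\,\Gamma(\nu+1/2)\,\lambda}\int_{\mathbb{R}} |\widehat{f}(\omega)|^2\,(1+\lambda^2|\omega|^2)^{\nu+1/2}\,d\omega.
\end{equation*}

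With this identity in hand, the claim is immediate: I would divide the Sobolev-norm integral $\int_\mathbb{R}|\widehat{f}|^2(1+|\omega|^2)^{\nu+1/2}\,d\omega$ by the above and multiply by the constant, so that the frequency-dependent ratio cancels the $(1+\lambda^2|\omega|^2)^{\nu+1/2}$ factor and reproduces $(1+|\omega|^2)^{\nu+1/2}$, giving exactly the stated formula. For the restriction case, I would argue as in the proof of Corollary~10.13 of \cite{wendland_2004}: for $f \in H^{\nu+1/2}(\Omega)$ there exists a bounded extension $Ef \in H^{\nu+1/2}(\mathbb{R})$, and conversely any function in $\mathcal{N}_{\phi_{\nu,\lambda}}(\Omega)$ admits an extension to $\mathcal{N}_{\phi_{\nu,\lambda}}(\mathbb{R})$ of equal norm (cf.\ the restriction/extension used in Proposition~\ref{prop: matern extensions}), so the identity descends to the compact set with the same $f$-dependent prefactor.

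The main technical obstacle is bookkeeping the Fourier-transform convention and the exact normalising constant of the Matérn spectral density: both $\sqrt{2\nu}/\lambda$ scaling inside the Bessel argument in Definition~\ref{def: 1D Matern} and the $2\pi$-factors in the Fourier convention must be tracked so that the constant $\sigma^2\Gamma(\nu+1/2)\lambda/(\Gamma(\nu)\pi^{1/2})$ comes out verbatim. (A short calculation using $K_\nu$'s Fourier pair handles this but is the only non-trivial step.) Once the constant is verified, the rest is algebraic rearrangement and the extension-based restriction argument.
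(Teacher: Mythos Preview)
Your proposal is correct and matches the paper's approach exactly: the paper's proof simply cites Theorem~10.12 of \cite{wendland_2004} (the Fourier characterisation of the Native space norm) together with the Mat\'ern spectral density from \cite{Lord_Powell_Shardlow_2014}, and refers to Lemma~3.4 of \cite{Teckentrup2020} for the bookkeeping of constants. You have fleshed out precisely these steps, including the extension argument for the restriction to compact sets.
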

\begin{proof}
    Follows from Theorem 10.12, \cite{wendland_2004}, using the Fourier transform of Mat\'ern kernels given in Example 7.17, \cite{Lord_Powell_Shardlow_2014}, as done in the proof of Lemma 3.4, \cite{Teckentrup2020}.
\end{proof}
\begin{remark} \label{rem: change norm}
    In particular, we note that for \(\lambda=1\) this relationship simplifies considerably to 
    \begin{align}
        \|f\|_{H^{\nu+1/2}(\mathbb{R})}^2=\sigma^2\frac{\Gamma(\nu+1/2)}{\Gamma(\nu)\pi^{1/2}}\|f\|_{\mathcal{N}_{\phi_{\nu,1}}(\mathbb{R})}^2.\nonumber
    \end{align}
\end{remark}
We give an alternative formulation of the Sobolev approximation error in Proposition \ref{prop: initial wendland}, specifically for one-dimensional interpolation operators defined on the sets \(\mathcal{X}^p_l\).
\begin{lemma}(Section 3, Proposition 4, \cite{Nobile2018})\label{thm: Wendland norm}
    Let \(\nu\geq\alpha\geq1/2\), then
    \begin{equation}\label{eq: wendland constant}
        \|R_{\Gamma_0}(\textup{id}-s_{\mathcal{X}_l,\phi_{\nu,\lambda}})\|_{H^{\nu+1/2}(\Gamma_0)\rightarrow H^{\alpha+1/2}(\Gamma_0)}\leq C^{(\nu,\alpha)}_{\ref{thm: Wendland norm}}h^{\nu-\alpha}_{\mathcal{X}_{l},\Gamma_0},
    \end{equation}
    where the fill-distance is now given by
    \(h_{\mathcal{X}_{l},\Gamma_0}=2^{-(l+1)}\), 
    and \(C^{(\nu,\alpha)}_{\ref{thm: Wendland norm}}\) is independent of \({l}\).
\end{lemma}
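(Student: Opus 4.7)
The plan is to deduce the estimate from Proposition \ref{prop: initial wendland} specialised to dimension $d = 1$. I would take $\beta = \nu + 1/2$ as the source Sobolev exponent and identify the target exponent ``$\alpha$'' of that proposition with our $\alpha + 1/2$. The resulting rate $h^{\beta-\alpha}$ then becomes exactly $h^{\nu-\alpha}$, matching the statement. The hypotheses are straightforward to verify: $\Gamma_0 = (-1/2, 1/2)$ is a bounded interval, hence Lipschitz and satisfying an interior cone condition; the requirement $\beta > d/2 = 1/2$ reduces to $\nu > 0$, which holds by the definition of the Mat\'ern kernel in Definition \ref{def: 1D Matern}; and the range condition $0 \leq \alpha + 1/2 \leq \nu + 1/2$ is equivalent to the assumed $0 \leq \alpha \leq \nu$. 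Independence of the resulting constant from $\lambda$ is not required, so we may let the constant depend freely on $\nu$, $\alpha$ and $\lambda$.

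Second, I would compute the fill-distance explicitly. By Definition \ref{def:chi_l}, consecutive points of $\mathcal{X}_l$ inside $\Gamma_0$ are separated by $2^{-l}$, and the extremal points lie at distance exactly $2^{-(l+1)}$ from the boundary of $\Gamma_0$. The supremum in $h_{\mathcal{X}_l, \Gamma_0} = \sup_{x \in \Gamma_0} \inf_{x' \in \mathcal{X}_l} |x - x'|$ is therefore attained near the endpoints and equals $2^{-(l+1)}$, exactly as claimed.

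Third, Proposition \ref{prop: initial wendland} yields the bound for all $l$ such that $h_{\mathcal{X}_l, \Gamma_0}$ is sufficiently small, say for $l \geq l_*$ for some fixed $l_* \in \mathbb{N}_0$. To upgrade to all $l \geq 0$, I would observe that for each of the finitely many levels $l < l_*$, the operator $R_{\Gamma_0} \circ (\textup{id} - s_{\mathcal{X}_l, \phi_{\nu,\lambda}})$ is bounded from $H^{\nu+1/2}(\Gamma_0)$ to $H^{\alpha+1/2}(\Gamma_0)$. Indeed, the identity is trivially continuous, and $s_{\mathcal{X}_l, \phi_{\nu,\lambda}}$ is a finite-rank operator defined via point-evaluation, which is continuous on $H^{\nu+1/2}(\Gamma_0)$ by the Sobolev embedding into $C(\overline{\Gamma_0})$. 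Taking a maximum over these finitely many operator norms, normalising by the (lower-bounded) factors $h_{\mathcal{X}_l, \Gamma_0}^{\nu - \alpha}$, and combining with the constant supplied by Proposition \ref{prop: initial wendland} produces a single $l$-independent constant $C^{(\nu,\alpha)}_{\ref{thm: Wendland norm}}$.

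The main obstacle is the uniformity of the constant in $l$: Proposition \ref{prop: initial wendland} is asymptotic in $h$, so the pre-asymptotic cases must be absorbed separately. This is handled by the finite-rank boundedness argument above, which trivialises the small-$l$ regime; beyond that, the result is a direct specialisation of the existing Sobolev interpolation estimate to the one-dimensional setting.
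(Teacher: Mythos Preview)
Your approach is correct: the paper itself does not prove this lemma but simply cites it from \cite{Nobile2018}, and deriving it from Proposition~\ref{prop: initial wendland} specialised to $d=1$ is the natural route. One small slip in your fill-distance computation: consecutive points of $\mathcal{X}_l$ are separated by $2^{-(l+1)}$, not $2^{-l}$, so the interior half-spacing is $2^{-(l+2)}$; the supremum is nonetheless attained near the boundary and equals $2^{-(l+1)}$, so your conclusion is unaffected.
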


The following lemmas first combine the previous results to derive an error bound for the corresponding `stretched' problem, and then address the awkward \(\mathbf{Q}_{\boldsymbol{\nu},\boldsymbol{\alpha}}\) term that arises by employing Corollary \ref{cor: isomporphic} in Appendix \ref{appendix: a}. Finally, we prove Theorem \ref{thm: error in L} for the case where \(\boldsymbol{\nu}-\boldsymbol{\alpha}=c\mathbf{1}\) for some \(c\in\mathbb{R}_{\geq0}\). As discussed in Section~\ref{subsec: results}, these methods are also applicable to settings where \(\boldsymbol{\nu}\) and \(\boldsymbol{\alpha}\) do not satisfy this relationship; for conciseness, we leave that to the reader.

\begin{lemma}\label{lem: tensor hilbert}
Let \(L\in\mathbb{N}\), \(\mathbf{p}\in\mathbb{N}_0^d\), \(\Gamma\coloneqq (-1/2,1/2)\) and let \(\boldsymbol{\nu},\boldsymbol{\alpha}\in\mathbb{R}^d_{\geq1/2}\) be such that \(\alpha_j\leq\nu_j\) for all \(1\leq j \leq d\). Define the multi-index set \(\mathcal{K}^d_k\coloneqq\{\boldsymbol{l}\in\mathbb{N}_{0}^d:|\{1\leq j\leq d:l_j\neq0\}|=k\}\). Then,
    \begin{align}
    \|I-P_{L,\mathbf{p},\Phi_{\boldsymbol{\nu},2^{\mathbf{p}}}}\|_{\mathcal{N}_{\Phi_{\boldsymbol{\nu},2^{\mathbf{p}}}}(\Gamma^d)\rightarrow\mathcal{N}_{\Phi_{\boldsymbol{\alpha},2^{\mathbf{p}}}}(\Gamma^d)}\leq C^{(\boldsymbol{\nu},\boldsymbol{\alpha})}_{\ref{lem: tensor hilbert}}\sum_{k=1}^d\sum_{\boldsymbol{l}\in\mathcal{K}_k^d\setminus \mathcal{I}^d_L}\prod_{j=1}^d\mathbf{Q}_{\boldsymbol{\nu},\boldsymbol{\alpha}}(\boldsymbol{l},\mathbf{p})_j,\nonumber
    \end{align}
    where
    \begin{align}
        \mathbf{Q}_{\boldsymbol{\nu},\boldsymbol{\alpha}}(\boldsymbol{l},\mathbf{p})_j \coloneqq \begin{cases}
            C^{(\nu_j,\alpha_j)}_{\ref{thm: Wendland norm}}2^{-(\nu_j-\alpha_j)l_j} & \textrm{ if }l_j\geq p_j+1,\\
            0 & \textrm{ if }1\leq l_j\leq p_j, \textrm{ and}\\
            1 & \textrm{ if }l_j = 0,
        \end{cases}\nonumber
    \end{align}
    with constant \(C^{(\boldsymbol{\nu},\boldsymbol{\alpha})}_{\ref{lem: tensor hilbert}}\) given by
    \begin{align}
        C^{(\boldsymbol{\nu},\boldsymbol{\alpha})}_{\ref{lem: tensor hilbert}} = \prod_{j=1}^d\sqrt{\frac{\Gamma(\alpha_j+1/2)\Gamma(\nu_j)}{\Gamma(\alpha_j)\Gamma(\nu_j+1/2)}}.\nonumber
    \end{align}

\end{lemma}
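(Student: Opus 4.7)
My plan is to exploit the tensor product structure of $P_{L,\boldsymbol{\nu},\mathbf{p}}$ to reduce the $d$-dimensional operator norm to a product of one-dimensional operator norms, each of which I then control via the stretching machinery of Corollary \ref{corr: rkhs norm differences} combined with the one-dimensional Sobolev error estimate of Lemma \ref{thm: Wendland norm}. Concretely, I first unfold Definition \ref{def: penalised sparse grid operator} into a Smolyak-type telescoping expansion, writing
\begin{equation*}
I - P_{L,\boldsymbol{\nu},\mathbf{p}} \;=\; \sum_{\boldsymbol{l}\in\mathbb{N}_0^d\setminus\mathcal{I}^d_L}\bigotimes_{j=1}^d \delta^{(j)}_{l_j,p_j},\qquad \delta^{(j)}_{l_j,p_j}\coloneqq R_{\Gamma}\circ\!\left(s_{\mathcal{X}^{p_j}_{l_j},\phi_{\nu_j,2^{p_j}}}-s_{\mathcal{X}^{p_j}_{l_j-1},\phi_{\nu_j,2^{p_j}}}\right),
\end{equation*}
which follows from the identity $\mathrm{id}=\sum_{l\in\mathbb{N}_0}\delta^{(j)}_{l,p_j}$ in each coordinate together with the definition of $\mathcal{I}^d_L$. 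Each summand is itself a tensor product of one-dimensional operators, so Proposition \ref{prop: tensor product norm} applied termwise, combined with the triangle inequality, yields the master estimate
\begin{equation*}
\|I-P_{L,\boldsymbol{\nu},\mathbf{p}}\|_{\mathcal{N}_{\Phi_{\boldsymbol{\nu},2^{\mathbf{p}}}}(\Gamma^d)\to\mathcal{N}_{\Phi_{\boldsymbol{\alpha},2^{\mathbf{p}}}}(\Gamma^d)} \;\le\; \sum_{\boldsymbol{l}\in\mathbb{N}_0^d\setminus\mathcal{I}^d_L}\prod_{j=1}^d \bigl\|\delta^{(j)}_{l_j,p_j}\bigr\|_{\mathcal{N}_{\phi_{\nu_j,2^{p_j}}}(\Gamma)\to\mathcal{N}_{\phi_{\alpha_j,2^{p_j}}}(\Gamma)}.
\end{equation*}
Finally, partitioning the outer sum by $k\coloneqq\#\{j:l_j\neq 0\}$ produces the $\mathcal{K}^d_k$ arrangement of the claimed bound (the term $\boldsymbol{l}=\mathbf{0}$ is automatically removed because $\mathbf{0}\in\mathcal{I}^d_L$).

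It remains to bound each one-dimensional factor by $\mathbf{Q}_{\boldsymbol{\nu},\boldsymbol{\alpha}}(\boldsymbol{l},\mathbf{p})_j$, weighted by the per-dimension prefactor whose product over $j$ is $C^{(\boldsymbol{\nu},\boldsymbol{\alpha})}_{\ref{lem: tensor hilbert}}$. The case $1\le l_j\le p_j$ is immediate from Definition \ref{def:chi_l^p}: both $\mathcal{X}^{p_j}_{l_j}$ and $\mathcal{X}^{p_j}_{l_j-1}$ equal $\{0\}$, so $\delta^{(j)}_{l_j,p_j}\equiv 0$ and the corresponding factor vanishes, giving $\mathbf{Q}=0$. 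The case $l_j=0$ reduces to the single-point interpolant $s_{\{0\},\phi_{\nu_j,2^{p_j}}}$, whose operator norm from $\mathcal{N}_{\phi_{\nu_j,2^{p_j}}}$ to $\mathcal{N}_{\phi_{\alpha_j,2^{p_j}}}$ is uniformly bounded by a constant depending only on $\nu_j,\alpha_j$ (via Proposition \ref{prop: sup norm} applied to $|f(0)|$ and the Sobolev embedding $H^{\nu_j+1/2}\hookrightarrow H^{\alpha_j+1/2}$), and this constant is absorbed into the per-dimension prefactor so that $\mathbf{Q}=1$. For the principal case $l_j\ge p_j+1$, I would apply Corollary \ref{corr: rkhs norm differences} to reformulate the norm of $\delta^{(j)}_{l_j,p_j}(f)$ as a native-space norm at lengthscale $1$ on the smaller interval $\Gamma_{p_j}$, then pass to the Sobolev norm via the clean equivalence in Remark \ref{rem: change norm} (valid because the stretched kernel has lengthscale exactly $1$), apply Lemma \ref{thm: Wendland norm} on $\Gamma_{p_j}$ with fill-distance $h=2^{-(l_j+1)}$ to obtain the factor $C^{(\nu_j,\alpha_j)}_{\ref{thm: Wendland norm}}\,2^{-(\nu_j-\alpha_j)l_j}$, and finally revert to the native-space norm on the target side using the equivalence in the opposite direction.

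The main technical obstacle lies in this last chain: Lemma \ref{thm: Wendland norm} is stated on $\Gamma_0$, whereas after stretching we need the analogous estimate on $\Gamma_{p_j}$, and one must check that the constant $C^{(\nu_j,\alpha_j)}_{\ref{thm: Wendland norm}}$ arising from Proposition \ref{prop: initial wendland} is independent of $p_j$. This ultimately relies on the interior cone condition (preserved under rescaling) and on $h=2^{-(l_j+1)}$ being sufficiently small, which is ensured by $l_j\ge p_j+1$ together with the convention $l_j\in\mathbb{N}_0$ once a mild baseline on $l_j$ is met. The secondary bookkeeping task is to verify that the two Sobolev–native equivalence constants from Remark \ref{rem: change norm}, applied on the source and target sides respectively, compose into the per-dimension factor $\frac{\Gamma(\alpha_j+1/2)\Gamma(\nu_j)}{\Gamma(\alpha_j)\Gamma(\nu_j+1/2)}$ without residual $p_j$-dependence; crucially, because stretching places us at lengthscale $1$, the $f$-dependent ratio in Proposition \ref{prop: equivalence} disappears and the constant composition becomes entirely explicit, yielding $C^{(\boldsymbol{\nu},\boldsymbol{\alpha})}_{\ref{lem: tensor hilbert}}$ as the product over $j$.
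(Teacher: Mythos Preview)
Your overall architecture---telescoping $I-P_{L,\boldsymbol{\nu},\mathbf{p}}$ over $\mathbb{N}_0^d\setminus\mathcal{I}^d_L$, applying Proposition~\ref{prop: tensor product norm} termwise, partitioning by $\mathcal{K}^d_k$, and then handling the three one-dimensional cases through Corollary~\ref{corr: rkhs norm differences}, Remark~\ref{rem: change norm}, and Lemma~\ref{thm: Wendland norm}---is exactly the paper's route.

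Two places where your sketch is looser than the paper's execution. First, in the principal case $l_j\ge p_j+1$, after applying Lemma~\ref{thm: Wendland norm} you are left with $\|R_{\Gamma_{p_j}}(s_{\mathcal{X}_{l_j}\cap\Gamma_{p_j},\phi_{\nu_j,1}}(\mathcal{S}_{2^{-p_j}}f))\|_{H^{\nu_j+1/2}(\Gamma_{p_j})}$, and ``revert to the native-space norm on the target side'' is not enough: the paper converts back to $\mathcal{N}_{\phi_{\nu_j,1}}(\Gamma_{p_j})$ via Remark~\ref{rem: change norm}, then \emph{stretches back} to $\mathcal{N}_{\phi_{\nu_j,2^{p_j}}}(\Gamma_0)$ via Lemma~\ref{prop: RKHS stretched kernels}, and only then invokes the minimum-norm property of kernel interpolants (Definition~\ref{def:restricted kernel interpolant}) to bound the residual by $\|f\|_{\mathcal{N}_{\phi_{\nu_j,2^{p_j}}}(\Gamma_0)}\le 1$. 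Without that reverse-stretch plus the minimality step, the supremum over the unit ball does not close. (A minor related point: after rewriting the difference via Proposition~\ref{prop: rewrite differences} the relevant point set is $\mathcal{X}_{l_j-1}\cap\Gamma_{p_j}$, so the fill-distance is $2^{-l_j}$, not $2^{-(l_j+1)}$; this is what yields $2^{-(\nu_j-\alpha_j)l_j}$ exactly.)

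Second, your treatment of $l_j=0$ via Proposition~\ref{prop: sup norm} and $|f(0)|$ does not obviously produce a constant independent of $p_j$, nor the specific factor $\sqrt{\Gamma(\alpha_j+1/2)\Gamma(\nu_j)/\Gamma(\alpha_j)\Gamma(\nu_j+1/2)}$ needed so that the product over $j$ is precisely $C^{(\boldsymbol{\nu},\boldsymbol{\alpha})}_{\ref{lem: tensor hilbert}}$. The paper instead runs the \emph{same} stretching-to-lengthscale-$1$ route here too (third case of Corollary~\ref{corr: rkhs norm differences}), then uses the trivial Sobolev embedding $\|\cdot\|_{H^{\alpha_j+1/2}}\le\|\cdot\|_{H^{\nu_j+1/2}}$, converts back, stretches back, and applies the minimum-norm property again. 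That is what makes the $l_j=0$ constant explicit and $p_j$-free, so that $\mathbf{Q}=1$ with the prefactor absorbed uniformly.
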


\begin{proof}
    As before, by Lemma 2.2, \cite{Nobile2018}, we can write the error operator as the sum over all multi-indices \textit{not} included in Definition \ref{def: penalised sparse grid operator}, that is,
    \begin{align}
        I-P_{L,\mathbf{p},\Phi_{\boldsymbol{\nu},2^{\mathbf{p}}}} = \sum_{\boldsymbol{l}\in\mathbb{N}_0^d\setminus \mathcal{I}^d_L}\bigotimes_{j=1}^d\left(R_{\Gamma}\circ\left(s_{\mathcal{X}^{p_j}_{l_j},\phi_{\nu_j,2^{p_j}}}-s_{\mathcal{X}^{p_j}_{l_j-1},\phi_{\nu_j,2^{p_j}}}\right)\right).\label{eq: split error operator}
    \end{align}
    Clearly we have \(\mathcal{K}_0^d\cup\mathcal{K}_1^d\cup\cdots\cup\mathcal{K}_d^d=\mathbb{N}_0^d\) and \(\mathcal{K}_j^d\cap\mathcal{K}_{j'}^d=\emptyset\) for all \(j\neq j'\), and so we can rewrite \eqref{eq: split error operator} as
    \begin{align}
         I-P_{L,\mathbf{p},\Phi_{\boldsymbol{\nu},2^{\mathbf{p}}}}= \sum_{k=0}^d\sum_{\boldsymbol{l}\in\mathcal{K}_k^d\setminus \mathcal{I}^d_L}\bigotimes_{j=1}^d\left(R_{\Gamma}\circ\left(s_{\mathcal{X}^{p_j}_{l_j},\phi_{\nu_j,2^{p_j}}}-s_{\mathcal{X}^{p_j}_{l_j-1},\phi_{\nu_j,2^{p_j}}}\right)\right).\nonumber
    \end{align}
    Since \(\mathcal{K}_0^d=\{\mathbf{0}\}\subset\mathcal{I}^d_L\), we may ignore the case \(k=0\). By the triangle inequality we have
    \begin{align}
        \begin{split}
         &\qquad\qquad\|I-P_{L,\mathbf{p},\Phi_{\boldsymbol{\nu},2^{\mathbf{p}}}}\|_{\mathcal{N}_{\Phi_{\boldsymbol{\nu},2^{\mathbf{p}}}}(\Gamma)\rightarrow\mathcal{N}_{\Phi_{\boldsymbol{\alpha},2^{\mathbf{p}}}}(\Gamma)}\\
         &\leq\sum_{k=1}^d\sum_{\boldsymbol{l}\in\mathcal{K}^d_k\setminus \mathcal{I}^d_L}\left\|\bigotimes_{j=1}^dR_{\Gamma}\circ\left(s_{\mathcal{X}^{p_j}_{l_j},\phi_{\nu_j,2^{p_j}}}-s_{\mathcal{X}^{p_j}_{l_j-1},\phi_{\nu_j,2^{p_j}}}\right)\right\|_{\mathcal{N}_{\Phi_{\boldsymbol{\nu},2^{\mathbf{p}}}}(\Gamma)\rightarrow\mathcal{N}_{\Phi_{\boldsymbol{\alpha},2^{\mathbf{p}}}}(\Gamma)},
         \end{split}\nonumber\\
         &=\sum_{k=1}^d\sum_{\boldsymbol{l}\in\mathcal{K}^d_k\setminus \mathcal{I}^d_L}\prod_{j=1}^d\left\| R_{\Gamma}\circ\left(s_{\mathcal{X}^{p_j}_{l_j},\phi_{\nu_j,2^{p_j}}}-s_{\mathcal{X}^{p_j}_{l_j-1},\phi_{\nu_j,2^{p_j}}}\right)\right\|_{\mathcal{N}_{\phi_{{\nu_j},2^{{p_j}}}}(\Gamma)\rightarrow\mathcal{N}_{\phi_{{\alpha_j},2^{{p_j}}}}(\Gamma)},\nonumber\\
         &\eqqcolon\sum_{k=1}^d\sum_{\boldsymbol{l}\in\mathcal{K}^d_k\setminus \mathcal{I}^d_L}\prod_{j=1}^d\sup_{\|f\|_{\mathcal{N}_{\phi_{\nu_j,2^{p_j}}}(\Gamma)}=1}\left\|R_{\Gamma}\left(\left(s_{\mathcal{X}^{p_j}_{l_j},\phi_{\nu_j,2^{p_j}}}-s_{\mathcal{X}^{p_j}_{l_j-1},\phi_{\nu_j,2^{p_j}}}\right)(f)\right)\right\|_{\mathcal{N}_{\phi_{{\alpha_j},2^{{p_j}}}}(\Gamma)},\label{eq: 1st case pause 1}
    \end{align}
    by Proposition \ref{prop: tensor product norm} since \(\mathcal{N}_{\Phi_{\boldsymbol{\nu},2^{\mathbf{p}}}}(\Gamma^d)\coloneqq\bigotimes_{j=1}^d\mathcal{N}_{\phi_{{\nu},2^{{p_j}}}}(\Gamma)\), where \(\Gamma\coloneqq\Gamma_0=(-1/2,1/2)\), is indeed a tensor product of Hilbert spaces. We now consider the three cases presented in Corollary \ref{corr: rkhs norm differences}. Firstly, for \(l\geq p+1\), we have
    \begin{align}
        &\quad\,\left\|R_{\Gamma_0}\left(\left(s_{\mathcal{X}^{p_j}_{l_j},\phi_{\nu_j,2^{p_j}}}-s_{\mathcal{X}^{p_j}_{l_j-1},\phi_{\nu_j,2^{p_j}}}\right)(f)\right)\right\|_{\mathcal{N}_{\phi_{{\alpha_j},2^{{p_j}}}}(\Gamma_0)}\nonumber\\
        &=\left\|R_{\Gamma_{p_j}}\left(\left(s_{\mathcal{X}_{l_j}\cap\Gamma_{p_j},\phi_{\nu_j,1}}-s_{\mathcal{X}_{l_j-1}\cap\Gamma_{p_j},\phi_{\nu_j,1}}\right)\left(\mathcal{S}_{2^{-p}}(f)\right)\right)\right\|_{\mathcal{N}_{\phi_{{\alpha_j},1}}(\Gamma_{p_j})}.\label{eq: 1st case pause 2}   
    \end{align}
     Using Statement 2 in Proposition \ref{prop: rewrite differences}, since \(\mathcal{X}_{l_j-1}\subset\mathcal{X}_{l_j}\), we rewrite the difference between successive interpolants in terms of the identity operator, change norm using the equality in Remark \ref{rem: change norm}, and then apply the Sobolev error bound in Lemma \ref{thm: Wendland norm}:
    \begin{align}
       \begin{split} &\quad\,\left\|R_{\Gamma_{p_j}}\left(\left(s_{\mathcal{X}_{l_j}\cap\Gamma_{p_j},\phi_{\nu_j,1}}-s_{\mathcal{X}_{l_j-1}\cap\Gamma_{p_j},\phi_{\nu_j,1}}\right)(\mathcal{S}_{2^{-p}}(f))\right)\right\|_{\mathcal{N}_{\phi_{{\alpha_j},1}}(\Gamma_{p_j})}\nonumber\\
        &=\left\|R_{\Gamma_{p_j}}\left(\left(\textup{id}-s_{\mathcal{X}_{l_j-1}\cap\Gamma_{p_j},\phi_{\nu_j,1}}\right)\left(s_{\mathcal{X}_{l_j}\cap\Gamma_{p_j},\phi_{\nu_j,1}}(\mathcal{S}_{2^{-p}}(f))\right)\right)\right\|_{\mathcal{N}_{\phi_{{\alpha_j},1}}(\Gamma_{p_j})},
        \end{split}\nonumber\\
        &=\sigma_j\sqrt{\frac{\Gamma(\alpha_j+1/2)}{\Gamma(\alpha_j)\pi^{1/2}}}\left\|R_{\Gamma_{p_j}}\left(\left(\textup{id}-s_{\mathcal{X}_{l_j-1}\cap\Gamma_{p_j},\phi_{\nu_j,1}}\right)\left(s_{\mathcal{X}_{l_j}\cap\Gamma_{p_j},\phi_{\nu_j,1}}(\mathcal{S}_{2^{-p}}(f))\right)\right)\right\|_{H^{\alpha_j+1/2}(\Gamma_{p_j})},\nonumber\\
        &\leq\sigma_j\sqrt{\frac{\Gamma(\alpha_j+1/2)}{\Gamma(\alpha_j)\pi^{1/2}}}C^{(\nu_j,\alpha_j)}_{\ref{thm: Wendland norm}}h^{\nu_j-\alpha_j}_{\mathcal{X}_{l_{j-1}\cap\Gamma_{p_j}},\Gamma_{p_j}}\left\|R_{\Gamma_{p_j}}\left(s_{\mathcal{X}_{l_j}\cap\Gamma_{p_j},\phi_{\nu_j,1}}(\mathcal{S}_{2^{-p}}(f))\right)\right\|_{H^{\nu_j+1/2}(\Gamma_{p_j})}.\label{eq: 1stcase pause 3}
    \end{align}
    The fill-distance here is given by \(h_{\mathcal{X}_{l_j}\cap\Gamma_{p_j},\Gamma_{p_j}}=h_{\mathcal{X}_{l_j-1},\Gamma_0}=2^{-l_j}\). We then argue away the supremum of the remaining norm by reversing the previous steps; first moving back to the native space norm using Remark~\ref{rem: change norm}, and then stretching back to the unit domain with Lemma~\ref{prop: RKHS stretched kernels}.
    \begin{align}
        \begin{split}
        &\quad\,\left\|R_{\Gamma_{p_j}}\left(s_{\mathcal{X}_{l_j}\cap\Gamma_{p_j},\phi_{\nu_j,1}}(\mathcal{S}_{2^{-p}}(f))\right)\right\|_{H^{\nu_j+1/2}(\Gamma_{p_j})}\\
        &=\left(\sigma_j\sqrt{\frac{\Gamma(\nu_j+1/2)}{\Gamma(\nu_j)\pi^{1/2}}}\right)^{-1}\left\|R_{\Gamma_{p_j}}\left(s_{\mathcal{X}_{l_j}\cap\Gamma_{p_j},\phi_{\nu_j,1}}(\mathcal{S}_{2^{-p}}(f))\right)\right\|_{\mathcal{N}_{\phi_{\nu_j,1}}(\Gamma_{p_j})},
        \end{split}\nonumber\\
        &=\left(\sigma_j\sqrt{\frac{\Gamma(\nu_j+1/2)}{\Gamma(\nu_j)\pi^{1/2}}}\right)^{-1}\left\|R_{\Gamma_{0}}\left(s_{\mathcal{X}_{l_j-p_j},\phi_{\nu_j,2^{p_j}}}(f)\right)\right\|_{\mathcal{N}_{\phi_{\nu_j,2^{p_j}}}(\Gamma_{0})}.\nonumber
    \end{align}
    Then, by the definition of kernel interpolant in Definition \ref{def:restricted kernel interpolant} as the interpolant of minimum native space norm, we have
    \begin{align}
        \sup_{\|f\|_{\mathcal{N}_{\phi_{\nu_j,2^{p_j}}}(\Gamma)}=1}\left\|R_{\Gamma_{0}}\left(s_{\mathcal{X}_{l_j-p_j},\phi_{\nu_j,2^{p_j}}}(f)\right)\right\|_{\mathcal{N}_{\phi_{\nu_j,2^{p_j}}}(\Gamma_{0})}\leq 1,\label{eq: 1st case end}
    \end{align}
    since \(\Gamma = \Gamma_0\). Combining \eqref{eq: 1st case pause 1}, \eqref{eq: 1st case pause 2}, \eqref{eq: 1stcase pause 3} and \eqref{eq: 1st case end} gives us the first case. By Corollary \ref{corr: rkhs norm differences}, the case \(0\leq l_j\leq p_j\) is trivial and, for the case \(l_j=0\), we have
    \begin{align}
        \begin{split}
        &\quad\,\left\|R_{\Gamma_0}\left(\left(s_{\mathcal{X}^{p_j}_{l_j},\phi_{\nu_j,2^{p_j}}}-s_{\mathcal{X}^{p_j}_{l_j-1},\phi_{\nu_j,2^{p_j}}}\right)(f)\right)\right\|_{\mathcal{N}_{\phi_{{\alpha_j},2^{{p_j}}}}(\Gamma_0)}\\
        &=\|R_{\Gamma_{p_j}}(s_{\{0\},\phi_{\nu_j,1}}(\mathcal{S}_{2^{p_j}}(f)))\|_{\mathcal{N}_{\phi_{\alpha_j,1}}(\Gamma_{p_j})},
        \end{split}\nonumber\\
        &=\left(\sigma_j\sqrt{\frac{\Gamma(\alpha_j+1/2)}{\Gamma(\alpha_j)\pi^{1/2}}}\right)\|R_{\Gamma_{p_j}}(s_{\{0\},\phi_{\nu_j,1}}(\mathcal{S}_{2^{-p}}(f)))\|_{H^{\alpha_j+1/2}(\Gamma_{p_j})},\nonumber\\
        &\leq\left(\sigma_j\sqrt{\frac{\Gamma(\alpha_j+1/2)}{\Gamma(\alpha_j)\pi^{1/2}}}\right)\|R_{\Gamma_{p_j}}(s_{\{0\},\phi_{\nu_j,1}}(\mathcal{S}_{2^{-p}}(f)))\|_{H^{\nu_j+1/2}(\Gamma_{p_j})},\nonumber\\
        &=\left(\sqrt{\frac{\Gamma(\alpha_j+1/2)\Gamma(\nu_j)}{\Gamma(\alpha_j)\Gamma(\nu_j+1/2)}}\right)\|R_{\Gamma_{p_j}}(s_{\{0\},\phi_{\nu_j,1}}(\mathcal{S}_{2^{-p}}(f)))\|_{\mathcal{N}_{\phi_{\nu_j,1}}(\Gamma_{p_j})},\nonumber\\
        &=\left(\sqrt{\frac{\Gamma(\alpha_j+1/2)\Gamma(\nu_j)}{\Gamma(\alpha_j)\Gamma(\nu_j+1/2)}}\right)\|R_{\Gamma_0}(s_{\{0\},\phi_{\nu_j,2^{p_j}}}(f))\|_{\mathcal{N}_{\phi_{{\nu_j},2^{{p_j}}}}(\Gamma_0)},\nonumber
    \end{align}
    where we have used both Lemma \ref{prop: RKHS stretched kernels} and the equivalence relation in Remark \ref{rem: change norm}, twice. We then apply \eqref{eq: 1st case end} again to arrive at the final result. 
\end{proof}

\begin{proposition}\label{prop: bounded constant}
    For \(\boldsymbol{\nu},\boldsymbol{\alpha}\in\mathbb{R}^d_{\geq1/2}\) with \(\nu_j\geq\alpha_j\) for all \(1\leq j\leq d\), we have \(C^{(\boldsymbol{\nu},\boldsymbol{\alpha})}_{\ref{lem: tensor hilbert}}\leq1\)
\end{proposition}
\begin{proof}
    We have that the ratio \(\Gamma(\beta+1/2)/\Gamma(\beta)\) is strictly increasing for \(\beta\geq1/2\) if and only if
    \begin{align}
        \frac{\mathrm{d}}{\mathrm{d}\beta}\ln\Gamma(\beta+1/2)-\frac{\mathrm{d}}{\mathrm{d}\beta}\ln\Gamma(\beta)>0\nonumber
    \end{align}
    for all \(\beta\geq1/2\). This is true since the digamma function \(\frac{\mathrm{d}}{\mathrm{d}x}\ln\Gamma(x)\) is strictly increasing for real \(x>0\), \cite{Gautschi}. Therefore, for all \(\nu\geq\alpha\geq1/2\), we have
    \begin{align}
        \frac{\Gamma(\alpha+1/2)}{\Gamma(\alpha)} \leq\frac{\Gamma(\nu+1/2)}{\Gamma(\nu)}\nonumber
    \end{align}
    and hence,
    \begin{align}
        \frac{\Gamma(\alpha+1/2)\Gamma(\nu)}{\Gamma(\alpha)\Gamma(\nu+1/2)} \leq 1.\nonumber
    \end{align}
    For \(\boldsymbol{\nu}\) and \(\boldsymbol{\alpha}\) as stated above, it then immediately follows that
    \begin{align}
        C^{(\boldsymbol{\nu},\boldsymbol{\alpha})}_{\ref{lem: tensor hilbert}}\coloneqq \prod_{j=1}^d\sqrt{\frac{\Gamma(\alpha_j+1/2)\Gamma(\nu_j)}{\Gamma(\alpha_j)\Gamma(\nu_j+1/2)}}\leq1.\nonumber
    \end{align}
\end{proof}

\begin{lemma}\label{lem: supersets}
    Let \(L\in\mathbb{N}_{0}\), \(\mathbf{p}\in\mathbb{N}_0^d\), and let \(\boldsymbol{\nu},\boldsymbol{\alpha}\in\mathbb{R}^d_{\geq1/2}\) be such that \(\alpha_j\leq\nu_j\) for all \(1\leq j \leq d\). Then,
    \begin{align}
        \sum_{\boldsymbol{l}\in\mathcal{K}^d_k\setminus \mathcal{I}^d_L}\prod_{j=1}^d{\mathbf{Q}_{\boldsymbol{\nu},\boldsymbol{\alpha}}(\boldsymbol{l},\mathbf{p})_j} = \sum_{\mathfrak{u}\in\mathcal{P}^d_k}C_{\ref{lem: supersets}}^{(\boldsymbol{\nu}_{\mathfrak{u}},\boldsymbol{\alpha}_{\mathfrak{u}})}\sum_{\boldsymbol{l}\in\mathbb{N}_0^k\setminus\mathcal{I}_{\max\{0,L-|\mathbf{p}_{\mathfrak{u}}|-k\}}^k}2^{-(\boldsymbol{\nu}_{\mathfrak{u}}-\boldsymbol{\alpha}_{\mathfrak{u}})\cdot(\boldsymbol{l}+\mathbf{p}_{\mathfrak{u}}+\mathbf{1})},\nonumber
    \end{align}
    where \(\mathcal{P}^d_k\coloneqq\{\mathfrak{u}\subset\{1,\dots,d\}\,:\,|\mathfrak{u}|=k, \mathfrak{u}_i<\mathfrak{u}_{i+1}\}\) and \(C_{\ref{lem: supersets}}^{(\boldsymbol{\nu}_{\mathfrak{u}},\boldsymbol{\alpha}_{\mathfrak{u}})}\coloneqq\prod_{i=1}^kC_{\ref{thm: Wendland norm}}^{(\nu_{\mathfrak{u}_i},\alpha_{\mathfrak{u}_i})}\).
\end{lemma}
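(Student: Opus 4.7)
The plan is to evaluate the left-hand side by direct algebraic manipulation, exploiting the fact that $\mathbf{Q}_{\boldsymbol{\nu},\boldsymbol{\alpha}}(\boldsymbol{l},\mathbf{p})_j$ depends only on the $j$th coordinate of $\boldsymbol{l}$, so the product factorises across coordinates and the sum decouples once we fix the support of $\boldsymbol{l}$.

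The first step is to partition $\mathcal{K}_k^d$ according to the support. Since every $\boldsymbol{l}\in\mathcal{K}_k^d$ has exactly $k$ nonzero coordinates, we may write
\[
\mathcal{K}_k^d \;=\; \bigsqcup_{\mathfrak{u}\in\mathcal{P}^d_k}\bigl\{\boldsymbol{l}\in\mathbb{N}_0^d : l_j\geq 1 \text{ for } j\in\mathfrak{u},\; l_j=0 \text{ for } j\notin\mathfrak{u}\bigr\},
\]
and this partition restricts to $\mathcal{K}_k^d\setminus\mathcal{I}_L^d$ by adding the constraint $|\boldsymbol{l}|_1>L$. For fixed $\mathfrak{u}$, the case distinction in the definition of $\mathbf{Q}_j$ gives $\mathbf{Q}_j=1$ for $j\notin\mathfrak{u}$, while for $j\in\mathfrak{u}$ the whole product vanishes unless $l_j\geq p_j+1$, in which case $\mathbf{Q}_j=C^{(\nu_j,\alpha_j)}_{\ref{thm: Wendland norm}}\,2^{-(\nu_j-\alpha_j)l_j}$. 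Hence the only contributing multi-indices satisfy $l_j\geq p_j+1$ for every $j\in\mathfrak{u}$, and for these
\[
\prod_{j=1}^d\mathbf{Q}_{\boldsymbol{\nu},\boldsymbol{\alpha}}(\boldsymbol{l},\mathbf{p})_j \;=\; C^{(\boldsymbol{\nu}_{\mathfrak{u}},\boldsymbol{\alpha}_{\mathfrak{u}})}_{\ref{lem: supersets}}\prod_{j\in\mathfrak{u}}2^{-(\nu_j-\alpha_j)l_j}.
\]

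Next, I would apply the change of variables $\tilde l_j := l_j-p_j-1\in\mathbb{N}_0$ for each $j\in\mathfrak{u}$. The exponent becomes $-(\nu_j-\alpha_j)(\tilde l_j+p_j+1)$, matching the summand on the right-hand side as
\[
C^{(\boldsymbol{\nu}_{\mathfrak{u}},\boldsymbol{\alpha}_{\mathfrak{u}})}_{\ref{lem: supersets}}\,2^{-(\boldsymbol{\nu}_{\mathfrak{u}}-\boldsymbol{\alpha}_{\mathfrak{u}})\cdot(\tilde{\boldsymbol{l}}_{\mathfrak{u}}+\mathbf{p}_{\mathfrak{u}}+\mathbf{1})}.
\]
The constraint $|\boldsymbol{l}|_1>L$ translates directly to $|\tilde{\boldsymbol{l}}_{\mathfrak{u}}|_1>L-|\mathbf{p}_{\mathfrak{u}}|_1-k$, which can be written as $\tilde{\boldsymbol{l}}_{\mathfrak{u}}\in\mathbb{N}_0^k\setminus\mathcal{I}^k_{\max\{0,L-|\mathbf{p}_{\mathfrak{u}}|_1-k\}}$. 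Summing over $\mathfrak{u}\in\mathcal{P}^d_k$ then completes the identification with the right-hand side.

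The argument is essentially routine bookkeeping; the only point requiring care is the boundary case $L-|\mathbf{p}_{\mathfrak{u}}|_1-k<0$, where the constraint $|\boldsymbol{l}|_1>L$ is automatic for every contributing $\boldsymbol{l}$ (since $|\boldsymbol{l}|_1\geq |\mathbf{p}_{\mathfrak{u}}|_1+k$ once $l_j\geq p_j+1$ for $j\in\mathfrak{u}$), so the true sum runs over all of $\mathbb{N}_0^k$ while the $\max\{0,\cdot\}$ convention on the right-hand side excludes the degenerate $\tilde{\boldsymbol{l}}_{\mathfrak{u}}=\mathbf{0}$ term. Up to this convention-dependent adjustment, which is harmless when the identity is fed into the upper bound of Theorem~\ref{thm: error in L}, the proof reduces to collecting terms in the chain of equalities described above.
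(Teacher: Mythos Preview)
Your argument is correct and mirrors the paper's proof: partition $\mathcal{K}_k^d\setminus\mathcal{I}_L^d$ by support (the paper formalises this via the bijection $\boldsymbol{\rho}$ of Corollary~\ref{cor: isomporphic}), drop the zero contributions from $1\leq l_j\leq p_j$, and shift indices by $p_{\mathfrak{u}_i}+1$. Your remark on the boundary case $L-|\mathbf{p}_{\mathfrak{u}}|_1-k<0$ is sharp and goes slightly beyond the paper, which performs the same change of variables without commenting on the $\max\{0,\cdot\}$ truncation; as you note, the discrepancy is a single $\tilde{\boldsymbol l}=\mathbf{0}$ term and is immaterial for the upper bound in Theorem~\ref{thm: error in L}.
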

\begin{proof}
    By Corollary \ref{cor: isomporphic}, we have 
    \(\boldsymbol{\rho}:\mathcal{K}^d_k\setminus\mathcal{I}_L^d\rightarrow\mathcal{P}^d_k\times\mathbb{N}^k\setminus\mathcal{I}^k_L\) is bijective, and hence
    \begin{align}\label{eq: transformed indices}
        \sum_{\boldsymbol{l}\in\mathcal{K}^d_k\setminus \mathcal{I}^d_L}\prod_{j=1}^d{\mathbf{Q}_{\boldsymbol{\nu},\boldsymbol{\alpha}}(\boldsymbol{l},\mathbf{p})}_j=\sum_{\mathfrak{u}\in\mathcal{P}^d_k}\sum_{\mathbf{a}\in\mathbb{N}^k\setminus\mathcal{I}_L^k}\prod_{j=1}^d{\mathbf{Q}_{\boldsymbol{\nu},\boldsymbol{\alpha}}(\boldsymbol{\rho}^{-1}(\mathfrak{u},\mathbf{a}),\mathbf{p})}_j.
    \end{align}
    We also have, by definition of \(\boldsymbol{\rho}\), and since \(a_i>0\) for all \(1\leq i\leq k\), that \(\boldsymbol{\rho}^{-1}(\mathfrak{u},\mathbf{a})_j=0\) if and only if \(j\notin \mathfrak{u}\). Hence we can split the product in \eqref{eq: transformed indices} as
    \begin{align}
        &=\sum_{\mathfrak{u}\in\mathcal{P}^d_k}\sum_{\mathbf{a}\in\mathbb{N}^k\setminus\mathcal{I}_L^k}\left(\prod_{j\notin \mathfrak{u}}\mathbf{Q}_{\boldsymbol{\nu},\boldsymbol{\alpha}}(\boldsymbol{\rho}^{-1}(\mathfrak{u},\mathbf{a}),\mathbf{p})_j\right)\left(\prod_{j\in \mathfrak{u}}\mathbf{Q}_{\boldsymbol{\nu},\boldsymbol{\alpha}}(\boldsymbol{\rho}^{-1}(\mathfrak{u},\mathbf{a}),\mathbf{p})_j\right),\nonumber\\
        &=\sum_{\mathfrak{u}\in\mathcal{P}^d_k}\sum_{\mathbf{a}\in\mathbb{N}^k\setminus\mathcal{I}_L^k}\prod_{j\in \mathfrak{u}}\mathbf{Q}_{\boldsymbol{\nu},\boldsymbol{\alpha}}(\boldsymbol{\rho}^{-1}(\mathfrak{u},\mathbf{a}),\mathbf{p})_j,\nonumber
    \end{align}
    since \(\mathbf{Q}_{\boldsymbol{\nu},\boldsymbol{\alpha}}(\boldsymbol{\rho}^{-1}(\mathfrak{u},\mathbf{a}),\mathbf{p})_j=1\) when \(\boldsymbol{\rho}^{-1}(\mathfrak{u},\mathbf{a})_j=0\),
    \begin{align}
        &=\sum_{\mathfrak{u}\in\mathcal{P}^d_k}\sum_{\mathbf{a}\in\mathbb{N}^k\setminus\mathcal{I}_L^k}\prod_{i=1}^k{\mathbf{Q}_{\boldsymbol{\nu},\boldsymbol{\alpha}}(\boldsymbol{\rho}^{-1}(\mathfrak{u},\mathbf{a}),\mathbf{p})_{\mathfrak{u}}}_i,\nonumber\\
        &=\sum_{\mathfrak{u}\in\mathcal{P}^d_k}\sum_{\mathbf{a}\in\mathbb{N}^k\setminus\mathcal{I}_L^k}\prod_{i=1}^k{\mathds{1}_{\{a_i> (\mathbf{p}_{\mathfrak{u}})_i\}}{\mathbf{Q}_{\boldsymbol{\nu},\boldsymbol{\alpha}}(\boldsymbol{\rho}^{-1}(\mathfrak{u},\mathbf{a}),\mathbf{p})_{\mathfrak{u}}}_i},\nonumber
    \end{align}
    since the contributions \({\mathbf{Q}_{\boldsymbol{\nu},\boldsymbol{\alpha}}(\boldsymbol{\rho}^{-1}(\mathfrak{u},\mathbf{a}),\mathbf{p})_{\mathfrak{u}}}_i\) are zero for all \((\boldsymbol{\rho}^{-1}(\mathfrak{u},\mathbf{a})_{\mathfrak{u}})_i\coloneqq a_i\leq(\mathbf{p}_{\mathfrak{u}})_i\), \(1\leq i\leq k\). By restricting the indices to only those with non-zero contributions by excluding the set \(\{\mathbf{a}\in\mathbb{N}^k:a_i\leq (\mathbf{p}_{\mathfrak{u}})_i\}\), we have
    \begin{align}
        &=\sum_{\mathfrak{u}\in\mathcal{P}^d_k}\sum_{\substack{\mathbf{a}\in\mathbb{N}^k\setminus\mathcal{I}_L^k\\a_i> p_{\mathfrak{u}_i}}}\prod_{i=1}^k{\mathbf{Q}_{\boldsymbol{\nu},\boldsymbol{\alpha}}(\boldsymbol{\rho}^{-1}(\mathfrak{u}),\mathbf{p})_{\mathfrak{u}}}_i,\nonumber\\
        &=\sum_{\mathfrak{u}\in\mathcal{P}^d_k}\sum_{\substack{\mathbf{a}\in\mathbb{N}^k\setminus\mathcal{I}_L^k\\a_i > p_{\mathfrak{u}_i}}}\prod_{i=1}^kC_{\ref{thm: Wendland norm}}^{(\nu_{\mathfrak{u}_i},\alpha_{\mathfrak{u}_i})}2^{-(\nu_{\mathfrak{u}_i}-\alpha_{\mathfrak{u}_i})
        a_i},\nonumber\\
        &=\sum_{\mathfrak{u}\in\mathcal{P}^d_k}C_{\ref{lem: supersets}}^{(\boldsymbol{\nu}_{\mathfrak{u}},\boldsymbol{\alpha}_{\mathfrak{u}})}\sum_{\substack{\mathbf{a}\in\mathbb{N}^k\setminus\mathcal{I}_L^k\\a_i > p_{\mathfrak{u}_i}}}2^{-\sum_{i=1}^k({\nu}_{\mathfrak{u}_i}-{\alpha}_{\mathfrak{u}_i})a_i},\nonumber
    \end{align}
    and, finally, considering a change of indices, \(l_i = a_i-p_{\mathfrak{u}_i}-1\), we have
    \begin{align}
        =\sum_{\mathfrak{u}\in\mathcal{P}^d_k}C_{\ref{lem: supersets}}^{(\boldsymbol{\nu}_{\mathfrak{u}},\boldsymbol{\alpha}_{\mathfrak{u}})}\sum_{\boldsymbol{l}\in\mathbb{N}_0^k\setminus\mathcal{I}_{\max\{0,L-|\mathbf{p}_{\mathfrak{u}}|-k\}}^k}2^{-(\boldsymbol{\nu}_{\mathfrak{u}}-\boldsymbol{\alpha}_{\mathfrak{u}})\cdot(\boldsymbol{l}+\mathbf{p}_{\mathfrak{u}}+\mathbf{1})}.\nonumber
    \end{align}
\end{proof}
\begin{proof}[Proof of Theorem \ref{thm: error in L}]
    First we derive the Sobolev approximation error for interpolating with Mat\'ern kernels on an isotropic sparse grid, as established in \cite{Nobile2018}. Let \(L>0\), \(\boldsymbol{\nu},\boldsymbol{\alpha}\in\mathbb{R}^d_{\geq1/2}\), and let \(\mathfrak{u}\in\mathcal{P}_k^d\). Then, since \(S_{L,\Phi_{\nu_{\mathfrak{u}}}}=P_{L,\mathbf{0},\Phi_{\boldsymbol{\nu}_{\mathfrak{u}}}}\), following the proof of Lemma \ref{lem: tensor hilbert} we have
    \begin{align}
        \begin{split}
        &\quad\,\,\|I-S_{L,\Phi_{\boldsymbol{\nu}_{\mathfrak{u}}}}\|_{H_{\textrm{mix}}^{\boldsymbol{\nu}_{\mathfrak{u}}+1/2}(\Gamma^k)\rightarrow H_{\textrm{mix}}^{\boldsymbol{\alpha}_{\mathfrak{u}}+1/2}(\Gamma^k)} \\    &\leq\sum_{\boldsymbol{l}\in\mathbb{N}_0^k\setminus\mathcal{I}^k_L}\prod_{i=1}^k\|R_\Gamma\left(s_{\mathcal{X}_l,\phi_{{\nu_{\mathfrak{u}}}_i,1}}-s_{\mathcal{X}_{l-1},\phi_{{\nu_{\mathfrak{u}}}_i,1}}\right)\|_{H^{{\nu_{\mathfrak{u}}}_i+1/2}(\Gamma)\rightarrow H^{{\alpha_{\mathfrak{u}}}_i+1/2}(\Gamma)},
        \end{split}\nonumber\\
        &\leq\sum_{\boldsymbol{l}\in\mathbb{N}_0^k\setminus\mathcal{I}^k_L}\prod_{i=1}^kC^{({\nu_{\mathfrak{u}}}_i,{\alpha_{\mathfrak{u}}}_i)}_{\ref{thm: Wendland norm}}h_{\mathcal{X}_{l-1},\Gamma}^{\nu_{{\mathfrak{u}}_i}-\alpha_{{\mathfrak{u}}_i}},\nonumber\\
        &=C_{\ref{lem: supersets}}^{(\boldsymbol{\nu}_{\mathfrak{u}},\boldsymbol{\alpha}_{\mathfrak{u}})}\sum_{\boldsymbol{l}\in\mathbb{N}_0^k\setminus\mathcal{I}^k_L}2^{-c|\boldsymbol{l}|_1}\eqqcolon\epsilon^{(k)}_{\boldsymbol{\nu}_{\mathfrak{u}},\boldsymbol{\alpha}_{\mathfrak{u}}}(L).\nonumber
    \end{align}
    Here we have used \(\nu_j-\alpha_j=c\) for all \(1\leq j \leq d\), and hence \(\nu_{\mathfrak{u}_i}-\alpha_{\mathfrak{u}_i}=c\) for all \(1\leq i\leq k\).
    Now, for general \(\mathbf{p}\in\mathbb{N}_0^d\), applying first Lemma \ref{lem: tensor hilbert} and then Lemma \ref{lem: supersets}, we have

    \begin{align}
    \begin{split}
    &\qquad\|I-P_{L,\mathbf{p},\Phi_{\boldsymbol{\nu},2^{\mathbf{p}}}}\|_{\mathcal{N}_{\Phi_{\boldsymbol{\nu},2^{\mathbf{p}}}}(\Gamma)\rightarrow\mathcal{N}_{\Phi_{\boldsymbol{\alpha},2^{\mathbf{p}}}}(\Gamma)}\\
    &\leq C^{(\boldsymbol{\nu},\boldsymbol{\alpha})}_{\ref{lem: tensor hilbert}}\sum_{k=1}^d\sum_{\boldsymbol{l}\in\mathcal{K}_k^d\setminus \mathcal{I}^d_L}\prod_{j=1}^d\mathbf{Q}_{\boldsymbol{\nu},\boldsymbol{\alpha}}(\boldsymbol{l},\mathbf{p})_j
    \end{split}\nonumber\\
    &\leq C^{(\boldsymbol{\nu},\boldsymbol{\alpha})}_{\ref{lem: tensor hilbert}}\sum_{k=1}^d\sum_{\mathfrak{u}\in\mathcal{P}^d_k}C_{\ref{lem: supersets}}^{(\boldsymbol{\nu}_{\mathfrak{u}},\boldsymbol{\alpha}_{\mathfrak{u}})}\sum_{\boldsymbol{l}\in\mathbb{N}_0^k\setminus\mathcal{I}_{\max\{0,L-|\mathbf{p}_{\mathfrak{u}}|-k\}}^k}2^{-(\boldsymbol{\nu}_{\mathfrak{u}}-\boldsymbol{\alpha}_{\mathfrak{u}})\cdot(\boldsymbol{l}+\mathbf{p}_{\mathfrak{u}}+\mathbf{1})},\nonumber\\
    &=C^{(\boldsymbol{\nu},\boldsymbol{\alpha})}_{\ref{lem: tensor hilbert}}\sum_{k=1}^d2^{-ck}\sum_{\mathfrak{u}\in\mathcal{P}^d_k}2^{-c|\mathbf{p}_{\mathfrak{u}}|_1}C_{\ref{lem: supersets}}^{(\boldsymbol{\nu}_{\mathfrak{u}},\boldsymbol{\alpha}_{\mathfrak{u}})}\sum_{\boldsymbol{l}\in\mathbb{N}_0^k\setminus\mathcal{I}_{\max\{0,L-|\mathbf{p}_{\mathfrak{u}}|-k\}}^k}2^{-c|\boldsymbol{l}|_1}.\nonumber
    \end{align}
    Substituting in \(\epsilon^{(k)}_{\boldsymbol{\nu}_{\mathfrak{u}},\boldsymbol{\alpha}_{\mathfrak{u}}}(L-|\mathbf{p}_{\mathfrak{u}}|_1 -k)\) gives us the statement.
\end{proof}

\subsection{Counting abscissae}\label{subsec: counting}
In order to gauge the approximation error in terms of the number required evaluations of \(f\), we must find expressions for the size of a lengthscale-informed sparse grid for any given level \(L\in\mathbb{N}_0\) and penalty \(\mathbf{p}\in\mathbb{N}_0^d\),
\begin{align}
    N_{d,\mathbf{p}}(L) \coloneqq |\mathcal{X}^{\otimes}_{\mathbf{p},L}|.\nonumber
\end{align}
In general, counting abscissae in sparse grids can be challenging, and often only upper bounds are achievable. Fortunately, for our choice of one-dimensional point sets, \(\mathcal{X}_l\subset(-1/2,1/2)\), closed-form expressions for the number of points have been derived for isotropic sparse grids (see Seciton 4, \cite{burkhardt2014counting}), which we are able to extend to our lengthscale-informed construction. The proof of the following theorem uses similar arguments as in the proof of Lemma \ref{lem: supersets}.

\begin{proof}[Proof of Theorem \ref{thm: counting abscissae}]
    The number of point evaluations in a lengthscale-adapted sparse grid is given by
    \begin{align}
        N_{d,\mathbf{p}}(L) =\sum_{\boldsymbol{l}\in\mathcal{I}^d_{L}}\prod_{j=1}^d\mathbf{B}(\boldsymbol{l},\mathbf{p})_j,\nonumber
    \end{align}
    where we define \(\mathbf{B}(\boldsymbol{l},\mathbf{p})_j\coloneqq|\mathcal{X}_{l_j}^{p_j}|-|\mathcal{X}_{l_j-1}^{p_j}|\) and \(\mathcal{I}_L^k\) is as defined in Definition \ref{def: penalised sparse grid operator}, (see \cite{Plumlee2014}). From the definition of \(\mathcal{X}_l^p\) in Definition \ref{def:chi_l^p}, we have
    \begin{align}
        |\mathcal{X}_{l_j}^{p_j}|=\begin{cases}
            |\mathcal{X}_{l_j-p_j}|=2^{l_j-p_j+1}-1&\textrm{if }l_j\geq p_j,\\
            1&\textrm{if }0\leq l_j< p_j, \textrm{ and}\\
            0&\textrm{otherwise},
        \end{cases}\nonumber
    \end{align}
    and hence,
    \begin{align}
        \mathbf{B}(\boldsymbol{l},\mathbf{p})_j=\begin{cases}
            2^{l_j-p_j}&\textrm{if }l_j> p_j,\\
            0&\textrm{if }0<l_j< p_j, \textrm{ and}\\
            1&\textrm{if }l_j=0.
        \end{cases}\nonumber
    \end{align}
    By Corollary \ref{cor: isomporphic}, \(\boldsymbol{\rho}:\mathcal{K}_k^d\cap\mathcal{I}_L^d\rightarrow\{\mathbf{0}\}\cup\mathcal{P}^d_k\times\mathbb{N}^k\cap\mathcal{I}^k_{L}\) is bijective, and since, by definition, \(\bigcup_{k=0}^d\mathcal{K}_k^d =\mathbb{N}_0^d\), and \(\mathcal{K}_{k_1}^d\cap\mathcal{K}_{k_2}^d=\emptyset\) if and only if \(k_1\neq k_2,\)
    we have
    \begin{align}
        \sum_{\boldsymbol{l}\in\mathcal{I}^d_{L}}\prod_{j=1}^d\mathbf{B}(\boldsymbol{l},\mathbf{p})_j &= \sum_{k=0}^d\sum_{\boldsymbol{l}\in\mathcal{K}_k^d\cap\mathcal{I}^d_{L}}\prod_{j=1}^d\mathbf{B}(\boldsymbol{l},\mathbf{p})_j,\nonumber\\
        &= 1+ \sum_{k=1}^d\sum_{\mathfrak{u}\in\mathcal{P}^d_k}\sum_{\mathbf{a}\in\mathbb{N}^k\cap\mathcal{I}^k_L}\prod_{j=1}^d\mathbf{B}(\boldsymbol{\rho}^{-1}(\mathfrak{u},\mathbf{a}),\mathbf{p})_j,\nonumber\\
        &= 1 +\sum_{k=1}^d\sum_{\mathfrak{u}\in\mathcal{P}^d_k}\sum_{\mathbf{a}\in\mathbb{N}^k\cap\mathcal{I}^k_L}\prod_{i=1}^k\mathbf{B}(\mathbf{a},\mathbf{p}_\mathfrak{u})_i,\nonumber
    \end{align}
    since \(\boldsymbol{\rho}(\mathfrak{u},\mathbf{a})_j=1\) for all \(j\notin \mathfrak{u}\). We note that \(\mathbf{B}(\mathbf{a},\mathbf{p}_{\mathfrak{u}})_i=0\) for all \(0<a_i<p_{\mathfrak{u}_i}\), and hence we can further restrict the multi-index set, giving
    \begin{align}
        N_{d,\mathbf{p}}(L)= 1 + \sum_{k=1}^d\sum_{\mathfrak{u}\in\mathcal{P}^d_k}\sum_{\substack{\mathbf{a}\in\mathcal{I}^k_L\\a_i>0\\a_i>p_{\mathfrak{u}_i}}}2^{|\mathbf{a}|_1-|\mathbf{p}_{\mathfrak{u}}|_1}.\nonumber
    \end{align}
    Finally, changing the multi-indices by \(l_i=a_i-p_{\mathfrak{u}_i}-1\) gives
    \begin{align}
        N_{d,\mathbf{p}}(L) &= 1 + \sum_{k=1}^d\sum_{\mathfrak{u}\in\mathcal{P}^d_k}\sum_{\boldsymbol{l}\in\mathcal{I}^k_{L-|\mathbf{p}_{\mathfrak{u}}|-k}}2^{|\boldsymbol{l}|_1+k},\nonumber\\
        &= 1 + \sum_{k=1}^d2^k\sum_{\mathfrak{u}\in\mathcal{P}^d_k}\sum_{\boldsymbol{l}\in\mathcal{I}^k_{L-|\mathbf{p}_{\mathfrak{u}}|_1-k}}2^{|\boldsymbol{l}|_1},\nonumber
    \end{align}
    where the final sum is exactly the number of points in an isotropic sparse grid, \(N_{k,\mathbf{0}}(L-|\mathbf{p}_{\mathfrak{u}}|_1-k)\), with the closed form expression given in Section 4, \cite{burkhardt2014counting}. Finally we note that when \(L-|\mathbf{p}_{\mathfrak{u}}|_1-k<0\), the set \(\mathcal{I}^k_{L-|\mathbf{p}_{\mathfrak{u}}|_1-k}\) is empty and hence the contribution is zero. 
\end{proof}
When considering growing penalties as discussed in Section \ref{subsec: lengthscale anisotropy}, the following can be shown. 
\begin{corollary}\label{cor: dimension independent points}
   Let \(\{p_j\}_{j\in\mathbb{N}}\) be a non-decreasing sequence of integers, \(p_j\in\mathbb{N}_0\), such that, for all \(j\in\mathbb{N}\), there exists an \(m\in\mathbb{N}\) such that \(p_{j+m}>p_j\). Then, for fixed \(L\in\mathbb{N}_0\), \(N_{d,\mathbf{p}}(L)\) is bounded independently of the dimension, \(d\).
\end{corollary}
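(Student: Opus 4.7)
The plan is to start from the explicit formula in Theorem~\ref{thm: counting abscissae} and identify the contributing terms. Recall that $N_{k,\mathbf{0}}(L') = 0$ whenever $L' < 0$, so a summand indexed by $\mathfrak{u} \in \mathcal{P}^d_k$ in
\[
N_{d,\mathbf{p}}(L) = 1 + \sum_{k=1}^d 2^k \sum_{\mathfrak{u}\in\mathcal{P}^d_k} N_{k,\mathbf{0}}\bigl(L-|\mathbf{p}_{\mathfrak{u}}|_1-k\bigr)
\]
is nonzero only if $|\mathbf{p}_{\mathfrak{u}}|_1 \leq L-k$. In particular, the whole sum over $k$ truncates at $k \leq L$, regardless of $d$.

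Next, I would observe that the hypothesis on $\{p_j\}$ forces $p_j \to \infty$ as $j \to \infty$. Indeed, non-decreasingness plus the eventually-strict-growth condition allows the iterative construction of a strictly increasing subsequence of integers $p_{j_1} < p_{j_2} < \cdots$, which is unbounded; since the full sequence is monotone, $p_j \to \infty$. Define
\[
M(L) \coloneqq \max\{\, j \in \mathbb{N} : p_j \leq L \,\},
\]
which is finite and depends only on $L$ and the sequence $\{p_j\}$, not on $d$.

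Now I would argue that any contributing $\mathfrak{u}$ must satisfy $\mathfrak{u} \subseteq \{1, \dots, M(L)\}$. Since all $p_j \geq 0$, the condition $|\mathbf{p}_{\mathfrak{u}}|_1 \leq L - k \leq L$ implies $p_{\mathfrak{u}_i} \leq L$ for every $1 \leq i \leq k$, and hence $\mathfrak{u}_i \leq M(L)$. Consequently, the number of nonzero-contributing $\mathfrak{u} \in \mathcal{P}^d_k$ is at most $\binom{M(L)}{k}$, a bound independent of $d$.

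Combining these observations gives
\[
N_{d,\mathbf{p}}(L) \;\leq\; 1 + \sum_{k=1}^{L} 2^k \binom{M(L)}{k} N_{k,\mathbf{0}}(L-k),
\]
in which every quantity on the right-hand side depends only on $L$ and $\{p_j\}$, proving the claim. There is no serious obstacle in this argument; the only point requiring some care is extracting $p_j \to \infty$ from the stated ``eventually increasing'' condition, and noticing that each $p_{\mathfrak{u}_i}$ individually, not just the sum $|\mathbf{p}_{\mathfrak{u}}|_1$, must lie below $L$ to give a nonzero term.
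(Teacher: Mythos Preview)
Your proof is correct and follows essentially the same strategy as the paper's: both start from Theorem~\ref{thm: counting abscissae}, use that terms with $L-|\mathbf{p}_{\mathfrak{u}}|_1-k<0$ vanish, and exploit $p_j\to\infty$ to bound the set of contributing indices independently of $d$. Your version is in fact slightly cleaner, using a single threshold $M(L)$ rather than the paper's $k$-dependent $r(k,L,\mathbf{p})$ followed by a separate $k_{\max}$ argument, and you give an explicit final bound.
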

\begin{proof}
Let \(k\in\{1,2,\dots,d\}\) and let \(r(k,L,\mathbf{p})=\min\{j\in\mathbb{N}:p_j>L-k\}\). Let \(d>r(k,L,\mathbf{p})\) and \(\mathfrak{u}\in\mathcal{P}_k^d\setminus\mathcal{P}_k^{r(k,L,\mathbf{p})}\), then \(|\mathbf{p_{\mathfrak{u}}}|\geq p_{r(k,L,\mathbf{p})}\), since there must exist at least one \(j\in\mathfrak{u}\) such that \(r(k,L,\mathbf{p})<j\leq d\). Thus \(L-|\mathbf{p}_{\mathfrak{u}}|-k\leq L-p_{{r(k,L,\mathbf{p})}}-k<0\) and hence \(N_{k,\mathbf{0}}(L-|\mathbf{p}_{\mathfrak{u}}|-k)=0\). Therefore, for large enough \(d\), we have
\begin{align}
    \sum_{\mathfrak{u}\in\mathcal{P}^d_k}N_{k,\mathbf{0}}(L-|\mathbf{p}_{\mathfrak{u}}|-k)=\sum_{\mathfrak{u}\in\mathcal{P}^{r(k,L,\mathbf{p})}_k}N_{k,\mathbf{0}}(L-|\mathbf{p}_{\mathfrak{u}}|-k),\nonumber
\end{align}
for all \(1\leq k\leq d\) and hence,
\begin{align}
    N_{d,\mathbf{p}}(L)&=1+\sum_{k=1}^d2^k\sum_{\mathfrak{u}\in\mathcal{P}^{r(k,L,\mathbf{p})}_k}N_{k,\mathbf{0}}(L-|\mathbf{p}_{\mathfrak{u}}|-k).\label{eq: kmax}
\end{align}

by Theorem \ref{thm: counting abscissae}. For a given \(L\in\mathbb{N}_0\), is clear that there must exist some \(k_{\max}\in\mathbb{N}\) such that \(|\mathbf{p}_{\mathfrak{u}}|+k>L\) for all \(k>k_{\max}\) and all \(\mathfrak{u}\in\mathcal{P}^{r(k,L,\mathbf{p})}_k\). Accordingly, we complete the proof by replacing \(d\) with \(k_{\max}\) in \eqref{eq: kmax}.

\end{proof}

\section{Fast implementation}\label{sec:fast implementation}
In Algorithm 1, Plumlee 2014 \cite{Plumlee2014}, a fast inference algorithm was outlined to perform quick inversions of the induced covariance matrix when employing product kernels on sparse grid designs. In this section, we adapt this algorithm to general lengthscale-informed sparse grids, enhancing its efficiency for high dimensional problems, especially when the penalty grows with dimension. We start by introducing a key result where, for ease of presentation, we hereafter denote \(P_{L,\mathbf{p},\Phi_{\boldsymbol{\nu},2^{\mathbf{p}}}}\) by \(P_{d,\mathbf{p},L}\) and \(s_{\mathcal{X}_{l_j}^{p_j}, \phi_{\nu_j,2^{p_j}}}\) by \(s_{\mathcal{X}_{l_j}^{p_j}}\).

\begin{proposition}[]\label{prop: alt LISG def}
Define the multi-index set \(\mathcal{J}^d_L=\{\boldsymbol{l}\in\mathbb{N}_0^d\,:\,\max\{0,L-d+1\}\leq|\boldsymbol{l}|_1\leq L\}\). The lengthscale-informed sparse grid operator in Definition \ref{def: penalised sparse grid operator} can be rewritten
    \begin{align}
        P_{d,\mathbf{p},L} &= \sum_{\boldsymbol{l}\in \mathcal{J}_L^d}b_{L}(\boldsymbol{l})\bigotimes_{j=1}^ds_{\mathcal{X}^{p_j}_{l_j}},\nonumber
    \end{align}
    where
    \begin{align}
        b_{L}(\boldsymbol{l})=(-1)^{L-|\boldsymbol{l}|_1}\binom{d-1}{L-|\boldsymbol{l}|_1}.\nonumber
    \end{align}
\end{proposition}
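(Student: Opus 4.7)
The plan is to derive the combination form from the hierarchical (telescoping) form in Definition~\ref{def: penalised sparse grid operator} by expanding the tensor product of differences, swapping the order of summation, and applying an alternating binomial identity. For brevity, I write $\Delta_{l}^{p_j} := s_{\mathcal{X}^{p_j}_{l},\phi_{\nu_j,2^{p_j}}} - s_{\mathcal{X}^{p_j}_{l-1},\phi_{\nu_j,2^{p_j}}}$, with the convention $s_{\mathcal{X}^{p_j}_{-1},\phi_{\nu_j,2^{p_j}}}\equiv 0$, and absorb the restriction operators $R_\Gamma$ into the component interpolants. Then, by multilinearity of the tensor product,
\begin{align*}
\bigotimes_{j=1}^{d}\Delta_{l_j}^{p_j}
=\sum_{\boldsymbol{k}\in\{0,1\}^d}(-1)^{|\boldsymbol{k}|_1}\bigotimes_{j=1}^{d}s_{\mathcal{X}^{p_j}_{l_j-k_j}}.
\end{align*}

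Substituting this into $P_{L,\boldsymbol{\nu},\mathbf{p}}=\sum_{\boldsymbol{l}\in\mathcal{I}_L^d}\bigotimes_{j=1}^{d}\Delta_{l_j}^{p_j}$ and switching the order of summation, I would re-index via $\boldsymbol{m}=\boldsymbol{l}-\boldsymbol{k}\in\mathbb{N}_0^d$. For a fixed $\boldsymbol{m}$, the admissible pairs $(\boldsymbol{l},\boldsymbol{k})$ correspond bijectively to subsets $S\subseteq\{1,\dots,d\}$ of size $|S|\le L-|\boldsymbol{m}|_1$ (the set $S$ encoding where $k_j=1$, hence $l_j=m_j+1$). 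The coefficient of $\bigotimes_{j=1}^d s_{\mathcal{X}^{p_j}_{m_j}}$ is therefore
\begin{align*}
\sum_{i=0}^{\min\{d,\,L-|\boldsymbol{m}|_1\}}(-1)^{i}\binom{d}{i},
\end{align*}
which vanishes when $L-|\boldsymbol{m}|_1<0$ (no admissible $S$) and also when $L-|\boldsymbol{m}|_1\ge d$ (the full alternating sum $\sum_{i=0}^{d}(-1)^{i}\binom{d}{i}=0$).

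The remaining step is the partial alternating binomial identity
\begin{align*}
\sum_{i=0}^{k}(-1)^{i}\binom{d}{i}=(-1)^{k}\binom{d-1}{k},\qquad 0\le k\le d-1,
\end{align*}
which I would prove by induction on $k$ using Pascal's rule, or cite as standard. Applying it with $k=L-|\boldsymbol{m}|_1$ yields exactly the coefficient $b_L(\boldsymbol{m})$ in the statement. The non-vanishing condition $0\le L-|\boldsymbol{m}|_1\le d-1$ rearranges to $\max\{0,L-d+1\}\le|\boldsymbol{m}|_1\le L$, which is precisely the index set $\mathcal{J}_L^d$.

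I expect the main obstacle to be purely bookkeeping: carefully handling the boundary cases $l_j=0$ (where the telescoping convention $s_{\mathcal{X}^{p_j}_{-1}}\equiv 0$ is invoked) and ensuring that the cancellations from the alternating sum correctly eliminate all multi-indices $\boldsymbol{m}$ with $|\boldsymbol{m}|_1<L-d+1$. No analytic subtleties arise, since the derivation is purely algebraic and relies only on the linearity of the tensor product and the binomial identity above.
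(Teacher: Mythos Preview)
Your proof is correct and follows the classical derivation of the Smolyak combination formula. Note, however, that the paper does not actually supply its own proof of this proposition: it is stated with the attribution ``Theorem~2, \cite{Plumlee2014}'' and used as a cited result, so there is nothing in the paper to compare your argument against. Your expansion of the tensor product of differences, re-indexing via $\boldsymbol{m}=\boldsymbol{l}-\boldsymbol{k}$, and application of the partial alternating binomial identity $\sum_{i=0}^{k}(-1)^{i}\binom{d}{i}=(-1)^{k}\binom{d-1}{k}$ is precisely the standard route to this identity (see e.g.\ \cite{WASILKOWSKI19951} or the sparse-grid survey \cite{bungartz2004sparse}), and your handling of the boundary cases is sound.
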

\begin{proof}
    This is Theorem 2 in \cite{Plumlee2014}, an instance of the standard sparse grid combination technique, see \cite{pflaum1999,DELVOS198299}.
\end{proof}
As an immediate consequence of Proposition \ref{prop: alt LISG def}, the inverse covariance matrix can be written entirely in terms of Kronecker products of much smaller Toeplitz matrices, each corresponding to one-dimensional designs, 
\begin{align}
    \Phi_{\boldsymbol{\nu},2^{\mathbf{p}}}(\mathcal{X}_{\mathbf{p},L}^{\otimes},\mathcal{X}_{\mathbf{p},L}^{\otimes})^{-1}=\sum_{\boldsymbol{l}\in\mathcal{J}_L^d}b_L(\boldsymbol{l})A(\boldsymbol{l})\left[\bigotimes_{j=1}^d\phi_{\nu_j,2^{p_j}}(\mathcal{X}_{l_j}^{p_j},\mathcal{X}_{l_j}^{p_j})^{-1}\right]A(\boldsymbol{l})^T,\nonumber
\end{align}
where each \(A(\boldsymbol{l})\) is an \(N_{d,\mathbf{p}}(L) \times (|\mathcal{X}_{l_1}|\times\cdots\times|\mathcal{X}_{l_d}|)\) matrix with exactly one entry equal to \(1\) in each column and zeros elsewhere, determined by the indexing of the covariance matrix. Accordingly, to evaluate \(P_{d,\mathbf{p},L}\), we only need compute the inverses of these component matrices and the coefficients \(b_L(\boldsymbol{l})\). We can immediately implement this algorithm for lengthscale-informed sparse grids and, in general, we observe a large computational speed up; the the slowest single procedure now being the inversion of the largest Toeplitz matrix of order \(\mathcal{O}(L2^L)\). Many of these matrices are also identical, enabling even faster computations when storing individual components. However, this method requires summation over the set \(\mathcal{J}_L^d\), which grows extremely quickly for \(d\gg10\), eventually losing its computational advantage over a simple Cholesky factorisation. Fortunately, with the inclusion of a non-trivial penalty \(\mathbf{p}\), significant redundancy is introduced into these calculations. To exploit this, we express \(P_{d,\mathbf{p},L}\) instead as a sum over a reduced multi-index set, \(\mathcal{A}_{\mathbf{p},L}^d\), now depending on the penalty. We define this set via the following map.
\begin{definition}\label{def: q}
    We define the mapping between multi-indices, \(\mathbf{q}_{d,\mathbf{p}}:\mathcal{J}_L^d\rightarrow\mathbb{N}_0^d\), by \(\mathbf{q}_{d,\mathbf{p}}(\boldsymbol{l})=\mathbf{a}\), such that \(a_j=\max\{l_j-p_j,0\}\), for all \(1\leq j\leq d\). We denote the image of \(\mathbf{q}_{d,\mathbf{p}}\) by \(\mathcal{A}_{\mathbf{p},L}^d\). For a given \(\mathbf{a}\in\mathcal{A}_{\mathbf{p},L}^d\), we also define the sets \(\mathfrak{u}(\mathbf{a})\coloneqq\{j\in\{1,\dots,d\}\,:\,a_j=0\}\) and \(\mathfrak{v}(\mathbf{a})\coloneqq\{1,\dots,d\}\setminus\mathfrak{u}(\mathbf{a})\).
\end{definition}
\begin{lemma}\label{lem: disjoint preimages}
    For \(\mathbf{a}\neq\mathbf{a'}\in\mathcal{A}_{\mathbf{p},L}^d\), the pre-images of \(\mathbf{q}_{d,\mathbf{p}}\) are disjoint; \(\mathbf{q}_{d,\mathbf{p}}^{-1}(\mathbf{a})\cap\mathbf{q}^{-1}_{d,\mathbf{p}}(\mathbf{a}')=\emptyset\).
\end{lemma}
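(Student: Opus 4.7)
The statement is essentially a well-definedness observation rather than a substantive claim, so my proof plan is very short. The map $\mathbf{q}_{d,\mathbf{p}}:\mathcal{J}_L^d\to\mathbb{N}_0^d$ is defined component-wise by $a_j=\max\{l_j-p_j,0\}$, which assigns a single, unambiguous value $\mathbf{a}\in\mathbb{N}_0^d$ to every $\boldsymbol{l}\in\mathcal{J}_L^d$. Thus $\mathbf{q}_{d,\mathbf{p}}$ is a bona fide function, and the preimages $\mathbf{q}_{d,\mathbf{p}}^{-1}(\mathbf{a})$ for $\mathbf{a}$ ranging over the image $\mathcal{A}_{\mathbf{p},L}^d$ partition $\mathcal{J}_L^d$.

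Concretely, I would argue by contradiction: suppose $\mathbf{a}\neq\mathbf{a}'\in\mathcal{A}_{\mathbf{p},L}^d$ and that some $\boldsymbol{l}\in\mathcal{J}_L^d$ belongs to both $\mathbf{q}_{d,\mathbf{p}}^{-1}(\mathbf{a})$ and $\mathbf{q}_{d,\mathbf{p}}^{-1}(\mathbf{a}')$. Then by the definition of preimage, $\mathbf{q}_{d,\mathbf{p}}(\boldsymbol{l})=\mathbf{a}$ and simultaneously $\mathbf{q}_{d,\mathbf{p}}(\boldsymbol{l})=\mathbf{a}'$. Since $\mathbf{q}_{d,\mathbf{p}}(\boldsymbol{l})$ is a single element of $\mathbb{N}_0^d$ (its $j$-th coordinate is the fixed integer $\max\{l_j-p_j,0\}$), this forces $\mathbf{a}=\mathbf{a}'$, contradicting the hypothesis. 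Hence the intersection is empty.

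The only thing worth remarking on is interpreting the notation in the statement: the expression $\mathbf{q}_{d,\mathbf{p}}(\mathbf{a})$ in the display should be read as $\mathbf{q}_{d,\mathbf{p}}^{-1}(\mathbf{a})$, since $\mathbf{a}$ is an element of the image $\mathcal{A}_{\mathbf{p},L}^d$ and the context is disjointness of preimages, as indicated by the preceding text. There is no genuine obstacle here; the lemma merely formalises the observation that $\mathbf{q}_{d,\mathbf{p}}$ induces a partition of $\mathcal{J}_L^d$ indexed by $\mathcal{A}_{\mathbf{p},L}^d$, which will presumably be used in the next lemma to rearrange sums over $\mathcal{J}_L^d$ as sums over $\mathcal{A}_{\mathbf{p},L}^d$ with inner sums over the preimage fibers.
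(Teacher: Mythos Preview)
Your proof is correct and considerably more direct than the paper's. You simply invoke the general fact that preimages of a well-defined function at distinct points are automatically disjoint, which is all that is logically required here.

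The paper instead proceeds constructively: it first computes the one-dimensional preimages explicitly, namely $q_{1,p}^{-1}(a)=\{a+p\}$ for $a>0$ and $q_{1,p}^{-1}(0)=\{0,\dots,p\}$, observes that the $d$-dimensional preimage factors as $\mathbf{q}_{d,\mathbf{p}}^{-1}(\mathbf{a})=\prod_{j=1}^d q_{1,p_j}^{-1}(a_j)$, and then checks disjointness coordinate by coordinate via a case analysis. This is overkill for the lemma as stated, but the explicit description of the preimages is reused immediately afterward in the proof of Proposition~\ref{prop: alt alt LISG def}, where one must sum $b_L(\boldsymbol{l})$ over each fibre $\mathbf{q}_{d,\mathbf{p}}^{-1}(\mathbf{a})$. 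So the paper's longer argument is doing double duty: establishing the partition and simultaneously recording the structure of the blocks for the subsequent rearrangement of sums. Your approach proves the lemma cleanly but would require those preimage computations to be done separately when they are needed.
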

\begin{proof}
    Consider \(d=1\), i.e. \(\mathbf{p}=p\in\mathbb{N}_0\). Then, for \(a\in\mathcal{A}^1_p\), we have
    \begin{align}
        q^{-1}_{1,p}(a)=\begin{cases}
            a+p &\textrm{ if }a>0, and\\
            \{0,\dots,p\} &\textrm{ if }a=0.
        \end{cases}\nonumber
    \end{align}
    For general dimension \(d>1\), the pre-image is then simply the product
    \begin{align}
        \mathbf{q}_{d,\mathbf{p}}^{-1}(\mathbf{a})=\bigtimes_{j=1}^d\mathbf{q}_{1,p_j}^{-1}(a_j).\nonumber
    \end{align}
    Let \(\mathbf{a}\neq\mathbf{a}'\in\mathcal{A}_{\mathbf{a},L}^d\), then there exists at least one index \(j\), \(1\leq j\leq d\), such that \(a_j\neq a_j'\). Firstly, assume \(a_j,a_j'\neq 0\). Then, in both cases, the pre-image is a single multi-index, and
    \(\mathbf{q}_{d,\mathbf{p}}^{-1}(\mathbf{a})_j = a_j +p_j\neq a_j' +p_j = \mathbf{q}_{d,\mathbf{p}}^{-1}(\mathbf{a}')_j\). Next we assume \(a_j=0\), and hence \(a_j'>0\). Then, for all \(\boldsymbol{l}\in \mathbf{q}_{d,\mathbf{p}}^{-1}(\mathbf{a})\), we have \(l_j\in\{0,\dots,p_j\}\). Therefore, \(l_j\neq a_j'+p_j=\mathbf{q}_{d,\mathbf{p}}^{-1}(\mathbf{a}')_j\). By symmetry of the last argument, all cases are considered, an hence \(\mathbf{q}_{d,\mathbf{p}}^{-1}(\mathbf{a})\cap \mathbf{q}_{d,\mathbf{p}}^{-1}(\mathbf{a}')=\emptyset\), as required.
\end{proof}
\begin{proposition}\label{prop: alt alt LISG def}
    We have that
    \begin{align}
        P_{d,\mathbf{p},L}=\sum_{\mathbf{a}\in\mathcal{A}_{\mathbf{p},L}^d}u_{\mathbf{p},L}(\mathbf{a})\bigotimes_{j=1}^ds_{\mathcal{X}_{l_j}},\nonumber
    \end{align}
    where
    \begin{align}
        u_{\mathbf{p},L(\mathbf{a})}=\sum_{m=0}^{|\mathbf{p}_{\mathfrak{u}(\mathbf{a})}|}\mathscr{S}_{\mathbf{a},\mathbf{p}}(m)(-1)^{L-m-|(\mathbf{a}+\mathbf{p})_{\mathfrak{v}(\mathbf{a})}|_1}\binom{d-1}{L-m-|(\mathbf{a}+\mathbf{p})_{\mathfrak{v}(\mathbf{a})}|_1},\nonumber
    \end{align}
    and
    \begin{align}
    \mathscr{S}_{\mathbf{a},\mathbf{p}}(m)=\sum_{\mathfrak{w}\subset\mathfrak{u}(\mathbf{a})}(-1)^{|\mathfrak{w}|}\binom{m+|\mathfrak{u}(\mathbf{a})|-|\mathfrak{w}|-|\mathbf{p}_{\mathfrak{w}}|_1-1}{|\mathfrak{u(\mathbf{a})}|-1}.\nonumber
    \end{align}
\end{proposition}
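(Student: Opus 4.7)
The plan is to start from the reformulation in Proposition \ref{prop: alt LISG def} and regroup the sum over $\boldsymbol{l}\in\mathcal{J}_L^d$ according to the fibres of the map $\mathbf{q}_{d,\mathbf{p}}$ from Definition \ref{def: q}. The key observation is that the tensor-product operator $\bigotimes_{j=1}^d s_{\mathcal{X}^{p_j}_{l_j}}$ depends on $\boldsymbol{l}$ only through $\mathbf{a}=\mathbf{q}_{d,\mathbf{p}}(\boldsymbol{l})$: indeed, $\mathcal{X}^{p_j}_{l_j}=\mathcal{X}_{a_j}$ when $l_j\ge p_j+1$ (so $a_j=l_j-p_j$) and $\mathcal{X}^{p_j}_{l_j}=\{0\}=\mathcal{X}_0$ whenever $0\le l_j\le p_j$ (so $a_j=0$). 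By Lemma \ref{lem: disjoint preimages}, the fibres $\mathbf{q}_{d,\mathbf{p}}^{-1}(\mathbf{a})$ are pairwise disjoint as $\mathbf{a}$ varies over $\mathcal{A}_{\mathbf{p},L}^d$, and together they cover $\mathcal{J}_L^d$ (adopting the harmless convention $b_L(\boldsymbol{l})=0$ when $|\boldsymbol{l}|_1\notin[\max\{0,L-d+1\},L]$, since $\binom{d-1}{L-|\boldsymbol{l}|_1}$ vanishes outside this range). Hence
\[
P_{d,\mathbf{p},L}\;=\;\sum_{\mathbf{a}\in\mathcal{A}_{\mathbf{p},L}^d}\Bigl(\sum_{\boldsymbol{l}\in\mathbf{q}_{d,\mathbf{p}}^{-1}(\mathbf{a})}b_L(\boldsymbol{l})\Bigr)\bigotimes_{j=1}^d s_{\mathcal{X}_{a_j}},
\]
so it remains to identify the inner sum with $u_{\mathbf{p},L}(\mathbf{a})$.

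For fixed $\mathbf{a}\in\mathcal{A}_{\mathbf{p},L}^d$, the fibre is parametrised explicitly: for $j\in\mathfrak{v}(\mathbf{a})$ (where $a_j>0$) the value $l_j=a_j+p_j$ is forced, while for $j\in\mathfrak{u}(\mathbf{a})$ (where $a_j=0$) we have $l_j\in\{0,1,\dots,p_j\}$ free. Consequently $|\boldsymbol{l}|_1=|(\mathbf{a}+\mathbf{p})_{\mathfrak{v}(\mathbf{a})}|_1+m$ where $m\coloneqq\sum_{j\in\mathfrak{u}(\mathbf{a})}l_j$ ranges over $\{0,1,\dots,|\mathbf{p}_{\mathfrak{u}(\mathbf{a})}|_1\}$. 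Grouping fibre elements by the value of $m$ gives
\[
\sum_{\boldsymbol{l}\in\mathbf{q}_{d,\mathbf{p}}^{-1}(\mathbf{a})}b_L(\boldsymbol{l})
=\sum_{m=0}^{|\mathbf{p}_{\mathfrak{u}(\mathbf{a})}|_1}\mathscr{S}_{\mathbf{a},\mathbf{p}}(m)\,(-1)^{L-m-|(\mathbf{a}+\mathbf{p})_{\mathfrak{v}(\mathbf{a})}|_1}\binom{d-1}{L-m-|(\mathbf{a}+\mathbf{p})_{\mathfrak{v}(\mathbf{a})}|_1},
\]
where $\mathscr{S}_{\mathbf{a},\mathbf{p}}(m)$ denotes the number of tuples $(l_j)_{j\in\mathfrak{u}(\mathbf{a})}\in\mathbb{N}_0^{|\mathfrak{u}(\mathbf{a})|}$ with $l_j\le p_j$ and $\sum_{j\in\mathfrak{u}(\mathbf{a})}l_j=m$.

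The remaining task is the combinatorial identity for $\mathscr{S}_{\mathbf{a},\mathbf{p}}(m)$. This is a standard bounded-composition count handled by inclusion–exclusion: the number of unrestricted non-negative integer solutions of $\sum_{j\in\mathfrak{u}(\mathbf{a})}l_j=m$ is $\binom{m+|\mathfrak{u}(\mathbf{a})|-1}{|\mathfrak{u}(\mathbf{a})|-1}$ by stars-and-bars, and we subtract those violating at least one constraint $l_j\le p_j$. For a subset $\mathfrak{w}\subset\mathfrak{u}(\mathbf{a})$ of violated constraints, the substitution $l_j\mapsto l_j-p_j-1$ for $j\in\mathfrak{w}$ shows the corresponding count is $\binom{m+|\mathfrak{u}(\mathbf{a})|-|\mathfrak{w}|-|\mathbf{p}_{\mathfrak{w}}|_1-1}{|\mathfrak{u}(\mathbf{a})|-1}$ (with the convention that the binomial vanishes if the top argument is negative). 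Summing with signs $(-1)^{|\mathfrak{w}|}$ gives the stated formula for $\mathscr{S}_{\mathbf{a},\mathbf{p}}(m)$, and combining the displays yields $u_{\mathbf{p},L}(\mathbf{a})$ exactly.

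The main obstacle is mainly bookkeeping rather than anything deep: one must verify carefully that summing $b_L$ over the full fibre (rather than intersecting with $\mathcal{J}_L^d$) introduces no spurious contributions—this follows because $\binom{d-1}{L-|\boldsymbol{l}|_1}$ kills terms with $|\boldsymbol{l}|_1<L-d+1$ or $|\boldsymbol{l}|_1>L$—and that the inclusion–exclusion for $\mathscr{S}_{\mathbf{a},\mathbf{p}}(m)$ remains valid when some binomial arguments become negative (again using the vanishing convention). Both are routine once stated explicitly.
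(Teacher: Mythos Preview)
Your proposal is correct and follows essentially the same route as the paper's proof: both start from Proposition~\ref{prop: alt LISG def}, observe that $\bigotimes_j s_{\mathcal{X}^{p_j}_{l_j}}$ depends on $\boldsymbol{l}$ only through $\mathbf{a}=\mathbf{q}_{d,\mathbf{p}}(\boldsymbol{l})$, regroup via Lemma~\ref{lem: disjoint preimages}, split the fibre according to $\mathfrak{u}(\mathbf{a})$ and $\mathfrak{v}(\mathbf{a})$, collect by the value $m=\sum_{j\in\mathfrak{u}(\mathbf{a})}l_j$, and identify $\mathscr{S}_{\mathbf{a},\mathbf{p}}(m)$ as a bounded-composition count evaluated by inclusion--exclusion. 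Your write-up is in fact slightly more explicit than the paper's on two points the paper leaves implicit---the binomial-vanishing convention that lets one sum over the full product fibre rather than its intersection with $\mathcal{J}_L^d$, and the stars-and-bars substitution underlying the inclusion--exclusion formula.
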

\begin{proof}
    For all \(\boldsymbol{l}\in\mathcal{J}_L^d\), we have
    \(s_{\mathcal{X}_{l_1}^{p_1}}\otimes\cdots\otimes s_{\mathcal{X}_{l_d}^{p_d}} = s_{\mathcal{X}_{\mathbf{q}_{d,\mathbf{p}}(\boldsymbol{l})_1}}\otimes\cdots\otimes s_{\mathcal{X}_{\mathbf{q}_{d,\mathbf{p}}(\boldsymbol{l})_d}},
    \)
    since,
    \begin{align}
        \mathcal{X}_{\mathbf{q}_{d,\mathbf{p}}(\boldsymbol{l})_j}&=\begin{cases}
            \mathcal{X}_{l_j-p_j}&\textrm{ for }l_j\geq p_j+1,\textrm{ and}\\
            \mathcal{X}_0&\textrm{ for }0\leq l_j\leq p_j,
            \end{cases}\nonumber
    \end{align}
    for all \(1\leq j\leq d\), which is exactly the definition of \(\mathcal{X}_{l_j}^{p_j}\) in Definition \ref{def:chi_l^p}. 
    By Lemma \ref{lem: disjoint preimages}, we can rearrange the expression for \(P_{d,\mathbf{p},L}\) in Proposition \ref{prop: alt LISG def} into a sum over multi-indices in \(\mathcal{A}_{\mathbf{p},L}^d\) as,
    \begin{align}
        P_{d,\mathbf{p},L}=\sum_{\mathbf{a}\in\mathcal{A}_{\mathbf{p},L}^d}\left(\sum_{\boldsymbol{l}\in \mathbf{q}_{d,\mathbf{p}}^{-1}(\mathbf{a})}b_{L}(\boldsymbol{l})\right)\bigotimes_{j=1}^d s_{\mathcal{X}_{a_j}}.\nonumber
    \end{align}
    We now wish to simplify the sum over the pre-image \(\mathbf{q}_{d,\mathbf{p}}^{-1}(\mathbf{a})\) for a given \(\mathbf{a}\in\mathcal{A}_{\mathbf{p},L}^d\) by considering the zero and non-zero components separately. Assuming \(|\mathfrak{u}(\mathbf{a})|=k\), we have
    \begin{align}
        &\sum_{\boldsymbol{l}\in \mathbf{q}_{d,\mathbf{p}}^{-1}(\mathbf{a})}b_{L}(\boldsymbol{l})=\sum_{l_1\in \mathbf{q}_{d,p_1}^{-1}(a_1)}\cdots\sum_{l_d\in \mathbf{q}_{d,p_d}^{-1}(a_d)}(-1)^{L-|\boldsymbol{l}|_1}\binom{d-1}{L-|\boldsymbol{l}|_1},\nonumber\\
        \begin{split}
        &=\sum_{l_{\mathfrak{u}(\mathbf{a})_1}\in \mathbf{q}_{d,p_{{\mathfrak{u}(\mathbf{a})}_1}}^{-1}(a_{\mathfrak{u}(\mathbf{a})_1})}\cdots\sum_{l_{\mathfrak{u}(\mathbf{a})_k}\in \mathbf{q}_{d,p_{{\mathfrak{u}(\mathbf{a})}_k}}^{-1}(a_{\mathfrak{u}(\mathbf{a})_k})}\\
        &\hspace{2.2cm}(-1)^{L-|\boldsymbol{l}_{\mathfrak{u}(\mathbf{a})}|_1-|(\mathbf{a}+\mathbf{p})_{\mathfrak{v}(\mathbf{a})}|_1}\binom{d-1}{L-|\boldsymbol{l}_{\mathfrak{u}(\mathbf{a})}|_1-|(\mathbf{a}+\mathbf{p})_{\mathfrak{v}(\mathbf{a})}|_1}.
        \end{split}\label{eq: sums over preimages}
    \end{align}
    For all \(1\leq i \leq k\), we have \(\mathbf{q}_{d,p_{{\mathfrak{u}(\mathbf{a})}_i}}^{-1}(a_{\mathfrak{u}(\mathbf{a})_i})=\{0,\dots,p_{{\mathfrak{u}(\mathbf{a})}_i}\}\). Therefore, we can rewrite \eqref{eq: sums over preimages} as
    \begin{align}
        \sum_{\boldsymbol{l}\in \mathbf{q}_{d,\mathbf{p}}^{-1}(\mathbf{a})}b_{L}(\boldsymbol{l})=\sum_{m=0}^{|\mathbf{p}_{\mathfrak{u}(\mathbf{a})}|}\mathscr{S}_{\mathbf{a},\mathbf{p}}(m)(-1)^{L-m-|(\mathbf{a}+\mathbf{p})_{\mathfrak{v}(\mathbf{a})}|_1}\binom{d-1}{L-m-|(\mathbf{a}+\mathbf{p})_{\mathfrak{v}(\mathbf{a})}|_1},\nonumber
    \end{align}
    where \(\mathscr{S}_{\mathbf{a},\mathbf{p}}(m)\coloneqq\#\{\boldsymbol{l}\in\mathbb{N}_0^k\,:\,|\boldsymbol{l}|_1=m,l_i\leq p_{\mathfrak{u}(\mathbf{a})_i}\}\), with an explicit expression given by inclusion-exclusion (see Chapter 2, \cite{StanleyCombinatorics}).
\end{proof}
Given \(L\in\mathbb{N}_0\), \(\mathbf{p}\in\mathbb{N}_0^d\) and a target function \(f:\Gamma^d\rightarrow\mathbb{R}\), we outline the procedural construction of the weight vector \(\mathbf{w}=\Phi_{\boldsymbol{\nu},2^{\mathbf{p}}}(\mathcal{X}_{\mathbf{p},L}^{\otimes},\mathcal{X}_{\mathbf{p},L}^{\otimes})^{-1}f(\mathcal{X}_{\mathbf{p},L}^{\otimes})\) in Algorithm \ref{alg: 1}, adapted for lengthscale-informed sparse grids from Algorithm 1, \cite{Plumlee2014}. Here, for a given \(\boldsymbol{a}\in\mathcal{A}_{\mathbf{p},L}^d\), we define \(u_{\mathbf{p},L}(\mathbf{a})\) as in Proposition \ref{prop: alt alt LISG def}, and let \(\mathbf{w}_{\mathbf{a}}\) denote the sub-vector of \(\mathbf{w}\) corresponding to the design points in the component grid \(\mathcal{X}_{a_1}\times\cdots\times\mathcal{X}_{a_d}\subset\mathcal{X}^{\otimes}_{\mathbf{p},L}\).

\begin{algorithm}[h]
\caption{Fast inference algorithm for lengthscale-informed sparse grids.}
    \begin{algorithmic}
    \State Initialise $\mathbf{w}=\mathbf{0}\in\mathbb{R}^{N_{d,\mathbf{p}}(L)}$
    \For{$\mathbf{a}\in\mathcal{A}_{\mathbf{p},L}^d$}
        \State $\mathbf{w}_{\mathbf{a}}=\mathbf{w}_{\mathbf{a}} + u_{\mathbf{p},L}(\mathbf{a})\left[\bigotimes_{j=1}^d\phi_{\nu_j,2^{p_j}}(\mathcal{X}_{a_j},\mathcal{X}_{a_j})^{-1}\right]f(\mathcal{X}_{a_1}\times\cdots\times\mathcal{X}_{a_d})$ 
    \EndFor
    \end{algorithmic}
\label{alg: 1}
\end{algorithm}
The explicit computational cost of constructing \(\mathcal{A}_{\mathbf{p},L}^d\), and consequently the algorithm, is difficult to assess as it is heavily dependent on the penalty; the greater the anisotropy, the smaller the set \(\mathcal{A}_{\mathbf{p},L}^d\). In the worst case, for a trivial penalty \(\mathbf{p}=(0,\dots,0)\), we recover isotropic sparse grids, and therefore the set \(|\mathcal{A}_{\mathbf{p},L}^d|=|\mathcal{J}_L^d|\leq\binom{L+d-1}{d-1}\). In the case of growing penalties as discussed in Section \ref{subsec: lengthscale anisotropy}, we have the following result.

\begin{proposition}
    Let \(\{p_j\}_{j\in\mathbb{N}}\) be a non-decreasing sequence of integers, \(p_j\in\mathbb{N}_0\), such that, for all \(j\in\mathbb{N}\), there exists an \(m\in\mathbb{N}\) such that \(p_{j+m}>p_j\). Then, for fixed \(L\in\mathbb{N}_0\), \(|\mathcal{A}_{\mathbf{p},L}^d|\) is bounded independently of \(d\).
\end{proposition}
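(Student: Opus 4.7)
The plan is to mirror the argument used in Corollary \ref{cor: dimension independent points} for counting abscissae: the multi-indices in $\mathcal{A}_{\mathbf{p},L}^d$ are heavily constrained, and under the growth hypothesis on $\{p_j\}$ only finitely many coordinates can be non-zero, with that number depending on $L$ and $\mathbf{p}$ but not on $d$.

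First I would record two structural constraints on any $\mathbf{a}\in\mathcal{A}_{\mathbf{p},L}^d$. By Definition \ref{def: q}, $\mathbf{a}=\mathbf{q}_{d,\mathbf{p}}(\boldsymbol{l})$ for some $\boldsymbol{l}\in\mathcal{J}_L^d$, so $|\mathbf{a}|_1\leq|\boldsymbol{l}|_1\leq L$. Moreover, $a_j>0$ forces $l_j>p_j$, and combined with $l_j\leq L$ this gives $p_j\leq L-1$. Thus the support of $\mathbf{a}$ is contained in the index set $\mathcal{R}(L,\mathbf{p})\coloneqq\{j\in\mathbb{N}:p_j\leq L-1\}$.

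Next I would use the hypothesis on $\{p_j\}$. Since $\{p_j\}$ is a non-decreasing integer sequence with the property that for every $j$ some later $p_{j+m}$ strictly exceeds $p_j$, the sequence is unbounded. Hence there exists a smallest integer $r(L,\mathbf{p})\in\mathbb{N}$ with $p_{r(L,\mathbf{p})+1}\geq L$, and for all $j>r(L,\mathbf{p})$ we have $p_j\geq L$, so $j\notin\mathcal{R}(L,\mathbf{p})$. Crucially, $r(L,\mathbf{p})$ depends only on $L$ and on the sequence $\{p_j\}$, not on the ambient dimension $d$.

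Finally, I would combine the two observations. Every $\mathbf{a}\in\mathcal{A}_{\mathbf{p},L}^d$ is determined by its restriction to the first $r(L,\mathbf{p})$ coordinates, which is an element of $\mathbb{N}_0^{r(L,\mathbf{p})}$ with $\ell_1$-norm at most $L$. The number of such restrictions is at most
\begin{equation*}
\binom{L+r(L,\mathbf{p})}{r(L,\mathbf{p})},
\end{equation*}
which is independent of $d$, completing the proof. There is no serious obstacle here; the only subtlety is verifying that the growth hypothesis indeed yields a finite $r(L,\mathbf{p})$, which follows immediately because a non-decreasing integer sequence that is not eventually constant must tend to $+\infty$.
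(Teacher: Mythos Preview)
Your argument is correct and is in fact more direct than the paper's. The paper proves a slightly stronger statement: it shows that the sequence $d\mapsto|\mathcal{A}_{\mathbf{p},L}^d|$ is eventually \emph{constant}, by exhibiting, for all $k_2\geq k_1\geq\max\{k_{\max},L+1\}$, an explicit bijection $t:\mathcal{A}_{\mathbf{p},L}^{k_1}\to\mathcal{A}_{\mathbf{p},L}^{k_2}$ given by zero-padding, and then checks well-definedness, injectivity, and surjectivity of $t$ in detail. Your approach instead extracts two structural constraints on any $\mathbf{a}\in\mathcal{A}_{\mathbf{p},L}^d$ (bounded $\ell_1$-norm and support contained in $\{j:p_j\leq L-1\}$), and then simply counts the tuples satisfying these constraints. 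This yields the explicit upper bound $\binom{L+r(L,\mathbf{p})}{r(L,\mathbf{p})}$, which the paper does not obtain, and does so with less bookkeeping. The trade-off is that the paper's bijection pins down the exact eventual value of $|\mathcal{A}_{\mathbf{p},L}^d|$, whereas your bound is in general not sharp; for the proposition as stated, however, your route is both sufficient and cleaner.
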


\begin{proof}

    Let \(L\in\mathbb{N}_0\) be given and let \(\boldsymbol{l}\in\mathcal{J}_L^d\). Clearly for all \(d\in\mathbb{N}\) we have \(|\mathcal{A}_{\mathbf{p},L}^d|\leq|\mathcal{A}_{\mathbf{p},L}^{d+1}|\). By definition of \(\{p_j\}_{j\in\mathbb{N}}\), for \(d\) large enough, there must exist a \(k_{\max}<d\) such that \(l_k<p_k\) for all \(k>k_{\max}\), and therefore \(\mathbf{q}_{k,\mathbf{p}}(\boldsymbol{l})_k=0\) for all \(k>k_{\max}\).  It suffices to show that, for any \(k_2\geq k_1\geq \max\{k_{\max},L+1\}\), \(|\mathcal{A}_{\mathbf{p},L}^{k_1}|=|\mathcal{A}_{\mathbf{p},L}^{k_2}|\). We achieve this by defining a mapping between these sets, \(t:\mathcal{A}_{\mathbf{p},L}^{k_1}\rightarrow \mathcal{A}_{\mathbf{p},L}^{k_2}\), \(t(\mathbf{a}^{(1)}) = (a^{(1)}_1,a^{(1)}_2,\dots,a^{(1)}_{k_1},0,\dots,0)\in\mathbb{N}_0^{k_2}\), and show that it is bijective. First we show that the image is indeed contained in \(\mathcal{A}_{\mathbf{p},L}^{k_2}\). Let \(\mathbf{a}^{(1)}\in\mathcal{A}_{\mathbf{p},L}^{k_1}\), then the pre-image \(\mathbf{q}_{k_1,L}^{-1}(\mathbf{a}^{(1)})\) is non-empty in \(\mathcal{J}_L^{k_1}\) by Definition \ref{def: q}. Define an analogous map \(t^*:\mathcal{J}_L^{k_1}\rightarrow\mathcal{J}_{L}^{k_2}\) by \(t^*(\boldsymbol{l}^{(1)})=(l^{(1)}_1,l^{(1)}_2,\dots,l^{(1)}_{k_1},0,\dots,0)\in\mathbb{N}_0^{k_2}\). We have both \(|t^*(\boldsymbol{l}^{(1)})|_1=|\boldsymbol{l}^{(1)}|_1\leq L\) and \(|t^*(\boldsymbol{l}^{(1)})|_1\geq\max\{0,L-k_2+1\}=0\), and so \(t^*(\boldsymbol{l}^{(1)})\in\mathcal{J}_{L}^{k_2}\) for all \(\boldsymbol{l}^{(1)}\in\mathbf{q}_{k_1,L}^{-1}(\mathbf{a}^{(1)})\). We then have 
    \begin{align}    
        &\mathbf{q}_{k_2,\mathbf{p}}(t^*(\boldsymbol{l}^{(1)}))=(\max\{t^*(\boldsymbol{l}^{(1)})_1-p_1,0\},\dots,\max\{t^*(\boldsymbol{l}^{(1)})_{k_2}-p_{k_2},0\}),\nonumber\\
        &=(\max\{t^*(\boldsymbol{l}^{(1)})_1-p_1,0\},\dots,\max\{t^*(\boldsymbol{l}^{(1)})_{k_1}-p_{k_1},0\},0,\dots,0),\nonumber\\
        &=(\max\{\boldsymbol{l}^{(1)}_1-p_1,0\},\dots,\max\{\boldsymbol{l}^{(1)}_{k_1}-p_{k_1},0\},0,\dots,0),\nonumber\\
        &=(\mathbf{q}_{k_1,\mathbf{p}}(\boldsymbol{l}^{(1)}),0,\dots,0)=t(\mathbf{a}^{(1)}),\nonumber
    \end{align}
    and therefore \(t(\mathbf{a}^{(1)})\) is indeed contained in \(\mathcal{A}_{\mathbf{p},L}^{k_2}\). Clearly \(t\) is injective, what remains to show is surjectivity. Let \(\mathbf{a}^{(2)}\in\mathcal{A}_{\mathbf{p},L}^{k_2}\). Since \(k_1>k_{\max}\), we have established that \(\mathbf{a}^{(2)}=(a_1^{(2)},\cdots,a_{k_1}^{(2)},0,\dots,0)\in\mathbb{N}_0^{k_2}\). Furthermore, there must exist an \(\boldsymbol{l}^{(2)}\in\mathcal{J}_{L}^{k_2}\) such that
    \(\mathbf{q}_{k_2,\mathbf{p}}(\boldsymbol{l}^{(2)})=\mathbf{a}^{(2)}\). Therefore, \(\max\{l_j^{(2)}-p_j^{(2)},0\}=a_j^{(2)}\) for all \(1\leq j\leq k_2\), and in particular, \(\max\{l_k^{(2)}-p_k^{(2)},0\}=0\) for all \(k_1\leq k\leq k_2\). Consider the multi-index \(\boldsymbol{l}'^{(2)}\coloneqq(l_{1}^{(2)},\dots,l_{k_1}^{(2)},0,\dots,0)\in\mathbb{N}_0^{k_2}\). Clearly \(|\boldsymbol{l}'^{(2)}|_1\leq|\boldsymbol{l}^{(2)}|_1\leq L\), and since \(k_2>L+1\), we have \(|\boldsymbol{l}'^{(2)}|_1>\max\{0,L-k_2-1\}=0\). Therefore \(\boldsymbol{l}'^{(2)}\in\mathcal{J}_L^{k_2}\) and hence \(\boldsymbol{l}'^{(2)}\in\mathbf{q}_{k_2,\mathbf{p}}^{-1}(\mathbf{a}^{(2)})\). Now define \(\boldsymbol{l}'^{(1)}\coloneqq(l'^{(2)}_1,\dots,l_{k_1}^{(2)})\), then \(|\boldsymbol{l}'^{(1)}|_1=|\boldsymbol{l}'^{(2)}|_1\), and since \(k_1>L+1\), we have \(\boldsymbol{l}'^{(1)}\in\mathcal{J}_{L}^{k_1}\). Then, by definition,
    \begin{align}
        \mathcal{A}_{\mathbf{p},L}^{k_1}\ni\mathbf{q}_{k_1,\mathbf{p}}(\boldsymbol{l}'^{(1)})&=(\max\{l'^{(1)}_1-p_1,0\},\dots,\max\{l'^{(1)}_{k_1}-p_{k_1},0\}),\nonumber\\
        &=(\max\{l^{(1)}_1-p_1,0\},\dots,\max\{l^{(1)}_{k_1}-p_{k_1},0\}),\nonumber\\
        &=(a_1^{(2)},\dots,a_1^{(2)}),\nonumber
    \end{align}
    and \(t((a_1^{(2)},\dots,a_1^{(2)}))=\mathbf{a}^{(2)}\), as required.
\end{proof}

\section{Numerical experiments}\label{sec: numerics}
In this section, we present numerical results for approximating known target functions, \(f:\Gamma^d\rightarrow\mathbb{R}\), \(\Gamma=(-1/2,1/2)\). We compare the performance of separable Mat\'ern kernel interpolants across three different point sets: lengthscale-informed sparse grids (LISG), standard isotropic sparse grids (SG), and Monte Carlo points (MC) (uniformly distributed points \(\mathbf{x}_i\sim U(\Gamma^d)\)). For our target functions we use random linear combinations of separable Mat\'ern kernels with unit standard deviation, \(\boldsymbol{\sigma}=\mathbf{1}\in\mathbb{R}^d\),
\begin{align}\label{eq: target function}
    f(\mathbf{x})=\sum_{i=1}^M\xi_i\Phi_{\boldsymbol{\nu},\boldsymbol{\lambda}}(\mathbf{y}_i,\mathbf{x})\nonumber
\end{align}
where the random parameters \(\xi_i\) and \(\mathbf{y}_i\) are i.i.d. according to \(\xi_i\sim\mathcal{N}(0,5)\) and \(\mathbf{y}_i\sim U\left(\Gamma^d\right)\), respectively. This choice of target functions is particularly useful for validating the theory as their native space norms (with respect to the same kernel) are known exactly (see Proposition \ref{prop: native space norm}) and remain bounded independently of the dimension \(d\). Consequently, the lengthscale-anisotropy condition given in \eqref{eq: anisotropic condition} is automatically satisfied when \(\Phi_{\boldsymbol{\nu},\boldsymbol{\lambda}}\) is used as the interpolating kernel.
% Considering alternative target functions bounded in \(\mathcal{N}_{\Phi_{\boldsymbol{\nu},\boldsymbol{\lambda}}}(\Gamma)\) would require verifying this condition, likely through an estimation procedure discussed in Section~\ref{subsec: lengthscale anisotropy}, which we leave for future work.

In each experiment, see Figures \ref{fig: iso vs LISG}, \ref{fig: LISG time}, \ref{fig: LISG high dimensions}, \ref{fig: LISG misspecified} and \ref{fig: gp post var}, we consider two distinct lengthscale anisotropy behaviours; exponential and linear growth, corresponding to linearly, \(\mathbf{p}_{\textrm{lin}}\), and logarithmically, \(\mathbf{p}_{\textrm{log}}\), growing penalties, respectively. Specifically, we take \(p_{\textrm{lin},j}=j-1\) and \(p_{\textrm{log},j} = \lceil\log_2(j)\rceil\), where \(\lceil\cdot\rceil\) corresponds to the standard ceiling function, enforcing integer valued penalties. In Figures \ref{fig: iso vs LISG}-\ref{fig: LISG misspecified}, the relative \(L^2\)-errors presented are averaged over 10 runs on independent realisations of \(f\), each being approximated with 100 Monte Carlo samples. For large designs in high dimensions, for example \(d\geq25\) and \(L\geq8\), due to the efficiency of Algorithm~\ref{alg: 1}, the major bottleneck becomes sampling the emulator with each sample requiring \(d\times N\)-many kernel evaluations. Hence, to compute the $L^2$-error in  reasonable time, we have chosen the number of Monte Carlo samples to be as small as possible while ensuring sufficiently small Monte Carlo error. In practice, evaluating emulators is trivially parallelisable, and so we do not view this as a particular drawback of GP emulation. We use Algorithm~\ref{alg: 1} to evaluate the interpolants for both sparse grid methods, and for Monte Carlo points we use a standard Cholesky decomposition. The Python code used to generate the data for each plot can be found at \url{https://github.com/elliot-addy/LISG}.

In the first experiment, see Figure \ref{fig: iso vs LISG}, we compare the error of lengthscale-informed sparse grids with standard isotropic sparse grids and Monte Carlo points in approximating realisations of \(f\). We employ both lengthscale-informed kernels, where we matched the lengthscale hyperparameters in each axial direction with the lengthscale-anisotropy in \(f\), and isotropic kernels, where we set \(\boldsymbol{\lambda}=\mathbf{1}\). The results shown assume \(\nu_j=1.5\) in all directions, \(1\leq j\leq d\), for both \(f\) and the interpolating kernels. We note that similar relationships were observed for the values \(\nu_j=0.5\) and \(\nu_j=2.5\). 

We immediately see a clear separation between the methods employing isotropic kernels and those with lengthscales adapted to the anisotropy in \(f\), with the latter converging much faster. The difference in the errors between the SG and LISG methods is more pronounced in the higher dimensional case, \(d=8\), and for linearly growing penalties, \(\mathbf{p}=\mathbf{p}_{\textrm{lin}}\), as supported by our results and discussion in Section \ref{subsec: results}. For both isotropic and lengthscale-informed interpolating kernels, we see that the error consistently converges fastest with Monte Carlo designs. We believe this discrepancy may be partially explained by the extrapolation behaviour of the one-dimensional interpolants \(s_{\mathcal{X}_l,\phi_{\nu,\lambda}}\) near the endpoints of the intervals, and thus inherited by sparse grid designs in higher dimensions. Consequently, we have also presented the corresponding results for lengthscale-informed sparse grids based on point-sets weighted towards the endpoints; namely Clenshaw-Curtis abscissae (excluding endpoints), \(\mathcal{X}_{\textrm{CC},l}\coloneqq\{\sin(\theta)/2\,:\,-\pi<\theta<\pi, \theta=n\pi/2^{l} ,n\in\mathbb{Z}\}\), and uniformly-spaced abscissae (including endpoints),
\begin{align}
    \mathcal{X}_{\textrm{B},l}\coloneqq\begin{cases}
        \mathcal{X}_{l-1}\cup\{-1/2,1/2\}&\textrm{ if } l\geq 2,\\
        \{-1/2,0,1/2\}&\textrm{ if } l=1, \textrm{ and}\\
        \{0\}&\textrm{ if } l=0.
    \end{cases}\nonumber
\end{align}
In both cases, the error is improved over the uniformly-spaced grids defined in Section \ref{sec: results}. Otherwise, to the authors' knowledge, no such approximation bounds have been derived that are able to explain the convergence rate observed in the Monte Carlo case, however we point the reader to~\cite{adcock2023montecarlogoodsampling}, which indicates that the choice of point sets may be secondary to the the choice of basis functions, which mirrors our observations. We also note that it is difficult to compare these methods for \(d\geq10\), due to how quickly isotropic sparse grid designs grow, and for \(N>10^4\), due to the inefficiency of evaluating emulators based on unstructured designs, such as with Monte Carlo points. 

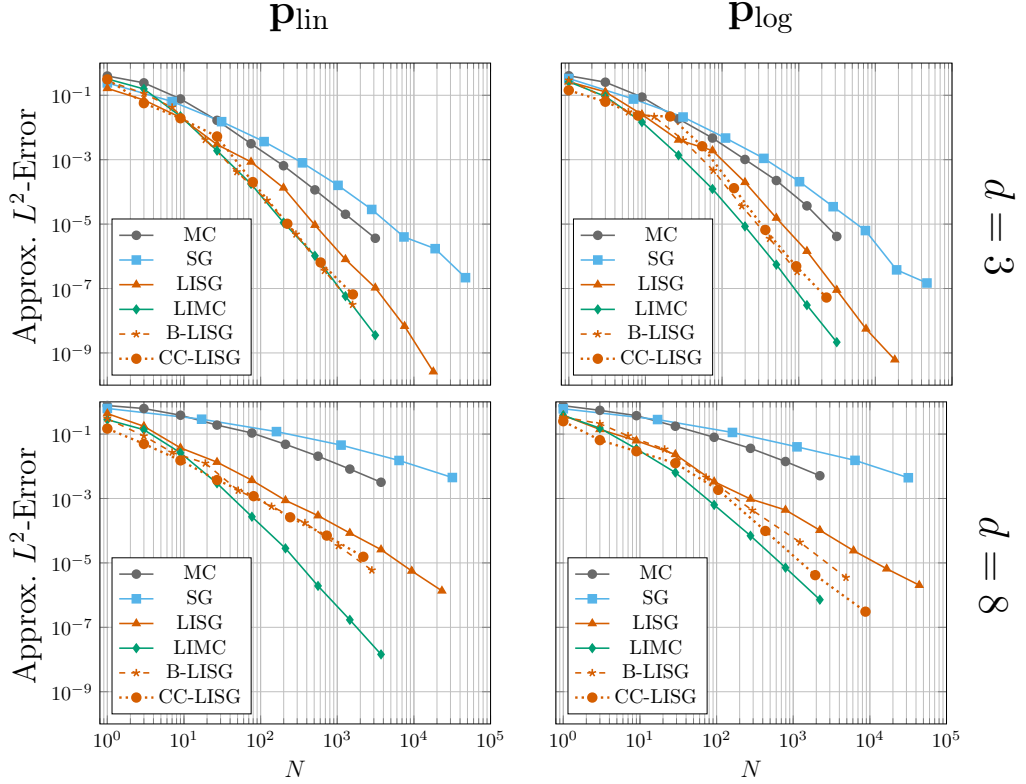
\begin{figure}[h]
    \begin{subfigure}{.45\textwidth}
    \begin{tikzpicture}[scale=0.75]
        \begin{axis}[
        cycle list name = myCycleList,
        xmode = log, ymode = log,
        xmin = 0.8, xmax=1e5,
        ymin = 1e+-10, ymax= 1,
        legend pos=south west,
        xticklabels={},
        grid=both
        ]
        \addplot table [x index=0,y index=1, col sep=space] {lin_mc_nu_1.5_dim_3.dat};
        \addplot table [x index=0,y index=1, col sep=space] {lin_sg_nu_1.5_dim_3.dat};
        \addplot table [x index=0,y index=1, col sep=space] {lin_psg_nu_1.5_dim_3.dat};
        \addplot table [x index=0,y index=1, col sep=space] {lin_stretched_mc_nu_1.5_dim_3.dat};
        \addplot[color=vermillion, thick, dashed, mark=star, mark options={draw=vermillion, solid}] table [x index=0,y index=1, col sep=space] {boundary_lin_psg_nu_1.5_dim_3.dat};
        \addplot[color=vermillion, very thick, dotted, mark=*, mark options={draw=vermillion, solid}] table [x index=0,y index=1, col sep=space] {cc_lin_psg_nu_1.5_dim_3.dat};
        \legend{MC, SG, LISG, LIMC, B-LISG, CC-LISG};
        \end{axis}
        \node at (3.5,6.5) {\Large$\mathbf{p}_{\textrm{lin}}$};
        \node[rotate=90] at (-1.3,2.8) {\large Approx. \(L^2\)-Error};
    \end{tikzpicture}
    \end{subfigure}
    \hspace{10mm}
    \begin{subfigure}{.45\textwidth}
    \begin{tikzpicture}[scale=0.75]
        \begin{axis}[
        cycle list name = myCycleList,
        xmode = log, ymode = log,
        xmin = 0.8, xmax=1e5,
        ymin = 1e+-10, ymax=1,
        legend pos=south west,
        xticklabels={}, yticklabels={},
        grid=both
        ]
        \addplot table [x index=0,y index=1, col sep=space] {log_mc_nu_1.5_dim_3.dat};
        \addplot table [x index=0,y index=1, col sep=space] {log_sg_nu_1.5_dim_3.dat};
        \addplot table [x index=0,y index=1, col sep=space] {log_psg_nu_1.5_dim_3.dat};
        \addplot table [x index=0,y index=1, col sep=space] {log_stretched_mc_nu_1.5_dim_3.dat};
        \addplot[color=vermillion, thick, dashed, mark=star, mark options={draw=vermillion, solid}] table [x index=0,y index=1, col sep=space] {boundary_log_psg_nu_1.5_dim_3.dat};
        \addplot[color=vermillion, very thick, dotted, mark=*, mark options={draw=vermillion, solid}] table [x index=0,y index=1, col sep=space] {cc_log_psg_nu_1.5_dim_3.dat};
        \legend{MC, SG, LISG, LIMC, B-LISG, CC-LISG};
        \end{axis}
        \node at (3.5,6.5) {\Large$\mathbf{p}_{\textrm{log}}$};
        \node[rotate=-90] at (7.7,2.8) {\Large$d=3$};
    \end{tikzpicture}
    \end{subfigure}
    \begin{subfigure}{.45\textwidth}
    \begin{tikzpicture}[scale=0.75]
        \begin{axis}[
        cycle list name = myCycleList,
        xmode = log, ymode = log,
        xmin = 0.8, xmax=1e5,
        ymin = 1e+-10, ymax= 1,
        legend pos=south west,
        xlabel={\Large$N$},
        grid=both
        ]
        \addplot table [x index=0,y index=1, col sep=space] {lin_mc_nu_1.5_dim_8.dat};
        \addplot table [x index=0,y index=1, col sep=space] {lin_sg_nu_1.5_dim_8.dat};
        \addplot table [x index=0,y index=1, col sep=space] {lin_psg_nu_1.5_dim_8.dat};
        \addplot table [x index=0,y index=1, col sep=space] {lin_stretched_mc_nu_1.5_dim_8.dat};
        \addplot[color=vermillion, thick, dashed, mark=star, mark options={draw=vermillion, solid}] table [x index=0,y index=1, col sep=space] {boundary_lin_psg_nu_1.5_dim_8.dat};
        \addplot[color=vermillion, very thick, dotted, mark=*, mark options={draw=vermillion, solid}] table [x index=0,y index=1, col sep=space] {cc_lin_psg_nu_1.5_dim_8.dat};
        \legend{MC, SG, LISG, LIMC, B-LISG, CC-LISG};
        \end{axis}
        \node[rotate=90] at (-1.3,2.8) {\large Approx. \(L^2\)-Error};
    \end{tikzpicture}
    \end{subfigure}
    \hspace{10mm}
    \begin{subfigure}{.45\textwidth}
    \begin{tikzpicture}[scale=0.75]
        \begin{axis}[
        cycle list name = myCycleList,
        xmode = log, ymode = log,
        xmin = 0.8, xmax=1e5,
        ymin = 1e+-10, ymax= 1,
        legend pos=south west,
        yticklabels={},
        xlabel={\Large$N$},
        grid=both
        ]
        \addplot table [x index=0,y index=1, col sep=space] {log_mc_nu_1.5_dim_8.dat};
        \addplot table [x index=0,y index=1, col sep=space] {log_sg_nu_1.5_dim_8.dat};
        \addplot table [x index=0,y index=1, col sep=space] {log_psg_nu_1.5_dim_8.dat};
        \addplot table [x index=0,y index=1, col sep=space] {log_stretched_mc_nu_1.5_dim_8.dat};
        \addplot[color=vermillion, thick, dashed, mark=star, mark options={draw=vermillion, solid}] table [x index=0,y index=1, col sep=space] {boundary_log_psg_nu_1.5_dim_8.dat};
        \addplot[color=vermillion, very thick, dotted, mark=*, mark options={draw=vermillion, solid}] table [x index=0,y index=1, col sep=space] {cc_log_psg_nu_1.5_dim_8.dat};
        \legend{MC, SG, LISG, LIMC, B-LISG, CC-LISG};
        \end{axis}
        \node[rotate=-90] at (7.7,2.8) {\Large$d=8$};
    \end{tikzpicture}
    \end{subfigure}
    \caption{Relative \(L^2\)-error in approximating realisations of \(f\) when (a), interpolating with isotropic kernels (\(\boldsymbol{\lambda}=\mathbf{1}\)) on Monte Carlo (MC) and standard sparse grid (SG) designs, and (b), interpolating with anisotropic kernels (\(\boldsymbol{\lambda}=2^\mathbf{p}\)) on Monte Carlo (LIMC) and lengthscale-informed sparse grid designs, based on uniformly spaced-points (LISG), uniformly-spaced including boundary points (B-LISG), and Clenshaw-Curtis points (CC-LISG). We consider input dimensions 3 and 8, both linearly and logarithmically growing penalties, and employ separable Mat\'ern kernels with \(\nu_j=1.5\) for all \(1\leq j \leq d\).}
    \label{fig: iso vs LISG}
\end{figure}

In Figure~\ref{fig: LISG time}, we similarly examine the approximation error for various design-kernel combinations, this time with respect to the the compute time of the interpolants.\footnote{All computations were performed on a desktop pc with an Intel i7-10710U (12 cores, 4.7 GHz), 31 GB RAM, running Ubuntu 24.04.3 LTS and Python 3.12.3.} Specifically, we record the wall-clock time required to solve the linear system
\begin{align}
f(\mathcal{X})=\Phi_{\boldsymbol{\nu},\boldsymbol{\lambda}}(\mathcal{X},\mathcal{X})\mathbf{w},\nonumber
\end{align}
where \(\mathcal{X}\) denotes the different point-sets considered. The experimental setup is identical to that of Figure~\ref{fig: iso vs LISG}, but applied to a reduced number of designs. As before, we use \(\nu_j=1.5\) in all directions for both \(f\) and the interpolating kernels. We note that our implementation of Algorithm~\ref{alg: 1} is not optimised, and therefore the compute times reported for the sparse grid methods could likely be improved by a constant factor.

Across all six studies, the LISG method achieves the best error-cost scaling of the considered methods with respect to the compute time. Comparing these results with Figure~\ref{fig: iso vs LISG} in dimensions \(d=3\) and \(d=8\) highlights the computational savings provided by the fast implementation described in Algorithm~\ref{alg: 1}. For linearly growing penalties in high dimensions (\(d=8,25\)), the observed computational cost of the LISG construction for a given accuracy closely matches that of Monte Carlo designs with anisotropic kernels. In contrast, in the more realistic case of logarithmically growing penalties, the LISG method exhibits a modest but consistent improvement in empirical error-cost scaling relative to Monte Carlo designs. In both penalty regimes, the computational complexity of the anisotropic methods appears to be largely unchanged between dimensions 8 and 25, suggesting an insensitivity to the input dimension beyond \(d\) large enough. This observation aligns with Corollary~\ref{cor: dimension independent points}, which states that the size of lengthscale-informed sparse grids---and thus associated covariance matrices---are bounded independently of the dimension for growing penalties. In all cases, isotropic sparse grids yield substantially poorer accuracy for the same computational effort, despite also benefitting from the fast implementation.

\begin{figure}[h]
    \begin{subfigure}{.45\textwidth}
    \begin{tikzpicture}[scale=0.75]
        \begin{axis}[
        cycle list name = myCycleList2,
        grid = both,
        xmode = log, ymode = log,
        xmin = 5e-4, xmax=2e2,
        ymin = 1e+-11, ymax= 2,
        legend pos=south west,
        xticklabels={}
        ]
        \addplot table [x index=0,y index=1, col sep=space] {time_sg_nu_1.5_dim_3_linp.dat};
        \addplot table [x index=0,y index=1, col sep=space] {time_lisg_nu_1.5_dim_3_linp.dat};
        \addplot table [x index=0,y index=1, col sep=space] {time_mc_nu_1.5_dim_3_linp.dat};
        \legend{SG,LISG,LIMC};
        \end{axis}
        \node at (3.5,6.5) {\Large$\mathbf{p}_{\textrm{lin}}$};
        \node[rotate=90] at (-1.3,2.8) {\large Approx. \(L^2\)-Error};
    \end{tikzpicture}
    \end{subfigure}
    \hspace{10mm}
    \begin{subfigure}{.45\textwidth}
    \begin{tikzpicture}[scale=0.75]
        \begin{axis}[
        cycle list name = myCycleList2,
        grid = both,
        xmode = log, ymode = log,
        xmin = 5e-4, xmax=2e2,
        ymin = 1e+-11, ymax=2,
        legend pos=south west,
        xticklabels={}, yticklabels={}
        ]
        \addplot table [x index=0,y index=1, col sep=space] {time_sg_nu_1.5_dim_3_logp.dat};
        \addplot table [x index=0,y index=1, col sep=space] {time_lisg_nu_1.5_dim_3_logp.dat};
        \addplot table [x index=0,y index=1, col sep=space] {time_mc_nu_1.5_dim_3_logp.dat};
        \legend{SG,LISG,LIMC};
        \end{axis}
        \node at (3.5,6.5) {\Large$\mathbf{p}_{\textrm{log}}$};
        \node[rotate=-90] at (7.7,2.8) {\Large$d=3$};
    \end{tikzpicture}
    \end{subfigure}
    \begin{subfigure}{.45\textwidth}
    \begin{tikzpicture}[scale=0.75]
        \begin{axis}[
        cycle list name = myCycleList2,
        grid = both,
        xmode = log, ymode = log,
        xmin = 5e-4, xmax=2e2,
        ymin = 1e+-11, ymax= 2,
        legend pos=south west,
        xticklabels = {}
        ]
        \addplot table [x index=0,y index=1, col sep=space] {time_sg_nu_1.5_dim_8_linp.dat};
        \addplot table [x index=0,y index=1, col sep=space] {time_lisg_nu_1.5_dim_8_linp.dat};
        \addplot table [x index=0,y index=1, col sep=space] {time_mc_nu_1.5_dim_8_linp.dat};
        \legend{SG,LISG,LIMC};
        \end{axis}
        \node[rotate=90] at (-1.3,2.8) {\large Approx. \(L^2\)-Error};
    \end{tikzpicture}
    \end{subfigure}
    \hspace{10mm}
    \begin{subfigure}{.45\textwidth}
    \begin{tikzpicture}[scale=0.75]
        \begin{axis}[
        cycle list name = myCycleList2,
        grid = both,
        xmode = log, ymode = log,
        xmin = 5e-4, xmax=2e2,
        ymin = 1e+-11, ymax= 2,
        legend pos=south west,
        yticklabels = {}, xticklabels = {}
        ]
        \addplot table [x index=0,y index=1, col sep=space] {time_sg_nu_1.5_dim_8_logp.dat};
        \addplot table [x index=0,y index=1, col sep=space] {time_lisg_nu_1.5_dim_8_logp.dat};
        \addplot table [x index=0,y index=1, col sep=space] {time_mc_nu_1.5_dim_8_logp.dat};
        \legend{SG,LISG,LIMC};
        \end{axis}
        \node[rotate=-90] at (7.7,2.8) {\Large$d=8$};
    \end{tikzpicture}
    \end{subfigure}
    \begin{subfigure}{.45\textwidth}
    \begin{tikzpicture}[scale=0.75]
        \begin{axis}[
        cycle list name = myCycleList2,
        grid = both,
        xmode = log, ymode = log,
        xmin = 5e-4, xmax=2e2,
        ymin = 1e+-11, ymax= 2,
        xlabel={\Large Time (s)},
        legend pos=south west
        ]
        \addplot table [x index=0,y index=1, col sep=space] {time_sg_nu_1.5_dim_25_linp.dat};
        \addplot table [x index=0,y index=1, col sep=space] {time_lisg_nu_1.5_dim_25_linp.dat};
        \addplot table [x index=0,y index=1, col sep=space] {time_mc_nu_1.5_dim_25_linp.dat};
        \legend{SG,LISG,LIMC};
        \end{axis}
        \node[rotate=90] at (-1.3,2.8) {\large Approx. \(L^2\)-Error};
    \end{tikzpicture}
    \end{subfigure}
    \hspace{10mm}
    \begin{subfigure}{.45\textwidth}
    \begin{tikzpicture}[scale=0.75]
        \begin{axis}[
        cycle list name = myCycleList2,
        grid = both,
        xmode = log, ymode = log,
        xmin = 5e-4, xmax=2e2,
        ymin = 1e+-11, ymax= 2,
        xlabel={\Large Time (s)},
        legend pos=south west,
        yticklabels = {}
        ]
        \addplot table [x index=0,y index=1, col sep=space] {time_sg_nu_1.5_dim_25_logp.dat};
        \addplot table [x index=0,y index=1, col sep=space] {time_lisg_nu_1.5_dim_25_logp.dat};
        \addplot table [x index=0,y index=1, col sep=space] {time_mc_nu_1.5_dim_25_logp.dat};
        \legend{SG,LISG,LIMC};
        \end{axis}
        \node[rotate=-90] at (7.7,2.8) {\Large$d=25$};
    \end{tikzpicture}
    \end{subfigure}
    \caption{Relative \(L^2\)-approximation error measured against the time taken to compute individual interpolants of \(f\). We employ the same setup as in Figure \ref{fig: iso vs LISG}, with the addition of experiments undertaken for input dimension 25.}
    \label{fig: LISG time}
\end{figure}

In Figure \ref{fig: LISG high dimensions}, we assess the effectiveness of Lengthscale-Informed Sparse Grids in significantly higher dimensions. Although theoretically applicable in arbitrarily high dimensions, for ease of implementation we consider up to a maximum dimension of \(d=100\), by which point clear trends are established. For the case \(\nu_j=2.5\), the number of points for which we can test is limited due to ill conditioning of the covariance matrices. 

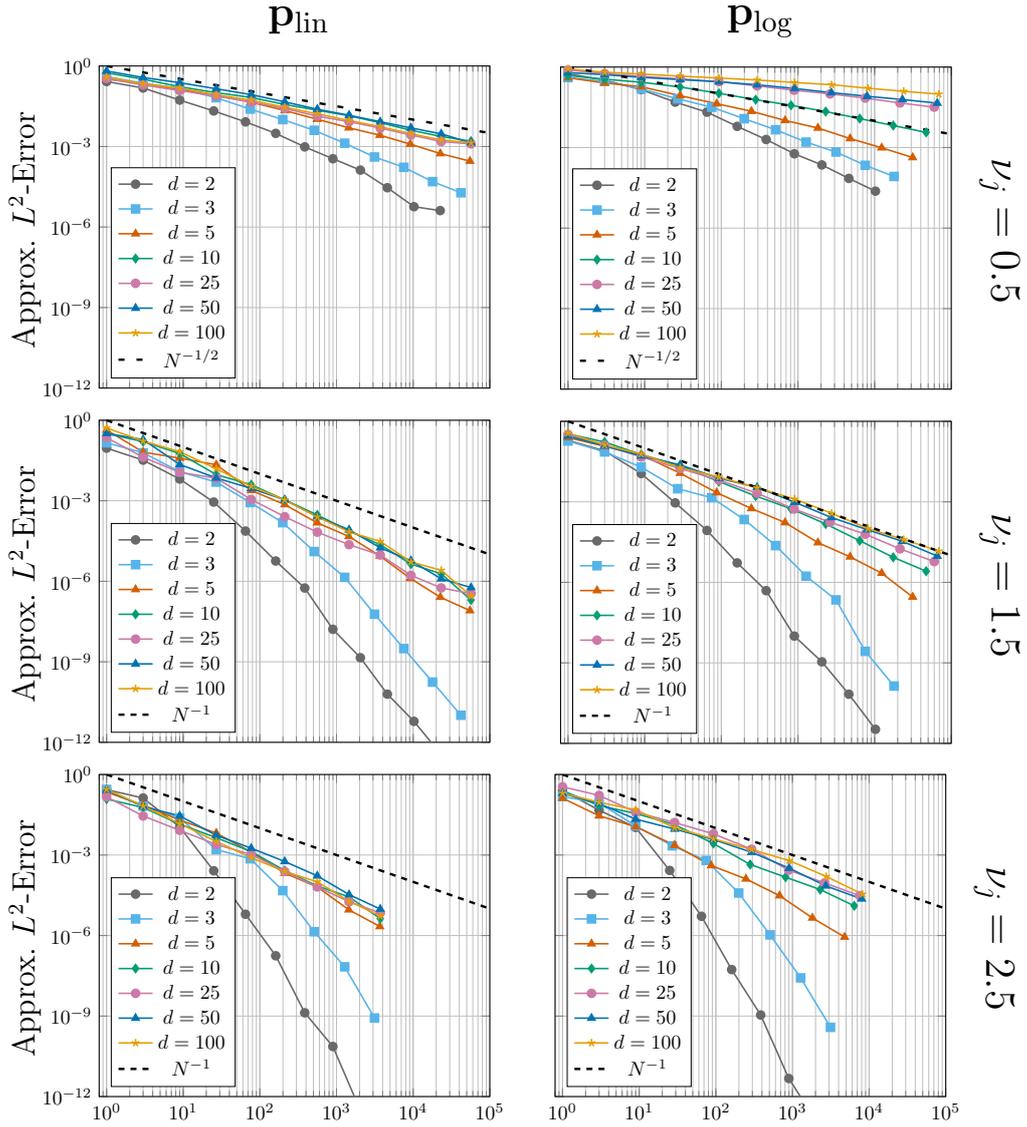
\begin{figure}[h]
    \begin{subfigure}{.45\textwidth}
    \begin{tikzpicture}[scale=0.75]
        \begin{axis}[
        cycle list name = myCycleList,
        grid = both,
        xmode = log, ymode = log,
        xmin = 0.8, xmax=1e5,
        ymin = 1e+-12, ymax= 1,
        legend pos=south west,
        xticklabels={}
        ]
        \addplot table [x index=0,y index=1, col sep=space] {nu_0.5_dim_2_pen_lin_avg_10.dat};
        \addplot table [x index=0,y index=1, col sep=space] {nu_0.5_dim_3_pen_lin_avg_10.dat};
        \addplot table [x index=0,y index=1, col sep=space] {nu_0.5_dim_5_pen_lin_avg_10.dat};
        \addplot table [x index=0,y index=1, col sep=space] {nu_0.5_dim_10_pen_lin_avg_10.dat};
        \addplot table [x index=0,y index=1, col sep=space] {nu_0.5_dim_25_pen_lin_avg_10.dat};
        \addplot table [x index=0,y index=1, col sep=space] {nu_0.5_dim_50_pen_lin_avg_10.dat};
        \addplot table [x index=0,y index=1, col sep=space] {nu_0.5_dim_100_pen_lin_avg_10.dat};
        \addplot[loosely dashed, very thick, black, domain=1:1e5, samples=100] {1/(sqrt(x))};
        \legend{\(d=2\),\(d=3\),\(d=5\),\(d=10\),\(d=25\),\(d=50\),\(d=100\),\(N^{-1/2}\)};
        \end{axis}
        \node at (3.5,6.5) {\Large$\mathbf{p}_{\textrm{lin}}$};
        \node[rotate=90] at (-1.3,2.8) {\large Approx. \(L^2\)-Error};
    \end{tikzpicture}
    \end{subfigure}
    \hspace{10mm}
    \begin{subfigure}{.45\textwidth}
    \begin{tikzpicture}[scale=0.75]
        \begin{axis}[
        cycle list name = myCycleList,
        grid = both,
        xmode = log, ymode = log,
        xmin = 0.8, xmax = 1e5,
        ymin = 1e+-12, ymax=1,
        legend pos=south west,
        xticklabels={}, yticklabels={}
        ]
        \addplot table [x index=0,y index=1, col sep=space] {nu_0.5_dim_2_pen_log_avg_10.dat};
        \addplot table [x index=0,y index=1, col sep=space] {nu_0.5_dim_3_pen_log_avg_10.dat};
        \addplot table [x index=0,y index=1, col sep=space] {nu_0.5_dim_5_pen_log_avg_10.dat};
        \addplot table [x index=0,y index=1, col sep=space] {nu_0.5_dim_10_pen_log_avg_10.dat};
        \addplot table [x index=0,y index=1, col sep=space] {nu_0.5_dim_25_pen_log_avg_10.dat};
        \addplot table [x index=0,y index=1, col sep=space] {nu_0.5_dim_50_pen_log_avg_10.dat};
        \addplot table [x index=0,y index=1, col sep=space] {nu_0.5_dim_100_pen_log_avg_10.dat};
        \addplot[loosely dashed, very thick, black, domain=1:1e5, samples=100] {1/(sqrt(x))};
        \legend{\(d=2\),\(d=3\),\(d=5\),\(d=10\),\(d=25\),\(d=50\),\(d=100\),\(N^{-1/2}\)};
        \end{axis}
        \node at (3.5,6.5) {\Large$\mathbf{p}_{\textrm{log}}$};
        \node[rotate=-90] at (7.7,2.8) {\Large$\nu_j=0.5$};
    \end{tikzpicture}
    \end{subfigure}
    \begin{subfigure}{.45\textwidth}
    \begin{tikzpicture}[scale=0.75]
        \begin{axis}[
        cycle list name = myCycleList,
        grid = both,
        xmode = log, ymode = log,
        xmin = 0.8, xmax=1e5,
        ymin = 1e+-12, ymax= 1,
        legend pos=south west,
        xticklabels = {}
        ]
        \addplot table [x index=0,y index=1, col sep=space] {nu_1.5_dim_2_pen_lin_avg_10.dat};
        \addplot table [x index=0,y index=1, col sep=space] {nu_1.5_dim_3_pen_lin_avg_10.dat};
        \addplot table [x index=0,y index=1, col sep=space] {nu_1.5_dim_5_pen_lin_avg_10.dat};
        \addplot table [x index=0,y index=1, col sep=space] {nu_1.5_dim_10_pen_lin_avg_10.dat};
        \addplot table [x index=0,y index=1, col sep=space] {nu_1.5_dim_25_pen_lin_avg_10.dat};
        \addplot table [x index=0,y index=1, col sep=space] {nu_1.5_dim_50_pen_lin_avg_10.dat};
        \addplot table [x index=0,y index=1, col sep=space] {nu_1.5_dim_100_pen_lin_avg_10.dat};
        \addplot[dashed, very thick, black, domain=1:1e5, samples=100] {1/x};
        \legend{\(d=2\),\(d=3\),\(d=5\),\(d=10\),\(d=25\),\(d=50\),\(d=100\),\(N^{-1}\)};
        \end{axis}
        \node[rotate=90] at (-1.3,2.8) {\large Approx. \(L^2\)-Error};
    \end{tikzpicture}
    \end{subfigure}
    \hspace{10mm}
    \begin{subfigure}{.45\textwidth}
    \begin{tikzpicture}[scale=0.75]
        \begin{axis}[
        cycle list name = myCycleList,
        grid = both,
        xmode = log, ymode = log,
        xmin = 0.8, xmax=1e5,
        ymin = 1e+-12, ymax= 1,
        legend pos=south west,
        yticklabels = {}, xticklabels = {}
        ]
        \addplot table [x index=0,y index=1, col sep=space] {nu_1.5_dim_2_pen_log_avg_10.dat};
        \addplot table [x index=0,y index=1, col sep=space] {nu_1.5_dim_3_pen_log_avg_10.dat};
        \addplot table [x index=0,y index=1, col sep=space] {nu_1.5_dim_5_pen_log_avg_10.dat};
        \addplot table [x index=0,y index=1, col sep=space] {nu_1.5_dim_10_pen_log_avg_10.dat};
        \addplot table [x index=0,y index=1, col sep=space] {nu_1.5_dim_25_pen_log_avg_10.dat};
        \addplot table [x index=0,y index=1, col sep=space] {nu_1.5_dim_50_pen_log_avg_10.dat};
        \addplot table [x index=0,y index=1, col sep=space] {nu_1.5_dim_100_pen_log_avg_10.dat};
        \addplot[dashed, very thick, black, domain=1:1e5, samples=100] {1/x};
        \legend{\(d=2\),\(d=3\),\(d=5\),\(d=10\),\(d=25\),\(d=50\),\(d=100\),\(N^{-1}\)};
        \end{axis}
        \node[rotate=-90] at (7.7,2.8) {\Large$\nu_j=1.5$};
    \end{tikzpicture}
    \end{subfigure}
    \begin{subfigure}{.45\textwidth}
    \begin{tikzpicture}[scale=0.75]
        \begin{axis}[
        cycle list name = myCycleList,
        grid = both,
        xmode = log, ymode = log,
        xmin = 0.8, xmax=1e5,
        ymin = 1e+-12, ymax= 1,
        xlabel={\Large$N$},
        legend pos=south west
        ]
        \addplot table [x index=0,y index=1, col sep=space] {nu_2.5_dim_2_pen_lin_avg_10.dat};
        \addplot table [x index=0,y index=1, col sep=space] {nu_2.5_dim_3_pen_lin_avg_10.dat};
        \addplot table [x index=0,y index=1, col sep=space] {nu_2.5_dim_5_pen_lin_avg_10.dat};
        \addplot table [x index=0,y index=1, col sep=space] {nu_2.5_dim_10_pen_lin_avg_10.dat};
        \addplot table [x index=0,y index=1, col sep=space] {nu_2.5_dim_25_pen_lin_avg_10.dat};
        \addplot table [x index=0,y index=1, col sep=space] {nu_2.5_dim_50_pen_lin_avg_10.dat};
        \addplot table [x index=0,y index=1, col sep=space] {nu_2.5_dim_100_pen_lin_avg_10.dat};
        \addplot[dashed, very thick, black, domain=1:1e5, samples=100] {1/x};
        \legend{\(d=2\),\(d=3\),\(d=5\),\(d=10\),\(d=25\),\(d=50\),\(d=100\),\(N^{-1}\)};
        \end{axis}
        \node[rotate=90] at (-1.3,2.8) {\large Approx. \(L^2\)-Error};
    \end{tikzpicture}
    \end{subfigure}
    \hspace{10mm}
    \begin{subfigure}{.45\textwidth}
    \begin{tikzpicture}[scale=0.75]
        \begin{axis}[
        cycle list name = myCycleList,
        grid = both,
        xmode = log, ymode = log,
        xmin = 0.8, xmax=1e5,
        ymin = 1e+-12, ymax= 1,
        xlabel={\Large$N$},
        legend pos=south west,
        yticklabels = {}
        ]
        \addplot table [x index=0,y index=1, col sep=space] {nu_2.5_dim_2_pen_log_avg_10.dat};
        \addplot table [x index=0,y index=1, col sep=space] {nu_2.5_dim_3_pen_log_avg_10.dat};
        \addplot table [x index=0,y index=1, col sep=space] {nu_2.5_dim_5_pen_log_avg_10.dat};
        \addplot table [x index=0,y index=1, col sep=space] {nu_2.5_dim_10_pen_log_avg_10.dat};
        \addplot table [x index=0,y index=1, col sep=space] {nu_2.5_dim_25_pen_log_avg_10.dat};
        \addplot table [x index=0,y index=1, col sep=space] {nu_2.5_dim_50_pen_log_avg_10.dat};
        \addplot table [x index=0,y index=1, col sep=space] {nu_2.5_dim_100_pen_log_avg_10.dat};
        \addplot[dashed, very thick, black, domain=1:1e5, samples=100] {1/x};
        \legend{\(d=2\),\(d=3\),\(d=5\),\(d=10\),\(d=25\),\(d=50\),\(d=100\),\(N^{-1}\)};
        \end{axis}
        \node[rotate=-90] at (7.7,2.8) {\Large$\nu_j=2.5$};
    \end{tikzpicture}
    \end{subfigure}
    \caption{Relative \(L^2\)-approximation error when interpolating target functions, \(f\), of increasing dimension, \(d\), using lengthscale-informed sparse grids.
    }
    \label{fig: LISG high dimensions}
\end{figure}

As discussed in Section~\ref{subsec: results}, in all-but-one of the six penalty and smoothness combinations, we observe the presence of two regimes: The pre-asymptotic regime driven by the penalty, \(\mathbf{p}\), and hence the lengthscale anisotropy, and the asymptotic regime governed by the smoothness, \(\nu_j\), and the dimensionality, \(d\). For smaller dimensions, there is a quick onset of the asymptotic behaviour, seen clearly for dimensions \(d=2,3\) and, in some instances, \(d=5\). For these dimensions, the asymptotic convergence rate is faster than the presymptomatic rate, and accordingly we see an increased convergence rate once \(L\) is large enough. This effect is diminished with increasing \(d\), and eventually we would expect a significant slow down. The exception to this pattern occurs for \(\nu_j=0.5\) and \(\mathbf{p}=\mathbf{p}_{\log}\), however we note that this behaviour is present for slightly faster, quadratic growth in the lengthscale, \(p_j=\lceil j^2\rceil\). As we increase the smoothness, the pre-asymptotic convergence rate increases, as expected. Notably, this increase is much larger from \(\nu_j =0.5\) to \(1.5\), than with \(\nu_j=1.5\) to \(2.5\). We also observe an increased convergence rate when employing linearly, instead of logarithmically, growing penalties, however the difference is small relative to the additional anisotropy present. 
\begin{figure}[h]
    \begin{subfigure}{.45\textwidth}
    \begin{tikzpicture}[scale=0.75]
        \begin{axis}[
        cycle list name = myCycleList,
        xmode = log, ymode = log,
        xmin = 0.8, xmax=1e4,
        ymin = 1e+-5, ymax= 1,
        legend pos=south west,
        xticklabels={},
        grid=both
        ]
        % eta = -1 (isotropic)
        \addplot[color=vermillion, very thick, dotted, mark=diamond*, 
            mark options={draw=vermillion, solid}]
            table [x index=0, y index=1, col sep=space] 
            {nu_0.5_dim_10_pert_-1_pen_lin_avg_10.dat};
        
        % eta = -0.5 (high neg)
        \addplot[color=vermillion, thick, dashed, mark=triangle*, 
            mark options={draw=vermillion, solid}]
            table [x index=0, y index=1, col sep=space] 
            {nu_0.5_dim_10_pert_-0.5_pen_lin_avg_10.dat};
    
        % eta = -0.2 (low neg)
        \addplot[color=vermillion, thick, solid, mark=*]
            table [x index=0, y index=1, col sep=space] 
            {nu_0.5_dim_10_pert_-0.2_pen_lin_avg_10.dat};
    
        % eta = 0 (no pert)
        \addplot[color=darkgray, thick, solid, mark=square*]
            table [x index=0, y index=1, col sep=space] 
            {nu_0.5_dim_10_pert_0_pen_lin_avg_10.dat};
    
        % eta = 0.2 (low pos)
        \addplot[color=skyblue, thick, solid, mark=*]
            table [x index=0, y index=1, col sep=space] 
            {nu_0.5_dim_10_pert_0.2_pen_lin_avg_10.dat};
    
        % eta = 0.5 (moderate pos)
        \addplot[color=skyblue, thick, dashed, mark=triangle*, 
            mark options={draw=skyblue, solid}]
            table [x index=0, y index=1, col sep=space] 
            {nu_0.5_dim_10_pert_0.5_pen_lin_avg_10.dat};
    
        % eta = 1 (high pos)
        \addplot[color=skyblue, very thick, dotted, mark=diamond*,
            mark options={draw=skyblue, solid}]
            table [x index=0, y index=1, col sep=space] 
            {nu_0.5_dim_10_pert_1_pen_lin_avg_10.dat};
    
        \legend{
            $\eta = -1$,
            $\eta = -0.5$,
            $\eta = -0.2$,
            $\eta = 0$,
            $\eta = 0.2$,
            $\eta = 0.5$,
            $\eta = 1$
        };
        \end{axis}
        \node at (3.5,6.5) {\Large$\nu_j=0.5$};
        \node[rotate=90] at (-1.3,2.8) {\large Approx. \(L^2\)-Error};
    \end{tikzpicture}
    \end{subfigure}
    \hspace{10mm}
    \begin{subfigure}{.45\textwidth}
    \begin{tikzpicture}[scale=0.75]
        \begin{axis}[
        cycle list name = myCycleList,
        xmode = log, ymode = log,
        xmin = 0.8, xmax=1e4,
        ymin = 1e+-5, ymax=1,
        legend pos=south west,
        xticklabels={}, yticklabels={},
        grid=both
        ]
        % eta = -1 (isotropic)
        \addplot[color=vermillion, very thick, dotted, mark=diamond*, 
            mark options={draw=vermillion, solid}]
            table [x index=0, y index=1, col sep=space] 
            {nu_1.5_dim_10_pert_-1_pen_lin_avg_10.dat};
            
        % eta = -0.5 (high neg)
        \addplot[color=vermillion, thick, dashed, mark=triangle*, 
            mark options={draw=vermillion, solid}]
            table [x index=0, y index=1, col sep=space] 
            {nu_1.5_dim_10_pert_-0.5_pen_lin_avg_10.dat};
    
        % eta = -0.2 (low neg)
        \addplot[color=vermillion, thick, solid, mark=*]
            table [x index=0, y index=1, col sep=space] 
            {nu_1.5_dim_10_pert_-0.2_pen_lin_avg_10.dat};
    
        % eta = 0 (no pert)
        \addplot[color=darkgray, thick, solid, mark=square*]
            table [x index=0, y index=1, col sep=space] 
            {nu_1.5_dim_10_pert_0_pen_lin_avg_10.dat};
    
        % eta = 0.2 (low pos)
        \addplot[color=skyblue, thick, solid, mark=*]
            table [x index=0, y index=1, col sep=space] 
            {nu_1.5_dim_10_pert_0.2_pen_lin_avg_10.dat};
    
        % eta = 0.5 (moderate pos)
        \addplot[color=skyblue, thick, dashed, mark=triangle*, 
            mark options={draw=skyblue, solid}]
            table [x index=0, y index=1, col sep=space] 
            {nu_1.5_dim_10_pert_0.5_pen_lin_avg_10.dat};
    
        % eta = 1 (high pos)
        \addplot[color=skyblue, very thick, dotted, mark=diamond*,
            mark options={draw=skyblue, solid}]
            table [x index=0, y index=1, col sep=space] 
            {nu_1.5_dim_10_pert_1_pen_lin_avg_10.dat};
    
        \legend{
            $\eta = -1$,
            $\eta = -0.5$,
            $\eta = -0.2$,
            $\eta = 0$,
            $\eta = 0.2$,
            $\eta = 0.5$,
            $\eta = 1$
        };
        \end{axis}
        \node at (3.5,6.5) {\Large$\nu_j=1.5$};
        \node[rotate=-90] at (7.7,2.8) {\Large$d=10$};
    \end{tikzpicture}
    \end{subfigure}
    \begin{subfigure}{.45\textwidth}
    \begin{tikzpicture}[scale=0.75]
        \begin{axis}[
        xmode = log, ymode = log,
        xmin = 0.8, xmax=1e4,
        ymin = 1e+-5, ymax= 1,
        legend pos=south west,
        xlabel={\Large$N$},
        grid=both
        ]
        % eta = -0.5 (high neg)
        \addplot[color=vermillion, thick, dashed, mark=triangle*, 
            mark options={draw=vermillion, solid}]
            table [x index=0, y index=1, col sep=space] 
            {nu_0.5_dim_50_pert_-0.5_pen_lin_avg_10.dat};
    
        % eta = -0.2 (low neg)
        \addplot[color=vermillion, thick, solid, mark=*]
            table [x index=0, y index=1, col sep=space] 
            {nu_0.5_dim_50_pert_-0.2_pen_lin_avg_10.dat};
    
        % eta = 0 (no pert)
        \addplot[color=darkgray, thick, solid, mark=square*]
            table [x index=0, y index=1, col sep=space] 
            {nu_0.5_dim_50_pert_0_pen_lin_avg_10.dat};
    
        % eta = 0.2 (low pos)
        \addplot[color=skyblue, thick, solid, mark=*]
            table [x index=0, y index=1, col sep=space] 
            {nu_0.5_dim_50_pert_0.2_pen_lin_avg_10.dat};
    
        % eta = 0.5 (moderate pos)
        \addplot[color=skyblue, thick, dashed, mark=triangle*, 
            mark options={draw=skyblue, solid}]
            table [x index=0, y index=1, col sep=space] 
            {nu_0.5_dim_50_pert_0.5_pen_lin_avg_10.dat};
    
        % eta = 1 (high pos)
        \addplot[color=skyblue, very thick, dotted, mark=diamond*,
            mark options={draw=skyblue, solid}]
            table [x index=0, y index=1, col sep=space] 
            {nu_0.5_dim_50_pert_1_pen_lin_avg_10.dat};
    
        \legend{
            $\eta = -0.5$,
            $\eta = -0.2$,
            $\eta = 0$,
            $\eta = 0.2$,
            $\eta = 0.5$,
            $\eta = 1$
        };
        \end{axis}
        \node[rotate=90] at (-1.3,2.8) {\large Approx. \(L^2\)-Error};
    \end{tikzpicture}
    \end{subfigure}
    \hspace{10mm}
    \begin{subfigure}{.45\textwidth}
    \begin{tikzpicture}[scale=0.75]
        \begin{axis}[
        xmode = log, ymode = log,
        xmin = 0.8, xmax=1e4,
        ymin = 1e+-5, ymax= 1,
        legend pos=south west,
        yticklabels={},
        xlabel={\Large$N$},
        grid=both
        ]
        % eta = -0.5 (high neg)
        \addplot[color=vermillion, thick, dashed, mark=triangle*, 
            mark options={draw=vermillion, solid}]
            table [x index=0, y index=1, col sep=space] 
            {nu_1.5_dim_50_pert_-0.5_pen_lin_avg_10.dat};
    
        % eta = -0.2 (low neg)
        \addplot[color=vermillion, thick, solid, mark=*]
            table [x index=0, y index=1, col sep=space] 
            {nu_1.5_dim_50_pert_-0.2_pen_lin_avg_10.dat};
    
        % eta = 0 (no pert)
        \addplot[color=darkgray, thick, solid, mark=square*]
            table [x index=0, y index=1, col sep=space] 
            {nu_1.5_dim_50_pert_0_pen_lin_avg_10.dat};
    
        % eta = 0.2 (low pos)
        \addplot[color=skyblue, thick, solid, mark=*]
            table [x index=0, y index=1, col sep=space] 
            {nu_1.5_dim_50_pert_0.2_pen_lin_avg_10.dat};
    
        % eta = 0.5 (moderate pos)
        \addplot[color=skyblue, thick, dashed, mark=triangle*, 
            mark options={draw=skyblue, solid}]
            table [x index=0, y index=1, col sep=space] 
            {nu_1.5_dim_50_pert_0.5_pen_lin_avg_10.dat};
    
        % eta = 1 (high pos)
        \addplot[color=skyblue, very thick, dotted, mark=diamond*,
            mark options={draw=skyblue, solid}]
            table [x index=0, y index=1, col sep=space] 
            {nu_1.5_dim_50_pert_1_pen_lin_avg_10.dat};
    
        \legend{
            $\eta = -0.5$,
            $\eta = -0.2$,
            $\eta = 0$,
            $\eta = 0.2$,
            $\eta = 0.5$,
            $\eta = 1$
        };
        \end{axis}
        \node[rotate=-90] at (7.7,2.8) {\Large$d=50$};
    \end{tikzpicture}
    \end{subfigure}
    \caption{Effect of misspecifying lengthscale anisotropy on the convergence of the relative \(L^2\)-approximation error in \(N\), the number of function evaluations.}
    \label{fig: LISG misspecified}
\end{figure}
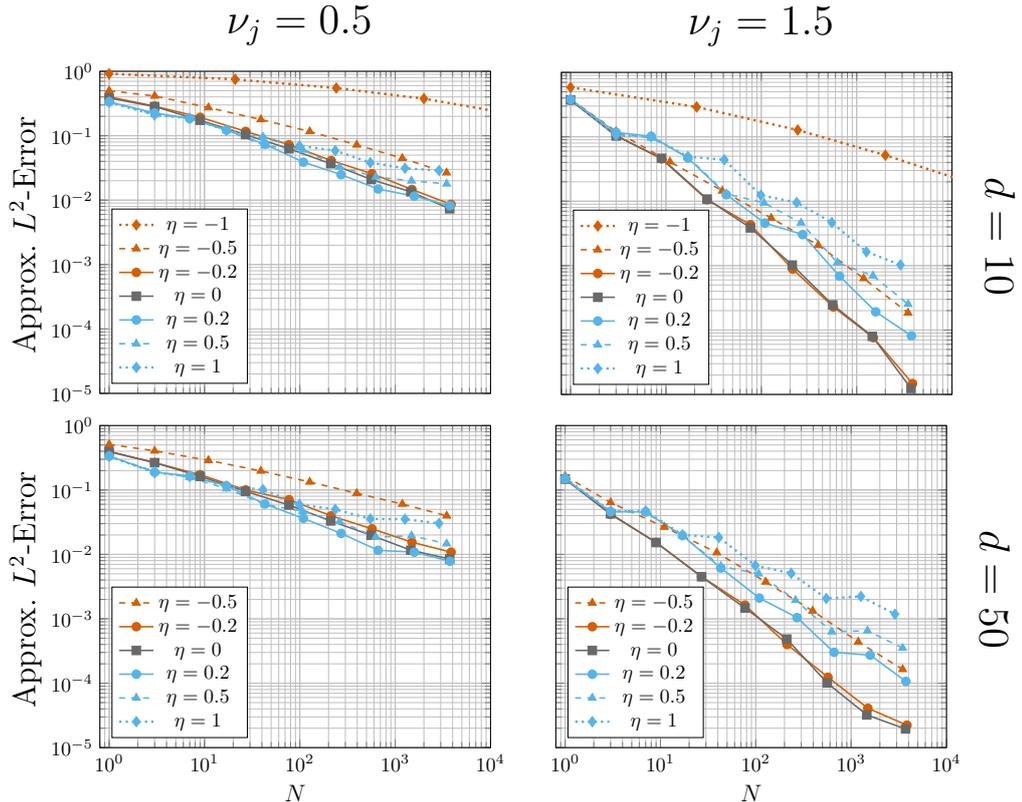
A limitation of the error bound in Theorem \ref{thm: error in L} is that we require lengthscales of the form \(\lambda_j=2^p\), \(p\in\mathbb{N}_0\). Furthermore, as discussed in Section \ref{subsec: lengthscale anisotropy}, lengthscales are rarely known \emph{a priori} in practice, and instead are more likely to be estimated in many applications. Hence, to test the robustness of the method, in Figure \ref{fig: LISG misspecified} we explore the effect of misspecifying the penalty. We take realisations of \(f\) with linearly growing penalties, \(\mathbf{p}=\mathbf{p}_{\textrm{lin}}\), and approximate using LISG designs, with separable Mat\'ern kernels now stretched according to a perturbed penalty, \(\mathbf{p}^{\eta}_{\textrm{lin}}\), defined in terms of a perturbation parameter, \(\eta\), by \(p_{\textrm{lin},j}^{\eta}=\lceil(1+\eta)p_j\rceil\). 

Predictably, we observe in all cases that misspecifying the penalty by a small degree only has a marginal effect on the error, whereas a large discrepancy between the anisotropy in \(f\) and the kernels results in a significantly worse error. We note that all of these cases perform better than isotropic sparse grids (equivalent to \(\eta=-1\)), which are infeasible in \(d=50\) due to their size. Interestingly, for \(\nu_j=0.5\), we see a small improvement when slightly overestimating the lengthscale according to \(\eta=0.2\). This is not true for \(\nu_j=1.5\), where instead underestimating the lengthscale according to \(\eta=-0.2\) results in a similar error as when using the exact penalties. 

Finally, in Figure \ref{fig: gp post var}, we present heatmaps representing the posterior marginal variance of two-dimensional Gaussian processes, whose mean functions correspond to kernel interpolants on LISG, SG and MC designs. We note that only the LISG method employs anisotropic kernels, which accounts for most of the discrepancy to the other two methods.

\begin{figure}[hp]
    \begin{subfigure}{.45\textwidth}
    \begin{tikzpicture}[scale=0.75]
        \begin{axis}[
            ylabel={\LARGE$x_2$},
            xticklabels={},
            xmin=-0.5, xmax=0.5,
            ymin=-0.5, ymax=0.5,
            axis on top,
            colorbar,
            colormap name=cividis,
            point meta min=0,
            point meta max=0.3,
            axis equal image
          ]
            \addplot graphics [
              includegraphics={width=\axiswidth,height=\axisheight},
              xmin=-0.5, xmax=0.5,
              ymin=-0.5, ymax=0.5,
            ] {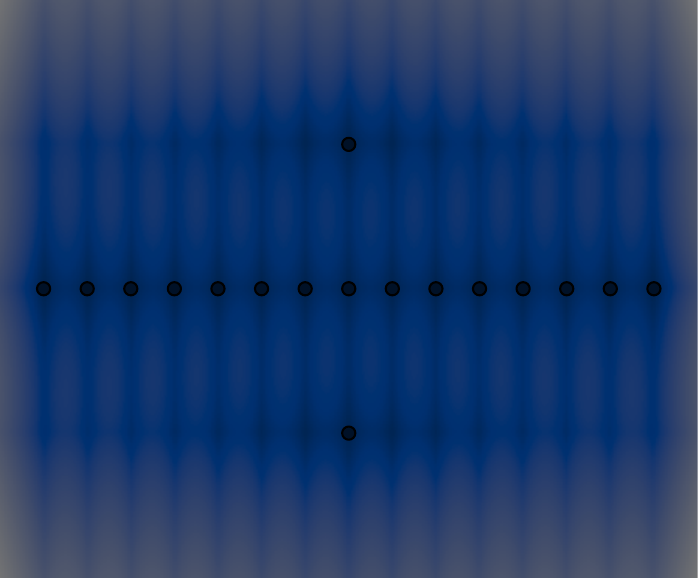};
        \end{axis}
        \node at (3.2,6.5) {\Large$\nu_j=0.5$};
        % \node[rotate=90] at (-1.8,2.8) {\large RRMSE};
    \end{tikzpicture}
    \end{subfigure}
    \hspace{10mm}
    \begin{subfigure}{.45\textwidth}
    \begin{tikzpicture}[scale=0.75]
        \begin{axis}[
            xticklabels={}, yticklabels={},
            xmin=-0.5, xmax=0.5,
            ymin=-0.5, ymax=0.5,
            axis on top, % so grid lines go over the image if you want
            colorbar,
            colorbar style={
                /pgf/number format/fixed,
                /pgf/number format/precision=3,
                scaled ticks=false
            },
            colormap name=cividis,
            point meta min=0,
            point meta max=0.01,
            axis equal image
          ]
            \addplot graphics [
              includegraphics={width=\axiswidth,height=\axisheight},
              xmin=-0.5, xmax=0.5,
              ymin=-0.5, ymax=0.5,
            ] {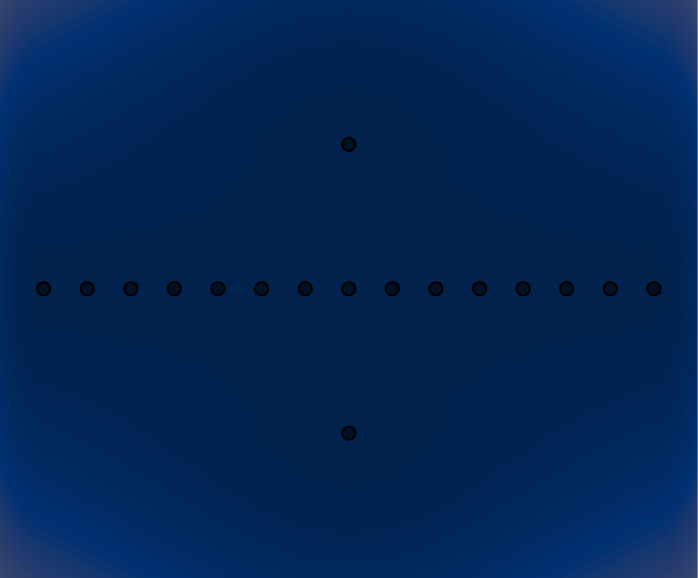};
        \end{axis}
        \node at (3,6.5) {\Large$\nu_j=1.5$};
        \node[rotate=-90] at (8,2.8) {\Large LISG};
    \end{tikzpicture}
    \end{subfigure}
    \begin{subfigure}{.45\textwidth}
    \begin{tikzpicture}[scale=0.75]
        \begin{axis}[
            ylabel={\LARGE$x_2$},
            xticklabels={},
            xmin=-0.5, xmax=0.5,
            ymin=-0.5, ymax=0.5,
            axis on top,
            colorbar,
            colormap name=cividis,
            point meta min=0,
            point meta max=0.3,
            axis equal image
          ]
            \addplot graphics [
              includegraphics={width=\axiswidth,height=\axisheight},
              xmin=-0.5, xmax=0.5,
              ymin=-0.5, ymax=0.5,
            ] {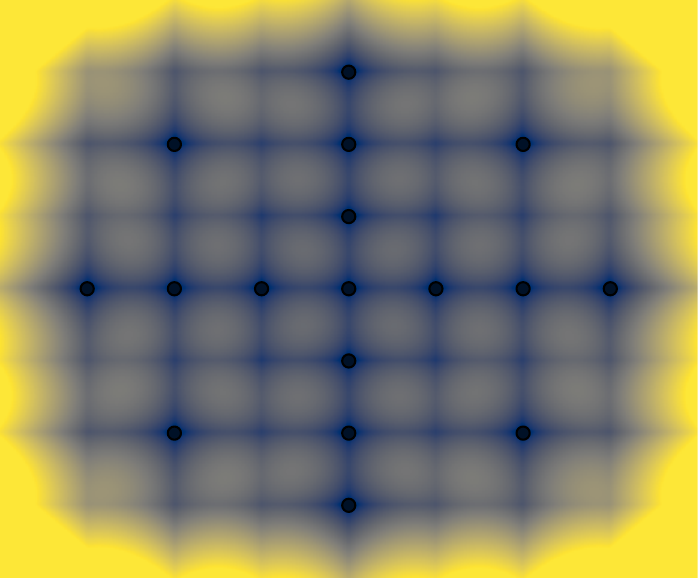};
        \end{axis}
        % \node[rotate=90] at (-1.3,2.8) {\large RRMSE};
    \end{tikzpicture}
    \end{subfigure}
    \hspace{10mm}
    \begin{subfigure}{.45\textwidth}
    \begin{tikzpicture}[scale=0.75]
        \begin{axis}[
            xticklabels={}, yticklabels={},
            xmin=-0.5, xmax=0.5,
            ymin=-0.5, ymax=0.5,
            axis on top, % so grid lines go over the image if you want
            colorbar,
            colorbar style={
                /pgf/number format/fixed,
                /pgf/number format/precision=3,
                scaled ticks=false
            },
            colormap name=cividis,
            point meta min=0,
            point meta max=0.01,
            axis equal image
          ]
            \addplot graphics [
              includegraphics={width=\axiswidth,height=\axisheight},
              xmin=-0.5, xmax=0.5,
              ymin=-0.5, ymax=0.5,
            ] {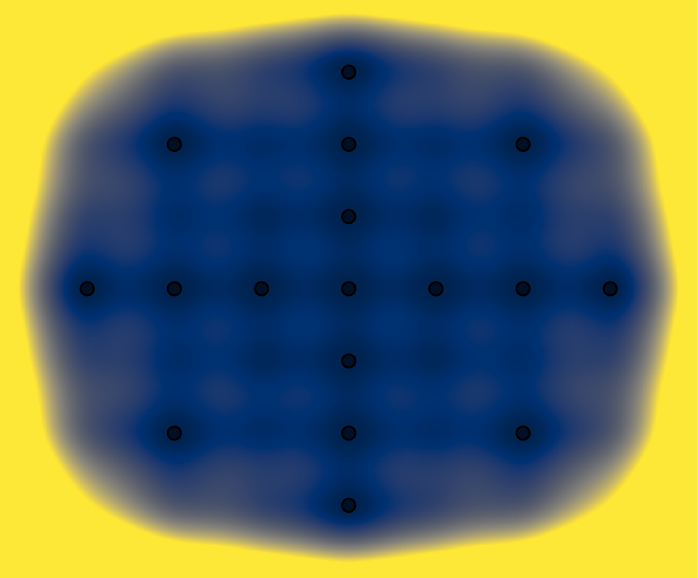};
        \end{axis}
        \node[rotate=-90] at (8,2.8) {\Large SG};
    \end{tikzpicture}
    \end{subfigure}
    \begin{subfigure}{.45\textwidth}
    \begin{tikzpicture}[scale=0.75]
        \begin{axis}[
            xlabel={\LARGE$x_1$},
            ylabel={\LARGE$x_2$},
            xmin=-0.5, xmax=0.5,
            ymin=-0.5, ymax=0.5,
            axis on top,
            colorbar,
            colormap name=cividis,
            point meta min=0,
            point meta max=0.3,
            axis equal image
          ]
            \addplot graphics [
              includegraphics={width=\axiswidth,height=\axisheight},
              xmin=-0.5, xmax=0.5,
              ymin=-0.5, ymax=0.5,
            ] {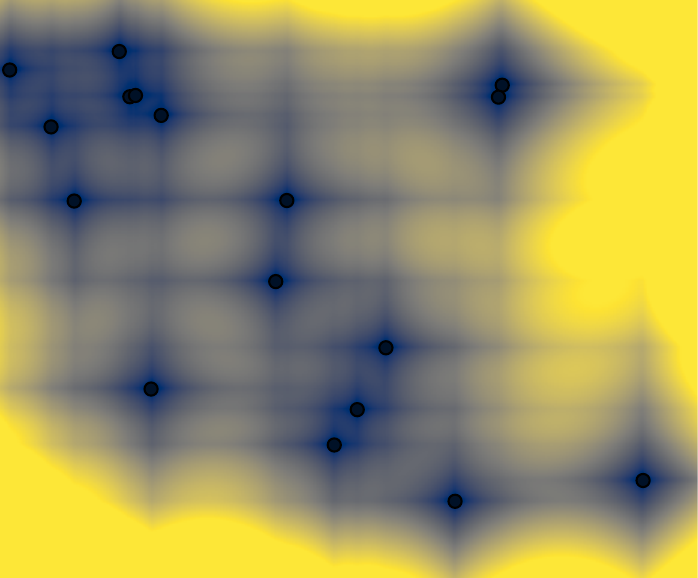};
        \end{axis}
        % \node[rotate=90] at (-1.3,2.8) {\large RRMSE};
    \end{tikzpicture}
    \end{subfigure}
    \hspace{10mm}
    \begin{subfigure}{.45\textwidth}
    \begin{tikzpicture}[scale=0.75]
        \begin{axis}[
            yticklabels={},
            xlabel={\LARGE$x_1$},
            xmin=-0.5, xmax=0.5,
            ymin=-0.5, ymax=0.5,
            axis on top, % so grid lines go over the image if you want
            colorbar,
            colorbar style={
                /pgf/number format/fixed,
                /pgf/number format/precision=3,
                scaled ticks=false
            },
            colormap name=cividis,
            point meta min=0,
            point meta max=0.01,
            axis equal image
          ]
            \addplot graphics [
              includegraphics={width=\axiswidth,height=\axisheight},
              xmin=-0.5, xmax=0.5,
              ymin=-0.5, ymax=0.5,
            ] {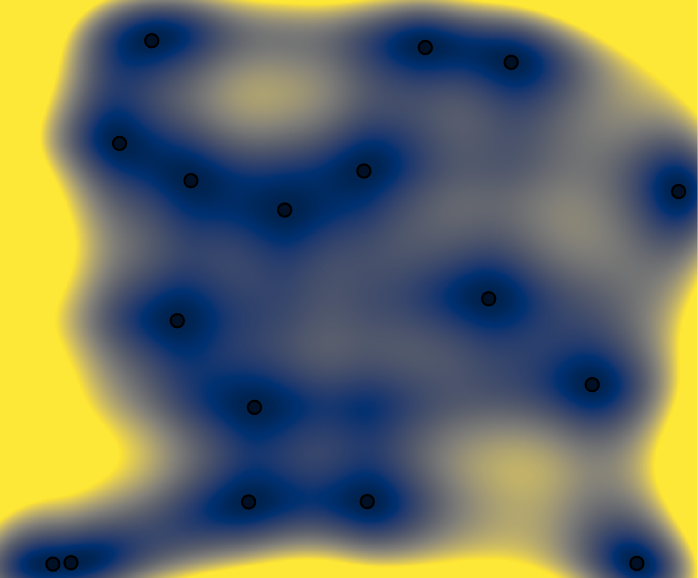};
        \end{axis}
        \node[rotate=-90] at (8,2.8) {\Large MC};
    \end{tikzpicture}
    \end{subfigure}
    \caption{Heat map displaying the posterior variance of a two-dimensional Gaussian process trained on approximately 17 function evaluations arranged in different designs. For the LISG method, a penalty of \(\mathbf{p}=(1,3)\) is assumed. Values exceeding the maximum are displayed as the maximum.}
    \label{fig: gp post var}
\end{figure}

\FloatBarrier
\section{Conclusion}
We have presented lengthscale-informed sparse grids (LISGs); a novel sparse grid construction for interpolation with separable Mat\'ern kernels, 
designed to exploit lengthscale-anisotropy in the target function \(f\). For sparse grid approximation methods to apply, we assume \(f\) lies in a Sobolev space with bounded mixed smoothness, and to be considered anisotropic, \(f\) is assumed to be bounded independently of the dimension in the native space norm of a separable Mat\'ern kernel with the prescribed lengthscales. In Theorem \ref{thm: error in L}, we derived a general error bound indicating the superior performance of LISG approximation over isotropic methods given the presence of anisotropy, especially in a high-dimensional setting. Making the connection to Gaussian process regression, we additionally derive an analogous error bound for the posterior marginal variance in Proposition \ref{prop: vairance convergence}. We included an adaptation to the fast inference algorithm from \cite{Plumlee2014} to LISG, resulting in an algorithm that scales well with both input dimension and the number of function evaluations. We tested the method on a series of numerical experiments on target functions with known anisotropy in up to 100 dimensions.

To the best of our knowledge, this is the first result proving error bounds in the high-dimensional setting assuming
only anisotropy in the lengthscales, not requiring anisotropic (or infinite) regularity. Although we in this work have assumed that the anisotropy structure, i.e. the lengthscales $\{\lambda_j\}_{j=1}^d$, is known \emph{a priori}, we expect that our method will perform equally well in practice when lengthscales have to be estimated, provided that that target function exhibits the required anisotropy. Our numerical experiments with misspecified lengthscales indicate a level of robustness to errors in the estimation of the lengthscales. Furthermore, since a typical workflow already involves the estimation of lengthscales, the application of our method is not much more cumbersome than that of isotropic sparse grids.

\section*{Acknowledgements}
The authors would like to thank the Isaac Newton Institute for Mathematical Sciences, Cambridge, for support and hospitality during the programme \emph{Mathematical and statistical foundation of future data-driven engineering}, where work on this paper was undertaken, supported by EPSRC (EP/R014604/1). The authors further thank Abdul-Lateef Haji-Ali and Ken Newman for helpful discussions and support. EA was supported by Biomathematics and Statistics Scotland (BioSS) and the EPSRC Centre for Doctoral Training in Mathematical Modelling, Analysis and Computation (MAC-MIGS) funded by the EPSRC (EP/S023291/1), Heriot-Watt University, and The University of Edinburgh. ALT was partially supported by
EPSRC grants EP/X01259X/1 and EP/Y028783/1.

\begin{appendices}

\section{}\label{appendix: a}

This section contains results related to the isomorphism of multi-index sets used in the proofs in Section \ref{sec: results}.

\begin{lemma}\label{lem: bijection}
    Let \(d\in\mathbb{N}\) and \(k\in\mathbb{N}_0\). Define subsets of \(\mathbb{N}_0^d\) by \(\mathcal{K}^d_k\coloneqq\{\boldsymbol{l}\in\mathbb{N}_{0}^d:|\{1\leq j\leq d:l_j\neq0\}|=k\}\) and \(\mathcal{P}^d_k\coloneqq\{\mathfrak{u}\subset\{1,\dots,d\}\,:\,|\mathfrak{u}|=k, \mathfrak{u}_i<\mathfrak{u}_{i+1}\}\). We have that \(\mathcal{K}_k^d\cong\{\mathbf{0}\}\cup\mathcal{P}^d_k\times\mathbb{N}^k\) with bijection, for a given \(\boldsymbol{l}\in\mathbb{N}_0^d\),
    \begin{align}
        \boldsymbol{\rho}(\boldsymbol{l}) = \begin{cases}
            \mathbf{0}\in\mathbb{N}_0^k&\textrm{if }\boldsymbol{l}=\mathbf{0}, \textrm{ and}\\
            \left(\mathfrak{u}(\boldsymbol{l}),\boldsymbol{l}_{\mathfrak{u}(\boldsymbol{l})}\right)&\textrm{otherwise},
            \end{cases}\nonumber
    \end{align}
    where \(\mathfrak{u}(\boldsymbol{l})\coloneqq\{j\in\{1,\dots,d\}:l_j=0\}\subset\mathcal{P}^d_k\) and, if \(|\mathfrak{u}(\boldsymbol{l})|=k\), we define, for a given \(\mathbf{b}\in\mathbb{N}_0^d\), \(\mathbf{b}_{\mathfrak{u}(\boldsymbol{l})}\coloneqq(b_{\mathfrak{u}(\boldsymbol{l})_1},\dots,b_{\mathfrak{u}(\boldsymbol{l})_k})\).
\end{lemma}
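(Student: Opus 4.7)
The plan is to prove the lemma by explicitly constructing a two-sided inverse to $\boldsymbol{\rho}$, since the statement is essentially a bookkeeping identification: every multi-index with exactly $k$ nonzero entries is determined by its support (a $k$-subset of $\{1,\dots,d\}$) together with the ordered tuple of its nonzero values (an element of $\mathbb{N}^k$, where $\mathbb{N}=\{1,2,\dots\}$). I note in passing what I read as a typo: for the map to make sense, $\mathfrak{u}(\boldsymbol{l})$ should be $\{j:l_j\neq 0\}$ rather than $\{j:l_j=0\}$, since otherwise $|\mathfrak{u}(\boldsymbol{l})|=d-k$ and $\boldsymbol{l}_{\mathfrak{u}(\boldsymbol{l})}$ would be identically zero. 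I will proceed with the corrected reading.

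First I would dispose of the degenerate case $k=0$ (so $\mathcal{K}_0^d=\{\mathbf{0}\}$ and $\mathcal{P}_0^d=\{\emptyset\}$), where the map is trivially bijective. For $k\geq 1$, I would define the candidate inverse
\[
\boldsymbol{\tau}:\mathcal{P}^d_k\times\mathbb{N}^k\to\mathcal{K}^d_k,\qquad \boldsymbol{\tau}(\mathfrak{u},\mathbf{a})_j=\begin{cases}a_i & \text{if }j=\mathfrak{u}_i\text{ for some }1\leq i\leq k,\\ 0 & \text{if }j\notin\mathfrak{u}.\end{cases}
\]
This is well-defined because the ordering $\mathfrak{u}_1<\cdots<\mathfrak{u}_k$ assigned in the definition of $\mathcal{P}^d_k$ removes any ambiguity in recovering the index $i$ from $j\in\mathfrak{u}$. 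Moreover, since each $a_i\geq 1$, the support of $\boldsymbol{\tau}(\mathfrak{u},\mathbf{a})$ is exactly $\mathfrak{u}$, so it has cardinality $k$ and indeed lies in $\mathcal{K}^d_k$.

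Next I would check the two compositions. For any $\boldsymbol{l}\in\mathcal{K}^d_k$ with $k\geq 1$, applying $\boldsymbol{\rho}$ records the support $\mathfrak{u}(\boldsymbol{l})$ and the nonzero values; applying $\boldsymbol{\tau}$ then reinserts those values in the correct positions and fills the rest with zeros, recovering $\boldsymbol{l}$. Conversely, for $(\mathfrak{u},\mathbf{a})\in\mathcal{P}^d_k\times\mathbb{N}^k$, the vector $\boldsymbol{\tau}(\mathfrak{u},\mathbf{a})$ has support exactly $\mathfrak{u}$ (so $\mathfrak{u}(\boldsymbol{\tau}(\mathfrak{u},\mathbf{a}))=\mathfrak{u}$) and its restriction to that support returns $\mathbf{a}$. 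Thus $\boldsymbol{\rho}\circ\boldsymbol{\tau}=\mathrm{id}$ and $\boldsymbol{\tau}\circ\boldsymbol{\rho}=\mathrm{id}$, proving the stated bijection.

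There is no real obstacle here beyond the notational bookkeeping; the only point requiring slight care is the ordering convention on $\mathfrak{u}$, which guarantees that the restriction $\boldsymbol{l}\mapsto\boldsymbol{l}_{\mathfrak{u}(\boldsymbol{l})}$ is a well-defined element of $\mathbb{N}^k$ rather than of an unordered multiset. To extend to Corollary~\ref{cor: isomporphic} (used in Lemmas~\ref{lem: supersets} and the point-counting proof), I would simply observe that $\boldsymbol{\rho}$ preserves the $\ell^1$-norm, since $|\boldsymbol{l}|_1=|\boldsymbol{l}_{\mathfrak{u}(\boldsymbol{l})}|_1$ for $\boldsymbol{l}\in\mathcal{K}_k^d$, so restricting $\boldsymbol{\rho}$ to the subset $\mathcal{K}^d_k\setminus\mathcal{I}_L^d$ yields a bijection onto $\mathcal{P}^d_k\times(\mathbb{N}^k\setminus\mathcal{I}_L^k)$, and similarly for the intersections used in the counting argument.
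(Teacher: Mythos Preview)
Your proof is correct and follows essentially the same approach as the paper: both construct the explicit inverse map (your $\boldsymbol{\tau}$ is the paper's $\boldsymbol{\rho}^{-1}$) and verify the two compositions directly. Your identification of the typo in the definition of $\mathfrak{u}(\boldsymbol{l})$ is also accurate, and your closing remark on $\ell^1$-norm preservation is exactly how the paper proceeds to Corollary~\ref{cor: level sets}.
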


\begin{proof} Define the candidate inverse of \(\boldsymbol{\rho}\) by \(\boldsymbol{\rho}^{-1}(\mathbf{0})=\mathbf{0}\in\mathbb{N}_0^d\), and for \((\mathfrak{u},\mathbf{a})\in\mathcal{P}^d_k\times\mathbb{N}^k\),
\begin{align}
    \boldsymbol{\rho}^{-1}(\mathfrak{u},\mathbf{a})_{\mathfrak{u}_i}=
        a_i \textrm{ for }1\leq i\leq k\quad\textrm{and}\quad\boldsymbol{\rho}^{-1}(\mathfrak{u},\mathbf{a})_j=0\textrm{ for }j\notin\mathfrak{u}.\nonumber
\end{align}
Let \(\boldsymbol{l}\in\mathcal{K}^d_k\). If \(\boldsymbol{l}=\mathbf{0}\), then trivially \(\boldsymbol{\rho}^{-1}(\boldsymbol{\rho}(\mathbf{0}))=\mathbf{0}\). Otherwise, for \(1\leq i\leq k\), we have
\begin{align}
    \boldsymbol{\rho}^{-1}(\boldsymbol{\rho}(\boldsymbol{l}))_{\mathfrak{u}(\boldsymbol{l})_i}&=\boldsymbol{\rho}^{-1}(\mathfrak{u}(\boldsymbol{l}),\boldsymbol{l}_{\mathfrak{u}(\boldsymbol{l})})_{\mathfrak{u}(\boldsymbol{l})_i}=l_{\mathfrak{u}(\boldsymbol{l})_i},\nonumber
\end{align}
and for all \(j\notin\mathfrak{u}(\boldsymbol{l})\), by definition \(\boldsymbol{l}_j=0\), and
\begin{align}
     \boldsymbol{\rho}^{-1}(\boldsymbol{\rho}(\boldsymbol{l}))_j&=\boldsymbol{\rho}^{-1}(\mathfrak{u}(\boldsymbol{l}),\boldsymbol{l}_{\mathfrak{u}(\boldsymbol{l})})_j=0.\nonumber
\end{align}
Therefore, for all \(1\leq j\leq d\), \(\boldsymbol{\rho}^{-1}(\boldsymbol{\rho}(\boldsymbol{l}))_j=l_j\), and so \(\boldsymbol{\rho}^{-1}(\boldsymbol{\rho}(\boldsymbol{l}))=\boldsymbol{l}\).

In the opposite direction, trivially we have \(\boldsymbol{\rho}(\boldsymbol{\rho}^{-1}(\mathbf{0}))=\mathbf{0}\). Otherwise, let \((\mathfrak{u},\mathbf{a})\in\mathcal{P}^d_k\times\mathbb{N}^k\). For \(1\leq i\leq k\), we have \(\boldsymbol{\rho}^{-1}(\mathfrak{u},\mathbf{a})_{\mathfrak{u}_i}= a_i\in\mathbb{N}\). Specifically, \(\boldsymbol{\rho}^{-1}(\mathfrak{u},\mathbf{a})_{\mathfrak{u}_i}\neq0\), and so by definition, \(\mathfrak{u}_i\in\mathfrak{u}(\boldsymbol{\rho}^{-1}(\mathfrak{u},\mathbf{a}))\). Furthermore, if \(j\notin\mathfrak{u}\), we have \(\boldsymbol{\rho}^{-1}(\mathfrak{u},\mathbf{a})_j=0\), and so \(j\notin\mathfrak{u}(\boldsymbol{\rho}^{-1}(\mathfrak{u},\mathbf{a}))\). Hence,
\begin{align}
    \mathfrak{u}(\boldsymbol{\rho}^{-1}(\mathfrak{u},\mathbf{a}))=\{\mathfrak{u}_i\}_{1\leq i \leq k}=\mathfrak{u}.\nonumber
\end{align}
Finally, we then have
\begin{align}
    \begin{split}
    \boldsymbol{\rho}^{-1}(\mathfrak{u},\mathbf{a})_{\mathfrak{u}(\boldsymbol{\rho}^{-1}(\mathfrak{u},\mathbf{a}))}&=\boldsymbol{\rho}^{-1}(\mathfrak{u},\mathbf{a})_{\mathfrak{u}},\\
    &\coloneqq\left(\boldsymbol{\rho}^{-1}(\mathfrak{u},\mathbf{a})_{\mathfrak{u}_1},\dots,\boldsymbol{\rho}^{-1}(\mathfrak{u},\mathbf{a})_{\mathfrak{u}_k}\right),\\
    &=(a_{1},\dots,a_{k}), \\
    &= \mathbf{a}.
    \end{split}\nonumber
\end{align}
Therefore,
\begin{align}
    \begin{split}
    \boldsymbol{\rho}(\boldsymbol{\rho}^{-1}(\mathfrak{u},\mathbf{a}))&\coloneqq\left(\mathfrak{u}(\boldsymbol{\rho}^{-1}(\mathfrak{u},\mathbf{a})),\boldsymbol{\rho}^{-1}(\mathfrak{u},\mathbf{a})_{\mathfrak{u}(\boldsymbol{\rho}^{-1}(\mathfrak{u},\mathbf{a}))}\right),\\
    &=(\mathfrak{u},\mathbf{a}),\nonumber
    \end{split}
\end{align}
as required.
\end{proof}

\begin{corollary}\label{cor: level sets}
    Let \(d,k\in\mathbb{N}\). Let \(\boldsymbol{\rho}:\mathcal{K}_k^d \leftrightarrow\{\mathbf{0}\}\cup\mathcal{P}^d_k\times\mathbb{N}^k\) be defined as in Lemma \ref{lem: bijection} and let \(\mathbf{0}\neq\boldsymbol{l}\in\mathcal{K}_k^d\), \(\boldsymbol{\rho}(\boldsymbol{l})=(\mathfrak{u},\mathbf{a})\). For \(L\in\mathbb{N}_0\), define the multi-index set \(\mathcal{I}_L^d\coloneqq\{\boldsymbol{l}\in\mathbb{N}_0^d:|\boldsymbol{l}|\leq L\}\). Then \(\boldsymbol{l}\in\mathcal{I}_L^d\) if and only if \(\mathbf{a}\in\mathcal{I}_L^k\).
\end{corollary}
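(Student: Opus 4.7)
The plan is to unpack the bijection $\boldsymbol{\rho}$ from Lemma \ref{lem: bijection} and observe that it preserves the $\ell_1$-norm of the multi-index. Recall that for $\mathbf{0} \neq \boldsymbol{l} \in \mathcal{K}_k^d$, we defined $\mathfrak{u}(\boldsymbol{l}) \coloneqq \{j \in \{1,\dots,d\} : l_j \neq 0\}$ and set $\boldsymbol{\rho}(\boldsymbol{l}) = (\mathfrak{u}(\boldsymbol{l}), \boldsymbol{l}_{\mathfrak{u}(\boldsymbol{l})})$, so $\mathbf{a} = \boldsymbol{l}_{\mathfrak{u}(\boldsymbol{l})} \in \mathbb{N}^k$ collects precisely the nonzero entries of $\boldsymbol{l}$.

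First, I would split the $\ell_1$-sum over $\boldsymbol{l}$ according to the partition $\{1,\dots,d\} = \mathfrak{u}(\boldsymbol{l}) \sqcup \mathfrak{u}(\boldsymbol{l})^c$. By definition of $\mathfrak{u}(\boldsymbol{l})$, the entries indexed by $\mathfrak{u}(\boldsymbol{l})^c$ vanish, and the entries indexed by $\mathfrak{u}(\boldsymbol{l})$ are exactly the components of $\mathbf{a}$. Hence
\begin{align}
|\boldsymbol{l}|_1 = \sum_{j=1}^d l_j = \sum_{j \in \mathfrak{u}(\boldsymbol{l})} l_j + \sum_{j \notin \mathfrak{u}(\boldsymbol{l})} 0 = \sum_{i=1}^k l_{\mathfrak{u}(\boldsymbol{l})_i} = \sum_{i=1}^k a_i = |\mathbf{a}|_1.
\end{align}

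Given this identity, the conclusion is immediate: $\boldsymbol{l} \in \mathcal{I}_L^d$ means $|\boldsymbol{l}|_1 \leq L$, which by the above equals $|\mathbf{a}|_1 \leq L$, i.e.\ $\mathbf{a} \in \mathcal{I}_L^k$, and conversely. There is essentially no obstacle here; the work has been done in Lemma \ref{lem: bijection}, and the corollary merely records that the $\ell_1$-constraint transfers cleanly across the bijection because $\boldsymbol{\rho}$ only discards zero coordinates. This is what subsequently allows the reindexing step in the proofs of Lemmas \ref{lem: supersets} and Theorem \ref{thm: counting abscissae}, where sums over $\mathcal{K}_k^d \setminus \mathcal{I}_L^d$ (respectively $\mathcal{K}_k^d \cap \mathcal{I}_L^d$) are rewritten as sums over $\mathcal{P}_k^d \times (\mathbb{N}^k \setminus \mathcal{I}_L^k)$ (respectively $\mathcal{P}_k^d \times (\mathbb{N}^k \cap \mathcal{I}_L^k)$).
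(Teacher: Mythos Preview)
Your proof is correct and takes essentially the same approach as the paper: both rely on the observation that $\boldsymbol{\rho}$ discards only the zero coordinates of $\boldsymbol{l}$, so the $\ell_1$-norm is preserved. Your presentation is in fact slightly cleaner, since you establish the equality $|\boldsymbol{l}|_1=|\mathbf{a}|_1$ once and read off the biconditional, whereas the paper argues the two implications separately (using $\sum_i a_i \leq \sum_j l_j$ in one direction and equality in the other, which of course both come from the same identity).
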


\begin{proof}
Let \(\boldsymbol{l}\in \mathcal{I}^d_L\), then \(\sum_{j=1}^dl_j\leq L\).
By definition, \(\mathbf{a}=\boldsymbol{l}_{\mathfrak{u}(\boldsymbol{l})}\), and hence \(\sum_{i=1}^ka_i=\sum_{i=1}^kl_{\mathfrak{u}(\boldsymbol{l})_i}\leq\sum_{j=1}^dl_j\leq L\), since \(l_j\geq0\). Thus \(\mathbf{a}\in\mathcal{I}^k_L\).

Let \(\mathbf{a}\in\mathcal{I}^k_L\). By definition, \(l_j=0\) if \(j\notin\mathfrak{u}(\boldsymbol{l})\). Since \(\boldsymbol{l}\in\mathcal{K}^d_k\), we have \(|\mathfrak{u}(\boldsymbol{l})|=k\), and hence \(\sum_{j=1}^dl_j=\sum_{i=1}^kl_{\mathfrak{u}(\boldsymbol{l})_i}=\sum_{i=1}^ka_i\leq L\). Thus \(\boldsymbol{l}\in\mathcal{I}^d_L\) also.
\end{proof}

\begin{corollary}\label{cor: isomporphic}
    Let \(d,k\in\mathbb{N}\). Let \(\mathcal{K}_k^d\), \(\mathcal{P}_k^d\) and \(\mathcal{I}_L^d\) be defined as in Lemma~\ref{lem: bijection} and Corollary~\ref{cor: level sets}. Then \(\mathcal{K}_k^d\cap\mathcal{I}_L^d\cong\{\mathbf{0}\}\cup\mathcal{P}^d_k\times\mathbb{N}^k\cap\mathcal{I}_L^k\) and \(\mathcal{K}_k^d\setminus\mathcal{I}_L^d\cong\mathcal{P}^d_k\times\mathbb{N}^k\setminus\mathcal{I}_L^k\).
\end{corollary}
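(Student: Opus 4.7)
\bigskip

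The plan is to derive both isomorphisms directly by restricting the bijection $\boldsymbol{\rho}$ from Lemma \ref{lem: bijection} to the appropriate subsets and invoking the level-set characterisation from Corollary \ref{cor: level sets}. Since Lemma \ref{lem: bijection} already supplies a bijection $\boldsymbol{\rho}:\mathcal{K}_k^d\to\{\mathbf{0}\}\cup\mathcal{P}^d_k\times\mathbb{N}^k$, the essential content of the corollary is that this bijection respects the level-set filtration, which is exactly what Corollary \ref{cor: level sets} provides.

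For the first isomorphism, I would restrict $\boldsymbol{\rho}$ to $\mathcal{K}_k^d\cap\mathcal{I}_L^d$. The image should be $\{\mathbf{0}\}\cup(\mathcal{P}^d_k\times\mathbb{N}^k\cap\mathcal{I}_L^k)$. I would check this by case analysis: the element $\mathbf{0}\in\mathcal{K}_k^d$ is trivially contained in $\mathcal{I}_L^d$ for every $L\geq 0$, and it maps to $\mathbf{0}$ under $\boldsymbol{\rho}$, so it appears in the image. For nonzero $\boldsymbol{l}\in\mathcal{K}_k^d$ with image $(\mathfrak{u},\mathbf{a})$, Corollary \ref{cor: level sets} gives $\boldsymbol{l}\in\mathcal{I}_L^d\iff\mathbf{a}\in\mathcal{I}_L^k$, so membership is preserved in both directions. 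Since $\boldsymbol{\rho}$ is already a bijection on the full sets, the restriction is automatically injective, and surjectivity onto the claimed image follows from the preceding equivalence applied to $\boldsymbol{\rho}^{-1}$.

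For the second isomorphism, I would similarly restrict $\boldsymbol{\rho}$ to $\mathcal{K}_k^d\setminus\mathcal{I}_L^d$. Since $\mathbf{0}\in\mathcal{I}_L^d$, the complement contains no zero element, so the image lies entirely in $\mathcal{P}^d_k\times\mathbb{N}^k$. Applying Corollary \ref{cor: level sets} to any such $\boldsymbol{l}$ shows $\boldsymbol{l}\notin\mathcal{I}_L^d\iff\mathbf{a}\notin\mathcal{I}_L^k$, and the bijection passes to the complements.

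There is no real obstacle here; the result is essentially bookkeeping once Lemma \ref{lem: bijection} and Corollary \ref{cor: level sets} are in hand. The only mild subtlety is the asymmetric treatment of $\mathbf{0}$: it belongs in the target of the first isomorphism (since $|\mathbf{0}|_1=0\leq L$) but not in the second, which explains why the $\{\mathbf{0}\}\cup$ appears in one statement and not the other.
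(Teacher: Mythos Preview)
Your proposal is correct and matches the paper's approach exactly: the paper's proof is a single sentence stating that the result follows directly from Lemma~\ref{lem: bijection} and Corollary~\ref{cor: level sets} since $\boldsymbol{\rho}$ restricts to a bijection in both cases, with the observation that $\mathbf{0}\in\mathcal{I}_L^d$ for all $L\geq 0$. Your write-up simply spells out this restriction argument in more detail, including the same asymmetric handling of $\mathbf{0}$ that you flag at the end.
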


\begin{proof}
    This follows directly from the two previous results as \(\boldsymbol{\rho}\) acts as a bijection in both cases, noting that \(\mathbf{0}\in\mathcal{I}^d_{L}\) for all \(L\geq0\).
\end{proof}

%%=============================================%%
%% For submissions to Nature Portfolio Journals %%
%% please use the heading ``Extended Data''.   %%
%%=============================================%%

%%=============================================================%%
%% Sample for another appendix section			       %%
%%=============================================================%%

%% \section{Example of another appendix section}\label{secA2}%
%% Appendices may be used for helpful, supporting or essential material that would otherwise 
%% clutter, break up or be distracting to the text. Appendices can consist of sections, figures, 
%% tables and equations etc.

\end{appendices}

%%===========================================================================================%%
%% If you are submitting to one of the Nature Portfolio journals, using the eJP submission   %%
%% system, please include the references within the manuscript file itself. You may do this  %%
%% by copying the reference list from your .bbl file, paste it into the main manuscript .tex %%
%% file, and delete the associated \verb+\bibliography+ commands.                            %%
%%===========================================================================================%%

\bibliography{sn-bibliography}% common bib file

@article{Nobile2018,
author = {Nobile, F. and Tempone, R. and Wolfers, S.},
title = {Sparse approximation of multilinear problems with applications to kernel-based methods in UQ},
journal = {Numerische Mathematik},
volume = {139},
year = {2018},

    abstract = {We provide a framework for the sparse approximation of multilinear problems and show that several problems in uncertainty quantification fit within this framework. In these problems, the value of a multilinear map has to be approximated using approximations of different accuracy and computational work of the arguments of this map. We propose and analyze a generalized version of Smolyak’s algorithm, which provides sparse approximation formulas with convergence rates that mitigate the curse of dimension that appears in multilinear approximation problems with a large number of arguments. We apply the general framework to response surface approximation and optimization under uncertainty for parametric partial differential equations using kernel-based approximation. The theoretical results are supplemented by numerical experiments.}
}

@article{Nobile2008,
author = {Nobile, F. and Tempone, R. and Webster, C. G.},
title = {An Anisotropic Sparse Grid Stochastic Collocation Method for Partial Differential Equations with Random Input Data},
journal = {SIAM Journal on Numerical Analysis},
volume = {46},
number = {5},
pages = {2411-2442},
year = {2008},
    abstract = { This work proposes and analyzes an anisotropic sparse grid stochastic collocation method for solving partial differential equations with random coefficients and forcing terms (input data of the model). The method consists of a Galerkin approximation in the space variables and a collocation, in probability space, on sparse tensor product grids utilizing either Clenshaw–Curtis or Gaussian knots. Even in the presence of nonlinearities, the collocation approach leads to the solution of uncoupled deterministic problems, just as in the Monte Carlo method. This work includes a priori and a posteriori procedures to adapt the anisotropy of the sparse grids to each given problem. These procedures seem to be very effective for the problems under study. The proposed method combines the advantages of isotropic sparse collocation with those of anisotropic full tensor product collocation: the first approach is effective for problems depending on random variables which weigh approximately equally in the solution, while the benefits of the latter approach become apparent when solving highly anisotropic problems depending on a relatively small number of random variables, as in the case where input random variables are Karhunen–Loève truncations of “smooth” random fields. This work also provides a rigorous convergence analysis of the fully discrete problem and demonstrates (sub)exponential convergence in the asymptotic regime and algebraic convergence in the preasymptotic regime, with respect to the total number of collocation points. It also shows that the anisotropic approximation breaks the curse of dimensionality for a wide set of problems. Numerical examples illustrate the theoretical results and are used to compare this approach with several others, including the standard Monte Carlo. In particular, for moderately large-dimensional problems, the sparse grid approach with a properly chosen anisotropy seems to be very efficient and superior to all examined methods. }
}

@book{wendland_2004, place={Cambridge}, series={Cambridge Monographs on Applied and Computational Mathematics}, title={Scattered Data Approximation}, publisher={Cambridge University Press}, address={Cambridge}, author={Wendland, Holger}, year={2004}}

@article{Teckentrup2020,
author = {Teckentrup, Aretha L.},
title = {Convergence of Gaussian Process Regression with Estimated Hyper-Parameters and Applications in Bayesian Inverse Problems},
journal = {SIAM/ASA Journal on Uncertainty Quantification},
volume = {8},
number = {4},
pages = {1310-1337},
year = {2020}


}

@article{Novak2018,
author = {Novak, Erich and Ullrich, Mario and Wo\'{z}niakowski, Henryk and Zhang, Shun},
title = {Reproducing kernels of Sobolev spaces on \(\mathbb{R}^d\) and applications to embedding constants and tractability},
journal = {Analysis and Applications},
volume = {16},
number = {05},
pages = {693-715},
year = {2018},


    abstract = { The standard Sobolev space W2s(ℝd), with arbitrary positive integers s and d for which s > d/2, has the reproducing kernel Kd,s(x,t) =∫ℝd ∏j=1dcos(2π(x j − tj)uj) 1 +∑0<|α|1≤s∏j=1d(2πuj)2αjdu for all x,t∈ℝd, where xj,tj,uj,αj are components of d-variate x,t,u,α, and |α|1 =∑j=1dα j with non-negative integers αj. We obtain a more explicit form for the reproducing kernel K1,s and find a closed form for the kernel Kd,∞. Knowing the form of Kd,s, we present applications on the best embedding constants between the Sobolev space W2s(ℝd) and L∞(ℝd), and on strong polynomial tractability of integration with an arbitrary probability density. We prove that the best embedding constants are exponentially small in d, whereas worst case integration errors of algorithms using n function values are also exponentially small in d and decay at least like n−1/2. This yields strong polynomial tractability in the worst case setting for the absolute error criterion. }
}

@article{Plumlee2014,
 abstract = {Random field models have been widely employed to develop a predictor of an expensive function based on observations from an experiment. The traditional framework for developing a predictor with random field models can fail due to the computational burden it requires. This problem is often seen in cases where the input of the expensive function is high dimensional. While many previous works have focused on developing an approximative predictor to resolve these issues, this article investigates a different solution mechanism. We demonstrate that when a general set of designs is employed, the resulting predictor is quick to compute and has reasonable accuracy. The fast computation of the predictor is made possible through an algorithm proposed by this work. This article also demonstrates methods to quickly evaluate the likelihood of the observations and describes some fast maximum likelihood estimates for unknown parameters of the random field. The computational savings can be several orders of magnitude when the input is located in a high-dimensional space. Beyond the fast computation of the predictor, existing research has demonstrated that a subset of these designs generate predictors that are asymptotically efficient. This work details some empirical comparisons to the more common space-filling designs that verify the designs are competitive in terms of resulting prediction accuracy.},
 author = {Matthew Plumlee},
 journal = {Journal of the American Statistical Association},
 number = {508},
 pages = {1581--1591},
 publisher = {[American Statistical Association, Taylor & Francis, Ltd.]},
 title = {Fast Prediction of Deterministic Functions Using Sparse Grid Experimental Designs},
 volume = {109},
 year = {2014}
}

@book{Rasmussen2005,
    author = {Rasmussen, Carl Edward and Williams, Christopher K. I.},
    title = {Gaussian Processes for Machine Learning},
    publisher = {The MIT Press},
address = {Cambridge, MA},
    year = {2005},
    month = {11},
}

@book{gramacy2020surrogates,
  title={Surrogates: Gaussian Process Modeling, Design, and Optimization for the Applied Sciences},
  author={Gramacy, R.B.},
  series={Chapman \& Hall/CRC Texts in Statistical Science},
  year={2020},
  publisher={CRC Press},
address={Boca Raton, FL}
}

@book{Santner2003,
author = {Thomas J. Santner and Brian J. Williams and William I. Notz},
title = {The Design and Analysis of Computer Experiments},
publisher = {Springer},
address={New York, NY},
series={Springer Series in Statistics},
year={2003}
}

@article {GoldsteinVernon2010,
	title = {Galaxy Formation: a Bayesian Uncertainty Analysis},
	abstract = {In many scientific disciplines complex computer models are used to understand the behaviour of large scale physical systems. An uncertainty anal- ysis of such a computer model known as Galform is presented. Galform models the creation and evolution of approximately one million galaxies from the begin- ning of the Universe until the current day, and is regarded as a state-of-the-art model within the cosmology community. It requires the specification of many in- put parameters in order to run the simulation, takes significant time to run, and provides various outputs that can be compared with real world data. A Bayes Linear approach is presented in order to identify the subset of the input space that could give rise to acceptable matches between model output and measured data. This approach takes account of the major sources of uncertainty in a consistent and unified manner, including input parameter uncertainty, function uncertainty, observational error, forcing function uncertainty and structural uncertainty. The approach is known as History Matching, and involves the use of an iterative suc- cession of emulators (stochastic belief specifications detailing beliefs about the Galform function), which are used to cut down the input parameter space. The analysis was successful in producing a large collection of model evaluations that exhibit good fits to the observed data.},
	issue = {04},
	journal = {Bayesian Analysis},
	pages = {619-670},
	publicationstatus = {Published},
	publisher = {International Society for Bayesian Analysis (ISBA)},
	volume = {05},
	keyword = {Computer models, Uncertainty analysis, Model discrepancy, History matching, Bayes linear analysis, Galaxy formation, Galform.},
	year = {2010},
	author = {Vernon, Ian and Goldstein, Michael and Bower, Richard G.}
}

@article{Rieger2019,
    author = {Rieger, Christian and Wendland, Holger},
    title = {Sampling inequalities for anisotropic tensor product grids},
    journal = {IMA Journal of Numerical Analysis},
    volume = {40},
    number = {1},
    pages = {285-321},
    year = {2019},
    month = {01},
    abstract = {We derive sampling inequalities for discrete point sets that are of anisotropic tensor product form. Such sampling inequalities can be used to prove convergence for arbitrary stable reconstruction processes. As usual in the context of high-dimensional problems, our sampling inequalities are expressed in terms of the number of data sites, i.e., the number of points in the sparse grid. To this end, new bounds on specific monotone sets and on the number of points in an anisotropic sparse grid are derived.}
}

@book{StanleyCombinatorics,
year = {2012},
author = {Stanley, Richard P.},
booktitle = {Enumerative Combinatorics. Volume 1},
keywords = {Combinatorial enumeration problems},
publisher = {Cambridge University Press},
address = {New York, NY},
title = {Enumerative Combinatorics. Volume 1},
}

@article{Bhattacharya2014,
author = {Bhattacharya, Anirban and Pati, Debdeep and Dunson, David},
journal = {The Annals of Statistics},
title = {Anisotropic function estimation using multi-bandwidth Gaussian processes},
year = {2014},
month = {02},
volume = {42},
number = {1},
pages = {352-381}
}

@article{vanderVaart2009,
author = {A. W. van der Vaart and J. H. van Zanten},
title = {{Adaptive Bayesian estimation using a Gaussian random field with inverse Gamma bandwidth}},
volume = {37},
journal = {The Annals of Statistics},
number = {5B},
publisher = {Institute of Mathematical Statistics},
pages = {2655 -- 2675},
keywords = {Adaptation, Bayesian inference, ‎classification‎, Gaussian process priors, Nonparametric density estimation, Nonparametric regression, posterior distribution, rate of convergence},
year = {2009}
}

@article{stuart_teckentrup_2018, 
title={Posterior consistency for Gaussian process approximations of Bayesian posterior distributions},
volume={87},
abstractNote={We study the use of Gaussian process emulators to approximate the parameter-to-observation map or the negative log-likelihood in Bayesian inverse problems. We prove error bounds on the Hellinger distance between the true posterior distribution and various approximations based on the Gaussian process emulator. Our analysis includes approximations based on the mean of the predictive process, as well as approximations based on the full Gaussian process emulator. Our results show that the Hellinger distance between the true posterior and its approximations can be bounded by moments of the error in the emulator. Numerical results confirm our theoretical findings.},
number={310},
journal={Mathematics of Computation},
publisher={American Mathematical Society},
author={Stuart, Andrew M. and Teckentrup, Aretha L.},
year={2018},
month={3},
pages={721–753}
}

@article{adcock2023montecarlogoodsampling,
      title={Monte Carlo is a good sampling strategy for polynomial approximation in high dimensions}, 
      author={Adcock, Ben and Brugiapaglia, Simone},
      year={2023},
      journal={arXiv preprint arXiv:2208.09045}, 
}

@article{Aronszajn1950,
title = {Theory of reproducing kernels},
author = {Aronszajn, Nachman},
journal = {Transactions of the American Mathematical
Society},
year = {1950},
volume = {63},
number = {3},
pages = {337–404}
}

@book{matern2013spatial,
  title={Spatial variation},
  author={Mat{\'e}rn, Bertil},
  volume={36},
  year={2013},
  publisher={Springer Science \& Business Media},
address={New York, NY}
}

@book{adams2003sobolev,
  title={Sobolev spaces},
  author={Adams, Robert A and Fournier, John JF},
  year={2003},
  publisher={Elsevier},
address={Amsterdam}
}

@book{Lord_Powell_Shardlow_2014, place={Cambridge}, series={Cambridge Texts in Applied Mathematics}, title={An Introduction to Computational Stochastic PDEs}, publisher={Cambridge University Press}, address = {Cambridge}, author={Lord, Gabriel J. and Powell, Catherine E. and Shardlow, Tony}, year={2014}, collection={Cambridge Texts in Applied Mathematics}
}

@article{dick2013high,
  title={High-dimensional integration: the quasi-Monte Carlo way},
  author={Dick, Josef and Kuo, Frances Y and Sloan, Ian H},
  journal={Acta Numerica},
  volume={22},
  pages={133--288},
  year={2013},
  publisher={Cambridge University Press}
}

@article{bai2024gaussian,
  title={Gaussian processes for Bayesian inverse problems associated with linear partial differential equations},
  author={Bai, Tianming and Teckentrup, Aretha L and Zygalakis, Konstantinos C},
  journal={Statistics and Computing},
  volume={34},
  number={4},
  pages={139},
  year={2024},
  publisher={Springer}
}

@article{conrad2017statistical,
  title={Statistical analysis of differential equations: introducing probability measures on numerical solutions},
  author={Conrad, Patrick R and Girolami, Mark and S{\"a}rkk{\"a}, Simo and Stuart, Andrew and Zygalakis, Konstantinos},
  journal={Statistics and Computing},
  volume={27},
  pages={1065--1082},
  year={2017},
  publisher={Springer}
}

@misc{burkhardt2014counting,
  author       = {John Burkhardt},
  title        = {Counting Abscissas in Sparse Grids},
url = {https://people.math.sc.edu/Burkardt/presentations/sgmga_counting.pdf},
  urldate      = {2025-06-02},
  year         = {2014}
}

@article{rieger2024approximability,
  title={On the approximability and curse of dimensionality of certain classes of high-dimensional functions},
  author={Rieger, Christian and Wendland, Holger},
  journal={SIAM Journal on Numerical Analysis},
  volume={62},
  number={2},
  pages={842--871},
  year={2024},
  publisher={SIAM}
}

@article{rieger2024constructive,
  title={Constructive Approximation of High-Dimensional Functions with Small Efficient Dimension with Applications in Uncertainty Quantification},
  author={Rieger, Christian and Wendland, Holger},
  journal={arXiv preprint arXiv:2411.18128},
  year={2024}
}

@article{liu2017dimension,
  title={Dimension reduction for Gaussian process emulation: An application to the influence of bathymetry on tsunami heights},
  author={Liu, Xiaoyu and Guillas, Serge},
  journal={SIAM/ASA Journal on Uncertainty Quantification},
  volume={5},
  number={1},
  pages={787--812},
  year={2017},
  publisher={SIAM}
}

@article{wipf2007new,
  title={A new view of automatic relevance determination},
  author={Wipf, David and Nagarajan, Srikantan},
  journal={Advances in neural information processing systems},
  volume={20},
  year={2007}
}

@article{mackay1992bayesian,
  title={Bayesian interpolation},
  author={MacKay, David JC},
  journal={Neural computation},
  volume={4},
  number={3},
  pages={415--447},
  year={1992},
  publisher={MIT Press One Rogers Street, Cambridge, MA 02142-1209, USA journals-info~…}
}

@book{neal2012bayesian,
  title={Bayesian learning for neural networks},
  author={Neal, Radford M},
  year={2012},
  publisher={Springer Science \& Business Media},
address={New York, NY}
}

@article{rieger2017sampling,
  title={Sampling inequalities for sparse grids},
  author={Rieger, Christian and Wendland, Holger},
  journal={Numerische Mathematik},
  volume={136},
  number={2},
  pages={439--466},
  year={2017},
  publisher={Springer}
}

@incollection{rasmussen2004gaussian,
  title={Gaussian processes in machine learning},
  author={Rasmussen, Carl Edward},
  booktitle={Summer school on machine learning},
  pages={63--71},
  year={2004},
  publisher={Springer},
    address={Berlin, Heidelberg}
}

@book{Hackbush2012,
    author = {Hackbusch, Wolfgang},
    title = {Tensor Spaces and Numerical Tensor Calculus},
    publisher = {Springer},
    address = {Cham},
    year = {2012}
}

@article{yang2015minimax,
  title={Minimax-optimal nonparametric regression in high dimensions},
  author={Yang, Yun and Tokdar, Surya T},
  journal={The Annals of Statistics},
  pages={652--674},
  year={2015},
  publisher={JSTOR}
}

@article{jagadeeswaran2019fast,
  title={Fast automatic Bayesian cubature using lattice sampling},
  author={Jagadeeswaran, Rathinavel and Hickernell, Fred J},
  journal={Statistics and Computing},
  volume={29},
  number={6},
  pages={1215--1229},
  year={2019},
  publisher={Springer}
}

@article{fasshauer2012dimension,
  title={On dimension-independent rates of convergence for function approximation with Gaussian kernels},
  author={Fasshauer, Gregory E and Hickernell, Fred J and Wo{\'z}niakowski, Henryk},
  journal={SIAM Journal on Numerical Analysis},
  volume={50},
  number={1},
  pages={247--271},
  year={2012},
  publisher={SIAM}
}

@article{nww06,
  title={Sobolev error estimates and a {B}ernstein inequality for scattered data interpolation via radial basis functions},
  author={Narcowich, Francis J and Ward, Joseph D and Wendland, Holger},
  journal={Constructive Approximation},
  volume={24},
  number={2},
  pages={175--186},
  year={2006},
  publisher={Springer}
}

@article{sss13,
  title={{Interpolation of spatial data--A stochastic or a deterministic problem?}},
  author={Scheuerer, Michael and Schaback, Robert and Schlather, Martin},
  journal={European Journal of Applied Mathematics},
  volume={24},
  number={04},
  pages={601--629},
  year={2013},
  publisher={Cambridge University Press}
}

@article{nww05,
  title={Sobolev bounds on functions with scattered zeros, with applications to radial basis function surface fitting},
  author={Narcowich, Francis and Ward, Joseph and Wendland, Holger},
  journal={Mathematics of Computation},
  volume={74},
  number={250},
  pages={743--763},
  year={2005}
}

@article{buhmann2000radial,
  title={Radial basis functions},
  author={Buhmann, Martin Dietrich},
  journal={Acta numerica},
  volume={9},
  pages={1--38},
  year={2000},
  publisher={Cambridge university press}
}

@book{stein,
  title={Interpolation of Spatial Data: Some Theory for Kriging},
  author={Stein, Michael L},
  year={1999},
  publisher={Springer},
address={New York, NY}
}

@article{hkccr04,
  title={Combining field data and computer simulations for calibration and prediction},
  author={Higdon, Dave and Kennedy, Marc and Cavendish, James C and Cafeo, John A and Ryne, Robert D},
  journal={SIAM Journal on Scientific Computing},
  volume={26},
  number={2},
  pages={448--466},
  year={2004},
  publisher={SIAM}
}

@article{kennedy2001bayesian,
  title={Bayesian calibration of computer models},
  author={Kennedy, Marc C and O'Hagan, Anthony},
  journal={Journal of the Royal Statistical Society: Series B (Statistical Methodology)},
  volume={63},
  number={3},
  pages={425--464},
  year={2001},
  publisher={Wiley Online Library}
}

@article{porcu2024matern,
  title={The Mat{\'e}rn model: A journey through statistics, numerical analysis and machine learning},
  author={Porcu, Emilio and Bevilacqua, Moreno and Schaback, Robert and Oates, Chris J},
  journal={Statistical Science},
  volume={39},
  number={3},
  pages={469--492},
  year={2024},
  publisher={Institute of Mathematical Statistics}
}

@article{van2011information,
  title={Information Rates of Nonparametric Gaussian Process Methods.},
  author={van der Vaart, AW and van Zanten, Harry},
  journal={Journal of Machine Learning Research},
  volume={12},
  number={6},
  year={2011}
}

@article{sloan1998quasi,
  title={When are quasi-Monte Carlo algorithms efficient for high dimensional integrals?},
  author={Sloan, Ian H and Wo{\'z}niakowski, Henryk},
  journal={Journal of Complexity},
  volume={14},
  number={1},
  pages={1--33},
  year={1998},
  publisher={Elsevier}
}

@inproceedings{Scholkopf2001,
author = {Sch\"{o}lkopf, Bernhard and Herbrich, Ralf and Smola, Alex J.},
title = {A Generalized Representer Theorem},
year = {2001},
isbn = {3540423435},
publisher = {Springer-Verlag},
address = {Berlin, Heidelberg},
abstract = {Wahba's classical representer theorem states that the solutions of certain risk minimization problems involving an empirical risk term and a quadratic regularizer can be written as expansions in terms of the training examples. We generalize the theorem to a larger class of regularizers and empirical risk terms, and give a self-contained proof utilizing the feature space associated with a kernel. The result shows that a wide range of problems have optimal solutions that live in the finite dimensional span of the training examples mapped into feature space, thus enabling us to carry out kernel algorithms independent of the (potentially infinite) dimensionality of the feature space.},
booktitle = {Proceedings of the 14th Annual Conference on Computational Learning Theory and the 5th European Conference on Computational Learning Theory},
pages = {416–426}
}

@article{adt12,
  title={Extension of sampling inequalities to Sobolev semi-norms of fractional order and derivative data},
  author={Arcang{\'e}li, R{\'e}mi and de Silanes, Mar{\'\i}a Cruz L{\'o}pez and Torrens, Juan Jos{\'e}},
  journal={Numerische Mathematik},
  volume={121},
  number={3},
  pages={587--608},
  year={2012},
  publisher={Springer}
}

@article{bungartz2004sparse,
  title={Sparse grids},
  author={Bungartz, Hans-Joachim and Griebel, Michael},
  journal={Acta numerica},
  volume={13},
  pages={147--269},
  year={2004},
  publisher={Cambridge University Press}
}

@article{WASILKOWSKI19951,
title = {Explicit Cost Bounds of Algorithms for Multivariate Tensor Product Problems},
journal = {Journal of Complexity},
volume = {11},
number = {1},
pages = {1-56},
year = {1995},
author = {G.W. Wasilkowski and H. Wozniakowski},
abstract = {We study multivariate tenser product problems in the worst case and average case settings. They are defined on functions of d variables. For arbitrary d, we provide explicit upper bounds on the costs of algorithms which compute an ϵ-approximation to the solution. The cost bounds are of the form (c(d) + 2)β1(β2 + β3(ln 1/ϵ)/(d − 1))β4(d − 1)(1/ϵ)β5. Here c(d) is the cost of one function evaluation (or one linear functional evaluation), and βi′s do not depend on d; they are determined by the properties of the problem for d = 1. For certain tensor product problems, these cost bounds do not exceed c(d)Kϵ−p for some numbers K and p, both independent of d. However, the exponents p which we obtain are too large. We apply these general estimates to certain integration and approximation problems in the worst and average case settings. We also obtain an upper bound, which is independent of d, for the number, n(ϵ, d), of points for which discrepancy (with unequal weights) is at most ϵ, n(ϵ, d) ≤ 7.26ϵ−2.454, ∀d, ϵ ≤ 1.}
}

@article{Gautschi,
author = {Gautschi, Walter},
title = {Some Elementary Inequalities Relating to the Gamma and Incomplete Gamma Function},
journal = {Journal of Mathematics and Physics},
volume = {38},
number = {1-4},
pages = {77-81},
year = {1959}
}

@article{pflaum1999,
    author = {Pflaum, C. and Zhou, A},
    title = {Error analysis of the combination technique},
    journal = {Numerische Mathematik},
    year = {1999},
    pages = {327-350},
    volume = {84},
    issue = {2},
}

@article{DELVOS198299,
title = {d-Variate Boolean interpolation},
journal = {Journal of Approximation Theory},
volume = {34},
number = {2},
pages = {99-114},
year = {1982},
author = {F.-J. Delvos}
}

@Inbook{Iooss2015,
author="Iooss, Bertrand
and Lema{\^i}tre, Paul",
title="A Review on Global Sensitivity Analysis Methods",
bookTitle="Uncertainty Management in Simulation-Optimization of Complex Systems: Algorithms and Applications",
year="2015",
publisher="Springer US",
address="Boston, MA",
pages="101--122",
abstract="This chapter makes a review, in a complete methodological framework, of various global sensitivity analysis methods of model output. Numerous statistical and probabilistic tools (regression, smoothing, tests, statistical learning, Monte Carlo, {\ldots}) aim at determining the model input variables which mostly contribute to an interest quantity depending on model output. This quantity can be for instance the variance of an output variable. Three kinds of methods are distinguished: the screening (coarse sorting of the most influential inputs among a large number), the measures of importance (quantitative sensitivity indices) and the deep exploration of the model behaviour (measuring the effects of inputs on their all variation range). A progressive application methodology is illustrated on a scholar application. A synthesis is given to place every method according to several axes, mainly the cost in number of model evaluations, the model complexity and the nature of brought information."
}

@article{mardia1984,
 abstract = {We describe the maximum likelihood method for fitting the linear model when residuals are correlated and when the covariance among the residuals is determined by a parametric model containing unknown parameters. Observations are assumed to be Gaussian. We give conditions which ensure consistency and asymptotic normality of the estimators. Our main concern is with the analysis of spatial data and in this context we describe some simulation experiments to assess the small sample behaviour of estimators. We also discuss an application of the spectral approximation to the likelihood for processes on a lattice.},
 author = {K. V. Mardia and R. J. Marshall},
 journal = {Biometrika},
 number = {1},
 pages = {135--146},
 publisher = {[Oxford University Press, Biometrika Trust]},
 title = {Maximum Likelihood Estimation of Models for Residual Covariance in Spatial Regression},
 urldate = {2025-10-28},
 volume = {71},
 year = {1984}
}

@article{Sundararajan2001,
    author = {Sundararajan, S. and Keerthi, S. S.},
    title = {Predictive Approaches for Choosing Hyperparameters in Gaussian Processes},
    journal = {Neural Computation},
    volume = {13},
    number = {5},
    pages = {1103-1118},
    year = {2001},
    month = {05},
    abstract = {Gaussian processes are powerful regression models specified by parameterized mean and covariance functions. Standard approaches to choose these parameters (known by the name hyperparameters) are maximum likelihood and maximum a posteriori. In this article, we propose and investigate predictive approaches based on Geisser's predictive sample reuse (PSR) methodology and the related Stone's cross-validation (CV) methodology. More specifically, we derive results for Geisser's surrogate predictive probability (GPP), Geisser's predictive mean square error (GPE), and the standard CV error and make a comparative study. Within an approximation we arrive at the generalized cross-validation (GCV) and establish its relationship with the GPP and GPE approaches. These approaches are tested on a number of problems. Experimental results show that these approaches are strongly competitive with the existing approaches.}
}

@article{hyperparamEstimationInverse,
    author = {Hall-Hooper, Khalil A. and Saibaba, Arvind K. and Chung, Julianne and Miller, Scot M.},
    title = {Efficient iterative methods for hyperparameter estimation in large-scale linear inverse problems},
    journal = {Advances in Computational Mathematics},
    year = {2024},
    volume = {50},
    issue = {6}
}

@article{Briol2019,
 ISSN = {08834237, 21688745},
 URL = {https://www.jstor.org/stable/26771026},
 abstract = {A research frontier has emerged in scientific computation, wherein discretisation error is regarded as a source of epistemic uncertainty that can be modelled. This raises several statistical challenges, including the design of statistical methods that enable the coherent propagation of probabilities through a (possibly deterministic) computational work-flow, in order to assess the impact of discretisation error on the computer output. This paper examines the case for probabilistic numerical methods in routine statistical computation. Our focus is on numerical integration, where a probabilistic integrator is equipped with a full distribution over its output that reflects the fact that the integrand has been discretised. Our main technical contribution is to establish, for the first time, rates of posterior contraction for one such method. Several substantial applications are provided for illustration and critical evaluation, including examples from statistical modelling, computer graphics and a computer model for an oil reservoir.},
 author = {François-Xavier Briol and Chris J. Oates and Mark Girolami and Michael A. Osborne and Dino Sejdinovic},
 journal = {Statistical Science},
 number = {1},
 pages = {pp. 1--22},
 publisher = {Institute of Mathematical Statistics},
 title = {Probabilistic Integration: A Role in Statistical Computation?},
 volume = {34},
 year = {2019}
}

@article{ZHANG2021100188,
title = {Machine learning tensile strength and impact toughness of wheat straw reinforced composites},
journal = {Machine Learning with Applications},
volume = {6},
pages = {100188},
year = {2021},
issn = {2666-8270},
author = {Yun Zhang and Xiaojie Xu},
keywords = {Natural fibers, Composites, Tensile strength, Impact toughness, Machine learning},
abstract = {The wheat straw reinforced composite is considered as a “green composite” as it utilizes biodegradable and recyclable food waste materials as reinforcing materials. Mechanical properties, such as tensile strength and impact toughness, are greatly dependent on the composite content and processing parameters. While compression molding is one of popular methods to fabricate the wheat straw/polypropylene composite, experimental trials to achieve targeted mechanical properties can be time-consuming and expensive. In this work, we develop the Gaussian process regression model to present the relationship among the fiber content, processing parameters of the compression molding, and mechanical performance of wheat straw reinforced polypropylene composites. The model achieves a correlation coefficient of 99.13% (95.68%), a root mean square error of 0.0857 (0.4369), and a mean absolute error of 0.0693 (0.3265) for tensile strength (impact toughness). The models are simple and fast to implement, produce predictions with high accuracy, and thus might be considered as efficient tools for mechanical property estimations. Should data become available, the model may be extended to include other descriptors, such as the wheat straw length, size distribution, and chemical treatment parameters.}
}

@article{geophysExampleARD,
    author = {Das, Saptarshi and Chen, Xi and Hobson, Michael P and Phadke, Suhas and van Beest, Bertwim and Goudswaard, Jeroen and Hohl, Detlef},
    title = {Surrogate regression modelling for fast seismogram generation and detection of microseismic events in heterogeneous velocity models},
    journal = {Geophysical Journal International},
    volume = {215},
    number = {2},
    pages = {1257-1290},
    year = {2018},
    month = {07},
    abstract = {Given a 3D heterogeneous velocity model with a few million voxels, fast generation of accurate seismic responses at specified receiver positions from known microseismic event locations is a well-known challenge in geophysics, since it typically involves numerical solution of the computationally expensive elastic wave equation. Thousands of such forward simulations are often a routine requirement for parameter estimation of microseimsic events via a suitable source inversion process. Parameter estimation based on forward modelling is often advantageous over a direct regression-based inversion approach when there are unknown number of parameters to be estimated and the seismic data have complicated noise characteristics which may not always allow a stable and unique solution in a direct inversion process. In this paper, starting from Graphics Processing Unit based synthetic simulations of a few thousand forward seismic shots due to microseismic events via pseudo-spectral solution of elastic wave equation, we develop a step-by-step process to generate a surrogate regression modelling framework, using machine learning techniques that can produce accurate seismograms at specified receiver locations. The trained surrogate models can then be used as a high-speed meta-model/emulator or proxy for the original full elastic wave propagator to generate seismic responses for other microseismic event locations also. The accuracies of the surrogate models have been evaluated using two independent sets of training and testing Latin hypercube quasi-random samples, drawn from a heterogeneous marine velocity model. The predicted seismograms have been used thereafter to calculate batch likelihood functions, with specified noise characteristics. Finally, the trained models on 23 receivers placed at the sea-bed in a marine velocity model are used to determine the maximum likelihood estimate of the event locations which can in future be used in a Bayesian analysis for microseismic event detection.}
}

@phdthesis{addy2026thesis,
  author       = {Elliot J. Addy},
  title        = {High Dimensional Gaussian Process Emulation with Sparse Grids},
  school       = {University of Edinburgh},
  year         = {2026}
}
%% if required, the content of .bbl file can be included here once bbl is generated
%%\input sn-article.bbl

\end{document}